\DeclareMathOperator{\tr}{tr}
\DeclareMathOperator{\spn}{span}
\DeclareMathOperator{\ran}{ran}
\DeclareMathOperator{\var}{var}
\DeclareMathOperator{\diag}{diag}
\DeclareMathOperator{\AD}{AD}
\DeclareMathOperator{\Ad}{Ad}
\DeclareMathOperator{\ad}{ad}
\DeclareMathOperator{\hor}{hor}
\DeclareMathOperator{\ver}{ver}
\DeclareMathOperator{\Id}{Id}
\DeclareMathOperator{\End}{End}
\DeclareMathOperator{\Hom}{Hom}
\newtheorem{thm}{Theorem}
\newtheorem{lem}[thm]{Lemma}
\newtheorem{prop}[thm]{Proposition}
\theoremstyle{remark}
\newtheorem*{rk*}{Remark}
\newcommand{\UU}{\text{U}(1)}
\newcommand{\SO}{\text{SO}^+(1,1)}
\newcommand{\so}{\mathfrak{so}^+(1,1)}
\newcommand{\GLC}{\text{GL}(1, \mathbb{C})}
\newcommand{\GLF}{\text{GL}(F)}
\newcommand{\glc}{\mathfrak{gl}(1,\mathbb{C})}
\newcommand{\glf}{\mathfrak{gl}(F)}
\begin{document}
\title{Quantum dynamics of the classical harmonic oscillator}
\author{Dimitrios Giannakis}
\date{\today}
\affiliation{Department of Mathematics and Center for Atmosphere Ocean Science, Courant Institute of Mathematical Sciences, New York University, New York, New York 10012, USA}

\begin{abstract}
    A correspondence is established between measure-preserving, ergodic dynamics of a classical harmonic oscillator and a quantum mechanical gauge theory on two-dimensional Minkowski space. This correspondence is realized through an isometric embedding of the $L^2(\mu)$ space on the circle associated with the oscillator's invariant measure, $ \mu $, into a Hilbert space $\mathcal{H}$ of sections of a $\mathbb{C}$-line bundle over Minkowski space. This bundle is equipped with a covariant derivative induced from an $\SO$ gauge field (connection 1-form) on the corresponding inertial frame bundle, satisfying the Yang-Mills equations. Under this embedding, the Hamiltonian operator of a Lorentz-invariant quantum system, constructed as a geometrical Laplace-type operator on bundle sections, pulls back to the generator of the unitary group of Koopman operators governing the evolution of classical observables of the harmonic oscillator, with Koopman eigenfunctions of zero, positive, and negative eigenfrequency corresponding to quantum eigenstates of zero (``vacuum''), positive (``matter''), and negative (``antimatter'') energy. The embedding also induces a pair of operators acting on classical observables of the harmonic oscillator, exhibiting canonical position--momentum commutation relationships. These operators have the structure of order-$1/2$ fractional derivatives, and therefore display a form of non-locality. 
    
    In a second part of this work, we study a quantum mechanical representation of the classical harmonic oscillator using a one-parameter family of reproducing kernel Hilbert spaces,  $ \hat{\mathcal{K}}_\tau $, associated with the time-$\tau$ diffusion kernel of an order-$1/2$ fractional diffusion on the circle. As shown in recent work, in addition to being Hilbert spaces with the reproducing property, these spaces are unital Banach algebras of functions. It is found that the evolution of classical observables in these spaces takes place via a strongly-continuous, unitary Koopman evolution group,  which exhibits a stronger form of classical--quantum consistency than the $L^2(\mu)$ case. Specifically, for every real-valued classical observable in $ \hat{ \mathcal K}_\tau $, there exists a quantum mechanical observable, whose expectation value is consistent with classical function evaluation. This allows for a description of classical state space dynamics, classical statistics, and quantum statistics of the harmonic oscillator within a unified framework.  
\end{abstract}
    
\maketitle

\section{\label{secIntro}Introduction and statement of main results}

The classical harmonic oscillator, or circle rotation, is arguably among the most widely encountered dynamical systems in science, with a vast range of applications in mechanics, wave propagation, signal processing, and many other areas. From the point of view of ergodic theory, it also provides one of the simplest non-trivial examples of measure-preserving, ergodic dynamics, characterized by a discrete spectrum of frequencies at integer multiples of the oscillator's natural frequency. In this work, we establish a correspondence between this simple classical dynamical system and quantum dynamics of a gauge field theory on two-dimensional Minkowski space, having the proper orthochronous Lorentz group $\SO$ as the structure group. This correspondence is realized through the operator-theoretic formulation of ergodic theory \cite{Baladi00,EisnerEtAl15}, which characterizes dynamical systems through the action of intrinsically linear evolution operators, called Koopman operators \cite{Koopman31,KoopmanVonNeumann32}, acting on appropriate linear spaces of observables by composition with the dynamics. In particular, for a continuous-time, measure-preserving, ergodic dynamical flow $ \Phi^t : S \to S$ on a state space $S$, the Koopman operators $U^t : f \mapsto f \circ \Phi^t$ act by unitary transformations on the $L^2(\mu)$ Hilbert space associated with the invariant measure $ \mu$, analogously to the unitary Heisenberg operators of quantum mechanics. 

Analogies of this type have been at the focus of a number of recent studies on the connections between quantum mechanics and operator-theoretic ergodic theory \cite{Mauro02,BondarEtAl12,Klein18,BondarEtAl19,Giannakis19b,Mezic19,Morgan19,Morgan19b,SlawinskaEtAl19}. Among these is a scheme for sequential data assimilation (filtering) of partially observed classical dynamical systems, which maps these systems into abstract quantum systems by means of the Koopman formalism, and employs the density matrix formulation of quantum dynamics and measurement to perform sequential statistical inference for the evolution of observables \cite{Giannakis19b,SlawinskaEtAl19}. Building on this framework, our goal is to explore the geometrical and algebraic properties of a classical--quantum correspondence in the specific case of the harmonic oscillator. In particular, we seek to address the question: Can a classical harmonic oscillator of fixed frequency (energy) be naturally mapped into a quantum system with a geometrical Hamiltonian operator?    

Our approach is inspired by the following basic observations: 
\begin{enumerate}
    \item The Koopman eigenfrequency spectrum of a classical harmonic oscillator of frequency $\alpha$ consists of all integer multiples $ \alpha_j =  j \alpha $, $ j \in \mathbb{Z} $. In particular, the spacing between two successive eigenfrequencies is constant and equal to the natural frequency, $ \alpha_{j+1} -\alpha_j =\alpha$.              
    \item The energy spectrum of a non-relativistic quantum harmonic oscillator of frequency $ \alpha $ takes the form $ E_j = ( 2 j + 1 ) \alpha / 2 $, $ j \in \mathbb{N}_0 $ (in units with $\hbar = 1$). That is, similarly to the classical harmonic oscillator, $ E_{j+1} - E_j $ is equal to $ \alpha$, but the quantum harmonic oscillator has a zero-point energy $ E_0 = \alpha / 2 $, and all energies are positive.   
    \item A \emph{pair} of quantum harmonic oscillators, consisting of one oscillator as above and another oscillator of the same frequency $ \alpha $, but sign-inverted Hamiltonian, has energy spectrum $ E_j = j \alpha $, with $ j$ now an arbitrary integer. Such a pair is therefore spectrally isomorphic to the Koopman group of a  classical harmonic oscillator. 
\end{enumerate}
In what follows, we will see that such a pair of quantum harmonic operators with oppositely-signed Hamiltonians arises naturally as a coordinate representation of a connection Laplacian of an $\SO$ gauge theory on two-dimensional Minkowski space, where wavefunctions correspond to sections of a $\mathbb{C}$-line bundle, and the positive and negative parts of the energy spectrum can be interpreted as corresponding to ``matter'' and ``antimatter'' states, respectively. Further, by mapping the eigenfunctions of the Koopman operator for the harmonic oscillator, which in this case coincide with Fourier functions on the circle, to the Hermite eigenstates of the quantum harmonic oscillator, with positive (negative) Koopman eigenfrequencies corresponding to positive (negative) energies, we will construct an isometric embedding of the $L^2(\mu) $ space of classical observables of the harmonic oscillator to a natural Hilbert space of sections associated with the gauge field theory. 

This embedding allows one to pull back quantum mechanical observables (self-adjoint operators) on Minkowski space to operators on classical observables of the harmonic oscillator. In particular, it is possible to pull back the quantum mechanical position and momentum operators, and we will see that the resulting operators take the form of fractional derivatives, exhibiting canonical commutation relationships. These results are summarized in the following two theorems.

\begin{thm}[Classical-quantum correspondence based on $L^2(\mu)$]
    \label{thmL2} Given a  rotation on the circle with frequency $ \alpha $, there exists a smooth $\mathbb{C}$-line bundle $E \to M$ over two-dimensional Minkowski space $M$, equipped with an $\SO$ Yang-Mills connection, and a gauge-covariant Hilbert space homomorphism $\mathcal{U} : L^2(\mu) \to \mathcal{H}$, where $ \mu $ is the Haar probability measure on $S^1$ and  $ \mathcal{H}$ a Hilbert space of sections $M \to E$, such that the following hold: 
    \begin{enumerate}[(i), wide]
        \item The skew-adjoint generator $V$ of the unitary Koopman evolution group on $L^2(\mu) $ induced by the circle rotation can be expressed as the pullback under $\mathcal{U}$ of a Hamiltonian operator $H $ on $\mathcal{H} $, i.e., 
            \begin{displaymath}
                V = i \mathcal{U}^* H \mathcal{U}
            \end{displaymath}
            on $C^\infty $ functions. In particular, $H$ has the structure of a Lorentz-invariant, gauge-covariant Laplace-type operator, given by the sum 
        \begin{displaymath}
            H = \Delta + v
        \end{displaymath}
        of the connection Laplacian $\Delta$ associated with the bundle and a quadratic potential $v$ taking negative (positive) values along timelike (spacelike) affine coordinates on $M$. Moreover, up to multiplication by $i $, $V$ and $H$ have the same spectra, consisting of integer multiples of the frequency $ \alpha $.   
    \item  For every $ \tau > 0 $, there exists a continuous, injective map $ \Psi_\tau : S^1 \to \mathcal Q(L^2(\mu))$ from the circle into the space $\mathcal{Q}(L^2(\mu))$ of regular quantum states on $L^2(\mu)$ (i.e., the set of positive, trace-class operators of unit trace), which is compatible with the dynamical flow  and the unitary evolution $ \tilde \Phi^t : \rho \mapsto U^{t*} \rho U^t $ on $ \mathcal{Q}(L^2(\mu))$ induced by the Koopman group, in the sense that
        \begin{displaymath}
            \Psi_\tau \circ \Phi^t = \tilde \Phi^t \circ \Psi_\tau, \quad \forall t \in \mathbb R.
        \end{displaymath} 
        As a result, for every $ \tau > 0 $, there exists an injective map $ \theta \mapsto \mathcal U^* \Psi_\tau(\theta) \mathcal U$ from $S^1$ into the space of quantum states $\mathcal{Q}(\mathcal{H})$ associated with the gauge theory on Minkowski space, mapping classical states (points in the circle) to pure quantum states evolving periodically under the unitary evolution $Z^t : \rho \mapsto e^{-iHt} \rho e^{iHt}$ generated by $H$. 
    \item For every $ \tau > 0$, there exists a continuous linear map $ \Omega_\tau : \mathcal A(L^2(\mu)) \to C_{\mathbb R}(S^1)$ from the Abelian Banach algebra $ \mathcal A(L^2(\mu))$ of bounded, self-adjoint operators on $L^2(\mu)$ (equipped with the operator norm and a symmetrized operator product for the multiplication operation) and the canonical Banach algebra $C_{\mathbb R}(S^1)$ of real-valued continuous functions on the circle. This map is compatible with the evolution $ \tilde U^t : A \mapsto U^t A U^{t*}$ on $ \mathcal A(L^2(\mu))$ induced by the Koopman group on $L^2(\mu)$, i.e., 
        \begin{displaymath}
            \Omega_\tau \circ \tilde U^t = U^t \circ \Omega_\tau, \quad \forall t \in \mathbb R.
        \end{displaymath}
        As a result, $ A \mapsto \Omega_\tau( \mathcal U A \mathcal U^*) $ is a continuous linear map from the space of bounded observables of the quantum system on Minkowski space, $ \mathcal A(\mathcal H)$, to classical observables in $ C_{\mathbb R}(S^1)$, which is compatible with the unitary Koopman evolution $  f \mapsto f \circ \Phi^t  $ on $C_{\mathbb R}(S^1)$ and the Heisenberg evolution $  A \mapsto e^{it H} A e^{-it H}$ on $ \mathcal{A}(\mathcal H)$.
    \end{enumerate}
\end{thm}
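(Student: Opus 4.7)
My plan for part (i) is to construct the bundle $E \to M$ directly from the inertial frame bundle of Minkowski space. Take $M = \mathbb R^{1,1}$ with metric $\eta = \diag(-1,1)$, whose orthonormal frame bundle $P \to M$ is a principal $\SO$-bundle (trivializable, since $M$ is contractible). Because $\SO \cong \mathbb R$ via the rapidity, its unitary characters are parameterized by a single real number $\kappa$, and I form the associated line bundle $E = P \times_{\chi_\kappa} \mathbb C$. The abelian Yang-Mills equation $d{\star}F = 0$ for $F = dA$ reduces, on a 2D base, to the statement that $\star F$ is locally constant; I take $F$ to be a constant multiple of the Minkowski volume form, the Lorentzian analogue of a constant ``magnetic field.'' I fix this constant together with $\kappa$ and the coefficient of a quadratic potential $v(t,x) = c\,(x^2 - t^2)$ with $c>0$ to match the harmonic oscillator structure below.

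In a symmetric (Landau-type) gauge, the connection Laplacian $\Delta$ plus $v$ becomes, after introducing lightcone coordinates $u = (t+x)/\sqrt 2,\ w = (t-x)/\sqrt 2$, a second-order operator that commutes with the Lorentz boost generator $K = u\partial_u - w\partial_w$ and, after a Bargmann-type similarity transformation, decouples into $H = H_+ - H_-$ with each $H_\pm$ unitarily equivalent to a one-dimensional non-relativistic quantum harmonic oscillator of frequency $\alpha/2$. The joint spectrum is therefore $\{(2n_+ +1)\alpha/2 - (2n_- +1)\alpha/2 : n_\pm \in \mathbb N_0\} = \alpha \mathbb Z$, matching the spectrum of $V/i$. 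The map $\mathcal U : L^2(\mu) \to \mathcal H$ is then defined on Fourier modes $z_j(\theta) = e^{i j \theta}$ by sending $z_j$ to the ``edge'' joint eigenstates $\lvert j,0 \rangle$ for $j \geq 0$ and $\lvert 0,-j \rangle$ for $j \leq 0$, extended by linearity and density; this picks out a one-dimensional subspace of each degenerate eigenspace of $H$. By construction $\mathcal U$ is an isometry intertwining $V$ with $iH$ on the span of Fourier modes, which is a core, and it inherits gauge covariance from its equivariant construction.

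For parts (ii) and (iii) I would use the reproducing-kernel apparatus developed in the second half of the paper. Let $k_\tau$ be the time-$\tau$ kernel of the order-$1/2$ fractional heat semigroup on $S^1$ and $\varphi_\tau : S^1 \to L^2(\mu)$ its canonical feature map, so that $k_\tau(\theta,\theta') = \langle \varphi_\tau(\theta), \varphi_\tau(\theta') \rangle$. I then set
\begin{displaymath}
    \Psi_\tau(\theta) = \frac{|\varphi_\tau(\theta)\rangle \langle \varphi_\tau(\theta)|}{\lVert \varphi_\tau(\theta) \rVert^2}, \qquad \Omega_\tau(A)(\theta) = \frac{\langle \varphi_\tau(\theta), A \varphi_\tau(\theta) \rangle}{\lVert \varphi_\tau(\theta) \rVert^2}.
\end{displaymath}
The first is a pure, unit-trace density operator on $L^2(\mu)$; the second is a continuous Husimi-type symbol obeying $\lVert \Omega_\tau(A) \rVert_\infty \leq \lVert A \rVert_{\mathrm{op}}$, so $\Omega_\tau$ is a bounded linear map into $C_{\mathbb R}(S^1)$. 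Continuity and injectivity of $\Psi_\tau$ follow from continuity and strict positive-definiteness of $k_\tau$, while rotation-invariance of the diffusion kernel (the fractional Laplacian commutes with the circle action) gives the dynamical equivariance of both maps; conjugation with $\mathcal U$ then transfers these to the Minkowski side.

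The main obstacle, as I see it, is the rigorous spectral analysis in (i). The operator $\Delta$ is not elliptic in Minkowski signature and $H$ is not semibounded, so one cannot invoke standard elliptic theory to obtain essential self-adjointness, to identify the natural Hilbert space $\mathcal H$, or to justify the formal separation of variables. A natural workaround is to Wick-rotate to Euclidean signature, where the same formal manipulations produce the well-studied Landau Hamiltonian with a confining harmonic potential, an essentially self-adjoint operator whose spectrum is explicitly known, and then carry the spectral decomposition back via analytic continuation, verifying that the eigenvalues and the map $\mathcal U$ transport correctly. Coordinating this analysis with the gauge-covariance and intertwining requirements is where most of the bookkeeping will go.
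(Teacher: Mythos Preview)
Your overall architecture is right, but two points diverge from the paper in ways worth noting.

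\textbf{Part~(i): the separation of variables and self-adjointness.} You propose passing to lightcone coordinates and performing a Bargmann-type similarity transformation to decouple $H$ into $H_+ - H_-$, and you flag essential self-adjointness as the main obstacle, to be handled by Wick rotation. The paper does neither. Working directly in the inertial coordinates $(x^0,x^1)$ and the symmetric gauge, the operator $\tilde\Delta^M$ is computed explicitly (equation~\eqref{eqLaplSigma}) and already has the form
\[
\tilde\Delta^M = -\Bigl(-X_0^2 + \tfrac{\alpha^2}{2}(x^0)^2\Bigr) + \Bigl(-X_1^2 + \tfrac{\alpha^2}{2}(x^1)^2\Bigr),
\]
i.e.\ the difference of two commuting one-dimensional harmonic-oscillator Hamiltonians in the \emph{original} variables, with no change of coordinates or similarity transformation required. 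This immediately yields a complete orthonormal eigenbasis $\chi_{jk} = \chi_j\otimes\chi_k$ of tensor products of Hermite functions, with eigenvalues $E_{jk}=(k-j)\alpha$. A densely defined symmetric operator that is diagonalizable in an orthonormal basis is automatically essentially self-adjoint, so the non-ellipticity and lack of semiboundedness never become issues; no Wick rotation or analytic continuation is needed. Your lightcone route may ultimately reach the same spectrum, but it introduces an extra layer (the Bargmann transform) whose unitarity and compatibility with the gauge structure you would have to verify, whereas the paper's direct computation avoids all of this.

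\textbf{Parts~(ii)--(iii): choice of kernel.} You build $\Psi_\tau$ and $\Omega_\tau$ from the order-$1/2$ fractional diffusion kernel. The paper uses the \emph{standard} heat kernel $\kappa_\tau$ for Theorem~\ref{thmL2}; the fractional kernel $\hat\kappa_\tau$ is reserved for the stronger RKHA-based correspondence in Theorem~\ref{thmRKHS}. Your construction would still go through, since both kernels are continuous, translation-invariant, and strictly positive-definite (so Lemmas~\ref{lemFeature}--\ref{lemKernelNorm} apply equally), but you should be aware that the specific objects named $\Psi_\tau,\Omega_\tau$ in the paper are built from $\kappa_\tau$, not $\hat\kappa_\tau$. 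Otherwise your formulas for $\Psi_\tau$ and $\Omega_\tau$ match the paper's (Proposition~\ref{propS1Quantum}), and the equivariance arguments you sketch are the same ones the paper carries out in Appendix~\ref{appS1Quantum}.
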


\begin{thm}[Canonically commuting operators for the circle rotation]
    \label{thmOps} With the notation of Theorem~\ref{thmL2}, the following hold:
    \begin{enumerate}[(i),wide]
        \item The two creation operators associated with the timelike and spacelike degrees of freedom of $H$ pull back under $\mathcal{U}$ to densely defined operators $A_-^+ $ and $A_+^+ $ on $ L^2(\mu) $, which act as creation operators for negative- (positive)-frequency Koopman eigenfunctions, respectively. These operators and their corresponding annihilation operators, $ A_- $ and $A_+ $, respectively, have the structure of order-$1/2 $ fractional differentiation operators. Moreover, they induce number operators $N_- = A_-^+ A_- $ and $ N_+ = A_+^+ A_+ $, exhibiting canonical commutation relationships, and leading to a decomposition of the generator as a difference between the positive- and negative-frequency number operators, 
        \begin{displaymath}
            V = i \alpha ( -N_- + N_+ ).
        \end{displaymath}
    \item The densely defined operators $ \tilde X_{\pm}$ and $\tilde P_{\pm}$ on $L^2(\mu) $ with
        \begin{displaymath}
            \tilde X_{\pm} = \frac{1}{\sqrt{2\alpha}} (A_{\pm} + A^+_{\pm}), \quad \tilde P_{\pm} = - i \sqrt{\frac{\alpha}{2}} (A_{\pm} - A^+_{\pm})
        \end{displaymath}
        satisfy canonical position-momentum commutation relationships, i.e., 
        \begin{displaymath}
            [ \tilde X_-, \tilde X_+ ] = [ \tilde P_-, \tilde P_+] = 0, \quad [ \tilde X_-, \tilde P_-] = [ \tilde X_+, \tilde P_+ ] = \Id.
        \end{displaymath}
        In particular, $ \tilde X_- $ and $ \tilde P_-$ (resp., $\tilde X_+$ and $ \tilde P_+$) are pullbacks under $\mathcal U$ of the position and momentum operators, respectively, associated with the timelike (resp., spacelike) degrees of freedom of $H$, and generate fractional diffusion semigroups on $L^2(\mu)$. Moreover, the pure quantum states in $\mathcal{Q}(L^2(\mu))$ associated with the Koopman eigenfunctions of the circle rotation satisfy canonical position-momentum uncertainty relationships with respect to these operators. 
    \end{enumerate}
\end{thm}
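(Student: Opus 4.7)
The plan is to transport everything through the isometry $\mathcal{U}$ of Theorem~\ref{thmL2} and reduce the claims to explicit computations on the Fourier basis $\{\phi_j := e^{ij\theta}\}_{j\in\mathbb{Z}}$ of $L^2(\mu)$. Since $\mathcal{U}$ sends the Koopman eigenfunctions to Hermite eigenstates of the two harmonic-oscillator factors of $H$, with positive (resp.\ negative) $j$ indexing the spacelike (resp.\ timelike) factor, the standard action of the ladder operators on Hermite states pulls back to yield explicit formulas such as $A_+^+ \phi_j = \sqrt{j+1}\,\phi_{j+1}$ and $A_+ \phi_j = \sqrt{j}\,\phi_{j-1}$ for $j\geq 0$, and symmetric formulas for $A_-^+$ and $A_-$ on the negative half of Fourier space. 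The presence of the $\sqrt{|j|}$ weights, rather than the $|j|$ that a first-order differential operator would produce on $\phi_j$, is exactly what identifies $A_\pm$ and $A_\pm^+$ as order-$1/2$ fractional differentiation operators.

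Part (i) then becomes essentially bookkeeping. The CCR $[A_\pm, A_\pm^+] = \Id$ on the appropriate invariant core are inherited by isometric transport from the underlying harmonic oscillators, while the cross commutators $[A_-, A_+]$ and $[A_-, A_+^+]$ vanish because $A_+$ and $A_-$ act nontrivially only on orthogonal halves of Fourier space. A direct computation then gives $N_+ \phi_j = j\,\phi_j$ for $j\geq 0$ and $N_- \phi_{-j} = j\,\phi_{-j}$ for $j\geq 0$, with both operators annihilating the complementary half; the decomposition $V = i\alpha(-N_- + N_+)$ follows immediately from the Koopman eigenvalue relation $V\phi_j = ij\alpha\,\phi_j$.

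For part (ii), the commutation relations between $\tilde X_\pm$ and $\tilde P_\pm$ reduce to the algebraic identity $[A_\pm + A_\pm^+, A_\pm - A_\pm^+] = -2[A_\pm, A_\pm^+]$ combined with the vanishing of cross commutators. The fractional-diffusion-semigroup claim is obtained by computing the action of $\tilde X_\pm^2$ and $\tilde P_\pm^2$ on Fourier modes: the resulting Fourier multipliers depend linearly on $|j|$ rather than quadratically on $j$, so $e^{-t\tilde X_\pm^2}$ and $e^{-t\tilde P_\pm^2}$ coincide with the heat semigroups of an order-$1/2$ fractional Laplacian on $S^1$, matching the fractional-diffusion kernels later used to construct $\hat{\mathcal{K}}_\tau$. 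Finally, the canonical position--momentum uncertainty inequality for the pure states associated with the Koopman eigenfunctions is an application of the Robertson inequality to the established CCR, with the required variances obtained by direct substitution of the ladder-operator formulas.

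The main obstacle, to my mind, is not the algebra --- which is essentially the standard harmonic-oscillator computation transported by an isometry --- but the functional-analytic bookkeeping underlying these identities. The operators $A_\pm$, $A_\pm^+$, $\tilde X_\pm$, $\tilde P_\pm$ are all unbounded, so one must exhibit a common invariant core (the trigonometric polynomials being the natural candidate), verify the relations strictly on this core, promote the formally symmetric operators $\tilde X_\pm$ and $\tilde P_\pm$ to essentially self-adjoint ones, and then justify the passage to strongly continuous diffusion semigroups and to the Robertson form of the uncertainty principle.
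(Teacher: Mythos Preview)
Your approach is essentially the paper's: pull back the Minkowski-space ladder, number, position, and momentum operators through $\mathcal{U}$ and verify everything on the Fourier basis $\{\phi_j\}$. The paper carries this out in Section~\ref{secCCO}, and makes the fractional-derivative structure explicit by introducing the shift operators $L = \phi_{-1}\cdot$, $L^* = \phi_1\cdot$ together with spectrally projected fractional derivatives $\partial_\pm^{1/2}$, proving formulas such as $A_- = i^{1/2} L^* \partial_-^{1/2}$ and $N_\pm = \mp i\,\partial_\pm$ (Proposition~\ref{propLadder}). Your observation that the $\sqrt{|j|}$ weights signal order-$1/2$ fractional differentiation is the same idea, stated less precisely.

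Two points of caution. First, your phrase ``inherited by isometric transport'' hides a genuine subtlety: $\mathcal{U}$ is an isometry but not a unitary, so $(\mathcal{U}^* A\,\mathcal{U})(\mathcal{U}^* B\,\mathcal{U}) = \mathcal{U}^* A\,\Pi_{\tilde{\mathcal H}} B\,\mathcal{U}$, and this equals $\mathcal{U}^* AB\,\mathcal{U}$ only when $B$ preserves $\tilde{\mathcal H} = \ran\mathcal{U}$. In particular $A_0^+\psi_{0k} = \psi_{1k}\notin\tilde{\mathcal H}$ for $k>0$, so $[A_-,A_-^+]$ is not the identity on all of $L^2(\mu)$ but only the projection onto $L^2_0(\mu)\oplus L^2_-(\mu)$. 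The paper is careful here: it only asserts the number--ladder commutators and the vanishing cross-commutators, not $[A_\pm,A_\pm^+]=\Id$. Your direct Fourier-basis verification is the correct route and should be the primary argument, not a backup.

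Second, your fractional-diffusion argument does not work as stated: $\tilde X_\pm^2$ and $\tilde P_\pm^2$ are \emph{not} Fourier multipliers, because the expansion $\tilde X_-^2 = \frac{1}{2\alpha}(A_-^2 + A_-A_-^+ + A_-^+A_- + (A_-^+)^2)$ contains the off-diagonal shift terms $A_-^2$ and $(A_-^+)^2$. Only the diagonal part $A_-A_-^+ + A_-^+A_- = 2N_- + [A_-,A_-^+]$ acts as a multiplier linear in $|j|$. The paper does not spell out a proof of the fractional-diffusion semigroup claim either, so this remains a point where additional care is needed in any complete write-up.
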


In Theorem~\ref{thmL2}, by gauge-covariance for $ \mathcal U$ we mean that under a gauge transformation this operator transforms as $ \mathcal U \mapsto \Xi \mathcal U $, where $ \Xi $ is a unitary multiplication operator on $ \mathcal H$ that commutes with $H$.  Moreover, by the connection on $E \to M $ being an $\SO$ Yang-Mills connection, we mean that it is induced from a connection 1-form on an $\SO$ frame bundle over $M$, whose corresponding field strength (curvature tensor) satisfies the Yang-Mills equations, though its Yang-Mills action is infinite due to non-compactness of $M$. In particular, $\SO$ is an Abelian group that may be identified with the universal covering group of $\UU$, so that the gauge theory employed in this work can be thought of as a two-dimensional analog of Maxwell electromagnetism. In Theorem~\ref{thmL2}(ii, iii), the one-parameter families of maps $ \Psi_\tau$ and $ \Omega_\tau$ are constructed using so-called feature maps \cite{FerreiraMenegatto13} associated with the reproducing kernel Hilbert spaces (RKHSs) $\mathcal{K}_\tau $ of functions on the circle induced by the canonical heat kernel at time parameter $\tau$. We recall that the time-$\tau$  heat kernel on the circle, $ \kappa_\tau : S^1 \times S^1 \to \mathbb R_+$, gives the transition probability for the diffusion semigroup $ e^{-\tau \mathcal L} $ on $L^2(\mu)$, where $ \mathcal L $ is the (positive-semidefinite) Laplace-Beltrami operator associated with the standard Riemannian metric on $S^1$ \cite{Rosenberg97}.  

Next, we study the action of the dynamics on a different class of RKHSs, $ \hat{\mathcal K}_\tau$, $ \tau > 0$, which are associated with order-$1/2$ fractional diffusions on the circle generated by $ -\mathcal L^{1/2}$. As shown in recent work \cite{DasGiannakis19c}, these spaces have the distinguished property forming Banach algebras of functions. For this reason, it is natural to refer to them as reproducing kernel Hilbert algebras (RKHAs). As with the canonical heat kernel $ \kappa_\tau$, the reproducing kernel $ \hat \kappa_\tau : S^1 \times S^1 \to \mathbb R $ of $ \hat{\mathcal K}_\tau$ is a smooth, translation-invariant function for any $ \tau > 0 $.  In particular, while a general RKHS of functions on $S^1$ need not be invariant under the circle rotation (so that one cannot speak of a Koopman evolution group on an arbitrary RKHS), by virtue of the translation invariance of $\kappa_\tau $ and  $ \hat \kappa_\tau $, the spaces $\mathcal K_\tau$ and $\hat{\mathcal K}_\tau$ are invariant under the dynamics, and moreover the corresponding Koopman evolution groups are unitary. In the case of $\hat{\mathcal K}_\tau$, the fact that this space is also a Banach algebra means that every element $ f \in \hat{\mathcal K}_\tau$ has a corresponding bounded multiplication operator on $ \hat{\mathcal K}_\tau$ multiplying by that function (which is not the case for $\mathcal K_\tau$). To our knowledge, the properties of Koopman groups on the spaces $\hat{\mathcal K}_\tau$ have not been discussed elsewhere in the literature, so this material (stated as Theorem~\ref{thmRKHS2} in Section~\ref{secRKHAClassical}) should be of independent interest. 

For our purposes, a key property provided by the RKHS structure of $ \hat{\mathcal K}_\tau$ is that pointwise function evaluation can be carried out by bounded, and thus continuous, linear functionals. This leads to a stronger form of classical--quantum correspondence than Theorem~\ref{thmL2}, which is compatible with the natural Banach algebra homomorphism mapping real-valued functions in $ \hat{\mathcal K}_\tau$ to their corresponding self-adjoint multiplication operators. As a result, the evolution of every classical observable in $ \hat{ \mathcal K }_\tau$ under the circle rotation can be consistently mapped into evolution of an  observable of the quantum system on Minkowski space.

\begin{thm}[Classical-quantum correspondence based on $\hat{\mathcal K}_\tau$]
    \label{thmRKHS} Let $\hat \kappa_\tau : S^1 \times S^1 \to \mathbb{R}_+$ be the time $\tau > 0 $ transition probability kernel associated with the fractional diffusion $ e^{-\tau \mathcal L^{1/2}}$, and $ \hat{\mathcal{K}}_\tau$ be the corresponding RKHA. Let also $ \hat{ \mathcal K }_{\mathbb R, \tau} $ be the RKHA formed by the real elements of $ \hat{\mathcal K}_\tau$. Then, with the notation of Theorem~\ref{thmL2}, there exists a unitary map $ \hat{\mathcal  V}_\tau:  \hat{\mathcal K}_\tau \to L^2(\mu)$ such that the following hold for every $ \tau > 0$: 
    \begin{enumerate}[(i), wide]
        \item The Koopman operators $U^t$ induced by the circle rotation act as a strongly-continuous, unitary evolution group on $ \hat{ \mathcal K }_\tau$.
        \item  For every $ \tau > 0 $, there exists a continuous, injective map $ \hat \Psi_\tau : S^1 \to \mathcal Q(\hat{\mathcal K}_\tau)$ from the circle into the space $\mathcal{Q}(\hat{\mathcal K}_\tau)$ of regular quantum states on $\hat{\mathcal K}_\tau$ (defined analogously to $\mathcal Q(L^2(\mu))$), which is compatible with the dynamical flow  and the unitary evolution $ \hat \Phi^t : \rho \mapsto U^{t*} \rho U^t $ on $ \mathcal{Q}(\hat{\mathcal K}_\tau)$ induced by the Koopman group; that is, 
        \begin{displaymath}
            \hat \Psi_\tau \circ \Phi^t = \hat \Phi^t \circ \hat \Psi_\tau, \quad \forall t \in \mathbb R.
        \end{displaymath} 
        As a result, there exists an injective map $ \theta \mapsto \hat{\mathcal W}_\tau^* \hat \Psi_\tau(\theta) \hat{\mathcal W}_\tau$, $\hat{\mathcal W}_\tau = \mathcal U \hat{\mathcal V}_\tau$, from $S^1$ into the space of quantum states $\mathcal{Q}(\mathcal{H})$ associated with the gauge theory on Minkowski space, mapping classical states to pure quantum states evolving periodically under the Heisenberg evolution group associated with $H$. 

    \item There exists a continuous linear map $ \hat \Omega_\tau : \mathcal A(\hat{\mathcal K}_\tau) \to \hat{\mathcal K}_{\mathbb R, \tau}$, where $ \mathcal A(\hat{\mathcal K}_\tau)$ is the Abelian Banach algebra of bounded, self-adjoint operators on $ \hat{\mathcal K}_\tau$ (defined analogously to $ \mathcal A(L^2(\mu))$),  such that $\hat\Omega_\tau$ is compatible with the Koopman group on $\hat{\mathcal K}_\tau$ and the evolution $\tilde U^t: A \mapsto U^t A U^{t*}$ on $\mathcal A(\hat{\mathcal K}_\tau)$ induced by it, viz.
        \begin{displaymath}
            \hat \Omega_\tau \circ \tilde U^t = U^t \circ \hat \Omega_\tau, \quad \forall t \in \mathbb R.
        \end{displaymath}
        As a result, $ A \mapsto \hat \Omega_\tau( \mathcal W_\tau A \mathcal W_\tau) $ is a continuous linear map from the space  $ \mathcal A(\mathcal H)$ of bounded quantum mechanical observables of the quantum system on Minkowski space to classical observables in $ \hat{\mathcal K}_{\tau,\mathbb R} $.     
    \item The map $ \hat \Omega_\tau $ is a left inverse of the natural Banach algebra homomorphism $ T : \hat{\mathcal K}_{\tau,\mathbb R} \to \mathcal A(\hat{\mathcal K}_\tau)$ mapping $f \in \hat{\mathcal K}_{\tau, \mathbb R}$ to the bounded, self-adjoint multiplication operator $ T_f  : g \mapsto f g$. As a result, for every classical observable $ f \in \hat{\mathcal K}_{\tau,\mathbb R}$ and state $ \theta \in S^1$, pointwise evaluation can be expressed as a quantum mechanical expectation value, 
            \begin{displaymath}
                f(\theta) = \mathbb{E}_{\rho_\theta} T_f  := \tr(\rho_\theta T_f ) , \quad \rho_\theta = \hat \Psi_\tau(\theta).
            \end{displaymath}
            Equivalently, $ f(\theta) $ can be expressed as an expectation value of the observable $ A_f = \hat{\mathcal W}_\tau T_f \hat{\mathcal W}_\tau^* $ of the quantum system on Minkowski space (which is not necessarily a multiplication operator), i.e., 
            \begin{displaymath}
                f(\theta) = \mathbb E_{\sigma_\theta}A_f, \quad \sigma_\theta = \hat{\mathcal W}^*_\tau \rho_\theta \hat{\mathcal W}_\tau.
            \end{displaymath}
        \item  The correspondences in~(iii) are compatible with dynamical evolution, i.e., 
            \begin{displaymath}
                f(\Phi^t(\theta)) = \mathbb E_{\tilde \Phi^t(\rho_\theta)} T_f = \mathbb E_{Z^t(\sigma_\theta)} A_f, \quad \forall t \in \mathbb R.
            \end{displaymath}
    \end{enumerate}
\end{thm}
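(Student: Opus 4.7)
The plan is to build every construction on top of the canonical feature map $\theta \mapsto k_\theta := \hat\kappa_\tau(\theta,\cdot) \in \hat{\mathcal K}_\tau$, leaning at every step on translation invariance of $\hat\kappa_\tau$ on $S^1$. The unitary $\hat{\mathcal V}_\tau : \hat{\mathcal K}_\tau \to L^2(\mu)$ is supplied by the Fourier diagonalization of $\mathcal L^{1/2}$: the Fourier basis $\{e_j\}_{j\in\mathbb Z}$ is orthogonal in both spaces (with $\|e_j\|_{\hat{\mathcal K}_\tau}^2 = 1/\lambda_j$ for the RKHA weights $\lambda_j$ coming from $e^{-\tau\mathcal L^{1/2}}$), and $\hat{\mathcal V}_\tau$ is the basis-to-basis rescaling. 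For part (i), translation invariance makes the same Fourier basis diagonalize each Koopman operator $U^t$, which then acts on $e_j$ by multiplication by $e^{i j \alpha t}$; this gives isometry, hence (by the group law) unitarity. Strong continuity follows by expanding $f = \sum_j \hat f_j e_j$ and applying dominated convergence to $\|U^t f - f\|^2_{\hat{\mathcal K}_\tau} = \sum_j |e^{i j \alpha t} - 1|^2 |\hat f_j|^2 / \lambda_j$.

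For (ii), I set $\hat\Psi_\tau(\theta) = |\tilde k_\theta\rangle\langle\tilde k_\theta|$ with $\tilde k_\theta = k_\theta / \|k_\theta\|_{\hat{\mathcal K}_\tau}$. Smoothness and strict positivity of $\hat\kappa_\tau$ make $\theta \mapsto k_\theta$ a continuous injection into $\hat{\mathcal K}_\tau$, which yields continuity and injectivity of $\hat\Psi_\tau$ in the trace norm. The covariance $\hat\Psi_\tau \circ \Phi^t = \hat\Phi^t \circ \hat\Psi_\tau$ reduces to the identity $U^{t*} k_\theta = k_{\Phi^t(\theta)}$, itself an immediate consequence of translation invariance; pulling back via $\hat{\mathcal W}_\tau = \mathcal U \hat{\mathcal V}_\tau$ then transports everything into $\mathcal Q(\mathcal H)$. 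For (iii), I define $\hat\Omega_\tau(A)(\theta) = \langle \tilde k_\theta, A \tilde k_\theta\rangle_{\hat{\mathcal K}_\tau}$. Self-adjointness of $A$ gives real values, boundedness of $A$ gives $\|\hat\Omega_\tau(A)\|_\infty \leq \|A\|$, and feature-map covariance yields the intertwining $\hat\Omega_\tau \circ \tilde U^t = U^t \circ \hat\Omega_\tau$.

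The crux of (iv) is a one-line application of the reproducing property:
\begin{equation*}
\langle k_\theta, T_f k_\theta\rangle_{\hat{\mathcal K}_\tau} = (f k_\theta)(\theta) = f(\theta)\,\hat\kappa_\tau(\theta,\theta) = f(\theta)\,\|k_\theta\|_{\hat{\mathcal K}_\tau}^2,
\end{equation*}
so that $\hat\Omega_\tau(T_f)(\theta) = f(\theta)$; the two expectation-value identities of the theorem are restatements of this equality, the second being obtained by conjugation with $\hat{\mathcal W}_\tau$. Part (v) then combines the covariance of $\hat\Psi_\tau$ with the left-inverse relation, since $\hat\Omega_\tau(\tilde U^t T_f)(\theta) = f(\Phi^t(\theta))$ expresses the Heisenberg-evolved expectation in two equivalent ways. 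The main obstacle I anticipate is verifying the target space of $\hat\Omega_\tau$: the formula manifestly produces only an element of $C_{\mathbb R}(S^1)$, whereas the theorem asks for membership in $\hat{\mathcal K}_{\mathbb R,\tau}$. To close this gap I would pass to the Fourier basis and estimate the coefficients of $\theta \mapsto \langle\tilde k_\theta, A \tilde k_\theta\rangle$ in terms of the matrix coefficients $A_{jk} = \langle e_j, A e_k\rangle$ and the RKHA weights $\lambda_j$, using both the Banach-algebra property of $\hat{\mathcal K}_\tau$ (from Theorem~\ref{thmRKHS2}) and the rapid decay of $\lambda_j$ induced by $e^{-\tau\mathcal L^{1/2}}$; this is where I expect the bulk of the technical work to reside.
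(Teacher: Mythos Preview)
Your proposal is correct and follows essentially the same route as the paper: the paper defines $\hat\Psi_\tau(\theta)$ via the normalized feature vector, defines $\hat\Omega_\tau(A)(\theta)=\mathbb E_{\hat\Psi_\tau(\theta)}A$, and for part~(iv) expands the trace in the orthonormal basis $\{\hat\phi_{j,\tau}\}$ using the reproducing property (written as the evaluation functional $\mathbb V_\theta$) to arrive at exactly your identity; parts~(i)--(iii) and~(v) are handled by the paper by explicit reference to the analogous $L^2(\mu)$ arguments, just as you do. Your direct one-line use of the reproducing property for~(iv) is a cleaner packaging of the paper's basis expansion, but the content is the same.

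On your flagged concern about the codomain of $\hat\Omega_\tau$: the paper does \emph{not} supply the estimate you anticipate. It simply declares $\hat\Omega_\tau:\mathcal A(\hat{\mathcal K}_\tau)\to\hat{\mathcal K}_{\tau,\mathbb R}$ and refers to the $L^2$ proof as analogous, but the analogous argument (Proposition~5) only lands in $C_{\mathbb R}(S^1)$. So the gap you identify is real and is left open in the paper as well; your proposed Fourier-coefficient estimate using the exponential weights $e^{-|j|\tau}$ is the natural way to close it, and would strengthen the paper's treatment.
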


The plan of this paper is as follows. In Section~\ref{secClassical}, we outline aspects of the Koopman operator formalism for the circle rotation on $L^2(\mu)$, as well as the properties of the heat kernel and corresponding RKHSs. In Section~\ref{secGauge}, we describe the construction of our $\SO$ gauge theory on two-dimensional Minkowski space, and in Section~\ref{secQuantum} develop its quantum formulation and correspondence with the dynamics of the circle rotation. Together, these sections constitute a proof of Theorem~\ref{thmL2}. In Section~\ref{secCCO}, we describe the construction of canonically commuting ladder operators for the classical harmonic oscillator, proving Theorem~\ref{thmOps}.  In Section~\ref{secRKHA}, we present the Koopman operator formulation for the circle rotation on the RKHAs associated with fractional diffusions on the circle, as well as the classical--quantum correspondence associated with these spaces, proving Theorem~\ref{thmRKHS}. Concluding remarks are stated in Section~\ref{secConclusions}.  The paper contains three appendices with definitions and  technical results on fiber bundles (Appendix~\ref{appBundle}), RKHS theory (Appendix~\ref{appRKHS}),  and fractional derivatives (Appendix~\ref{appFractional}).          

\section{\label{secClassical}Ergodic dynamics of the classical harmonic oscillator}

In this section, we introduce the Koopman operator formalism for the harmonic oscillator, including recently proposed ladder-like operators acting on Koopman eigenspaces \cite{Mezic19}. In addition, we define and outline the basic properties of the heat kernel and corresponding RKHSs which will be useful in establishing our classical--quantum correspondence. Additional details on Koopman operator theory can be found in one of the many references in the literature, e.g., Refs.~\cite{Baladi00,EisnerEtAl15}. Aspects of RKHS theory can also be found in many references, e.g., \cite{SriperumbudurEtAl11,FerreiraMenegatto13,PaulsenRaghupathi16}. Appendix~\ref{appRKHSBasic} summarizes basic definitions and results from RKHS theory that are pertinent to this work. 

\subsection{\label{secCircle}Koopman operator formalism}

We represent the dynamics of a classical harmonic oscillator of angular frequency $ \alpha \in \mathbb{R} $ by a translation map $ \Phi^t: S^1 \to S^1$, $ t \in \mathbb{R}$, on the circle $ S^1$, viz.
\begin{displaymath}
    \Phi^t( \theta ) = \theta + \alpha t \mod 2 \pi.
\end{displaymath}
As a concrete example, in Hamiltonian mechanics, $S^1$ would be a subset of the cotangent bundle $T^* \mathbb{R} \simeq \mathbb{R}^2$ representing a constant-energy surface in position--momentum coordinates, but in general we can consider $S^1 $ as the state space of a classical dynamical system without reference to an embedding. As is well known, $ \Phi^t$ has a unique Borel, ergodic, invariant probability measure $ \mu $ (i.e., a Haar measure) corresponding to a normalized arclength. 

We use the notations $C^r(S^1)$, $r \in \mathbb{N}_0$, and $L^p(\mu)$, $ p \geq 1 $, to represent the Banach spaces of complex-valued functions on $S^1$ with $r$ continuous derivatives, and (equivalence classes of) of complex-valued functions with $\mu$-integrable $p$-th power, equipped with the standard norms, respectively. In the Hilbert space case, $L^2(\mu)$, we also use the notations $ \langle f, g \rangle_\mu = \int_{S^1} f^* g \, d\mu$ and $ \lVert f \rVert_\mu = \sqrt{\langle f, f \rangle}_\mu$ for the corresponding inner product and norm, respectively. Further, we abbreviate $C^0(S^1)$ by $C(S^1)$. We shall refer to elements of $C^r(S^1)$ and $L^p(\mu)$ as classical observables.

In this setting, the Koopman group of evolution operators is the strongly continuous group of isometries $U^t : \mathcal{E} \to \mathcal{E} $, where $\mathcal{E}$ stands for either $C(S^1)$ or $L^p(\mu)$,  acting on classical observables by composition with the dynamical flow map, i.e.,
\begin{equation}
    \label{eqKoop}
    U^t f = f \circ \Phi^t, \quad t \in \mathbb{R}.
\end{equation}
We will mainly focus on the Koopman group on $L^2(\mu)$, which in addition to being isometric is unitary, $U^{t*} = U^{t^{-1}} = U^{-t}$. By Stone's theorem on strongly continuous unitary evolution groups \cite{Stone32}, $ U^t : L^2(\mu) \to L^2(\mu)$ is completely characterized by its \emph{generator}; the latter, is a skew-adjoint, unbounded operator $V: D(V) \to L^2(\mu) $ with a dense domain $D(V) \subset L^2(\mu)$, acting on elements in its domain according to the formula
\begin{displaymath}
    V f = \lim_{t \to 0} \frac{U^t f - f}{ t},
\end{displaymath}
and generating the Koopman operator at any time $t \in \mathbb{R} $ by operator exponentiation,
\begin{displaymath}
    U^t = e^{tV}.
\end{displaymath}
Ter Elst and Lema\'nczyk \cite{TerElstLemanczyk17} have recently shown that the space $D(V)\cap L^\infty(\mu)$ is a Koopman-invariant algebra on which $V$ acts as a derivation. That is, for any two elements $f,g \in D(V) \cap L^\infty(\mu) $, the pointwise product $fg$ also lies in this space, and the generator satisfies a Leibniz rule,
\begin{equation}
    \label{eqLeibniz}
    V(fg ) = (V f ) g + f ( V g ).
\end{equation}
This is a manifestation of the fact that $V $ behaves as a differential operator on classical observables, which can be viewed as an extension of the vector field $ \vec V$ on $S^1 $ generating the flow $ \Phi^t $, and acting on continuously differentiable functions as a derivative operator, $ \vec V f = \alpha f' $. 

Consider now the continuous dual space $C'(S^1)$ to $C(S^1)$, equipped with the standard (operator) norm. It is a standard result that $C'(S^1)$ can be canonically identified with the Banach space $\mathcal{M}(S^1)$ of complex Radon measures on $S^1$, equipped with the total variation (TV) norm, through a linear isometry mapping $ m \in \mathcal{M}(S^1)$ to $ J_m \in C'(S^1)$ with $ J_m f = \int_{S^1} f \, dm$. The dynamics acts on $\mathcal{M}(S^1)$ through a group of isometries, $\Phi^t_* : \mathcal{M}(S^1) \to \mathcal{M}(S^1)$, $ \Phi^t_* m = m \circ \Phi^{-t}$, known as \emph{Perron-Frobenius} or \emph{transfer} operators. Under the $C'(S^1) \simeq \mathcal{M}(S^1)$ isomorphism, $ \Phi^t_*$ is identified with the transpose of the Koopman operator, $U^{t\prime}: C'(S^1) \to C'(S^1)$, $U^{t\prime} \varphi = \varphi \circ U^t $; specifically, $ U^{t\prime} \varphi_m = \varphi_{\Phi^t_* m} $. It can be readily verified that if $m \in \mathcal{M}(S^1)$ has a density $ \rho = dm/d\mu \in \mathcal{E}$ relative to the invariant measure (where again $\mathcal E $ stands for either $C(S^1)$ or $L^p(\mu)$), then $ \Phi^t_*m$ has density $U^{-t} \rho$. In particular, for $ \rho \in L^2(\mu)$, we have $U^{-t} \rho = U^{t*} \rho$, so we may identify the transfer operator with the adjoint of the Koopman operator. 

In what follows, we shall be concerned with the action of both the Koopman and transfer operators on spaces of functions and measures on the circle, respectively, as well as their induced action on operator algebras and their duals. We will continue to overload notation and employ the same symbol $U^t$ to represent the Koopman operator acting on any of the $C^r(S^1)$ and $L^p(\mu)$ spaces (as well as the RKHSs introduced below), as the particular instance will be clear from the context. In addition, we will sometimes consider spaces of real-valued continuous functions, their duals, and real Radon measures, which we will distinguish using the symbols $C_{\mathbb R}(S^1)$, $C'_{\mathbb{R}}(S^1)$, and $\mathcal M_{\mathbb R}(S^1)$, respectively. We also let $ \mathcal P (S^1) \subset \mathcal M_{\mathbb R}(S^1)$ be the space of Radon probability measures on the circle, equipped with the TV norm topology. Koopman and transfer operators are then defined on these spaces  analogously to the complex case, and we again use the symbols $U^t$, $U^{t\prime}$, and $ \Phi^t_*$, respectively, to represent them.  We will oftentimes refer to any such instance of $U^t $ and $U^{t\prime}$/$\Phi^t_*$ as ``the'' Koopman or transfer operator, respectively.

Before carrying on, we note a mathematical correspondence between the operator-theoretic description of a measure-preserving, classical dynamical system, such as the harmonic oscillator studied here, and quantum mechanics. That is, $-iV $ is a self-adjoint operator analogous to the Hamiltonian $H$ of a quantum system, and the Koopman operator $U^t$ on $L^2(\mu)$ is a unitary operator analogous to the Heisenberg evolution operator $e^{it H}$ generated by $H$. 

\subsection{\label{secKoopEig}Koopman eigenfunctions}

Next, we turn to the spectral characterization of the unitary Koopman group on $L^2(\mu)$ associated with the classical harmonic oscillator. As can be readily verified, there is a smooth orthonormal basis $ \{ \phi_j \}_{j=-\infty}^\infty$ of $L^2(\mu) $ consisting of Koopman eigenfunctions, satisfying the equation
\begin{displaymath}
    V \phi_j = i \alpha_j \phi_j,
\end{displaymath}
where $ \alpha_j = j \alpha $ is an eigenfrequency equal to an integer multiple of the oscillator's frequency, and $\phi_j( \theta ) = e^{i j \theta}$ is equal to a Fourier function. Moreover, each $ \phi_j $ is also an eigenfunction of the Koopman operator $U^t $ corresponding to the eigenvalue $e^{i\alpha t}$. The latter implies that we can compute the dynamical evolution of any classical observable $f  \in L^2(\mu)$ by first expanding it in the Koopman eigenfunction basis, $ f = \sum_{j=-\infty}^{\infty} c_j \phi_j$, and then computing
\begin{displaymath}
    U^t f = \sum_{j=-\infty}^\infty e^{i j \alpha t } c_j \phi_j,
\end{displaymath}
where the infinite sum over $ j $ converges in $L^2(\mu)$ norm.  As with any measure-preserving, continuous-time dynamical system on a manifold, the Koopman eigenfrequencies and eigenfunctions have an important algebraic group structure (which can be verified from the Leibniz rule in~\eqref{eqLeibniz}), namely, they are closed under addition and multiplication, respectively, i.e., 
\begin{displaymath}
    \alpha_j + \alpha_k = \alpha_{j+k}, \quad \phi_j \phi_k = \phi_{j+k}, \quad \forall j,k \in \mathbb{Z}.
\end{displaymath}
For completeness, we note that the existence of a complete basis of Koopman eigenfunctions is not generic to arbitrary measure-preserving, ergodic dynamical systems, and in particular systems exhibiting mixing (chaotic) behavior have a non-empty continuous Koopman spectrum with an associated subspace of $L^2(\mu)$ which does not admit a Koopman eigenfunction basis \cite{Mezic05}. 

\subsection{\label{secKoopLadder}Ladder-like operators}

We now draw another analogy between the operator-theoretic description of the classical harmonic oscillator and quantum mechanics (to our knowledge, first pointed out by Mezi\'c \cite{Mezic19}), pertaining to ladder operators \cite{Sakurai93}. In particular, consider the bounded operator $L : L^2(\mu) \to L^2(\mu) $ which multiplies by the Koopman eigenfunction $ \phi_{-1}$, i.e., $ L f = \phi_{-1} f $.
A direct calculation yields
\begin{equation}
    \label{eqLComm}
    [ V, L] = - i \alpha L
\end{equation}
on $D(V)$, which is analogous to the commutation relation between the Hamiltonian and lowering operator of the quantum harmonic oscillator, modulo the presence of the imaginary number $i$ in the right-hand side due to skew-adjointness of $V$. Similarly, we have
\begin{equation}
    \label{eqLStarComm}
    [ V, L^*] = i \alpha L^*,
\end{equation}
so that $L^*$ behaves analogously to the raising operator in the context of the quantum harmonic oscillator. Stated explicitly, the last two equations imply 
\begin{equation}
    \label{eqLadderL}
    L \phi_j = \phi_{j-1}, \quad L^* \phi_j = \phi_{j+1}, 
\end{equation}
respectively, which shows that $L$ ($L^*$) lowers (raises) the eigenfrequency of $ \phi_j $ by a unit of $\alpha$. 

Despite these similarities with ladder operators of quantum harmonic oscillators, it is important to note that there is no analog of the ground state in this picture, and that $ L\phi_j$ and $ L^* \phi_j $ are equal to $\phi_{j-1}$ and $\phi_{j+1}$, respectively, without the presence of $j$-dependent multiplication factors (cf.~\eqref{eqLadderPsi} ahead). The first of these facts underpins the correspondence put forward in Section~\ref{secIso} between the classical harmonic oscillator and a \emph{relativistic} oscillator, which naturally supports negative-energy states. The latter fact implies that $L$ and $L^*$ are bounded operators (unlike the ladder operators of the quantum harmonic oscillator), and that $L^* L$ is not a number operator. In fact, it follows directly from~\eqref{eqLadderL} that $L$ is a unitary operator, and therefore 
\begin{equation}
    L^*L = LL^* = \Id. 
    \label{eqLadderLId}
\end{equation}

In Section~\ref{secCCO} we will employ the correspondence established in Section~\ref{secIso} to construct operators that, besides satisfying the commutation relations in~\eqref{eqLComm} and~\eqref{eqLStarComm}, they also give rise to a number operator satisfying the appropriate commutation relations. These operators will turn out to be fractional differentiation operators of order $1/2$. 

\subsection{\label{secHeatKernel}Heat kernel and the associated reproducing kernel Hilbert spaces (RKHSs)}

Let $ \tilde{\mathcal{L}} : C^\infty(S^1) \to L^2(\mu)$, be the canonical Laplace-Beltrami operator on the circle, defined here as the operator on $L^2(\mu)$ with dense domain $C^\infty(S^1)$ such that $ \tilde{\mathcal L} f =  -f'' $. It is a standard result from analysis on manifolds \cite{Rosenberg97} that $ \tilde{\mathcal L} $ is a positive-semidefinite, essentially self-adjoint operator, having the Fourier functions $ \phi_j $ as its eigenfunctions,
\begin{displaymath}
    \tilde{\mathcal L} \phi_j = j^2 \phi_j, \quad j \in \mathbb Z.
\end{displaymath}
The (unique) self-adjoint extension $ \mathcal{ L } : D(\mathcal L) \to L^2(\mu)$ of $ \tilde{\mathcal L}$, whose domain $D(\mathcal L)$ is an order-2 Sobolev space, generates a Markov diffusion semigroup $\{ e^{-\tau \mathcal L } \}_{\tau \geq 0}$ on $L^2(\mu)$, called \emph{heat semigroup}, such that, for any $ \tau > 0$,
\begin{displaymath}
    e^{-\tau \mathcal L} f = \int_{S^1} \kappa_{\tau}(\cdot, \theta') f(\theta') \, d\mu(\theta').
\end{displaymath}
Here, $ \kappa_\tau : S^1 \times S^1 \to \mathbb{R}_+$ is the time-$\tau$ \emph{heat kernel} on the circle---the smooth, strictly positive, bivariate function given by 
\begin{equation}
    \begin{aligned}
        \kappa_\tau(\theta,\theta') &= \sum_{j=-\infty}^{\infty} e^{- j^2 \tau} \phi^*_j(\theta) \phi_j(\theta') \\
        & = 1 + \sum_{j=1}^\infty e^{-j^2\tau} \cos(j(\theta-\theta'))\\
        &= \sqrt{4 \pi \tau} \sum_{j=-\infty}^\infty e^{-\tau(\theta-\theta' - 2 j\pi)^2}, 
    \end{aligned}
    \label{eqHeatKernel}
\end{equation}
where the sums over $j \in \mathbb{Z}$ converge in any $C^r(S^1)$ norm, $r \in \mathbb{N}_0$. A continuous kernel admitting a uniformly convergent eigenfunction expansion as in the first line of~\eqref{eqHeatKernel} is known as a \emph{Mercer kernel}. 

The heat kernel on the circle has the property of being \emph{translation invariant}, i.e., $\kappa_{\tau}(\theta,\theta')$ depends only on the arclength distance between $\theta$ and $\theta'$. As a result, since $ \Phi^t$ preserves arclength distances, $\kappa_\tau$ is also shift-invariant under the circle rotation, $\kappa_{\tau}( \Phi^t(\theta),\Phi^t(\theta')) = \kappa_{\tau}(\theta,\theta')$ for all $ t \in \mathbb{R}$ and $\theta,\theta' \in S^1$. In addition, $\kappa_\tau$ is \emph{strictly positive-definite}, meaning that for any collection $\theta_1, \ldots, \theta_N$ of distinct points in $S^1$, the $N\times N$ kernel matrix $\bm K = [ \kappa_\tau(\theta_i,\theta_j) ]_{ij}$ is strictly positive. 

Associated with the heat kernel $\kappa_\tau$ is a \emph{reproducing kernel Hilbert space (RKHS)} of complex-valued functions on $S^1$; that is, a Hilbert space $ \mathcal{K}_\tau  $ with inner product $ \langle \cdot, \cdot \rangle_{\mathcal{K}_\tau}$ and corresponding norm $ \lVert \cdot \rVert_{\mathcal{K}_\tau}$, such that, for every $ \theta \in S^1$, (i) the kernel section $ \kappa_\tau(\theta,\cdot) $ lies in $ \mathcal{K}_\tau$; and (ii) the evaluation functional $ \mathbb V_\theta : \mathcal{K}_\tau \to \mathbb{C}$, $\mathbb V_\theta f = f(\theta)$ is bounded (and thus, continuous), and satisfies $ \mathbb V_\theta f = \langle \kappa_{\tau}(\theta,\cdot), f \rangle_{\mathcal{K}_\tau}$. The latter, is known as the \emph{reproducing property}, and leads to the inner product relationships $ \langle \kappa_\tau(\theta,\cdot),\kappa_{\tau}(\theta',\cdot) \rangle_{\mathcal{K}_\tau}= \kappa_{\tau}(\theta,\theta')$ for the kernel sections at any $\theta,\theta' \in S^1$.  

It can be shown, e.g., using results in Refs.~\cite{SriperumbudurEtAl11,FerreiraMenegatto13}, that the $\mathcal{K}_\tau$ are subspaces of $C^\infty(S^1)$, forming an increasing sequence as $ \tau $ decreases to 0. Moreover, for every $\tau > 0$,  $\mathcal{K}_\tau $ is a dense subspace of $C(S^1)$, and for any $ r \in \mathbb{N}_0$, the inclusion $ \mathcal{K}_\tau \hookrightarrow C^r(S^1) $ (and thus $\mathcal K_\tau \hookrightarrow L^p(\mu)$) is bounded. It is a consequence of the continuity of $ \kappa_\tau$ and density of $\mathcal K_\tau$ in $C(S^1)$ that $\mathbb K_\tau : \mathcal P(S^1) \to \mathcal K_\tau$, with
\begin{displaymath}
    \mathbb K_\tau(m) = \int_{S^1} \kappa_\tau(\theta,\cdot) \, dm(\theta),
\end{displaymath}
is an injective, continuous map from Radon probability measures to RKHS functions on the circle. Such a map is oftentimes referred to as an \emph{RKHS embedding} of probability measures. Now, as can be verified by routine calculations, the map $ \delta : S^1 \to \mathcal P(S^1)$ sending $\theta \in S^1$ to the Dirac measure $\delta_\theta =: \delta(\theta) $ supported at $\theta$ is also injective and continuous. As a result, the  so-called \emph{feature map}, $ F_{\tau} : S^1 \to \mathcal{K}_\tau$, 
\begin{equation}
    F_\tau = \mathbb K_\tau \circ \delta, \quad F_\tau(\theta) = \kappa_\tau(\theta,\cdot),
    \label{eqFeatureMap}
\end{equation}
is an injective, continuous map, mapping points on the circle to the RKHS functions given by the corresponding kernel sections (also known as \emph{feature vectors}). It also follows from the strict positive-definiteness of $\kappa_\tau$ that   $F_\tau(\theta)$ and $F_\tau(\theta')$ are linearly independent whenever $\theta$ and $\theta' $ are distinct, so that the image $\mathcal F(\mathcal K_\tau ) := F_\tau(S^1) \subset \mathcal K_\tau$ contains only nonzero functions. See Lemmas~\ref{lemFeature} and~\ref{lemUniversal} in Appendix~\ref{appRKHSBasic} for further details.  

A useful correspondence between the Hilbert spaces $\mathcal K_\tau$ and $L^2(\mu)$ is provided by the integral operators $K_\tau : L^2(\mu) \to \mathcal K_\tau$, 
\begin{displaymath}
    K_\tau f = \int_{S^1} \kappa_\tau(\cdot,\theta') f(\theta') \, d\mu(\theta'),
\end{displaymath}
which can be shown to be well-defined, compact integral operators with dense range for any $\tau >0$. Moreover, the adjoint $ K_\tau^* $ maps injectively $f \in \mathcal{K}_\tau \subset C^\infty(S^1) $ to its corresponding $L^2(\mu)$ equivalence class, so that $K^*_\tau : \mathcal{K}_\tau \hookrightarrow L^2(\mu)$ is a compact embedding. It then follows that $ e^{-\tau \mathcal L} = K_\tau^* K_\tau$, and that $ \{ \phi_{j,\tau}  \}_{j\in \mathbb{Z}}$ with 
\begin{displaymath}
    \phi_{j,\tau}= e^{j^2 \tau / 2} K_\tau \phi_j = e^{-j^2 \tau / 2} \phi_j   
\end{displaymath}
is an orthonormal basis of $\mathcal{K}_\tau$. As a result, we can characterize $\mathcal K_\tau$ as the space of smooth functions $f: S^1 \to \mathbb C$ admitting (uniformly convergent) Fourier expansions $ f(\theta) = \sum_{j=-\infty}^\infty \hat f_j \phi_j(\theta) $, $ \hat f_j \in \mathbb{C}$, satisfying
\begin{equation}
    \label{eqRKHSDecay}
    \sum_{j=-\infty}^\infty e^{j^2\tau} \lvert \hat f_j \rvert^2<\infty.
\end{equation}

It is worthwhile noting that, besides $K^*_\tau$, another natural way of mapping functions in $\mathcal K_\tau$ to $L^2(\mu)$ elements is through the unitary operator $\mathcal V_\tau : \mathcal K_\tau \to L^2(\mu)$ defined uniquely by $\mathcal V_\tau \phi_{j,\tau} = \phi_j$, for all $ j \in \mathbb Z$. This operator can also be defined in a basis-free manner through the polar decomposition of the integral operator $K_\tau$. Specifically, $\mathcal V_\tau$ is the unique operator from $\mathcal K_\tau $ to $L^2(\mu)$ such that 
\begin{equation}
    \label{eqPolarDecomp}
    K_\tau = \mathcal V_\tau^{*} e^{-\tau \mathcal L / 2}.
\end{equation}

\section{\label{secGauge}Gauge field theory}

In this section, we describe the construction of the  gauge field theory into which we will embed the classical harmonic oscillator in Section~\ref{secQuantum}. We will follow a geometric approach \cite{BerlineEtAl04,Michor08,RudolphSchmidt17}, where the two key objects are a principal bundle of the proper, orthochronous Lorentz group over two-dimensional Minkowski space and an associated $ \mathbb{C}$-line bundle. The gauge field is then understood as a connection 1-form on the principal bundle, inducing a covariant derivative acting on sections of the associated bundle. The space of sections is also endowed with a Hilbert space structure, providing the foundation for defining  quantum mechanical observables as self-adjoint operators. Among these, the Hamiltonian operator generates the quantum dynamics of the theory, and will take the form of a connection Laplacian when expressed in terms of the covariant derivative.   

In what follows, we will present the steps of this construction sequentially, starting from the basic properties of Minkowski space, and adding the necessary structure to arrive at the connection 1-form and its associated covariant derivatives and curvature. Readers familiar with gauge theory may wish to skip to Section~\ref{secConn}, where these objects are explicitly defined. Auxiliary results, as well as an overview of basic concepts from fiber bundle theory, are included in Appendix~\ref{appBundle}. We refer the reader to one of the many references in the literature (e.g., \cite{BerlineEtAl04,Michor08,RudolphSchmidt17}) for detailed expositions on these topics.

\subsection{Notation}
We will use the notation $  {E} \xrightarrow{\pi}  {M} $ to represent a smooth fiber bundle with base space $ {M}$, total space $ {E}$, and projection map $ \pi:  {E} \to {M}$. Moreover, $ \Gamma( {E})$ will denote the space of smooth sections of $ {E}\xrightarrow{\pi}  {M}$. If $  {E} \xrightarrow{\pi}  {M}$ is a vector bundle with real fibers,  $\Omega^k( {M}, {E}) $ will be the space of sections of the vector bundle over $ {M}$ with total space $\bigwedge^k T^*  {M} \otimes_{\mathbb{R}}  {E}$, where $ \bigwedge^k T^* {M} $ is the $k$-th exterior power of the cotangent bundle $T^* {M}$, and $\otimes_{\mathbb{R}}$ the tensor product of real vector bundles. If $ F$ is a vector space over the real numbers, $C^\infty(M, F) $ and $ \Omega^k ( {M},F)$ will denote the space of smooth $F$-valued functions and $k$-forms on $ {M}$, respectively. Note that $C^\infty(M,F) = \Omega^0(M,F)$ and $\Omega^k( {M},F) = \Omega^k( {M}, M \times F)$, where we view $M\times F$ as the total space of the trivial bundle $ M \times F \xrightarrow{\pi} M $ with $\pi $ the canonical projection map onto the first factor. We will abbreviate $ \Omega^k( {M}, \mathbb{R})$ by $ \Omega^k( {M})$. If $  {E} \xrightarrow{\pi}  {M}$ has complex fibers and $ F$ is a vector space over the complex numbers, $ \Omega^k( {M}, {E})$ and $\Omega^k( {M}, F)$ are defined analogously to the real case, replacing $T^* {M}$ by the complexified cotangent bundle, $T_{\mathbb C}^*M$ and $ \otimes_{\mathbb{R}} $ by the tensor product $\otimes_{\mathbb C}$ of vector bundles over the complex numbers. We will use $\Gamma_c( {E}) \subset \Gamma( {E}) $ to denote the space of compactly supported sections of $ {E} \xrightarrow{\pi}  {M}$. When convenient, we will use subscript notation to represent pointwise evaluation of sections; e.g., for a section $ s \in \Gamma(E)$ we set $s_m = s(m)$.  

\subsection{\label{secMinkowski}Minkowski space}

We are interested in constructing a gauge theory over two-dimensional \emph{Minkowski space}, which has the structure of a metric affine space $ (M, \vec M, \eta)$.  Here, $M$ is a two-dimensional manifold, whose points represent \emph{events}, $\vec M$ is a two-dimensional vector space over the real numbers, whose elements represent \emph{translations}, acting on $M$ freely and transitively as an Abelian group, and $\eta : \vec M \times \vec M \to \mathbb{R}$ is a symmetric, non-degenerate bilinear form with signature $ (-,+) $. In addition, $ \vec M $  is equipped with a distinguished vector $ \vec\tau \in \vec M $ with $\eta(\vec \tau, \vec \tau ) < 0 $, providing a notion of \emph{future direction}.

Given any point $ m \in M $, the tangent space $T_m M $ can be canonically identified with $ \vec M $ (through identification of curves), and thus inherits a pseudo-Riemannian metric tensor $\eta_m : T_m M \times T_m M \to \mathbb{R} $ from $ \eta $, called \emph{Minkowski metric}. Moreover, for every point $o \in M$ and basis $ \{ \vec X_0, \vec X_1 \} $ of $ \vec M $, there exists a \emph{Cartesian coordinate chart}, $ x: M \to \mathbb{R}^2$, such that $ x(m) = (x^0,x^1) $, where $  x^0 \vec X_0 + x^1 \vec X_1 = \vec v $, and $ \vec v $ is the unique element of $ \vec M $ such that $ m = o + \vec v $. Here, the notation $ o + \vec v $ represents translation of the point $ o \in M $ by the vector $ \vec v \in \vec M $. 

Given two points $m_1, m_2 \in M$, we will let $\overrightarrow{m_1 m_2}$ be the unique translation in $ \vec M $ such that $ m_2 = m_1 + \overrightarrow{m_1 m_2}$. Moreover, we will denote the ``inverse metric'' to $ \eta $, acting on dual vectors, by $ \eta' $, and use the same symbol to represent the canonical lift of $\eta'$ to 2-forms. We also let ${}^\flat : \vec M \to \vec M'$ and $ {}^\sharp : \vec M' \to \vec M $ be the Riemannian isomorphisms between vectors and dual vectors, where $ \vec M' $ is the dual space to $\vec M$, $ \vec v^\flat = \eta( \vec v, \cdot ) $, and $ w^\sharp = \eta'( w,\cdot)$.  

A map $ a : M \to M $ is said to be \emph{affine} if there exists $ A \in \text{GL}(\vec M)$ such that for all $ m \in M$ and $ \vec v \in \vec M $, $ a( m + \vec v ) = a(m) + A \vec v $. The linear map $ A $ may then be identified with the pushforward map $ a_* $ on tangent vectors.  The set of all affine maps $ a $ on $M$ preserving $\eta $, i.e., $ \eta(\vec v_1, \vec v_2) = \eta(A\vec v_1, A\vec v_2) $, for all $\vec v_1, \vec v_2 \in \vec M$, forms a group under composition of maps, called \emph{Poincar\'e group}. Note that the Poincar\'e group contains the translations $\vec M $ as a subgroup, since for every $ \vec v \in \vec M $ the map $a : M \to M $ with $ a(m) = m + \vec v $ is affine with $ A = \Id $. In what follows, given a linear map $ A $ on $ \vec M $, $ A^* $ will denote its adjoint with respect to $ \eta $, i.e., the unique linear map satisfying $ \eta( \vec v_1, A^* \vec v_2 ) = \eta( A \vec v_1, \vec v_2 ) $.

Next, let $ \nu \in \Omega^2(M)$ be the volume form associated with $ \eta $. Together, $ \eta $ and $ \nu $ induce a \emph{Hodge star operator} on forms, $ \star : \Omega^k(M) \to \Omega^{2-k}(M)$, defined uniquely through the requirement that $ v \wedge \star w = \eta'( v, w ) \nu $ for all $ v, w \in \Omega^k(M)$. The Hodge star operator induces in turn a map $^\perp : \Gamma(TM) \to \Gamma(TM) $ on vector fields, such that 
\begin{equation}
    \label{eqPerp}
    X^\perp f = (\star df) X, \quad  \forall f \in C^\infty(M). 
\end{equation}
Intuitively, $X^\perp$ can be thought of as a directional derivative in a perpendicular direction to $ X $, obtained by a positive (``anticlockwise'') rotation with respect to the orientation induced by $ \nu$. One can readily verify that $^\perp $ is compatible with the Poincar\'e group; that is, for every affine map $ a : M \to M $ and vector field $ X \in \Gamma(TM) $, $(a_* X)^\perp = a_*( X^\perp)$, where $ a_* : \Gamma(TM) \to \Gamma(TM)$ is the pushforward map on vector fields associated with $ a$.  It should be noted that the fact that $M$ is two-dimensional is important in the definition of $^\perp$, and thus will also be important in our definition of a connection 1-form in Section~\ref{secConn} utilizing this map. In what follows, $L^2(\nu)$ will denote the Hilbert space of (equivalence classes of) complex-valued functions on $M$, square-integrable with respect to $ \nu $. This space is equipped with the inner product $ \langle f_1, f_2 \rangle_\nu = \int_M f_1^* f_2 \, d\nu$ and the corresponding norm $\lVert f \rVert_\nu = \sqrt{\langle f, f \rangle_\nu}$.

Due to the existence of global Cartesian coordinate charts, $M $ is diffeomorphic to $ \mathbb{R}^2 $, but note that the diffeomorphism is not canonical as it depends both on the choice of the point $ o $, which can be thought of as an \emph{origin}, and the basis $ \{ \vec X_0, \vec X_1 \} $. Nonzero vectors $ \vec v \in \vec M$ for which $ \eta( \vec v, \vec v ) $ is negative, zero, or positive will be said to be \emph{timelike}, \emph{null}, or \emph{spacelike}, respectively. Two timelike vectors $\vec v_1$ and $\vec v_2$ with $ \eta( \vec v_1, \vec v_2 ) < 0 $ are said to be \emph{co-oriented}.  Timelike, null, spacelike, and co-oriented elements of the tangent spaces $T_m M$ are defined analogously. 

Hereafter, we will let $x$ be a Cartesian chart with origin $ o $ and basis $ \{ \vec X_0, \vec X_1 \} $, and further require that $ \{ \vec X_0, \vec X_1 \} $  be orthonormal, positive-oriented, and has $ \vec X_0 $ future-directed, i.e., $ \eta( \vec X_0, \vec X_0 ) = - 1$, $ \eta(\vec X_1, \vec X_1 ) = 1$, $ \eta( \vec X_0, \vec X_1 ) = 0$, $ \nu( \vec X_0, \vec X_1) = 1 $, and $ \eta( \vec \tau, \vec X_0 ) < 0 $. We will denote the dual basis vectors to $ \vec X_i $ by $ \hat X^i$, where $ \vec X^i \cdot \vec X_j = {\delta^i}_j $.  For convenience, we will abbreviate the coordinate vector fields $ \frac{\partial\ }{\partial x^0} $ and $ \frac{\partial\ }{\partial x^1} $ on $M$ by $ X_0$ and $ X_1 $, respectively. Note that because $\eta$ does not have $(+,+)$ signature, the dual basis vectors are not all equal to their Riemannian duals; in particular, $\hat X^0 = - X_0^\flat$ and $\hat X^1 = X_1^\flat$.  We also note the relationships $ X^\perp_0 = - X_1 $ and $ X^\perp_1 = - X_0 $, which indicate that $ \{ X_{0,m}^\perp, X_{1,m}^\perp \} $ is a negatively-oriented, past-directed, orthonormal basis of $T_mM$ at every $m \in M$. For brevity, we shall refer to any Cartesian chart with the properties listed above as an \emph{inertial chart with origin $o$}. Moreover, when there is no risk of confusion about the choice of origin, we will simply refer to $ x$ as an \emph{inertial chart}.  

\subsection{Lorentz group}

In what follows, we will construct principal and associated bundles over $M$ having as their structure group the proper, orthochronous \emph{Lorentz group}, defined as the subgroup $ G \subset \text{GL}(\vec M )$, whose every element $ \Lambda $ satisfies the conditions (i) $ \eta( \Lambda \vec v_1, \Lambda \vec v_2 ) = \eta( \vec v_1, \vec v_2 )  $, for all $ \vec v_1, \vec v_2 \in \vec M $; (ii) $ \det \Lambda = 1 $; $ \eta( \vec \tau, \Lambda \vec \tau ) < 0 $. The group $G$ is a one-dimensional, connected, Abelian Lie group, and is isomorphic to the matrix group $\SO$, consisting of the real $2 \times 2$ matrices 
\begin{displaymath}
    \bm \Lambda_\theta = 
    \begin{pmatrix}
        \cosh \theta & \sinh \theta \\
        \sinh \theta & \cosh \theta
    \end{pmatrix}, 
    \quad \theta \in \mathbb{R}.
\end{displaymath}
Specifically, given an inertial chart $x : M \to \mathbb R^2$, each element  $ \Lambda \in G$ can be smoothly identified with a matrix $ \bm \Lambda_\theta \in \SO$, with elements $ \hat X^i \cdot \Lambda \vec X_j $. Because $G$ is Abelian, this identification does not depend on the choice of inertial chart $x$, leading to a canonical global coordinate chart $ \vartheta : G \to \mathbb{R}$ such that $ \vartheta(\Lambda) = \theta$ if $ \Lambda$ is identified with $\bm \Lambda_\theta$. We denote the corresponding coordinate basis vector fields on $G$ by $\Theta = \frac{\partial\ }{\partial \vartheta}$. The linear transformations $ \vec x \mapsto \bm \Lambda_\theta \vec x $ on $\mathbb{R}^2$ carried out by $\SO$ are known as \emph{squeeze mappings}, or \emph{hyperbolic rotations}. We shall denote the identity element of $G $ by $ I$. The action of $ G $ on the translation $\vec M $ canonically extends to a linear action on the tangent space $T_m M $ at every $ m \in M $, which we will also denote by $ \Lambda $.    

The Lie algebra of $ G $, denoted by $ \mathfrak{g} $, is isomorphic to the Lie algebra $ \so $ of $ \SO$; the latter, consists of the set of symmetric, real $ 2 \times 2 $ matrices
\begin{displaymath}
    \bm \lambda_\theta = 
    \begin{pmatrix}
        0 & \theta \\
        \theta & 0 
    \end{pmatrix},
    \quad \theta \in \mathbb{R},
\end{displaymath}
where $ \exp(\bm \lambda_\theta) = \bm \Lambda_\theta$. In particular, if $ \lambda \in \mathfrak{g} $ is such that $ \Lambda = \exp(\lambda )$ and $ \vartheta(\Lambda) = \theta$, then $ \lambda $ is identified with $ \bm \lambda_\theta $. Equivalently, $ \lambda $ is equal to $ \theta u$, where $ u $ is the basis vector of $ \mathfrak{g} $ equal to $ \Theta_I $, and we can also write $\theta = u'( \lambda )$, where $u'= d \vartheta_I $. By virtue of these facts, the $\vartheta$ chart has the property
\begin{equation}
    \label{eqThetaChart}\vartheta(\Lambda \Lambda') = \vartheta(\Lambda) + \vartheta(\Lambda'), \quad \forall \Lambda,\Lambda' \in G.
\end{equation}
We equip $ \mathfrak{g} $ with a metric $ b : \mathfrak{g} \times \mathfrak{g} \to \mathbb{R} $ inherited from the coordinate chart $ \vartheta $; specifically, $ b(u,u) = 1$. Note that this metric is canonical, in the sense of being independent of the choice of inertial chart in the construction of  $\vartheta$.

As can be seen directly from~\eqref{eqThetaChart},  $G$ and $\mathfrak g$ are isomorphic as a Lie group and a Lie algebra to $(\mathbb{R},+)$ and $(\mathbb{R},\cdot,+)$, respectively, i.e., the Abelian group and Lie algebra of real numbers equipped with the standard addition and multiplication operations. As a result, we may identify $G$ with the universal covering group $\UU$ of unitary maps on the complex plane.

We denote the left and right multiplication map by $ \Lambda \in G $ by $ L^\Lambda : G \to G $ and $R^\Lambda : G \to G $, respectively; that is, $ L^\Lambda \Lambda' = \Lambda \Lambda' $ and $ R^\Lambda \Lambda' = \Lambda' \Lambda $, for all $ \Lambda' \in G$. Of course, since $ G $ is Abelian, the left and right multiplication maps coincide, $ L^\Lambda \Lambda' = R^\Lambda \Lambda' $, but we prefer to keep these notions distinct so as to better delineate the correspondence between the analysis that follows with analogous constructions in the non-Abelian setting. It should be noted that a fundamental difference between Abelian and non-Abelian groups is that in the former case the adjoint map $ \AD_\Lambda : G \to G$ trivially reduces to the identity for all $ \Lambda \in G$, i.e., $ \AD_\Lambda \Lambda': = \Lambda \Lambda' \Lambda^{-1} = \Lambda'$. 

Besides the left action on itself by multiplication, $G $ has a left, affine action $L^\Lambda_o : M \to M $ on Minkowski space, defined for a fixed origin $ o \in M $  as $ L^\Lambda_o( m) = o + \Lambda \overrightarrow{om}$. The map $ L^\Lambda_o $ is known as a (proper, orthochronous) \emph{Lorentz transformation}. It can be readily verified that for each $ o \in M$,  $ \Lambda \mapsto L^\Lambda_o $ defines a homomorphism of $ G $ into the Poincar\'e group of $M$. Moreover,  for every $\Lambda \in G$, the map $x': M \to \mathbb{R}^2$ with $ x' = x \circ L^\Lambda_o$ is an inertial chart, with coordinate basis vectors $ X'_j = \sum_{i=0}^1 X_i {\Lambda^{-1,i}}_j $, where  ${\Lambda^{-1,i}}_j = \hat X^i \cdot \vec X'_j =  dx^i \cdot \Lambda^{-1} X_j $ are the matrix elements of the inverse transformation $\Lambda^{-1}$ in the $x$ chart. See Lemma~\ref{lemChartX} in Appendix~\ref{appBundleMinkowski} for further details.   

\subsection{\label{secPrincipal}Inertial frame bundle}
Given any $ m \in M$, we let $ P_m $ be the set of future-directed, positively-oriented, orthonormal frames (ordered bases) of $T_m M $. That is, every element $ p \in P_m M $ consists of an ordered pair $ ( p_0, p_1 ) $ of tangent vectors in $ T_m M $, satisfying $ \eta( \tau, p_0 ) < 0 $ (future direction),  $ \nu(p_0,p_1) > 0 $ (positive orientation), and $ [ \eta( p_i, p_j ) ]_{ij} = \diag( -1, 1 ) $ (orthonormality). We will refer to every such $ p $ as an \emph{inertial frame}. On $P_m M $, $ G $ has a free, transitive right action, denoted $ p \cdot \Lambda $, where $ \Lambda \in G $ and 
\begin{displaymath}
    p \cdot \Lambda  = ( \Lambda^{-1} p_0, \Lambda^{-1} p_1 ), 
\end{displaymath}
With some abuse of notation, when convenient, we will use $R^\Lambda : P \to P $ to denote the diffeomorphism induced by right action by group element $ \Lambda\in G $, i.e., $ R^\Lambda( p ) = p \cdot \Lambda $.  

The disjoint union $ P = \bigsqcup_{m\in M} P_m$, endowed with an appropriate smooth manifold structure, then becomes the total space of a principal $G$-bundle $ P \xrightarrow{\pi} M $, where the projection map $ \pi$ maps $ p \in P $ to the underlying base point $ m \in  M$. In differential geometry, such a bundle is known as an \emph{oriented orthonormal frame bundle}; here, we will refer to $P \xrightarrow{\pi}M$ as the principal bundle, or inertial frame bundle, for brevity. 

The kernel of the pushforward map $\pi_{*p} : T_p P \to T_{\pi(p)} M $ on tangent vectors is called the \emph{vertical subspace} at $p \in P$, denoted $V_p P \subset T_p P $. A vector field $W \in \Gamma(TP)$ is said to be \emph{vertical} if $ W_p \in V_p P $ for all $ p \in P$. Given any Lie algebra element $ \lambda \in \mathfrak{g} $, we can construct a vertical vector field $W^\lambda \in \Gamma(TP)$, such that
\begin{displaymath}
    W^{\lambda} f = \lim_{\epsilon\to 0} \frac{f \circ R^{\exp(\epsilon \lambda)} -f}{\epsilon}, \quad \forall f \in C^\infty(P).
\end{displaymath}
Such a vector field is called \emph{fundamental}. In fact, the set of fundamental vector fields is in one-to-one correspondence with $ \mathfrak{g} $; that is, the map $ \lambda \mapsto W^{\lambda} $ has a smooth inverse. Moreover, at any point $ p \in P$, the vertical subspace $ V_p P $ is naturally isomorphic to $ \mathfrak{g} $ under the map $ \lambda \mapsto W^\lambda_p $.  We also note the relationship
\begin{equation}
    \label{eqFundamental}
    W^{\theta u} = \theta W^{u}, \quad \forall \theta \in \mathbb{R},
\end{equation}
where $u$ is the canonical unit basis vector of $\mathfrak g$ from Section~\ref{secMinkowski}.

Due to the existence of a global inertial coordinate chart $ x $ for $M$, $ P \xrightarrow{\pi} M$ admits a global section $ \sigma_x : M \to P$ induced by the coordinate basis vectors, i.e.,  $ \sigma_x(m ) = ( X_0, X_1 )_m $, $ \pi \circ \sigma_x = \Id_M $. This section induces in turn a global trivialization, i.e., a diffeomorphism $ \iota_{\sigma_x} : M \times G \to P $, defined as $ \iota_{\sigma_x}( m, \Lambda ) = \sigma_x( m ) \cdot\Lambda $.  Note that the inverse map $ \iota_{\sigma_x}^{-1}$ satisfies $ \iota_{\sigma_x}^{-1}(p) = (m, \Lambda )$, where $ m = \pi(p ) $, and $ \Lambda $ is the unique element of $ G $ such that $ \sigma_x(m) \cdot \Lambda = p $. The uniqueness of $ \Lambda  $ is a consequence of the fact that the action of $G$ on $ P $ is free. The map $\iota_{\sigma_x}^{-1}$ induces in turn a map $\gamma_{\sigma_x} : P \to G$ from the principal bundle into the structure group, such that $  \Lambda =  \gamma_{\sigma_x}(p) $ is the unique element of $G$ satisfying $\iota_{\sigma_x}^{-1}(p) = (m, \Lambda )$.  It is then straightforward to verify that $\gamma_{\sigma_x}$ is $G$-equivariant, i.e.,  
\begin{equation}
    \label{eqGammaEquiv}
    \gamma_{\sigma_x} \circ R^\Lambda = L^\Lambda \circ \gamma_{\sigma_x}, \quad \forall \Lambda \in G. 
\end{equation}
Together, these results lead to the following commutative diagram for any $ \Lambda \in G $,
\begin{displaymath}
    \begin{tikzcd}
        G & P \arrow{l}{\gamma_{\sigma_x}}  & M \times G \arrow{l}{\iota_{\sigma_x}} \\
        G \arrow{u}{L^\Lambda} & P \arrow{l}{\gamma_{\sigma_x}} \arrow{d}{\pi}\arrow{u}{R^\Lambda}  & M \times G \arrow{l}{\iota_{\sigma_x}} \arrow{ld}{\pi_1} \arrow{u}{\Id_M \times R^\Lambda}  \\
        & M 
    \end{tikzcd},
\end{displaymath}
where $\pi_1$ denotes projection onto the first factor. It is a direct consequence of Lemma~\ref{lemChartX} that for any $ \Lambda \in G $, the trivializing section $ \sigma_{x'} : M \to P $ associated with the Lorentz-transformed chart $ x' = x \circ L^\Lambda_o $ satisfies $ \sigma_{x'} = R^\Lambda \circ \sigma_x$. 

More generally, any local section $ \sigma : U \to P$ (not necessarily associated with an inertial chart as is $ \sigma_x$) defined on a smooth submanifold $U \subseteq M$  induces a local trivialization $ \iota_\sigma : U \times G \to P$ and a map $\gamma_{\sigma}:\pi^{-1}(U) \to G$,  with analogous properties to those of $ \iota_{\sigma_x} $ and $ \gamma_{\sigma_x}$, respectively. By definition, every global section $\sigma$ corresponds to an assignment of an \emph{inertial frame} of $TM$; that is, a positively-oriented pair $ ( e_0, e_1 ) $ of smooth, orthonormal vector fields in $\Gamma(TM)$, of which $ e_0 $ is timelike and future-directed,  such that $ ( e_{0,m}, e_{1,m} ) = \sigma(m) $ at every $ m \in M $. Note, however, that $ e_0 $ and $ e_1  $ may not be coordinate vector fields; in particular, they may fail to commute. 

A coordinate-induced section $\sigma_x$ also induces a global coordinate chart $ y : P \to \mathbb{R}^3 $ on the principal bundle, given by $ y(p) = (y^0, y^1, y^2 ) = ( x^0( m ), x^1( m ), \vartheta( \Lambda ) ) $, where $m = \pi(p)$ and $\Lambda = \gamma_{\sigma_x}(p)$. The corresponding coordinate basis vector fields, $ Y_j := \frac{\partial\ }{\partial y^j}$, $ j \in \{ 0, 1, 2 \} $,  are then lifts of  $ X_0 $, $ X_1 $, and $ \Theta $, respectively, i.e., $ \pi_* Y_0 = X_0$, $\pi_* Y_1 = X_1$, and $ \gamma_{\sigma_x * } Y_2 = \Theta$. These vector fields are all invariant under the $G$ action on $ P$, i.e., $R^\Lambda_* Y_j = Y_j$, and $Y_2 $, in particular, is a fundamental field generated by Lie algebra element $ u$. See Lemma~\ref{lemChartY} in Appendix~\ref{appBundleMinkowski} for further details. The section $\sigma_x$ also induces a metric tensor $ \tilde \eta_p : T_p P \times T_p P \mapsto{R}$, given as a pullback of the direct sum metric $ \eta \oplus b $ on $ M \times G $ under $ \iota^{-1 *}_{\sigma_x} $. This metric has the matrix representation $ [ \tilde \eta ( Y_i, Y_j ) ]_{ij} = \diag(-1,1,1 ) $. 

In gauge theory, a local section of the principal bundle is known as a \emph{gauge}, and  oftentimes, the task is to reconstruct global objects (e.g., connection 1-forms) from their behavior on local sections, as well as to study the behavior of these objects under gauge transformations (to be discussed in Section~\ref{secTransf}). As noted above, in the context of the present work, a gauge corresponds to a smooth assignment of an orthonormal basis to tangent spaces of Minkowski space, which can intuitively be thought of as a local choice of inertial frame. Our task is then to construct a \emph{gauge-covariant} quantum mechanical system, i.e., a system transforming naturally under changes of local inertial frame.  

\subsection{\label{secAssoc}Associated $\mathbb{C}$-line bundle}

Let $ \GLC $ be the Lie group of invertible linear maps on the complex plane, and  $ \glc $ its associated Lie algebra. The group $ \GLC $ is canonically isomorphic to the multiplicative group of nonzero complex numbers, so that $ w \in \GLC $ acts linearly on $ z \in \mathbb{C} $ by multiplication, $ z \mapsto w z $. Meanwhile, the Lie algebra $ \glc $ is canonically isomorphic to the vector space of complex numbers, and we have the exponential map $ \exp : \glc \to \GLC $, where $ \exp( \theta ) = e^{\theta}$ for any $ \theta \in \glc$. We equip $\mathbb{C}$ with the standard inner product, $ \langle z_1, z_2 \rangle_{\mathbb{C}} = z_1^* z_2$. 

To construct the $\mathbb{C}$-line bundle associated with the principal bundle from Section~\ref{secPrincipal}, we start from the Lie algebra  representation  $\varrho : \mathfrak{g} \to \mathfrak{gl}(1,\mathbb{C})$, defined as 
\begin{displaymath}
    \varrho(\lambda) = i \alpha \vartheta(\lambda) / \sqrt{2}. 
\end{displaymath}
Here, $ \alpha $ is a real parameter (which will be set in Section~\ref{secIso} below equal to the frequency of the classical oscillator from Section~\ref{secClassical}), so that the representation $ \varrho$ is skew-adjoint,
\begin{displaymath}
    \varrho(\lambda)^* = - \varrho(\lambda) = \varrho(-\lambda), \quad \forall \lambda \in \mathfrak{g}.
\end{displaymath}
Expressed in terms of matrices, $ \varrho$ acts by extracting from the hollow, symmetric matrix $ \bm\lambda_\theta$ representing $\lambda$ the value $ \theta = d\vartheta_I\lambda $ in its off-diagonal elements, and multiplying that value by the imaginary number $i \alpha / \sqrt{2}$. The factor of $1/\sqrt{2}$ is introduced here for later convenience. 

Using $\varrho $, we then construct a group representation  $\rho : G \to \GLC$, making use of the fact that the exponential map $ \exp: \mathfrak{g} \to G $ for $ G $ is a actually a diffeomorphism, whose inverse, $ \log : G \to \mathfrak{g} $, allows one to recover the unique Lie algebra element underlying a given group element. Based on these facts, we define the unitary representation
\begin{displaymath}
    \rho = \exp \circ \varrho \circ \log, \quad \ran \rho = \UU, 
\end{displaymath}
where $ \rho(\Lambda) = e^{i \alpha \vartheta(\Lambda)/\sqrt{2}} $, and the differential at the identity recovers $\varrho$, 
\begin{displaymath}
    \rho_{*,I} = \varrho.
\end{displaymath}
Note that, unlike $ \exp: \mathfrak{g} \to G $, the exponential map on $ \glc $ is not injective, and therefore $ \rho $ is not a faithful (injective) representation. In fact, viewed as a map from $G\simeq\SO$ to $\UU$, $ \rho $ becomes the universal covering map for $\UU$.   

The representation $\rho$ induces a left action on $\mathbb{C}$, denoted 
\begin{equation}
    \label{eqLeftAction}
    L^\Lambda z \equiv \Lambda\cdot z = \rho( \Lambda ) z = e^{i\alpha\vartheta(\Lambda)/ \sqrt{2}} z 
\end{equation}
for any $ \Lambda \in G $ and $ z \in \mathbb{C} $. Together with the $G$ action on the total space $ P $ of the principal bundle, this action induces an equivalence relation $ \sim $ on $ P \times \mathbb{C} $, whereby $( p, z ) \sim (p', z')$ if there exists $\Lambda \in G$ such that $(p', z') = ( p \cdot \Lambda, \Lambda^{-1} \cdot z) $. We then define $E $ as the set of equivalence classes in $ P \times \mathbb{C}$ under this equivalence relation. It can be readily verified that the projection map $ \pi_E : E \to M $, sending equivalence class $ [ p, z ] \in E $ to $ \pi(p) $, is well-defined. Moreover, letting $E_m = \pi^{-1}_E(\{m\})$ denote the fiber in $E$ over $ m \in M $, and $p $ an arbitrary point in $E_m $, it can be shown that the map $\varepsilon_p : \mathbb{C} \to E_m $ with $ \varepsilon_p( z ) = [ p, z] $ is a bijection, and the property $ \epsilon^{-1}_{R^\Lambda(p)} = L^{\Lambda^{-1}} \circ \varepsilon^{-1}_p  $ holds (see Lemma~\ref{lemUnique} in Appendix~\ref{appBundle}). The tuple $ E \xrightarrow{\pi_E} M$ is therefore an \emph{associated vector bundle} to $P \xrightarrow{\pi}M$ over the complex numbers, with typical fiber $  \mathbb{C}$. In particular, for any $ p \in P$ and $ z, c \in \mathbb{C}$, the scalar multiplication  $ c [ p, z ] = [ p, c z ] $ and complex conjugation $ [ p, z ]^*  = [ p, z^* ] $ are well defined operations. 

Next, consider the space of smooth sections of $E$, $\Gamma(E)$. Using any global section $ \sigma : M \to P $ from Section~\ref{secPrincipal},  we can construct a trivialization of $ E \xrightarrow{\pi_E} M$ analogously to that of the principal bundle, viz. 
\begin{displaymath}
    \begin{tikzcd}
        E\arrow{d}{\pi} & M \times \mathbb{C} \arrow{l}{\iota_{\sigma,E}} \arrow{ld}{\pi_1}  \\
        M
    \end{tikzcd}.
\end{displaymath}
Here, $ \iota_{\sigma,E} : M \times \mathbb{C} \to E$ is the diffeomorphism with $ \iota_{\sigma,E}(m,z) = [ \sigma( m ), \varepsilon_{\sigma(m)}(z ) ] $ and inverse $\iota_{\sigma,E}^{-1}([p,z]) = ( \pi(p), \varepsilon_{\sigma(\pi(p))}^{-1}([p,z]) )$. Moreover, $\sigma $ induces a Hermitian metric on $ E$; that is, a smooth assignment $ m \mapsto g_m $, where $ m \in M $ and $ g_m $ is a positive-definite sesquilinear form on $E_m $. Explicitly, given any $ m \in M$ and  $ e_1, e_2 \in E_m $, we have 
\begin{equation}
    \label{eqG}
    g_m(e_1,e_2) = \langle \varepsilon_{\sigma(m)}(e_1), \varepsilon_{\sigma(m)} ( e_2 ) \rangle_{\mathbb{C}}.
\end{equation}
The complex structure of $E$ induces a complex structure $J: \Gamma(E) \to \Gamma(E)$ on sections, such that $ (J s)(m) = (s(m))^*$.

It can be verified that, due to the unitarity of $ \rho $, $ g_m $ is independent of the choice of trivializing section $ \sigma $ (see Lemma~\ref{lemG} in Appendix~\ref{appBundle}), i.e., it is \emph{gauge-invariant}. We can therefore define a Hilbert space $\mathcal{H}$ of sections of $E$ (modulo sets of zero Riemannian measure), equipped with the inner product $ \langle s_1, s_2 \rangle_\mathcal{H} = \int_M g_m(s_1(m),s_2(m)) \, d\nu(m)$ and norm $ \lVert s \rVert_\mathcal{H} = \sqrt{\langle s, s \rangle_\mathcal{H}}$, in a manner that does not depend on the choice of gauge.  We will employ this Hilbert space in Section~\ref{secHamiltonian} to define quantum mechanical observables as self-adjoint operators acting on it.  

The trivializing section $\sigma$ also induces an isomorphism $ \zeta_\sigma :  C^\infty(M) \to \Gamma(E) $ between smooth, complex-valued functions on $M$ and sections of $\Gamma(E)$, such that  
\begin{displaymath}
    \zeta_\sigma f(m) = \varepsilon_{\sigma(m)}( f(m)), \quad \zeta_{\sigma}^{-1} s(m) = \varepsilon_{\sigma(m)}^{-1}(s(m)).
\end{displaymath}
This extends in turn to a Hilbert space isomorphism between equivalence classes of functions in $L^2(\nu)$ and sections in $\mathcal{H}$, which is very convenient for calculational purposes, and will also facilitate establishing the correspondence with Koopman operator theory for the classical harmonic oscillator. In gauge theory, pullbacks of sections of associated bundles, such as $\zeta_{\sigma}^{-1} s$ are known as \emph{matter fields}. As will be discussed in more detail in Section~\ref{secTransf}, matter fields have the distinguished property of being \emph{gauge-covariant}, i.e., they transform naturally under changes of section $\sigma$, and thus under changes of inertial frame in the present context. In particular, this is contrast to arbitrary $C^\infty(M)$ functions which have no natural transformation properties with respect to $\sigma$.

Before closing this section, we note another key property of sections in $ \Gamma(E) $, namely, that they are in one-to-one correspondence with $G$-equivariant, $\mathbb{C}$-valued functions on the principal bundle.  In particular, a function $ f \in C^\infty(P)$ is said to be \emph{$G$-equivariant} if $ f \circ R^\Lambda = L^\Lambda \circ f $ for any $ \Lambda \in G$. We denote the vector space of all such functions by $C^\infty_{G}(P)$. It can be verified that the map $ \beta: C^\infty_{G}(P) \to \Gamma(E) $ with $ \beta f(m) = [ p, f(p) ] $, where $ p $ is an arbitrary element of $ \pi^{-1}(m)$, is well-defined, and possesses a smooth inverse given by $ \beta^{-1} s(p) = \varepsilon_{p}^{-1}(s(\pi(p)))$. 

\subsection{\label{secConn}Connection 1-form and covariant derivative}

We now have the necessary ingredients to construct the connection 1-form on the principal bundle for our gauge theory, as well as the induced gauge field on the base space  and covariant derivative on sections of the associated bundle.   

In the Abelian setting under study, a \emph{connection 1-form} on the principal bundle $P \xrightarrow{\pi} M$ is a Lie-algebra-valued 1-form $\omega \in \Omega^1(P, \mathfrak{g})$, such that for every point $p \in P$, group element $ \Lambda \in G$, and fundamental vector field $W^\lambda \in \Gamma(TP)$, the conditions 
\begin{equation}
    \label{eqConnCond} ( \omega W^\lambda )_p = \lambda, \quad R^{\Lambda*} \omega = \omega 
\end{equation}
hold. That is, $\omega$ recovers the Lie algebra elements generating the fundamental vector fields, and is also $G$-invariant. Note that the invariance condition (the second equation in~\eqref{eqConnCond}) is specific to Abelian groups, and is replaced by a more general equivariance condition in the non-Abelian setting; see~\eqref{eqAdj} in Appendix~\ref{appBundle}. Given a section $ \sigma : M \to P $, the pullback $\omega^\sigma := \sigma^* \omega $ of the connection 1-form onto the base space $M$ is known as a \emph{gauge field}. A connection 1-form endows the principal bundle, as well as its associated bundles, with important geometrical structure, including the assignment of a horizontal distribution and a corresponding notion of parallel transport of curves from the base space to the total space. See Appendix~\ref{appBundleGeneral} and \cite{BerlineEtAl04,Michor08,RudolphSchmidt17} for further details on these topics. 

For our purposes, a key implication of the connection 1-form is that it induces  covariant derivative operators on the principal bundle, as well as the Minkowski base space. Specifically, associated with a given connection 1-form $\omega $, a vector field $X \in \Gamma(TM)$, a vector field $ Y \in \Gamma(TP) $, and a section $\sigma : M \to P $ are:
\begin{enumerate}
    \item An \emph{exterior covariant derivative} $ \mathcal{D}_Y: C^\infty(P) \to C^\infty(P)$, acting on  $\mathbb{C}$-valued functions on the principal bundle and preserving the space of $G$-equivariant functions $C^\infty_G(P)$; 
    \item A \emph{covariant derivative} $ \nabla_X : \Gamma(E) \to \Gamma(E)$, acting on sections of the associated bundle;
    \item A \emph{covariant derivative} $ \nabla^\sigma_X : C^\infty(M) \to C^\infty(M)$, acting on $\mathbb{C}$-valued functions on the base space.
\end{enumerate}
Among these, the exterior covariant derivative of $ f \in C^\infty(P) $ with respect to $Y $  is given by
\begin{equation}
    \label{eqExtCov}
    \mathcal{D}_Y f = df \cdot  \hor Y,
\end{equation}
where $ d: C^\infty(P) \to \Omega^1(P) $ is the canonical exterior derivative, and $ \hor : \Gamma(TP) \to \Gamma(TP)$ denotes the \emph{horizontal projection map} associated with $ \omega $:
\begin{equation}
    \label{eqHor}
    \hor Y_p = Y_p - W_p^{(\omega Y)_p}.  
\end{equation}
Setting $Y $ to the \emph{horizontal lift} of $X$ on $P$ (see Appendix~\ref{appBundleGeneral}), the covariant derivative operators $ \nabla_X$ and $ \nabla_X^\sigma$ on base space are then obtained by means of the following commutative diagram:   
\begin{displaymath}
    \begin{tikzcd}
        C^\infty_{G}(P) \arrow{r}{\mathcal{D}_{Y}} \arrow[shift left]{d}{\beta}  & C^\infty_{G}(P) \arrow[shift left]{d}{\beta} \\
        \Gamma(E)\arrow{r}{\nabla_X}\arrow[shift left]{u}{\beta^{-1}} \arrow[shift left]{d}{\zeta_{\sigma}^{-1}} & \Gamma(E) \arrow[shift left]{u}{\beta^{-1}} \arrow[shift left]{d}{\zeta_\sigma^{-1}} \\
        C^\infty(M) \arrow[shift left]{u}{\zeta_\sigma}\arrow[shift left]{r}{\nabla^{\sigma}_X} & C^\infty(M)\arrow[shift left]{u}{\zeta_\sigma}
    \end{tikzcd}.
\end{displaymath}
That is, we have 
\begin{equation}
    \label{eqCov0}
    \nabla_X = \beta \circ \mathcal{D}_Y \circ \beta^{-1}, \quad \nabla_X^\sigma = \zeta_\sigma^{-1} \circ \nabla_X \circ \zeta_\sigma.
\end{equation}
It can be shown that the covariant derivative $ \nabla^\sigma_X $ takes the form
\begin{equation}
    \label{eqCov}
    \begin{aligned}
        \nabla_X^\sigma f &= df \cdot X + \varrho(\omega^\sigma X) f \\
        &= X f+ \varrho(\omega^\sigma X) f, 
    \end{aligned}
\end{equation}
where $ \varrho(\omega^\sigma X)$ is a shorthand notation for the $\glc$-valued function $m \mapsto \varrho( (\omega^\sigma X)_m)$ on $M$. This expression is particularly useful for calculational purposes as it involves ordinary complex-valued functions on the base space. 
 
It is evident from~\eqref{eqExtCov} and~\eqref{eqCov} that a covariant derivative deviates from the standard exterior derivative through a ``correction'' that depends on the connection 1-form, and in the case of $ \nabla^\sigma_X $, that correction presents itself through the gauge field $ \omega^\sigma $ and Lie algebra representation $\varrho$.  It should be kept in mind that it is the triviality of $P \xrightarrow{\pi} M $ that allows us to work with  globally defined gauge fields. More generally, $ \omega^\sigma $ would only be defined locally on the domain of definition of $ \sigma $, and~\eqref{eqCov} would be valid locally on the same domain with appropriate compatibility conditions fulfilled in the overlap regions between domains of different local sections.   

To construct our connection 1-form $ \omega$, we begin by fixing an inertial chart $x : M \to \mathbb{R}^2$ with origin $o \in M$, and introducing a linear map $^\odot : T_pP \to T_pP $, $ p \in P $, on the tangent spaces of the principal bundle, characterized uniquely through the relationships
\begin{equation}
    \label{eqCirc}
    Y_{0,p}^\odot = - Y_{1,p}, \quad Y_{1,p}^\odot = - Y_{0,p}, \quad  Y_{2,p}^\odot = Y_{2,p},
\end{equation}
where $Y_j $ are the basis vector fields of the associated chart $ y : P \to \mathbb{R}^3$ to $x$. That is, for any $ Y \in T_pP $ we have
\begin{displaymath}
    Y^\odot = \sum_{j=0}^2 ( dy^j_p \cdot Y ) Y_{j,p}^\odot,
\end{displaymath}
and it can be verified that this definition is independent of the choice of inertial chart $x$ (see Lemma~\ref{lemCirc}). Note that $ \pi_{*,p}(Y^\odot) = ( \pi_{*,p} Y )^\perp $, so that $ ^\odot $ can be interpreted as a lift of the $ ^\perp $ operator from Section~\ref{secMinkowski} to the principal bundle. In particular, the notation $^\odot$ is suggestive of the fact that $Y^\odot$ has perpendicular components to $ Y$ in the $Y_0$ and $ Y_1 $ directions, but the same component in the $Y_2 $ direction, so that $Y_2$ can be thought of as an axis of symmetry remaining unchanged under $^\odot$.  

Next, given the origin $ o \in M $ of the inertial chart $x $, we consider the quadratic form $ h : M \mapsto \mathbb{R} $, defined as 
\begin{equation}
    \label{eqPot}
    h(m) =  \eta(\overrightarrow{om}, \overrightarrow{om}) / 2.
\end{equation}
Explicitly, in terms of the coordinates $x(m) = (x^0(m),x^1(m))$, we have 
\begin{displaymath}
    h( m ) = [ - (x^{0}(m))^2 + (x^1(m))^2 ] / 2,
\end{displaymath}
from which it follows that $ h(m)$ vanishes for points $m$ with null translations from $o$, and is negative (positive) for points with timelike (spacelike) translations.  With the help of the map $ \gamma_{\sigma_x} : P \to G $ associated with the chart, we also introduce the function $ \tilde h_x : P \to \mathbb{R} $ on the principal bundle given by
\begin{displaymath}
    \tilde h_x(p) =h( \pi( p ) ) + \vartheta( \gamma_{\sigma_x}(p) ). 
\end{displaymath}

We then define the Lie-algebra-valued 1-form $ \omega \in \Omega^1( P, \mathfrak{g}) $ with
\begin{equation}
    \label{eqConn}
    ( \omega Y )_p = ( Y^\odot \tilde h_x)_p  u,  \quad \forall Y \in \Gamma(TP), \quad \forall p\in P.
\end{equation}
One can then verify that $\omega $ satisfies~\eqref{eqConn}, and is thus a connection 1-form on the principal bundle  (see Proposition~\ref{propConn} in Appendix~\ref{appBundleMinkowski}). Moreover, $\omega$ does not depend on the choice of inertial chart $x$ with origin $o$. As will be discussed in more detail in Section~\ref{secTransf}, by virtue of these facts, the corresponding operators $\nabla_X$ and $ \nabla_X^\sigma$ in~\eqref{eqCov} are $G$-covariant, Lorentz-invariant derivatives.

It is also straightforward to check that the action of $\omega $ on the coordinate vector fields $Y_j $ of the chart $y : P \to \mathbb{R}^3$ induced by $ x$ on the principal bundle takes the form
\begin{equation}
    \label{eqConnComponents}
    (\omega Y_0)_p = - y^1(p) u, \quad (\omega Y_1)_p = y^0(p) u, \quad (\omega Y_2)_p = u, 
\end{equation}
leading to the expression
\begin{align}
    \label{eqConnHor}\hor Y &= ( dy^0 \cdot Y ) Y_0 + ( dy^1 \cdot Y) Y_1 \\
    & \quad + ( (dy^0 \cdot Y) y^1 - (dy^1 \cdot Y) y^0) Y_2
\end{align}
for the associated horizontal projection map on vector fields. Note that, in general, $ \hor Y$ has a nonzero component along the $ y^2 $ coordinate, despite the fact that $Y_2$ is a vertical vector field.  

Next, pulling back $\omega$ to the base space along the section $\sigma_x$ yields the gauge field $ \omega^{\sigma_x} $, where
\begin{displaymath}
    (\omega^{\sigma_x} X_0)_m = - x^1(m) u, \quad (\omega^{\sigma_x} X_1)_m = x^0(m) u. 
\end{displaymath}
Correspondingly, the covariant derivative $ \nabla^{\sigma_x}$ on complex-valued functions on Minkowski space is found to satisfy
\begin{displaymath}
    \nabla^{\sigma_x}_{X_0} f  = X_0 f -i\frac{ \alpha}{\sqrt{2}} x^1 f , \quad \nabla^{\sigma_x}_{X_1} f = X_1 f + i \frac{\alpha}{\sqrt{2}} x^0 f.
\end{displaymath}
These relationships can be expressed in a coordinate-free manner as 
\begin{displaymath}
    \omega^{\sigma_x} X = X^\perp h u, \quad \nabla^{\sigma_x} = d + i \frac{\alpha}{\sqrt{2}} X^\perp h,   
\end{displaymath}
where it is evident that $\omega^{\sigma_x}$ (and thus $\nabla^{\sigma_x}$) is independent of $\sigma_x$. Henceforth, we will use the notations $\omega^M \equiv \omega^{\sigma_x}$ and $ \nabla^M \equiv \nabla^{\sigma_x} $ to highlight that independence. It can also be readily verified that for all $ X \in \Gamma(TM)$ and $s_1, s_2 \in \Gamma(E)$ the property
\begin{displaymath}
    X g(s_1, s_2) = g(\nabla_X s_1, s_2) + g(s_1, \nabla_X s_2)
\end{displaymath}
holds, which implies that $\nabla_X$ is a \emph{metric covariant derivative}. Moreover, $ \nabla_X $ is a formally skew-symmetric operator with respect to the Hilbert space inner product of $\mathcal{H}$; that is, $ \langle \nabla_X s_1, s_2 \rangle_{\mathcal{H}} = - \langle s_1, \nabla_X s_2 \rangle_{\mathcal{H}}$ for all $ X \in \Gamma(TM)$ and compactly supported sections $ s_1, s_2 \in \Gamma_c(E)$. Using the notation $ \nabla_X^+$ to represent the formal adjoint of $ \nabla_X $ with respect to the $\langle \cdot, \cdot \rangle_{\mathcal H}$ inner product, we have $ \nabla_X^+ = - \nabla_X$.   

Before closing this section, we note that, by $C^\infty(M) $-linearity of the dependence $X \mapsto \nabla_X s$, we can lift $\nabla_X$ to an operator $ \nabla : \Gamma(E) \to \Omega^1(M,E) = \Gamma( T^*_\mathbb{C}M \otimes_\mathbb{C} E)$, such that 
\begin{equation}
    \label{eqCovSX}
    \nabla s \cdot X = \nabla_X s.
\end{equation}
Note, in particular, that $ T^*_\mathbb{C} M \otimes_{\mathbb{C}} E \to M $ is isomorphic as a bundle to the vector bundle $ \Hom( T_{\mathbb C}M, E ) \to M $ of bundle homomorphisms from $T_{\mathbb C}M $ to $E$. Thus,  $ \nabla s $ can act on a vector field $X \in \Gamma(T_{\mathbb C}M) $, giving \eqref{eqCovSX}. Similarly, the exterior covariant derivative $\mathcal D_Y$ on the principal bundle lifts to an operator $\mathcal D : C^\infty(P) \to \Omega^1(P)$, and for any vector space  $F$, we can define an exterior covariant derivative $ \mathcal D : C^\infty(P,F) \to \Omega^1(P,F)$ analogously to~\eqref{eqExtCov}. In what follows, for simplicity of notation we will suppress $\mathbb C$ subscripts from $T_{\mathbb C}M$, $T^*_{\mathbb C} M$, and $\mathbb \otimes_{\mathbb C}$.    
    
\subsection{\label{secLapl}Laplacians on the associated bundle}

We now describe the construction of the Laplace-type operator which will play the role of a quantum mechanical Hamiltonian operator acting on sections in $\Gamma(E)$, and discuss its relationship to the connection Laplacian induced by the connection in Section~\ref{secConn}. 

First, given a vector field $ X \in \Gamma(TM)$,  we introduce the complex-conjugate adjoint covariant derivative $ \bar \nabla_X : \Gamma(E) \to \Gamma(E)$, 
\begin{displaymath}
    \bar \nabla_X = J \circ \nabla_X^+ \circ J = - J \circ \nabla_X \circ J,
\end{displaymath}
where $J : \Gamma(E) \to \Gamma(E) $ is the complex structure on sections. Given a trivializing section $ \sigma \in \Gamma(P)$ of the principal bundle, this operator has a representation $ \bar \nabla_X^M : C^\infty(M) \to C^\infty(M)$ as a differential operator on complex-valued functions on $M$, where $ \bar \nabla^M_X = \zeta_\sigma^{-1} \circ \bar \nabla_X \circ \zeta_\sigma$, and
\begin{displaymath}
    \bar \nabla^M_X f = df \cdot X + \varrho(\omega^M X)^* f = df \cdot X - \varrho(\omega^M X) f. 
\end{displaymath}
If, in particular, $X_0 $ and $X_1$ are the coordinate vector fields of an inertial chart $ x $ centered at $o$, we have
\begin{displaymath}
    \bar \nabla^{M}_{X_0} f = X_0 f + i \frac{ \alpha}{\sqrt{2}} x^1 f, \quad \bar \nabla^{M}_{X_1} f = X_1 f - i \frac{\alpha}{\sqrt{2}} x^0 f. 
\end{displaymath}
As with $ \nabla_X$, $\bar \nabla_X $ can be extended  to an operator $ \bar \nabla : \Gamma(E)\to \Omega^1(M, E)$ mapping sections to 1-forms. 

Next, let $\nabla^{\text{LC}} : \Gamma(TM) \to \Omega^1(M)$ be the Levi-Civita connection associated with the Minkowski metric $ \eta $. Together, $ \bar \nabla$ and $\nabla^\text{LC}$ induce a \emph{tensor covariant derivative} on the tensor product bundle $ T^*M \otimes E$, that is, a covariant derivative operator $ \tilde \nabla : \Gamma(T^*M \otimes E) \to \Omega^1( M, T^*M \otimes E ) = \Gamma( T^*M \otimes T^*M \otimes E ) $, such that for any two vector fields, $ X, Y \in \Gamma(TM) $ and any section $ T \in \Gamma(T^*M \otimes E)$ 
\begin{equation}
    \label{eqCovLC}
    ( \tilde \nabla_X T ) Y = \bar \nabla_X( T \cdot Y ) - T( \nabla_X^\text{LC} Y).
\end{equation}
Note that this definition is consistent with the Leibniz rule, $ \bar \nabla_X(T \cdot Y ) = ( \tilde \nabla_X T )  Y + T( \nabla_X^\text{LC} Y ) $. The composition of $ \tilde \nabla $ and $ \nabla $ then leads to the \emph{second covariant derivative} $ \tilde{\mathsf H} : \Gamma(E) \to \Gamma(T^*M \otimes T^*M \otimes E)$, where $ \tilde{\mathsf H} = \tilde \nabla \circ \nabla$. It follows from~\eqref{eqCovLC} that for any $ X, Y \in \Gamma(TM)$ and $ s \in \Gamma(E)$, we have
\begin{align*}
    \tilde{\mathsf H}_{X,Y} s &\equiv ( \tilde{\mathsf H} s )(X,Y) \\
    &= ( \tilde \nabla_X \nabla s ) \cdot Y = \bar \nabla_X \nabla_Y s - \nabla_{\nabla^{\text{LC}}_X Y} s.
\end{align*}
Using Riemannian isomorphisms, we also define a second covariant derivative $\tilde{\mathsf H}' : \Gamma(E) \to \Gamma( TM \otimes TM^* \otimes E) $, such that for any vector field $X \in \Gamma(TM)$, 1-form $w \in \Gamma(T^*M)$, and section $ s \in \Gamma(E)$, 
\begin{displaymath}
    \tilde{\mathsf H}'_{X,w} s = \tilde{\mathsf H}_{X,w^\sharp} s. 
\end{displaymath}

Observe now that $ TM \otimes TM^* \rightarrow M $ is isomorphic as a bundle to the vector bundle $ \End(TM) \rightarrow M $ of endomorphisms of the tangent bundle. In particular, given  $ s \in \Gamma(E) $ and $ m \in M$,  $ ( \tilde{\mathsf H}' s )_m $ can be viewed as a linear map $ L : T_m M \to T_m M $ such that for any vector field $ X \in \Gamma( TM ) $ and 1-form $ w \in \Gamma( T^* M ) $, $ w_m  ( L X_m ) = ( \tilde{\mathsf H}'_{X,w} s)_m $.    We can therefore reduce $ \tilde{\mathsf H}'$ to an operator $ \tilde \Delta : \Gamma(E) \to \Gamma(E) $ on sections by taking the trace of $  ( \tilde{\mathsf H}' \cdot )_m $ at every point $ m \in M $, i.e.,  
\begin{displaymath}
    ( \tilde \Delta s )_m = \tr ( \tilde{\mathsf H}' s )_m.
\end{displaymath}
That is, in any orthonormal frame $ \{ X_0, X_1 \} $ of $ TM $ with $ \eta(X_0, X_0 ) = -1 $ and $ \eta(X_1,X_1) = 1$, we have
\begin{displaymath}
    \tilde \Delta  =  - \left( \bar \nabla_{X_0} \nabla_{X_0} - \nabla_{\nabla^{\text{LC}}_{X_0} X_0} \right) + \left( \bar \nabla_{X_1} \nabla_{X_1} - \nabla_{\nabla^{\text{LC}}_{X_1} X_1} \right),   
\end{displaymath}
and if the $X_i $ are coordinate vector fields associated with an affine coordinate chart (as is nominally the case), the covariant derivatives $ \nabla^{\text{LC}}X_i $ vanish, leading to the simpler expression
\begin{equation}
    \label{eqLaplSect}
    \tilde \Delta  =  \bar \nabla_{X_0} \nabla_{X_0} - \bar \nabla_{X_1} \nabla_{X_1}.  
\end{equation}

To characterize this operator more explicitly, it is useful to consider its representation $ \tilde \Delta^{M} : C^\infty(M) \to C^\infty(M)$ on complex-valued functions on $M$ induced by the trivializing section $ \sigma_x $, i.e., $ \tilde \Delta^{M} = \zeta_{\sigma_x}^{-1} \circ \tilde \Delta \circ \zeta_{\sigma_x}$. A direct calculation then yields 
\begin{equation}
    \label{eqLaplSigma}
    \tilde \Delta^{M}  = - \left( - X_0^2 + \frac{\alpha^2}{2} ( x^0 )^2 \right) + \left( - X_1^2 + \frac{\alpha^2}{2} (x^1)^2  \right).  
\end{equation}
Noticing that the coordinate vector fields $X_0$ and $X_1$  are formally skew-symmetric as operators on functions, i.e., $ \langle f_1, X_j f_2 \rangle_{\nu} = - \langle X_j f_1, f_2 \rangle_{\nu}$ for all $f_1, f_2 \in C^\infty_c(M)$, it follows from~\eqref{eqLaplSigma} that $ \tilde \Delta^{M}$ (and thus $ \tilde \Delta$) is formally symmetric, $ \langle f_1, \tilde \Delta^{M} f_2 \rangle_{\nu} = \langle \tilde \Delta^{M} f_1, f_2 \rangle_{\nu} $. Moreover, it is evident that, up to a proportionality factor of $1/2$,  $ \tilde \Delta^{M} $ has the structure of the difference between the Hamiltonians of two one-dimensional quantum harmonic oscillators of frequency $ \alpha $, operating along the $ x^0 $ and $x^1 $ coordinates, respectively. We will pursue this correspondence further in Section~\ref{secIso}, where we will use a (self-adjoint extension of) $\tilde \Delta / 2$ as the Hamiltonian of a quantum mechanical theory with $\mathcal{H}$ as its Hilbert space.      

For now, we express $ \tilde \Delta $ in an alternative form, which also exemplifies its relationship with a harmonic oscillator potential. For that, we introduce the \emph{connection Laplacian} $ \Delta : \Gamma(E) \to \Gamma(E)$ on sections of the associated bundle. This operator is defined analogously to $ \tilde \Delta $ as
\begin{displaymath}
    (\Delta s )_m = \tr(\mathsf H' s)_m, \quad \forall m \in M,
\end{displaymath}
where $ \mathsf H' : \Gamma(E) \to \Gamma(TM \otimes TM^* \otimes E) $ is obtained via Riemannian isomorphisms from the operator $ \mathsf H : \Gamma(E) \to \Gamma(TM^* \otimes TM^* \otimes E)$, $ \mathsf H = \hat \nabla \circ \nabla$. Here, $ \hat \nabla : \Gamma(T^*M \otimes E ) \to \Gamma(T^*M \otimes T^*M \otimes E)$ is a tensor covariant derivative, defined analogously to~\eqref{eqCovLC} as 
\begin{displaymath}
    (\hat \nabla_X T) Y = \nabla_X(T \cdot Y) - T(\nabla^{\text{LC}}_X Y), \quad \forall X, Y \in \Gamma(TM).  
\end{displaymath}
In other words, the difference between $ \Delta $ and $ \tilde \Delta $ is that the former is constructed without using complex conjugation and adjoints. 

A calculation analogous to that used to obtain~\eqref{eqLaplSigma} then shows that the function  Laplacian $ \Delta^{M} := \zeta_{\sigma_x}^{-1} \circ \Delta \circ \zeta_{\sigma_x}$ induced by $\sigma_x $ is given by
\begin{equation}
    \label{eqLaplConn}
    \Delta^{M} = - \left( - X_0^2 - \frac{\alpha^2}{2} ( x^0 )^2 \right) + \left( - X_1^2 - \frac{\alpha^2}{2} (x^1)^2 \right).  
\end{equation}
Comparing~\eqref{eqLaplSigma} and~\eqref{eqLaplConn}, it then follows that
\begin{displaymath}
    \tilde \Delta^{M} = \Delta^{M} + \alpha^2( - (x^0)^2 + (x^1)^2 ), 
\end{displaymath}
or, equivalently,
\begin{equation}
    \label{eqDeltaV}
    \tilde \Delta  = \Delta + 2 v, \quad v =  \alpha^2 h,   
\end{equation}
where $ h$ is the quadratic form from~\eqref{eqPot}. Thus, if $ \Delta/2 $ is interpreted as  a ``free-particle'' Hamiltonian, then $ \tilde \Delta/2 $ corresponds to the Hamiltonian for a particle in a quadratic potential $v$, with frequency parameter $\alpha$. Note the distinguished role that the origin $o$ of the coordinate chart $ x $ plays in this definition, as it is the unique saddle point of $v$.

\subsection{\label{secYM}Curvature and Yang-Mills equations}

Thus far, we have identified a connection 1-form $\omega$ on the inertial frame bundle $P \xrightarrow{\pi} M $, which naturally leads to geometrical Laplace-type operators having the structure of quantum mechanical Hamiltonians with quadratic potentials. Yet, the specific choice of $\omega$ in~\eqref{eqConn} may appear somewhat ad hoc, especially given the fact that the space of connections on a principal bundle is an infinite-dimensional (i.e., ``large'') space. More specifically, it can be shown that the space of connections, $\mathcal C$, of a general principal bundle $ P \xrightarrow{\pi} M$ is an infinite-dimensional affine space whose translation group is isomorphic to a vector bundle $\Ad P \xrightarrow{\pi_{\Ad P}} M $ over $M$, with typical fiber isomorphic to the Lie algebra $\mathfrak g $, known as the \emph{adjoint bundle} (see Appendix~\ref{appBundleGeneral}). That is, the difference $\omega - \omega' \in \Omega^1( P, \mathfrak g)$ between any two connections in $\mathcal C $ can be identified with a unique $\Ad P$-valued 1-form (equivalently, a section of $ TM^* \otimes \Ad P $), and conversely, given any connection $\omega \in \mathcal C$ any other connection can be reached by translating from $ \omega $ by sections in  $\Omega^1(M,\Ad P)$. 

For an Abelian structure group such as $ G \simeq \SO$ studied here, $ \Ad P \xrightarrow{\pi_{\Ad P}} M  $ is canonically isomorphic to the trivial bundle $ M \times \mathfrak g \xrightarrow{\pi_1} M $, so that every section in $\Gamma(\Ad P)$ can be identified with a unique $ \mathfrak g $-valued 1-form on $M$. Correspondingly, we can express every element of $\mathcal C $ as $\omega_o + \pi^* \varpi  $, where $\omega_o \in \mathcal C $ is fixed connection 1-form acting as the ``origin'', and $\varpi \in \Omega^1(M,\mathfrak g) $ is a Lie-algebra-valued 1-form on the base space. For instance, choosing $\omega_o$ as the trivial connection with coordinate basis values (cf.~\eqref{eqConnComponents}) 
\begin{displaymath}
    \omega_o Y_0 = \omega_o Y_1 = 0, \quad \omega_o Y_2 = u,
\end{displaymath}
and horizontal projection map (cf.~\eqref{eqConnHor})
\begin{displaymath}
    \hor_o Y = (dy^0 \cdot Y ) Y_0 + (dy^1 \cdot Y) Y_1, 
\end{displaymath}
we can express the connection 1-from from~\eqref{eqConn} as
\begin{displaymath}
    \omega = \omega_o + \pi^* \varpi, \quad \varpi X = X^\perp h u.
\end{displaymath}

In gauge theories for fundamental physics, the problem of identifying gauge field configurations realized in nature is approached through the \emph{Yang-Mills equations} \cite{YangMills54} involving a \emph{field strength} associated with the connection 1-form. Given a connection 1-form $\omega \in \mathcal C$, the field strength employed in Yang-Mills theory is a 2-form $ F^\omega \in \Omega^2(M,\Ad P) $, taking values in the adjoint bundle, which can be constructed by pulling back the \emph{curvature 2-form} associated with $ \omega $ along trivializing sections of the principal bundle. The latter, is defined as the Lie-algebra-valued 2-form $\Omega \in \Omega^2(P,\mathfrak g )$, given by the exterior covariant derivative of the connection 1-form; i.e., $\Omega = \mathcal D \omega $, where 
\begin{displaymath}
    \Omega(Y,Z) = d\omega(\hor Y, \hor Z), \quad Y, Z \in \Gamma(TP).
\end{displaymath}
Using $\nabla : \Omega^k(M,\Ad P ) \to \Omega^{k+1}(M, \Ad P)$ to denote the covariant derivative induced on the adjoint bundle by $\omega $ (constructed analogously to the covariant derivative on $ E \xrightarrow{\pi_E} M $ from Sections~\ref{secConn} and~\ref{secLapl}), the field strength can be shown to satisfy the \emph{Bianchi identity},
\begin{equation}
    \label{eqBianchi}
    \nabla F^\omega = 0.
\end{equation}
In Yang-Mills theory, $F^\omega$ is further required to satisfy
\begin{equation}
    \label{eqYM}
    \nabla \star F^\omega = 0,
\end{equation}
where $\star : \Omega^k(M,\Ad P ) \to \Omega^{\dim M - k}(M, \Ad P)$ is the Hodge star operator. Equation~\eqref{eqYM} is derived by extremizing a quadratic functional of $F^\omega$ known as the \emph{Yang-Mills action}. See Appendix~\ref{appYM} for further details, and one of the many references in the literature \citep[e.g.,][]{RudolphSchmidt17} for a complete exposition of Yang-Mills theory. 

In the general, non-Abelian, setting, \eqref{eqYM} represents a set of nonlinear partial differential equations, known as Yang-Mills equations, whose rigorous characterization, including existence of solutions, is a challenging open problem (a Millennium Prize problem \cite{JaffeWitten00}). However, in the present setting involving an Abelian gauge theory on two-dimensional Minkowski space, \eqref{eqBianchi} and~\eqref{eqYM} greatly simplify to a linear, first-order differential equation for a single Lie-algebra-valued function, 
\begin{equation}
    d f^\omega = 0, \quad f^\omega \in C^\infty(M,\mathfrak g ),  
    \label{eqSimpleYM}
\end{equation}
such that $ F^\omega = \star f^\omega $. The reduction to~\eqref{eqSimpleYM} is a consequence of the following three simplifications:
\begin{enumerate}
    \item For an Abelian structure group such as $ G \simeq \SO$, the adjoint bundle $ \Ad P \xrightarrow{\pi_{\Ad P}} M $ is trivial and canonically isomorphic to $ M \times \mathfrak g \xrightarrow{\pi_1} M $ (as already stated). As a result, we can canonically identify $ F^\omega \in \Omega^2(M,\Ad P ) $ with a unique Lie-algebra-valued 2-form $ \tilde F^\omega \in \Omega^2(M,\mathfrak g)$. Moreover, because the principal bundle $P \xrightarrow{\pi} M$ over Minkowski space is trivial, $\tilde F^\omega$ is given by the pullback $ \tilde F^\omega = \sigma^* \Omega $ of the curvature 2-form along any global section $\sigma \in \Gamma(P) $.  
    \item For an Abelian structure group, the covariant derivative on $\Ad P $ reduces to a standard exterior derivative (due to the triviality of the adjoint representation of $\mathfrak g$; see Appendix~\ref{appCurv}). Thus,~\eqref{eqYM} reduces to the linear equation, $ d \star \tilde F^\omega = 0 $. 
    \item For a two-dimensional manifold $M$ such as two-dimensional Minkowski space, $F^\omega$ is a top form, and thus the Bianchi identity is trivially satisfied by any element of $ \Omega^2(M,\mathfrak g)$ (even if it is not the field strength induced by a connection). Moreover, $\star \tilde F^\omega $ is equal to a Lie-algebra-valued function, $ f^\omega \in C^\infty(M,\mathfrak g)$, giving $\tilde F^\omega = \star f^\omega $, since $\star \star = \Id $ for 2-forms in two dimensions. Equation~\eqref{eqSimpleYM} then follows.  
\end{enumerate}

The simplified Yang-Mills equation in~\eqref{eqSimpleYM} can be thought of as a two-dimensional analog of Maxwell's equations for electromagnetism. Since Minkowski space is a connected manifold, the only solutions of this equation are constant functions, $ f^\omega( m ) = \lambda_0 $, for some $ \lambda_0 \in \mathfrak g $. To verify that the connection 1-form from~\eqref{eqConn} indeed induces a constant $f^\omega$, let $y : P \to \mathbb R^3$ be a coordinate chart on the principal bundle induced by an inertial chart $ x : M \to \mathbb R^2 $, and consider the matrix representation of the curvature 2-form with respect to the corresponding coordinate vector fields $Y_j$, i.e., $ \bm \Omega = [ \Omega(Y_i, Y_j) ]$. A direct calculation yields
\begin{displaymath}
    \bm \Omega = 
    \begin{pmatrix}
        0 & f & 0 \\
        - f & 0 & 0 \\
        0 & 0 & 0
    \end{pmatrix},
\end{displaymath}
where $ f \in C^\infty(M,\mathfrak g) $ is a constant Lie-algebra-valued function, equal to $ \lambda_0 = 2 u $. As a result, $ \tilde F^\omega = \sigma_x^* \Omega $ is represented by the matrix
\begin{displaymath}
    \bm F := [ \tilde F^\omega(X_i, X_j)] = 
    \begin{pmatrix}
        0 & f  \\
        -f & 0
    \end{pmatrix},
\end{displaymath}
from which we deduce that $ f^\omega = \star \tilde F^\omega = f $ is a constant function. We therefore conclude that the connection 1-form employed in this work satisfies the Yang-Mills equations. 

\subsection{\label{secTransf}Gauge and Lorentz transformations}

In this section, we examine the transformation properties of the gauge theory constructed above under two types of geometrical transformations, namely gauge and Lorentz transformations.

Starting from the former, a \emph{gauge transformation} is a principal-bundle isomorphism of $ P \xrightarrow{\pi} M $; that is, a diffeomorphism $ \varphi : P \to P $ satisfying $ \pi \circ \varphi = \pi $ and  $ \Psi( p \cdot \Lambda ) = \Psi(p) \cdot \Lambda $ for every $ p \in P $ and $ \Lambda \in G$. These properties are represented by the following commutative diagram: 
\begin{equation}
    \label{eqGaugeCommute}
    \begin{tikzcd}
        P \arrow{r}{\varphi} & P \\
        P \arrow{u}{R^\Lambda} \arrow{r}{\varphi} \arrow{d}{\pi} & P\arrow{u}{R^\Lambda} \arrow{dl}{\pi} \\
        M
    \end{tikzcd}.
\end{equation}
The \emph{gauge group} $ \mathcal{G} $ of $ P \xrightarrow{\pi} M$ is the group formed by all such maps $ \varphi $ with composition of maps as the group multiplication. In the present context, where $P$ represents the collection of all inertial frames over $M$ (see Section~\ref{secPrincipal}), a gauge transformation can be thought of as a change of inertial reference frame. 

Every gauge transformation $\varphi \in \mathcal{G} $ induces a bundle isomorphism of the associated bundle $E \xrightarrow{\pi_E} M $. Specifically, there is a diffeomorphism $ \varphi_* : E \to E $, defined as $ \varphi_*( [ p, z]) = [ \varphi(p), z] $, making the following diagram commute:
\begin{displaymath}
    \begin{tikzcd}
        E \arrow{r}{\varphi_*} \arrow{d}{\pi_E} & E \arrow{dl}{\pi_E} \\
        M
    \end{tikzcd}.
\end{displaymath}
Note that the well-definition of $\varphi_*$ as a map on $G$-equivalence-classes in $ E $ depends on the $G$-equivariance of $\varphi$. 

Next, the map $ \varphi_*$ induces a map $ \tilde \varphi_* : \Gamma(E) \to \Gamma(E)$, acting on sections from the left, 
\begin{equation}
    \label{eqGauge}
    \tilde \varphi_* s =  \varphi_* \circ s. 
\end{equation}
Given a section $ \sigma: M \to P$ of the principal bundle, $ \tilde \varphi_* $ induces in turn a map $ \tilde \varphi_*^{\sigma} : C^\infty(M) \to C^\infty(M)$ on $\mathbb{C}$-valued functions on $M $ such that 
\begin{equation}
    \label{eqGaugeSigma}
    \tilde \varphi_*^\sigma  = \zeta_{\sigma}^{-1} \circ \tilde \varphi_* \circ \zeta_\sigma.  
\end{equation}
Henceforth, we will abbreviate $ \tilde \varphi_* $ and $ \tilde \varphi_*^\sigma $ by $ \varphi_* $ and $ \varphi_*^\sigma$ for simplicity of notation.  It can then be shown that for any $ X \in \Gamma(TM)$, the covariant derivative $ \nabla_X$ is \emph{gauge-covariant}, i.e., $ \nabla_X \circ \varphi_* = \varphi_* \circ \nabla_X $. The latter implies that the Laplace-type operators $ \Delta$ and $\bar \Delta$ are also gauge-covariant,   
\begin{displaymath}
    \Delta \circ \varphi_* = \varphi_* \circ \Delta, \quad \bar \Delta \circ \varphi_* = \varphi_* \circ \bar \Delta. 
\end{displaymath}
Similarly, the corresponding operators induced by the section $\sigma$ on $\mathbb{C}$-valued functions on $M$ satisfy $ \nabla_X^M \circ \varphi_*^\sigma = \varphi_*^\sigma \circ \nabla_X^M $, $ \Delta^M \circ \varphi_*^\sigma = \varphi_*^\sigma \circ \Delta^M$, and $ \bar \Delta^M \circ \varphi_*^\sigma = \varphi_*^\sigma \circ \bar \Delta^M$, respectively. 

The above highlight an important difference between arbitrary complex-valued functions on $M$ on one hand, and sections $ s \in \Gamma(E) $ and their associated functions $ s^\sigma = \zeta_{\sigma}^{-1} s $ on the other hand, namely, that the former have no canonical behavior under gauge transformations (as indicated by the commutative diagram in~\eqref{eqGaugeCommute}, where there is no action on $M$), whereas the latter transform non-trivially according to~\eqref{eqGauge} and~\eqref{eqGaugeSigma}. In fact, as we now discuss, gauge transformations characterize the behavior of gauge fields, matter fields, and other geometrical objects defined on the base space $M$ under changes of section $ \sigma :M \to P $ of the principal bundle, which in the present context corresponds to a choice of inertial frame. Intuitively, all matter fields $ s^\sigma $ corresponding to the same underlying section $s$ represent the same intrinsic physical configuration, and it is precisely the transformation properties in~\eqref{eqGaugeSigma} that ensure the mutual consistency of these representations. 

To examine this in more detail, we introduce the \emph{nonlinear adjoint bundle} $ \AD P \xrightarrow{\pi_{ \AD P}} M $, which is a fiber bundle over $M$ associated to the principal bundle, with typical fiber $ G$ (see Appendix~\ref{appBundleGeneral}). The gauge group $\mathcal{G}$ can then be identified with sections in $ \Gamma(\AD P )$. In the present setting involving the Abelian group $G \simeq \SO $, $ \AD P $ is a trivial bundle canonically isomorphic to $ M \times G$, and every gauge transformation $ \varphi \in \mathcal{G} \simeq \Gamma(\AD P) $ can be identified with a unique $G$-valued function $\xi $ on $M$, sometimes referred to as a \emph{gauge map}, such that $ \varphi( p ) = p \cdot \xi( \pi( p ) ) $. Conversely, every smooth function $ \xi : M \to G $ induces a gauge transformation $ \varphi$ defined by the same formula. It then follows from the definition of the left action in~\eqref{eqLeftAction} that the induced transformation $ \varphi_* $ on the associated bundle $ E \xrightarrow{\pi_E} M$ acts by multiplication by a spatially dependent phase factor, viz.    
\begin{equation}
    \label{eqGaugeTransfE}
    \varphi_* e = e^{i\alpha \vartheta(\xi(m)) / \sqrt{2}} e, \quad m \in M, \quad e \in E_m.
\end{equation}
See Lemma~\ref{lemPhase} in Appendix~\ref{appBundleMinkowski} for additional details.

Let now $\sigma, \sigma' : M \to P $ be sections of the principal bundle, and $\xi : M \to G $ the unique gauge map such that $ \sigma'(m) = \sigma(m) \cdot \xi(m)$ for all $m \in M$ (note that the uniqueness of $\xi$ is a consequence of the fact that the action of $G$ on $P$ is free). Associated with $\xi$ is a gauge transformation $ \varphi : P \to P$, constructed as described above, such that $\sigma' = \varphi \circ \sigma$. This transformation represents a change of inertial frame from $\sigma$ to $\sigma'$. Given a section $s \in \Gamma(E)$, it is a direct consequence of~\eqref{eqGaugeTransfE} (see also Lemma~\ref{lemPhase}) that the corresponding matter fields $s^\sigma, s^{ \sigma'} \in C^\infty(M)$ transform according to 
\begin{equation}
    \label{eqTransfS}
    s^{\sigma'}(m) = \varphi_{*}^{-1} s^\sigma(m) = e^{-i \alpha \vartheta(\xi(m))/\sqrt{2}} s^\sigma(m).
\end{equation}
Equation~\ref{eqTransfS} shows that matter fields transform in a contravariant (``inverse'') manner under the gauge transformation describing the change of section from $ \sigma $ to $\sigma'$. This behavior is analogous to the transformation rule for the components of vectors in Euclidean space under a change of basis, which acts by the inverse of the transformation of the basis vectors (the analogs of the inertial frames studied here). As stated above, we view $s^\sigma$ and $s^{\sigma'}$ as representations of the same intrinsic physical configuration, i.e., the section $s $, which should transform according to~\eqref{eqTransfS} for consistency. 

It should be noted that the transformation in~\eqref{eqTransfS} is analogous to the transformation of the quantum mechanical wavefunction of a nonrelativistic charged particle moving in an electromagnetic field under gauge transformations of the scalar and vector potential \citep[e.g.,][Section~2.6]{Sakurai93}. This should be of no surprise, since electromagnetism is a $\UU $ gauge theory, and the representation $\rho : G \to \UU$ used to construct $E \xrightarrow{\pi_E} M$ acts as the universal cover of $\UU$.  

Next, we turn to Lorentz transformations of the base space, represented by the affine maps $L^\Lambda_o : M \to M $ from Section~\ref{secMinkowski}. Here, it is important that the origin $o \in M $ is the same as in the definition of the connection 1-form $\omega$ in Section~\ref{secConn}. This type of transformation is distinct from gauge transformations, in the sense that the latter describe how matter or gauge fields (e.g.,  sections in $\mathcal H$) present themselves under different choices of inertial frame, whereas the former describe how the structure of the theory itself (``physical laws'') depends on the choice of inertial coordinate chart. Here, the key structural object that could be potentially affected by the choice of inertial chart is the connection  1-form. However, as shown in Proposition~\ref{propConn}, and already stated in Section~\ref{secConn}, $\omega$ is in fact independent of the choice of inertial chart, so long as that chart is centered at $o$. Thus, the gauge theory employed in this work is invariant under Lorentz transformations with that point as the origin. Clearly, due to the distinguished role that $ o$ plays, the theory is not invariant under translations in the Poincar\'e group.

\section{\label{secQuantum}Quantum dynamics}

In this section, we construct a quantum mechanical system associated with the gauge theory on Minkowski space from Section~\ref{secGauge}. Then, we describe an embedding of the Koopman operator formulation of the classical harmonic oscillator from Section~\ref{secClassical} into the quantum system, thereby proving Theorem~\ref{thmL2}(i). Our construction of the quantum system on Minkowski space, will be based on the standard density operator formulation of quantum mechanics \cite[e.g.,][]{Sakurai93}. This choice is in part motivated from the fact that the density operator formalism provides a useful model for statistical inference in measure-preserving deterministic systems \cite{Giannakis19b}. In particular, here we will not employ the canonical quantization approach employed in the context of quantum field theory, whereby one would treat solutions $ \tilde \Delta s = 0$ associated with the Laplacian in~\eqref{eqLaplSect} as being ``classical'', and then  promoting each of the normal mode expansion coefficients of these solutions to ladder operators for a quantum field. Instead, we will treat a (self-adjoint extension of) $\tilde \Delta$ directly as a quantum Hamiltonian without reference to an underlying classical system. When there is no risk of confusion with the usage of that term in other contexts, we will refer to the quantum system on Minkowski space as a quantum harmonic oscillator.

\subsection{\label{secHamiltonian}Quantum Dynamics on Minkowski space}

Let $\mathcal{S}(M,E)$ be the Schwartz space of rapidly decreasing sections in $\Gamma(E)$, defined using any inertial chart $ x : M \to \mathbb{R}^2$ and corresponding trivializing section $\sigma_x \in \Gamma(P)$ as
\begin{multline*}
    \mathcal{S}(M,E) = \{ s \in \Gamma(E) : \\  \sup_{m \in M} \lvert x(m)^\alpha \partial^\beta_{x(m)} ( s \circ \zeta_{\sigma_x}^{-1} \circ x ) \rvert < \infty,  \forall \alpha,\beta \in \mathbb{N}^n,\;  n \in \mathbb{N}_0 \}. 
\end{multline*}
The operator $ \tilde H : \mathcal{S}(M) \to \mathcal{H}$, 
\begin{displaymath}
    \tilde H = \frac{1}{2} \tilde \Delta  = \frac{1}{2} \Delta + v, 
\end{displaymath}
is then a symmetric operator with a dense domain $ \mathcal{S}(M) \subset \mathcal{H}$. Similarly, we let $ \mathcal{S}(M) \subset L^2(\nu)$ be the Schwartz space of complex-valued functions on $M$, and $\tilde H^M : \mathcal{S}(M) \to L^2(\nu)$  be the densely defined, symmetric operator equal to $ \tilde \Delta^{M}/2$. As in Section~\ref{secGauge}, we always consider that $x$ is centered at $o \in M$, so that the various coordinate-based formulas for covariant derivatives and Laplace operators from Section~\ref{secConn} apply.

It can then be readily verified from~\eqref{eqLaplSect} and~\eqref{eqLaplSigma} that $\tilde H$ is diagonalizable in an orthonormal basis of eigensections associated with tensor products of Hermite functions in the $x^0 $ and $x^1 $ coordinates. Specifically, let $ p_j \in C^\infty(\mathbb{R}) $ be the Hermite polynomial of degree $ j \in \mathbb{N}_0$, and $ \chi_j \in C^\infty(\mathbb{R}) $ the Hermite function with 
\begin{displaymath}
    \chi_j(z) = \frac{1}{\sqrt{2^j j!}} \left( \frac{\alpha}{\pi} \right)^{1/4} e^{-\alpha^2 z^2/2} p_j(\alpha^{1/2} z).
\end{displaymath}
As is well known, the set $ \{ \chi_j \}_{j=0}^\infty $ is an orthonormal basis of the Hilbert space $L^2(\mathbb{R})$ associated with the Lebesgue measure on $\mathbb{R}$, consisting of eigenfunctions of the quantum harmonic oscillator Hamiltonian \cite{Sakurai93,Hall13}. That is,
\begin{displaymath}
    H_0 \chi_j = E_j \chi_j, \quad E_j = ( 2 j + 1 ) \alpha /2, \quad j \in \mathbb N_0,
\end{displaymath}
where $H_0 : \mathcal{S}(\mathbb{R}) \to L^2(\mathbb{R}) $ is defined on the Schwartz space $\mathcal{S}(\mathbb{R})$ as 
\begin{displaymath}
    H_0 f(z) = \frac{1}{2} f''(z) + \frac{\alpha^2}{2} z^2 f(z).
\end{displaymath}
It then follows from~\eqref{eqLaplSect} and the fact that $L^2(\nu) = L^2(\mathbb{R}) \otimes L^2(\mathbb{R}) $ that the functions $ \chi_{jk} = \chi_j \otimes \chi_k $ form an orthonormal basis $ \{ \chi_{jk} \}_{j,k=0}^\infty$ of $L^2(\nu)$ consisting of eigenfunctions of $\tilde H^M $, corresponding to the eigenvalues
\begin{displaymath}
    E_{jk} = (k -j) \alpha.
\end{displaymath}
Correspondingly, $ \{ \psi_{jk} \}_{j,k=0}^\infty$, with $ \psi_{jk} = \zeta_\sigma \chi_{jk}$, is an orthonormal basis of $\mathcal{H}$ consisting of eigensections of $\tilde H$ at the same corresponding eigenvalues $E_{jk}$. 

Based on the above, we can conclude that $\tilde H$ is a densely-defined, symmetric, diagonalizable operator, and therefore has a unique self-adjoint extension $H : D(H) \to \mathcal{H}$ defined on the domain $D(H) \supset \mathcal{S}(M,E)$ with
\begin{displaymath}
    D(H) = \left \{ \sum_{j,k=0}^\infty c_{jk} \psi_{jk} \in \mathcal{H} : \sum_{j,k=0}^\infty E_{jk}^2 \lvert c_{jk} \rvert^2 < \infty \right \}.
\end{displaymath}
This operator acts as the Hamiltonian of a quantum system, generating a strongly-continuous, unitary group of Heisenberg evolution operators $W^t : \mathcal{H} \to \mathcal{H}$, given by
\begin{equation}
    \label{eqHeis}
    W^t = e^{i t H }.
\end{equation}
Following the standard formulation of quantum mechanics, we will consider the states of this quantum system to be non-negative, trace-class operators on $\mathcal{H}$ with unit trace. Specifically, the set of \emph{regular quantum states} on $\mathcal{H}$ is the closed, convex subset of the Banach space $B_1(\mathcal{H})$ of trace-class operators on $\mathcal H$, equipped with the trace norm, $\lVert A \rVert_{1} = \tr \lvert A \rvert$, given by 
\begin{displaymath}
    \mathcal{Q}(\mathcal{H}) = \{ \rho \in B_1(\mathcal{H}) : \rho \geq 0,\; \tr \rho = 1 \}. 
\end{displaymath}
We recall that a state $\rho \in \mathcal{Q}(\mathcal H)$ is said to be \emph{pure} if there exists $ s \in \mathcal H$ such that $ \rho = \langle \cdot, s \rangle_{\mathcal H} s$. That is, every pure state is a rank-1, orthogonal projection on $\mathcal H$, $ \rho^2 = \rho$, and we have $ \lVert \rho \rVert_1 = \lVert \rho \rVert = 1$, where $ \lVert \cdot \rVert$ denotes the $\mathcal H$-operator norm. The states in $\mathcal Q( \mathcal H)$ which are not pure are said to be \emph{mixed}.

The unitary evolution group $ W^t $ acts on states in $\mathcal{Q}(\mathcal{H})$ through the conjugation map $ Z^t : \mathcal{Q}(\mathcal{H}) \to \mathcal{Q}(\mathcal{H})$, defined as
\begin{equation}
    \label{eqZt}
    Z^t(\rho) = W^{t*} \rho W^t.
\end{equation}
As usual, the observables of the quantum system will be the (bounded and unbounded) self-adjoint operators on $\mathcal{H}$. In what follows, we will typically restrict attention to bounded observables in the set
\begin{displaymath}
    \mathcal{A}(\mathcal{H}) = \{ A \in B(\mathcal{H}) : A^* = A \},
\end{displaymath}
where $B(\mathcal H)$ is the Banach space of bounded operators on $\mathcal H$, equipped with the operator norm, $\lVert \cdot \rVert$. It can be readily verified that $\mathcal{A}(\mathcal{H})$ is closed under addition of operators, scalar multiplication by real numbers, and the multiplication operation 
\begin{displaymath}
    A \cdot B = B \cdot A = ( AB + BA ) / 2.   
\end{displaymath}
Furthermore, we have $ \lVert A \cdot B \rVert \leq \lVert A \rVert \lVert B \rVert$, so that $\mathcal{A}(\mathcal{H})$ has the structure of an Abelian Banach algebra of quantum observables over the real numbers. On $\mathcal{A}( \mathcal H) $, the quantum system has an induced linear action $ \tilde W^t : \mathcal{A}(\mathcal H) \to  \mathcal A( \mathcal H)$, 
\begin{equation}
    \label{eqWtStar}
    \tilde W^t A = W^t A W^{t*},
\end{equation}
which preserves the operator norm. We also let $ \mathcal{A}'(\mathcal H) $ be the  the Banach space of continuous linear functionals on $ \mathcal{A}(\mathcal{H})$, equipped with the standard (operator) norm. 

The space of quantum states $\mathcal Q(\mathcal H)$ embeds naturally  into $\mathcal A'(\mathcal H)$ through the continuous map $\iota : \mathcal Q(\mathcal H) \to \mathcal A'(\mathcal H)$, such that $\iota(\rho) := \mathbb E_\rho $ is equal to the quantum mechanical expectation functional given by
\begin{equation}
    \label{eqQExp}
    \mathbb{E}_\rho A = \tr(\rho A).
\end{equation}
Note that the continuity of $\mathbb E_\rho$ follows from the fact that 
\begin{displaymath}
    \lvert \mathbb E_{\rho} A \rvert \leq  \lVert \rho \rVert_1 \lVert A \rVert = \lVert A \rVert.
\end{displaymath}
In fact, it can be readily verified that $ \iota $ is a contraction, i.e., $ \lVert \iota(\rho) \rVert \leq \lVert \rho \rVert_1 = 1 $, where the equality is saturated by pure states. We shall refer to the elements of $\mathcal A'(\mathcal H)$ as \emph{generalized quantum states}.

Intuitively, $ A \mapsto \mathbb{E}_\rho A $ can be thought of as ``evaluation'' of the quantum mechanical observable $A$ at the state $\rho$.  It follows directly from~\eqref{eqZt} and~\eqref{eqWtStar} that the property $ \mathbb{E}_{Z^t(\rho)} = \mathbb{E}_\rho \circ \tilde W^t$ holds for all $ t\in \mathbb{R}$. In other words, the following diagram commutes, 
\begin{displaymath}
    \begin{tikzcd}
        \mathcal{Q}(\mathcal H) \arrow{r}{Z^t} \arrow{d}{\iota} & \mathcal Q( \mathcal H) \arrow{d}{\iota}\\
        \mathcal{A}'(\mathcal H) \arrow{r}{\tilde W^{t\prime}} & \mathcal A'(\mathcal H),
    \end{tikzcd}
\end{displaymath}
where $ \tilde W^{t\prime} : \mathcal A'(\mathcal H) \to \mathcal A'( \mathcal H)$ is the composition map by $\tilde W^t$, acting isometrically on $ \mathcal A'( \mathcal H)$ according to the formula
\begin{equation}
    \label{eqTildeWt}
    \tilde W^{t\prime} J = J \circ \tilde W^t. 
\end{equation}

This completes the construction of our quantum mechanical system on Minkowski space. It should be noted that the dynamics of this system, generated by $ H $, differ from a spherical quantum harmonic oscillator of frequency $\alpha$ in two-dimensional Euclidean space in two key ways, namely: 
\begin{enumerate}
    \item The energy levels $E_{jk}$ range from $-\infty $ to $ \infty $, as opposed to the Euclidean case, where the energy levels are strictly positive, and have the ground-state eigenvalue $\alpha $. 
    \item The eigenspaces corresponding to any energy level $E_{jk} $ are infinite-dimensional, whereas eigenspaces of the harmonic oscillator in Euclidean space are all finite-dimensional. 
\end{enumerate}
The evolution group generated by $H$ also has a fundamental difference from the Koopman operator group $U^t$ for the classical harmonic oscillator generated by $V $. That is, unlike $U^t$, which is realized through the deterministic flow $ \Phi^t $ on the circle via~\eqref{eqKoop}, the Heisenberg operators $W^t$ in~\eqref{eqHeis} are \emph{not} the realization of a flow on Minkowski space, i.e., there exists no measurable flow $ \hat \Phi^t : M \to M $ such that $W^t f = f \circ \hat \Phi^t$. Indeed, a necessary condition for that to happen is that $ H $ acts as a derivation on an algebra of bounded sections contained in $ D(H) $ \cite[][Theorem~1.1]{TerElstLemanczyk17}, and this is clearly not the case since $H $ is a second-order operator that does not obey a Leibniz rule (cf.~\eqref{eqLeibniz}). 

Yet, as alluded to in Section~\ref{secIntro}, the Koopman and Heisenberg evolution groups studied here do have an important structural similarity, namely that, up to multiplication by $i $, the spectra $ \{ i \alpha_j =  ij \alpha \}_{j=-\infty}^\infty $ and $ \{ E_{jk} = (k-j) \alpha \}_{j,k=0}^\infty$ of their generators, respectively, are identical. In Section~\ref{secIso} ahead, we will take advantage of this spectral equivalence to construct an isometric embedding of the Hilbert space $L^2(\mu)$ associated with the classical harmonic oscillator into the Hilbert space of sections  $\mathcal{H}$ that maps the generator $ V$ to the Hamiltonian $ H $. This construction will be based on a quantum mechanical representation of the periodic dynamics of the circle rotation, which we describe next.         

\subsection{\label{secQuantumCircle}Quantum dynamics of the circle rotation}

Following \cite{Giannakis19b}, we assign ``quantum states'' to the classical harmonic oscillator as positive, unit-trace operators on $L^2(\mu)$; that is, operators lying in the set
\begin{align*}
    \mathcal{Q}(L^2(\mu)) & = \{ \rho \in B_1(L^2(\mu)): \rho \geq 0, \; \tr \rho = 1 \},  
\end{align*}
on which the Koopman group acts by the conjugation map $ \tilde\Phi^t : \mathcal Q(L^2(\mu)) \to \mathcal Q(L^2(\mu))$, 
\begin{displaymath}
    \tilde\Phi^t (\rho) = U^{t*} \rho U^t.
\end{displaymath}
Note that we use the symbol $ \tilde\Phi^t$ to indicate that the dynamics on the quantum states $ \mathcal{Q}(L^2(\mu))$ is induced by the classical flow $\Phi^t$ on the circle (cf.\ $Z^t$ from~\eqref{eqZt}, where no such correspondence exists). 

Next, we assign quantum observables as  self-adjoint operators on $L^2(\mu) $. Proceeding as in Section~\ref{secHamiltonian}, we will focus on the space 
\begin{displaymath}
    \mathcal{A}(L^2(\mu)) =\{ A \in B(L^2(\mu)) : A^* = A \}
\end{displaymath}
of bounded self-adjoint operators, which becomes an Abelian Banach algebra over the real numbers analogously to $\mathcal A(\mathcal H)$. We define the continuous dual  $\mathcal A'(L^2(\mu))$ of $\mathcal A(L^2(\mu))$, as well as the expectation functionals $\mathbb E_\rho \in \mathcal A'( L^2(\mu)) $, evolution maps $ \tilde U^t : \mathcal A(L^2(\mu)) \to \mathcal A(L^2(\mu))$, $\tilde U^{t\prime} : \mathcal A'(L^2(\mu)) \to \mathcal A'(L^2(\mu)) $, and inclusion map $ \iota : \mathcal Q(L^2(\mu)) \to \mathcal A'(L^2(\mu))$ analogously to their counterparts for $ \mathcal A(\mathcal H )$ and $\mathcal A'(\mathcal H)$ in Section~\ref{secQuantum}.   

We now describe an embedding of the classical rotation on the circle to the quantum system introduced above, at the level of both states and observables. For that, consider first the Abelian Banach algebra $C_{\mathbb{R}}(S^1)$ consisting of continuous, real-valued functions on the circle, equipped with the maximum norm. This algebra embeds homomorphically into $\mathcal A(L^2(\mu))$ through the mapping $T : C_{\mathbb{R}}(S^1) \to \mathcal A(L^2(\mu))$ that sends the continuous, real-valued function $f \in C_{\mathbb R}(S^1)$ to the self-adjoint multiplication operator $ T_f := Tf \in \mathcal A(L^2(\mu))$ that multiplies by that function, i.e., $ T_f g = fg$ for all $ g \in L^2(\mu)$. It is then straightforward to verify that 
\begin{equation}
    \label{eqCommuteTS1}
    \tilde U^t \circ T = T \circ U^t, \quad \forall t \in \mathbb{R},
\end{equation}
The above shows that continuous, real-valued classical observables  evolve consistently under the action of the Koopman operator with their corresponding quantum observables (multiplication operators in $\mathcal A(L^2(\mu))$). Similarly, the transpose map $T': \mathcal A'(L^2(\mu)) \to C'_{\mathbb R}(S^1)$, given by $ T' J = J \circ T $, satisfies
\begin{equation}
     T' \circ \tilde U^{t\prime} = U^{t\prime} \circ T', \quad \forall t \in \mathbb{R}
    \label{eqCommuteTPrimeS1}.
\end{equation}

Clearly, one could carry out a similar construction for the quantum system on Minkowski space $M$, replacing $C_\mathbb{R}(S^1)$ with the Banach space of real-valued, compactly supported functions on $M$. In the case of the circle rotation, however, additional correspondences can be derived with the help of the heat kernel (see Section~\ref{secHeatKernel}), namely:  

\begin{prop}
    \label{propS1Quantum}
    Let $ F_\tau : S^1 \to \mathcal F(\mathcal K_\tau) \subset \mathcal K_\tau$ be the feature map associated with the heat kernel on the circle at time parameter $ \tau > 0$. Let also $\Pi : C(S^1) \setminus \{ 0 \} \to \mathcal Q(L^2(\mu))$ map the nonzero classical observable $f $ to the pure quantum state $ \Pi(f) = \rho_f  $ given by $ \rho_f = \langle f, \cdot \rangle_\mu f / \lVert f \rVert^2_\mu $. Then, the following hold for every $ \tau > 0$ and $ t \in \mathbb R$: 
    \begin{enumerate}[(i), wide]
        \item $\Psi_\tau : S^1 \to \mathcal{Q}(L^2(\mu))$, $\Psi_\tau = \Pi \circ F_\tau$, is an injective, continuous map with respect to the trace-norm topology of $\mathcal Q(L^2(\mu))$, and the following dynamical compatibility relationships hold: 
        \begin{gather*}
            F_\tau \circ \Phi^t = U^{-t} \circ F_\tau, \quad \Pi \circ U^{-t} = \Phi^t_* \circ \Pi, \\
            \Psi_\tau \circ \Phi^t = \tilde\Phi^t \circ \Psi_\tau.
        \end{gather*}
    \item The induced linear map $ \Omega_\tau : \mathcal A(L^2(\mu)) \to C_{\mathbb R}(S^1)$, mapping quantum observable $A$ to the classical observable $ f_A = \Omega_\tau A $, with $ f_A(\theta) = \mathbb{E}_{\Psi_\tau(\theta)} A $ is well-defined (i.e., $f_A$ is a continuous, real-valued function), satisfying 
        \begin{displaymath}
            \Omega_\tau \circ \tilde U^t = U^t \circ \Omega_\tau.
        \end{displaymath}
    \item The transpose map $\Omega_\tau': C_{\mathbb R}'(S^1) \to \mathcal A'(L^2(\mu))$, defined as $ \Omega_\tau' J = J \circ \Omega_\tau$ satisfies
        \begin{displaymath}
            \tilde U^{t\prime} \circ \Omega'_\tau = \Omega_\tau' \circ U^{t\prime}.
        \end{displaymath}
    \end{enumerate}
\end{prop}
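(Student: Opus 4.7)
The plan is to attack the three items in sequence, with (i) serving as the engine for (ii) and (iii). The foundational ingredients are all already at my disposal: translation invariance of the heat kernel $\kappa_\tau$ (Section~\ref{secHeatKernel}), injectivity and continuity of the feature map $F_\tau$, strict positive-definiteness of $\kappa_\tau$, unitarity of $U^t$ on $L^2(\mu)$, and cyclicity of the trace on $B_1(L^2(\mu))$.

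For (i), I would first establish the two ``atomic'' equivariance identities and then obtain the third as their composition. The identity $F_\tau\circ\Phi^t = U^{-t}\circ F_\tau$ reduces to the pointwise relation $\kappa_\tau(\Phi^t(\theta),\theta') = \kappa_\tau(\theta,\Phi^{-t}(\theta'))$, a one-line consequence of translation invariance of $\kappa_\tau$ under the circle rotation. The identity $\Pi\circ U^{-t} = \Phi^t_*\circ\Pi$ follows from two facts: unitarity of $U^{-t}$ preserves $\lVert\cdot\rVert_\mu$, and $U^{t*}=U^{-t}$, so a short manipulation of $\rho_{U^{-t}f} = \langle U^{-t}f,\cdot\rangle_\mu U^{-t}f/\lVert f\rVert_\mu^2$ identifies it with $U^{-t}\rho_f U^t$, which is the Koopman-induced conjugation applied to $\rho_f$ (I read $\Phi^t_*$ here in the overloaded sense introduced in Section~\ref{secCircle}, as the induced action on $\mathcal Q(L^2(\mu))$). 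For injectivity of $\Psi_\tau$, I would argue that $\Pi(F_\tau(\theta)) = \Pi(F_\tau(\theta'))$ forces $F_\tau(\theta')\in\spn\{F_\tau(\theta)\}$, contradicting strict positive-definiteness of $\kappa_\tau$ unless $\theta=\theta'$. For trace-norm continuity, I would compose continuity of $F_\tau$ into $\mathcal K_\tau\hookrightarrow L^2(\mu)$ with continuity of the normalized rank-one projector $f\mapsto\langle f,\cdot\rangle_\mu f/\lVert f\rVert_\mu^2$ on $L^2(\mu)\setminus\{0\}$ into $B_1(L^2(\mu))$, the latter verified via the standard estimate $\lVert\langle u,\cdot\rangle_\mu v\rVert_1 = \lVert u\rVert_\mu\lVert v\rVert_\mu$ applied to the difference of two such projections after normalization.

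For (ii), I would write $f_A(\theta) = \langle F_\tau(\theta),AF_\tau(\theta)\rangle_\mu/\lVert F_\tau(\theta)\rVert_\mu^2$, so that real-valuedness is immediate from $A^*=A$, and continuity of $f_A$ follows from trace-norm continuity of $\Psi_\tau$ combined with the bound $\lvert\tr(\rho A)\rvert\le\lVert\rho\rVert_1\lVert A\rVert$. The intertwining $\Omega_\tau\circ\tilde U^t = U^t\circ\Omega_\tau$ is then a short chain of equalities using cyclicity of the trace, the definition $\tilde U^t A = U^t A U^{-t}$, and the compatibility identity $\tilde\Phi^t(\Psi_\tau(\theta)) = \Psi_\tau(\Phi^t(\theta))$ supplied by (i). Part (iii) I would obtain by formal transposition of (ii): for $J\in C_{\mathbb R}'(S^1)$ and $A\in\mathcal A(L^2(\mu))$, both $(\tilde U^{t\prime}\Omega_\tau' J)(A)$ and $(\Omega_\tau' U^{t\prime}J)(A)$ unfold to $J(\Omega_\tau\tilde U^t A) = J(U^t\Omega_\tau A)$ via the definitions of the transposes and (ii).

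I do not anticipate any genuinely hard step. The most technical point is the trace-norm continuity of the rank-one projector map, which is routine but must be verified rather than taken for granted. The only other subtlety is notational: the $\Phi^t_*$ appearing in the second identity of (i) must be read in the overloaded sense of Section~\ref{secCircle} as the Koopman-induced conjugation on $\mathcal Q(L^2(\mu))$, not as the transfer operator on measures. Once these two points are settled, the entire proposition is driven by translation invariance of $\kappa_\tau$, unitarity of $U^t$, and trace cyclicity.
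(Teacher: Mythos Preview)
Your proposal is correct and follows essentially the same route as the paper's proof: translation invariance of $\kappa_\tau$ for the first equivariance, unitarity of $U^t$ for the second, composition for the third, strict positive-definiteness for injectivity, trace cyclicity for the intertwining in (ii), and formal transposition for (iii).

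One small simplification the paper makes that you do not: for trace-norm continuity of $\Psi_\tau$, the paper first observes (via a separate lemma) that $\lVert F_\tau(\theta)\rVert_\mu$ is actually \emph{independent} of $\theta$, as a consequence of translation invariance of $\kappa_\tau$ and unitarity of $U^t$. With the normalizing constant fixed, the trace-norm estimate reduces to a single add-and-subtract step followed by $\lVert\langle u,\cdot\rangle_\mu v\rVert_1 = \lVert u\rVert_\mu\lVert v\rVert_\mu$, bypassing the need to control a varying denominator. Your approach---continuity of $f\mapsto \langle f,\cdot\rangle_\mu f/\lVert f\rVert_\mu^2$ on $L^2(\mu)\setminus\{0\}$---is more general and also works, but the constant-norm observation is worth noting since it streamlines the estimate. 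Your identification of the notational issue with $\Phi^t_*$ in the second identity of (i) is apt; the paper's own proof silently uses $\tilde\Phi^t$ there.
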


A proof of Proposition~\ref{propS1Quantum} is included in Appendix~\ref{appS1Quantum}. Figure~\ref{figCommutL2} depicts the quantum-classical relationships from the proposition and \eqref{eqCommuteTS1}, \eqref{eqCommuteTPrimeS1} in the form of commutative diagrams. These diagrams indicate that the time-$\tau$ heat kernel on the circle induces, through its corresponding RKHS feature map (which underpins the maps $\Psi_\tau$, $\Omega_\tau$, and $\Omega'_\tau$) two types of classical--quantum correspondence associated with the circle rotation, namely:
\begin{enumerate}
    \item A continuous, one-to-one mapping of classical states (points) in the circle and regular quantum states on $L^2(\mu)$, carried out by $\Psi_\tau$, as well as a continuous one-to-one mapping with the larger space of functionals in $\mathcal A'(L^2(\mu))$, carried out by $ \Omega_\tau'$. 
    \item A continuous mapping of bounded quantum mechanical observables on $L^2(\mu)$ to classical observables (continuous, real-valued functions) on the circle, carried out by $\Omega_\tau$. 
\end{enumerate}
Moreover, both of these mappings are consistent (equivariant) with the classical and quantum dynamics, in the sense of the commutative diagrams in Fig.~\ref{figCommutL2}. Together, these facts prove the claims about $\Psi_\tau$ and $\Omega_\tau$ in Theorem~\ref{thmL2}(ii, iii), respectively, and will be sufficient to complete the proof of the theorem in Section~\ref{secIso} below. 

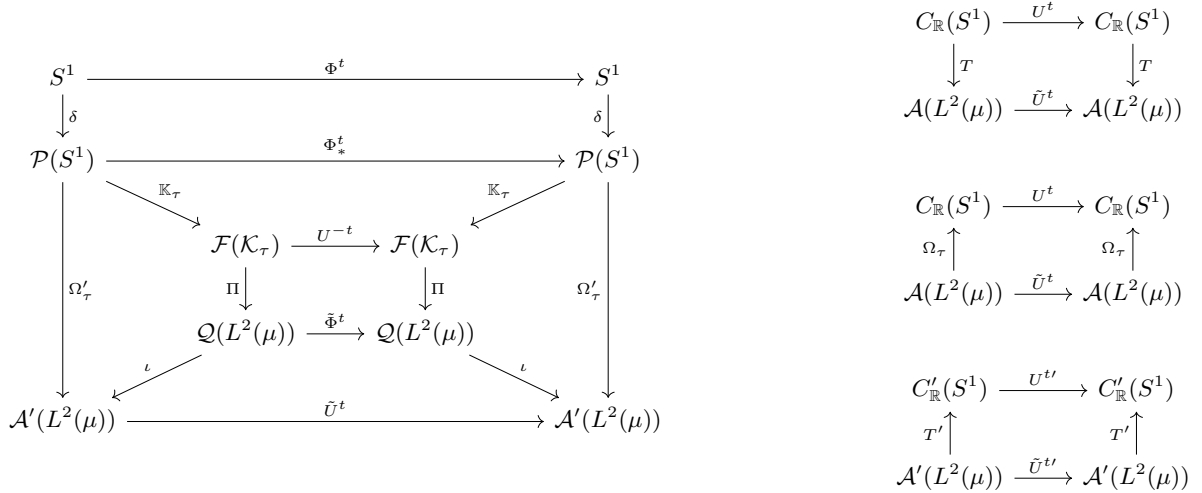
\begin{figure*}
    \begin{minipage}{.65\linewidth}
        \begin{displaymath}
            \begin{tikzcd}
                S^1 \arrow{d}{\delta} \arrow{rrr}{\Phi^t} & & & S^1 \arrow[swap]{d}{\delta} \\
                \mathcal{P}(S^1)  \arrow{rrr}{\Phi^t_*} \arrow{ddd}{\Omega'_{\tau}} \arrow{dr}{\mathbb K_\tau} & & & \mathcal P(S^1) \arrow[swap]{dl}{\mathbb K_\tau} \arrow[swap]{ddd}{\Omega'_{\tau}} \\
                & \mathcal F(\mathcal K_\tau) \arrow[swap]{d}{\Pi} \arrow{r}{U^{-t}} & \mathcal F(\mathcal K_\tau) \arrow{d}{\Pi} \\
                & \mathcal{Q}(L^2(\mu)) \arrow[swap]{dl}{\iota} \arrow{r}{\tilde \Phi^t} & \mathcal{Q}(L^2(\mu)) \arrow{dr}{\iota} \\
                \mathcal A'(L^2(\mu)) \arrow{rrr}{\tilde U^t} & & & \mathcal A'(L^2(\mu)) 
            \end{tikzcd}
        \end{displaymath}
    \end{minipage} \hfill
    \begin{minipage}{.3\linewidth}
        \begin{displaymath}
            \begin{tikzcd}
                C_\mathbb{R}(S^1) \arrow{r}{U^t} \arrow{d}{T} & C_\mathbb{R}(S^1) \arrow{d}{T} \\
                \mathcal A(L^2(\mu)) \arrow{r}{\tilde U^t} & \mathcal A(L^2(\mu))
            \end{tikzcd}
        \end{displaymath} 

        \begin{displaymath}
            \begin{tikzcd}
                C_{\mathbb R}(S^1) \arrow{r}{U^t} & C_{\mathbb R}(S^1) \\
                \mathcal A(L^2(\mu)) \arrow{r}{ \tilde U^t} \arrow{u}{\Omega_\tau} & \mathcal A(L^2(\mu)) \arrow{u}{\Omega_\tau}
            \end{tikzcd}
        \end{displaymath}

        \begin{displaymath}
            \begin{tikzcd}
                C'_{\mathbb R}(S^1) \arrow{r}{U^{t\prime}} & C'_{\mathbb R}(S^1) \\
                \mathcal A'(L^2(\mu)) \arrow{u}{T'} \arrow{r}{\tilde U^{t\prime}} & \mathcal A'(L^2(\mu)) \arrow{u}{T'}
            \end{tikzcd}
        \end{displaymath}
    \end{minipage}
    \caption{\label{figCommutL2}Commutative diagrams illustrating the correspondence between classical states/observables of the harmonic oscillator and quantum states/observables on the $L^2(\mu)$ space associated with the invariant measure. The diagram on the left shows how classical states (points in the circle $S^1$) embed injectively into Radon probability measures $\mathcal P(S^1)$ via the map $\delta$ mapping into Dirac measures, which in turn map injectively into nonzero RKHS functions under the embedding $\mathbb K_\tau$, and then into quantum states and generalized quantum states on $L^2(\mu)$ via the maps $\Pi$  and $\iota$, respectively. The composition $ \mathbb K_\tau \circ \delta$ corresponds to the RKHS feature map $F_\tau$ (not shown). These embeddings are all equivariant with the dynamical evolution maps at each level, as depicted in the diagram. The top diagram in the right shows the dynamical equivariance properties of the Banach algebra homomorphism $ T $, mapping real-valued continuous functions on $S^1$ to bounded, self-adjoint multiplication operators in $\mathcal A(L^2(\mu))$. The middle diagram shows the equivariance properties of $\Omega_\tau : \mathcal A(L^2(\mu)) \to C_\mathbb{R}(S^1)$, which is not an algebra homomorphism. The bottom diagram shows the dynamical equivariance properties of the transpose map $T':\mathcal A'(L^2(\mu)) \to \mathbb C_\mathbb{R}'(S^1)$.}
\end{figure*}

We close this section by noting that, despite their equivariance properties, the classical--quantum correspondences identified thus far fail to preserve an important structure of both the classical and quantum representations of the circle rotation, namely the Abelian algebraic structure of classical observables in $C_{\mathbb R}(S^1)$ and continuous multiplication operators in $ \mathcal A(L^2(\mu))$. That is, apart from special cases, starting from a classical  observable, mapping it to a multiplication operator in $ \mathcal A(L^2(\mu))$ via $T$, and mapping the result back to $C_{\mathbb R}(S^1)$ through $ \Omega_\tau$ does not recover the original observable. In other words, $ \Omega_\tau $ is not a left inverse of $ T$, and similarly, $\Omega'_\tau$ is not a right inverse of $T'$. Had these properties been true, $ \Omega_\tau \circ \tilde U^t \circ T $ would be equal to $ U^t $ and $ T' \circ \tilde U^{t\prime} \circ \Omega_\tau' $ would be equal to $U^{t\prime} $ which would then imply that the evolution of any classical observable in $C_\mathbb{R}(S^1)$ and Radon measure in $C_{\mathbb R}'(S^1)$ can be respectively understood as the evolution of a quantum observable in $\mathcal A(L^2(\mu))$ and a generalized quantum state in $ \mathcal A'(L^2(\mu))$. We will improve upon this inconsistency in Section~\ref{secRKHA} by considering quantum states and algebras of quantum observables associated with the RKHAs $ \hat{\mathcal K}_\tau$, as opposed to the $L^2(\mu)$ space. 

\subsection{\label{secIso}Isometric embedding}

We now have the necessary ingredients to construct an isometric embedding (Hilbert space homomorphism) of the $L^2(\mu)$ space associated with the invariant measure of the circle rotation into the Hilbert space $\mathcal H$ of sections over Minkowski space, thereby inducing an embedding of classical and quantum states of the circle rotation into the states of the quantum harmonic oscillator on Minkowski space, as well as a corresponding pullback map from quantum observables of the latter system into quantum and classical observables of the circle rotation. 

We begin by introducing the following three Hilbert subspaces of $\mathcal{H}$,
\begin{equation}
    \label{eqHSub}
    \begin{aligned}
        \mathcal{H}_0 &= \spn \{ \psi_{00} \}, \\
        \mathcal{H}_- &= \overline{ \spn\{\psi_{j0} : j > 0 \}}, \\
        \mathcal{H}_+ &= \overline{ \spn\{ \psi_{0j}: j > 0 \}}, 
    \end{aligned}
\end{equation}
where overlines denote closure with respect to $ \mathcal{H} $ norm. Because they are spanned by distinct eigenfunctions of $ H $, these subspaces are mutually orthogonal, and each of them is invariant under the Heisenberg operator $W^t $ for all $ t \in \mathbb{R}$. Moreover, if a section $ s $ lies in $\mathcal{H}_0 $, $ \mathcal{H}_- \cap D(H) $, or $\mathcal{H}_+ \cap D(H)$, then $ \langle s, H s \rangle_{\mathcal{H}} $ is zero, strictly negative, or strictly positive, respectively. Due to that, we intuitively interpret pure states $ \rho= \langle s, \cdot \rangle_{\mathcal{H}} s$ with $s $  in $\mathcal{H}_0$, $\mathcal{H}_-$, or $\mathcal{H}_+$, as ``vacuum'', ``antimatter'', or ``matter'' states, respectively. Defining  $ \tilde{\mathcal{H}} = \mathcal{H}_0 \oplus \mathcal{H}_{-} \oplus \mathcal{H}_+ $, we note that for pure states with $s $ in the orthogonal complement of $\tilde{\mathcal{H}}$ in $ \mathcal H $,  $ \langle s, Hs \rangle_{\mathcal{H}}$ is not sign-definite.

With these considerations in mind, and in analogy with~\eqref{eqHSub}, we define the Hilbert subspaces of $L^2(\mu)$ given by
\begin{equation}
    \label{eqL2Sub}
    \begin{aligned}
        L^2_0(\mu) &= \spn\{ \phi_0 \}, \\
        L^2_-(\mu) &= \overline{\spn\{ \phi_j : j < 0 \} }, \\
        L^2_+(\mu) &= \overline{\spn\{ \phi_j : j > 0 \} }, 
    \end{aligned}
\end{equation}
where $\phi_j$ are the Koopman eigenfunctions from Section~\ref{secKoopEig}. These subspaces are all invariant under the Koopman operator $U^t$ for any $t \in \mathbb{R}$. Moreover, $ \langle f, V f \rangle_{\mu}$ is zero, strictly negative, or strictly positive whenever  $f$ lies in $L^2_0(\mu) $, $L^2_-(\mu) \cap D(V) $, or $L^2_+(\mu) \cap D(V)$, so we interpret the corresponding pure states $ \rho = \langle f, \cdot \rangle_\mu f$ as being zero-, negative-, or positive-frequency states, respectively.  

Motivated by the similarity between~\eqref{eqHSub} and~\eqref{eqL2Sub}, we define the linear operator $\mathcal{U} : L^2(\mu) \to \mathcal{H}$, characterized completely through the relationships
\begin{equation}
    \label{eqUTransf}
    \mathcal{U} \phi_j = 
    \begin{cases}
        \psi_{00}, & j = 0, \\
        \psi_{-j,0}, & j < 0, \\
        \psi_{0j}, & j > 0.
    \end{cases}
\end{equation}
It then follows directly from its definition that $\mathcal{U}$ is an isometric embedding of $L^2(\mu)$ into $\mathcal{H}$, i.e., it is an injective operator satisfying  
\begin{displaymath}
    \langle \mathcal{U} f_1, \mathcal{U} f_2 \rangle_{\mathcal{H}} = \langle f_1, f_2 \rangle_{\mu}, \quad \forall f_1,f_2 \in L^2(\mu),
\end{displaymath}
and $\mathcal{U}^* \mathcal{U}$ is equal to the identity on $L^2(\mu)$.  Moreover, $\mathcal{U}$ clearly maps $L^2_0(\mu)$ to $\mathcal{H}_0$, $L^2_-(\mu)$ to $\mathcal{H}_-$, and $L^2_+(\mu)$ to $\mathcal{H}_+$.  In other words, $\mathcal{U}$ maps  the zero-, negative-, and positive-frequency states of the classical harmonic oscillator map to the vacuum, matter, and antimatter states of the quantum harmonic oscillator on Minkowski space. The range of $\mathcal{U}$ is clearly equal to $\tilde{\mathcal{H}}$, and as a result $\mathcal{U} \mathcal{U}^*$ is equal to the identity operator on that space. 

The operator $\mathcal{U}$ provides the isometric embedding of the Koopman operator formulation of the classical harmonic oscillator into the quantum harmonic oscillator on Minkowski space stated in Theorem~\ref{thmL2}. One of its key properties is that for every $j\in \mathbb{Z}$, $\mathcal{U} \phi_j$ is an eigenfunction of $ H $ at the same eigenvalue $ j \alpha $ as the eigenfrequency corresponding to $ \phi_j$. As a result, $\mathcal{U}$ pulls back the Hamiltonian $H $ to the generator $V$, 
\begin{displaymath}
    \mathcal{U}^* H \mathcal{U} = V / i, 
\end{displaymath}
and we have the dynamical correspondence
\begin{equation}
    \label{eqDynCorr}
    \mathcal{U}^* e^{it H} \mathcal{U} = e^{tV}, \quad \forall t \in \mathbb{R}.
\end{equation}
It should also be noted that the dual operation, i.e., $ V \mapsto \mathcal{U} V \mathcal{U}^* $, maps the generator (up to multiplication by $i$) to a projected Hamiltonian, 
\begin{displaymath}
    \mathcal{U} V \mathcal{U}^* = i \Pi_{\tilde{\mathcal{H}}} H \Pi_{\tilde{\mathcal{H}}},
\end{displaymath}
where $\Pi_{\tilde{\mathcal{H}}} : \mathcal{H} \to \mathcal{H}$ is the orthogonal projection operator mapping into $ \tilde{\mathcal{H}}$. 

Turning now to the spaces $ \mathcal A(L^2(\mu))$ and $ \mathcal A(\mathcal H)$, $\mathcal U$ induces a surjective linear map $ \tilde{ \mathcal U} : \mathcal A( \mathcal H) \to \mathcal A(L^2(\mu))$, an isometry $ \tilde{\mathcal U}^+ : \mathcal Q(L^2(\mu)) \to \mathcal Q(\mathcal H)$, and a linear isometry $ \tilde{\mathcal U}' : \mathcal A'(L^2(\mu)) \to \mathcal A'(\mathcal H) $, where
\begin{displaymath}
    \tilde{\mathcal U} A = \mathcal U^* A \mathcal U, \quad U\tilde{\mathcal U}^+(\rho) = \mathcal U \rho \mathcal U^*, \quad  \tilde{ \mathcal U }' J = J \circ \tilde{\mathcal U}.
\end{displaymath}
It is then straightforward to verify using Proposition~\ref{propS1Quantum} and \eqref{eqDynCorr} that the equivariance properties
\begin{displaymath}
    \tilde{\mathcal U} \circ \tilde W^t = \tilde U^t \circ \tilde{\mathcal U}, \quad \tilde{\mathcal U}^+ \circ Z^t = \tilde \Phi^t \circ \tilde \Phi^t, \quad \tilde{\mathcal U}' \circ \tilde U^{t\prime} = \tilde W^{t\prime} \circ \tilde{\mathcal U}',
\end{displaymath}
hold for every $ t \in \mathbb{R}$. 

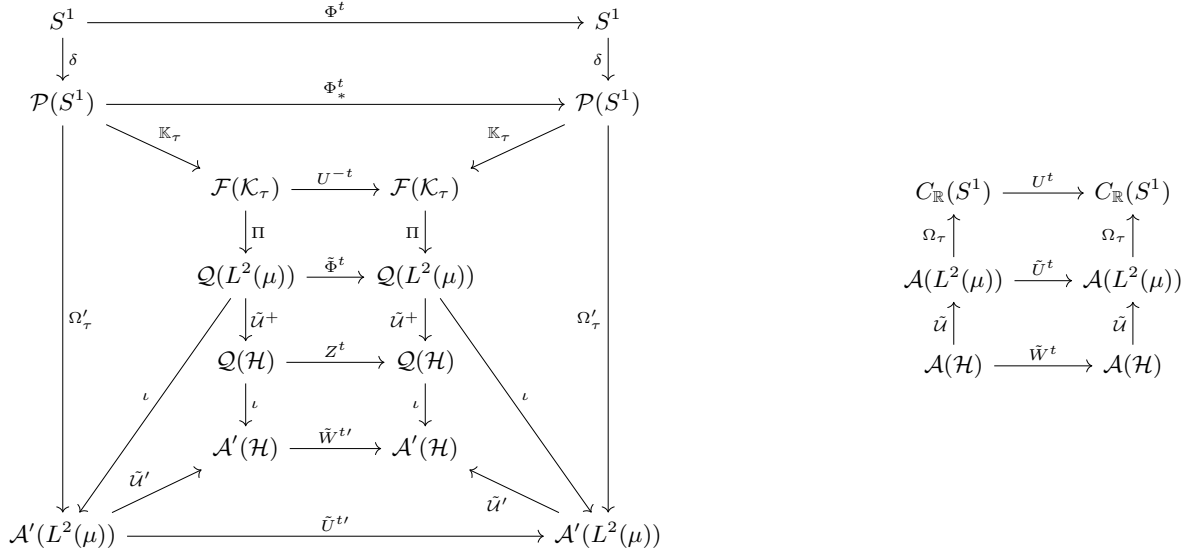
\begin{figure*}
    \begin{minipage}{.65\linewidth}
        \begin{displaymath}
            \begin{tikzcd}
                S^1 \arrow{d}{\delta} \arrow{rrr}{\Phi^t} & & & S^1 \arrow[swap]{d}{\delta} \\
                \mathcal{P}(S^1) \arrow{rrr}{\Phi^t_*} \arrow{ddddd}{\Omega'_{\tau}} \arrow{dr}{\mathbb K_\tau} & & &   \mathcal{P}(S^1) \arrow[swap]{dl}{\mathbb K_\tau} \arrow[swap]{ddddd}{\Omega'_{\tau}} \\
                & \mathcal F(\mathcal K_\tau) \arrow{d}{\Pi} \arrow{r}{U^{-t}} & \mathcal F(\mathcal K_\tau) \arrow[swap]{d}{\Pi} \\
                & \mathcal{Q}(L^2(\mu)) \arrow{d}{\tilde{\mathcal U}^+} \arrow[swap]{dddl}{\iota} \arrow{r}{\tilde \Phi^t} & \mathcal{Q}(L^2(\mu)) \arrow{dddr}{\iota} \arrow[swap]{d}{\tilde{\mathcal U}^+} \\
                & \mathcal Q(\mathcal H) \arrow{d}{\iota} \arrow{r}{Z^t} & \mathcal Q(\mathcal H) \arrow[swap]{d}{\iota}\\
                & \mathcal A'(\mathcal H) \arrow{r}{\tilde W^{t\prime}} & \mathcal A'(\mathcal H) \\
                \mathcal A'(L^2(\mu)) \arrow{ur}{\tilde{\mathcal U}'} \arrow{rrr}{\tilde U^{t\prime}}  & & & \mathcal A'(L^2(\mu)) \arrow{ul}{\tilde{\mathcal U}'} 
            \end{tikzcd}
        \end{displaymath}
    \end{minipage} \hfill
    \begin{minipage}{.3\linewidth}
        \begin{displaymath}
            \begin{tikzcd}
                C_{\mathbb R}(S^1) \arrow{r}{U^t} & C_{\mathbb R}(S^1) \\
                \mathcal A(L^2(\mu)) \arrow{r}{ \tilde U^t} \arrow{u}{\Omega_\tau} & \mathcal A(L^2(\mu)) \arrow{u}{\Omega_\tau} \\
                \mathcal A(\mathcal H) \arrow{u}{\tilde{\mathcal U}} \arrow{r}{\tilde W^t} & \mathcal A(\mathcal H) \arrow{u}{\tilde{\mathcal U}}
            \end{tikzcd}
        \end{displaymath}
    \end{minipage}
\caption{\label{figCommutL2H}As in Fig.~\ref{figCommutL2}, but for commutative diagrams showing the relationships between the quantum formulations of the circle rotation and the quantum harmonic oscillator on Minkowski space. Note that in the left-hand diagram we abuse notation, using $\iota$ to denote the inclusion maps of both $\mathcal Q(L^2(\mu))$ into $\mathcal A'(L^2\mu))$ and $ \mathcal{Q}(\mathcal H)$ into $ \mathcal A'(\mathcal H)$.}
\end{figure*}

The above, allow us to augment the commutative diagrams in Figure~\ref{figCommutL2} to obtain the diagrams in Figure~\ref{figCommutL2H}, illustrating the combined relationships between states and observables of the circle rotation (both classical and quantum) and those of the quantum harmonic oscillator on Minkowski space. The following proposition, which can be deduced directly from these diagrams, shows that there exist quantum states of the system on Minkowski space for which the expectation value of any observable evolves consistently as evaluation of a classical observable of the circle rotation.

\begin{prop}
    For any $\theta \in S^1$ and $ \tau >0 $, let $ \rho_{\theta,\tau} \in \mathcal Q(\mathcal H) $ be the quantum state given by $ \sigma_{\theta,\tau} = \tilde{\mathcal U}^+(\Psi_\tau(\theta))$. Then, for any quantum mechanical observable $ A \in \mathcal A(\mathcal H) $ and $ t \in \mathbb R$, the relationship
    \begin{displaymath}
        \mathbb E_{Z^t(\sigma_{\theta,\tau})} A = f_{A,\tau}(\Phi^t(\theta))
    \end{displaymath}
    holds, where $ f_{A,\tau} \in C_{\mathbb R}(S^1)$ is the classical observable of the circle rotation given by $ f_{A,\tau} = \Omega_\tau(\tilde{\mathcal{U}}(A))$. In particular, $ t \mapsto \mathbb E_{Z^t(\rho_\theta)}$ is periodic with period $ 2 \pi / \alpha $.
\end{prop}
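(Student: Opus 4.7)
The proof is essentially a diagram chase through the commutative diagrams in Fig.~\ref{figCommutL2H}, combining the equivariance of $\tilde{\mathcal U}^+$ and $\tilde{\mathcal U}$ (established from the definition of $\mathcal U$ and the pullback relation $\mathcal U^* H \mathcal U = V/i$) with the dynamical compatibility of $\Psi_\tau$ and $\Omega_\tau$ from Proposition~\ref{propS1Quantum}. The plan is to reduce the computation of $\mathbb E_{Z^t(\sigma_{\theta,\tau})} A$ to $\mathbb E_{\Psi_\tau(\Phi^t(\theta))} \tilde{\mathcal U}(A)$ via cyclicity of the trace, and then invoke the defining property of $\Omega_\tau$.

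First, I would unfold $Z^t(\sigma_{\theta,\tau})$ using the $\tilde{\mathcal U}^+$-equivariance $Z^t \circ \tilde{\mathcal U}^+ = \tilde{\mathcal U}^+ \circ \tilde \Phi^t$, which gives
\begin{displaymath}
    Z^t(\sigma_{\theta,\tau}) = \tilde{\mathcal U}^+\bigl(\tilde\Phi^t(\Psi_\tau(\theta))\bigr).
\end{displaymath}
Applying Proposition~\ref{propS1Quantum}(i) to replace $\tilde \Phi^t \circ \Psi_\tau$ by $\Psi_\tau \circ \Phi^t$ then yields
\begin{displaymath}
    Z^t(\sigma_{\theta,\tau}) = \mathcal U\,\Psi_\tau(\Phi^t(\theta))\,\mathcal U^*.
\end{displaymath}

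Next, I would compute the expectation value by substituting this expression into $\mathbb E_{Z^t(\sigma_{\theta,\tau})} A = \tr\bigl(Z^t(\sigma_{\theta,\tau}) A\bigr)$ and using cyclicity of the trace (valid since $\Psi_\tau(\Phi^t(\theta))$ is trace-class and $\mathcal U^* A \mathcal U$ is bounded),
\begin{displaymath}
    \tr\bigl(\mathcal U\,\Psi_\tau(\Phi^t(\theta))\,\mathcal U^* A\bigr) = \tr\bigl(\Psi_\tau(\Phi^t(\theta))\,\tilde{\mathcal U}(A)\bigr) = \mathbb E_{\Psi_\tau(\Phi^t(\theta))}\,\tilde{\mathcal U}(A).
\end{displaymath}
By the defining formula of $\Omega_\tau$ in Proposition~\ref{propS1Quantum}(ii), the right-hand side equals $(\Omega_\tau \tilde{\mathcal U}(A))(\Phi^t(\theta)) = f_{A,\tau}(\Phi^t(\theta))$, proving the identity. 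Note that $\tilde{\mathcal U}(A) \in \mathcal A(L^2(\mu))$ by construction, so $\Omega_\tau$ is applicable and $f_{A,\tau}$ is a real-valued continuous function on the circle.

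The periodicity statement is immediate from the intermediate expression $Z^t(\sigma_{\theta,\tau}) = \tilde{\mathcal U}^+(\Psi_\tau(\Phi^t(\theta)))$: since $\Phi^t(\theta) = \theta + \alpha t \bmod 2\pi$ is $2\pi/\alpha$-periodic in $t$ and both $\Psi_\tau$ and $\tilde{\mathcal U}^+$ are functions independent of $t$, the state $Z^t(\sigma_{\theta,\tau})$ (and hence the functional $\mathbb E_{Z^t(\sigma_{\theta,\tau})}$) inherits the same period. There is no real obstacle here beyond careful bookkeeping of the directions of the equivariance arrows; the entire argument is a formal consequence of Proposition~\ref{propS1Quantum}, the intertwining relation $\mathcal U^* H \mathcal U = V/i$, and cyclicity of the trace.
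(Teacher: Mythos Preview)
Your proposal is correct and follows exactly the approach the paper indicates: the paper does not give an explicit proof but states that the proposition ``can be deduced directly from these diagrams'' (Fig.~\ref{figCommutL2H}), and your argument is precisely that diagram chase, using the equivariance $Z^t\circ\tilde{\mathcal U}^+=\tilde{\mathcal U}^+\circ\tilde\Phi^t$, Proposition~\ref{propS1Quantum}(i,ii), and cyclicity of the trace.
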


It can be readily verified that $ \rho_{\theta,\tau}$ takes the form $ \rho_{\theta,\tau} = \langle \psi_{\theta,\tau},\cdot \rangle_{\mathcal H} \psi_{\theta,\tau} / \lVert \kappa_\tau(\theta,\cdot) \rVert^2_\mu$, where $ \psi_{\theta,\tau} \in \mathcal H $ is the section (wavefunction) given by
\begin{equation}
    \label{eqWF}
    \psi_{\theta,\tau} = \sum_{j=0}^\infty e^{-j^2\tau}( \phi_j(\theta) \psi_{j0} + \phi_j^*(\theta) \psi_{0j}).
\end{equation}
Figure~\ref{figPsi} shows plots of the corresponding $\mathbb C$-valued function $ \psi^\sigma_{\theta, \tau} = \zeta_{\sigma_x}^{-1} \psi_{\theta,\tau}$ associated with an inertial coordinate chart $x = (x^0,x^1)$ centered at $o$. There, as $\theta $ increases from 0 to $\pi$, $\psi^\sigma_{\theta,\tau} $ is seen to undergo an evolution from a real-valued, diffuse configuration supported mainly in the $x^0,x^1\geq 0$ coordinate quadrant, to a focused configuration near the origin at $\theta= \pi/2$, and finally to a real-valued, diffuse configuration supported in the $x^0,x^1\leq 0$ quadrant.

\begin{figure*}
    \includegraphics[width=\linewidth]{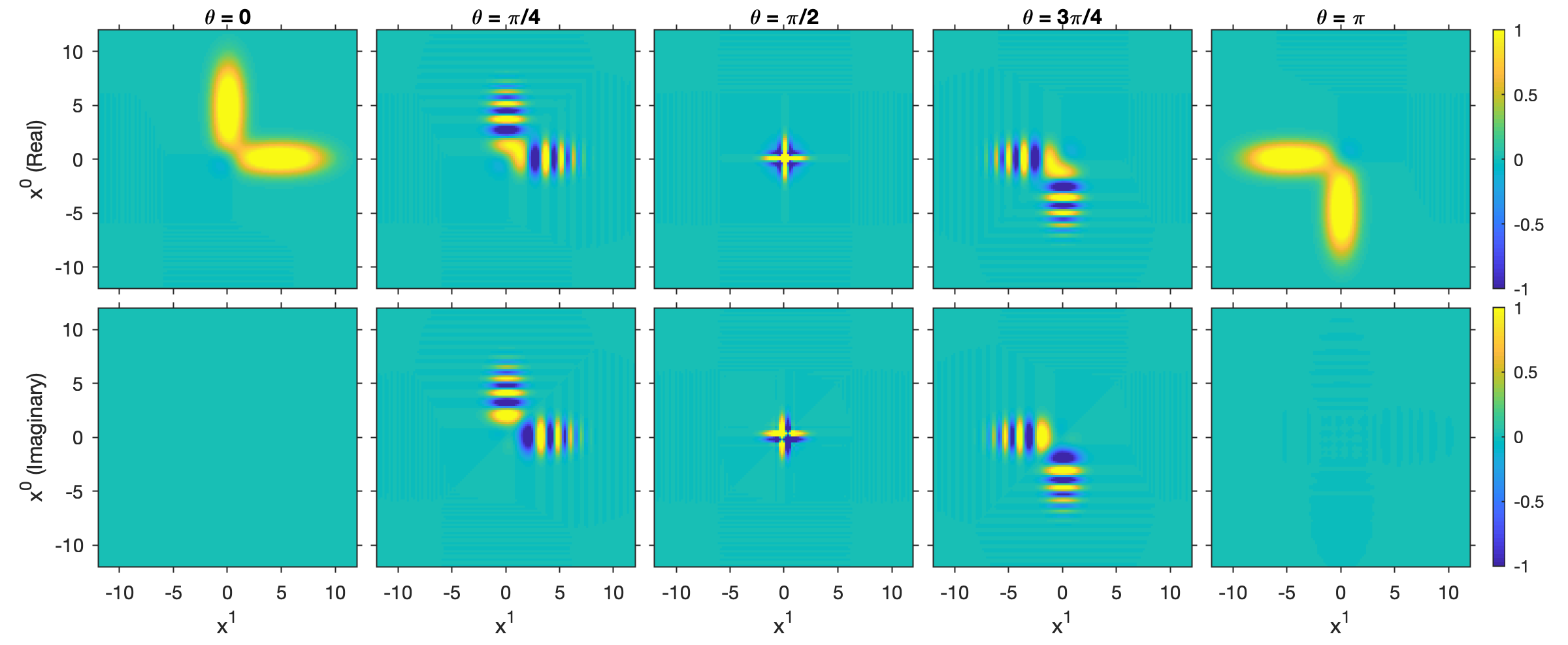}
    \caption{\label{figPsi}Wavefunction $\psi^{\sigma_x}_{\theta,\tau}$ associated with the embedding of the classical states (points) on the circle to states of the quantum harmonic oscillator on two-dimensional Minkowski space. Here, the wavefunction is shown as a function of the inertial coordinates $(x^0,x^1)$ of Minkowski space for $\tau = 10^{-3}$ and representative values of $\theta \in S^1$ in the range $[0, \pi]$, using an arbitrary normalization with respect to $L^@(\nu)$ norm.}
\end{figure*}

\subsection{Gauge covariance}
At this point, we have completed the proof of Theorem~\ref{thmL2} aside from the claimed gauge covariance of the isometry $\mathcal U : L^2(\mu) \to \mathcal H$. Specifically, the construction of $ \mathcal U$ in Section~\ref{secIso} employed a particular basis of $\mathcal H$ consisting of the eigensections $ \psi_{jk}$, whose construction (in Section~\ref{secHamiltonian}) depended on a choice of section $ \sigma: M \to P$ of the principal bundle to map Hermite functions into sections. Since every such choice $\sigma$ corresponds to a choice of inertial frame, and Minkowski space does not have a preferred inertial frame, it is important that the transformation imparted to $ \mathcal U$ by passing from $\sigma$ to a different section, $ \sigma' : M \to P$, be structure-preserving. In particular, the operator $ \mathcal U_{\sigma'} : L^2(\mu) \to \mathcal H $ defined as in~\eqref{eqUTransf}, but using the $H$ eigensections $ \psi_{\sigma',jk} = \zeta_{\sigma'} \chi_{jk}$ instead of $ \psi_{jk}$, should be relatable to $\mathcal U$ by a unitary map $ \Xi : \mathcal H \to \mathcal H$ such that $ \mathcal U_{\sigma'} = \Xi \mathcal U$ (otherwise, Hilbert space structure would not be preserved), and moreover the dynamical correspondence in~\eqref{eqDynCorr} should still hold for $ \mathcal U_{\sigma'}$. 

To verify that this is indeed the case, let $ \varphi : P \to P $ be the unique gauge transformation such that $ \sigma' = \varphi \circ \sigma$. Then, by Lemma~\ref{lemUnique}(i), $ \psi_{\sigma',jk}(m) = e^{i\alpha \vartheta(\xi(m)) / \sqrt{2}} \psi_{jk}(m) $ for a $G$-valued function $ \xi $ on $M$, so that $ \mathcal U_{\sigma'} = \Xi \circ \mathcal U $, where $ \Xi$ is the unitary multiplication operator on $\mathcal H$ by the function $ e^{i \alpha \vartheta(\xi(\cdot))}$. Moreover, again by Lemma~\ref{lemUnique}(i), we have $ \psi_{\sigma',jk} = \varphi_* \circ \varphi_{jk}$, and because the operator $ \varphi_* \circ $ commutes with the Laplacian $ \bar \Delta $ (see Section~\ref{secTransf}), it follows that $ \mathcal U^*_{\sigma'} H \mathcal U_{\sigma'} = \mathcal U^* H \mathcal U$ so that $ \mathcal U^*_{\sigma'} e^{itH } \mathcal U_{\sigma' } = \mathcal U^*_{\sigma} e^{itH } \mathcal U_{\sigma}$. We therefore conclude that the dynamical correspondence in~\eqref{eqDynCorr} holds for $\mathcal U_{\sigma'}$, completing the proof of Theorem~\ref{thmL2}. 

\section{\label{secCCO}Canonically commuting operators for the classical harmonic oscillator}

In this section, we employ the correspondence between the circle rotation and quantum harmonic oscillator on Minkowski space established in Section~\ref{secQuantum} to construct operators acting on classical observables (functions) on the circle exhibiting canonical position--momentum commutation relationships, as well as their corresponding ladder operators. This construction will be carried out using the isometry $ \mathcal U : L^2(\mu) \to \mathcal H$ from~\eqref{eqUTransf} to pull back position, momentum, and ladder operators of the quantum harmonic oscillator to densely defined operators on $L^2(\mu)$. These operators will automatically satisfy the appropriate commutation relationships since $ \mathcal U$ is a Hilbert space homomorphism. 

\subsection{\label{secLadder}Ladder operators}

As with the non-relativistic quantum harmonic oscillator, the quantum system on Minkowski space from Section~\ref{secHamiltonian} admits canonically commuting \emph{position} and \emph{momentum operators} with respect to any inertial chart $ x = ( x^0, x^1 ) : M \to \mathbb{R}^2$ (taken to be centered at $ o  \in M $ as per our convention). These operators can be constructed as usual by considering the one-parameter unitary groups of operators $ \mathcal{X}^a_j : \mathcal{H} \to \mathcal{H}$ and $\mathcal{P}_j^a : \mathcal{H} \to \mathcal{H}$, with $ j \in \{ 0, 1 \}$ and $a \in \mathbb{R}$, acting on sections $ s \in \mathcal{H}$ by phase multiplication and translation, respectively, i.e., 
\begin{displaymath}
    \mathcal{X}_j^a s( m )  = e^{i a x^j(m)} s(m), \quad \mathcal{P}_j^a s( m )  = s(m + a \vec X_j).
\end{displaymath}
Note that $ \mathcal X_j^a$ coincides with the action $ \varphi_* $ on sections associated with the gauge transformation $ \varphi : P \to P $ with $ \varphi( p ) = p \cdot \xi(\pi(p))$, $ \xi(m) = \exp( \sqrt{2} a x^j(m) / \alpha )$.  We then define the position operators $ \hat X_j : D( \hat X_j) \to \mathcal{H}$, $ D(\hat X_j) \subset \mathcal{H} $, and momentum operators $ \hat P_j : D(\hat P_j) \to \mathcal{H}$, $D( \hat P_j) \subset \mathcal{H}$, as the generators of these groups, respectively, times a factor of $ 1/i $ introduced by convention to render the generators self-adjoint. That is, we have
\begin{equation}
    \label{eqQP}
    \begin{aligned}
        \hat X_j s &= i^{-1} \lim_{a\to 0} (\mathcal{X}_j^a s - s) / a, \\
        \hat P_j s &= i^{-1} \lim_{a \to 0} (\mathcal{P}_j^a s - s) / a,
    \end{aligned}
\end{equation}
and the domains $D(\hat X_j) $ and $D(\hat P_j) $ are defined as the dense subspaces of $ \mathcal{H} $ where the respective limits in~\eqref{eqQP} exist with respect to $\mathcal{H}$ norm. It then follows from these definitions that $\hat X_j $ is a multiplication operator by the coordinate function $x^j $, i.e.,       
\begin{displaymath}
    \hat X_j s = x^j s,  
\end{displaymath}
while $ \hat P_j $ is a differentiation operator that behaves as an extension of the coordinate vector field $X_j $ to smooth sections in $\mathcal{H} $, 
\begin{displaymath}
    \hat P_j s = -i X_j s = -i\, ds \cdot X_j, \quad \forall s \in D(\hat P_j) \cap \Gamma(E). 
\end{displaymath}

Note now that the Schwartz space $\mathcal{S}(M,E)$ is invariant under all of $\hat X_j$ and $ \hat P_j $, so we can consider the restricted operators $ \tilde X_j : \mathcal{S}(M, E) \to \mathcal{H}$ and $ \tilde P_j : \mathcal{S}(M,E) \to \mathcal{H}$, where $ \tilde X_j = \hat X_j \rvert_{\mathcal{S}(M,E)}$, $ \tilde P_j = \hat P_j \rvert_{\mathcal{S}(\Gamma,E)}$, and $ \ran \tilde X_j $, and $ \ran \tilde P_j $ are both subspaces of $ \mathcal{S}(M,E)$. It then follows directly from their definition that these operators obey the canonical position--momentum commutation relations,
\begin{displaymath}
    [\tilde X_j, \tilde X_k ] = 0, \quad [ \tilde P_j,\tilde P_k] = 0, \quad [ \tilde X_j, \tilde P_k] = i \delta_{jk}. 
\end{displaymath}
Moreover, associated with $ \tilde X_j $ and $\tilde P_j $ are the \emph{ladder operators}
\begin{displaymath}
    A_j = \sqrt{\frac{\alpha}{2}} \tilde X_j + \frac{i}{\sqrt{2\alpha}} \tilde P_j, \quad A_j^+ = \sqrt{\frac{\alpha}{2}} \tilde X_j - \frac{i}{\sqrt{2\alpha}} \tilde P_j, 
\end{displaymath}
and the \emph{number operators}
\begin{displaymath}
    N_j = A_j^+ A_j. 
\end{displaymath}
These operators satisfy the standard commutation relations 
\begin{equation}
    \label{eqCommMink}
    \begin{gathered}
        \,[N_j, A_k] = - A_j \delta_{jk}, \quad [N_j, A_k^+] = A_j \delta_{jk}, \\
        [A_j, A_k] = [A_j^+,A_k^+] = [N_j,N_k] = 0,
    \end{gathered}
\end{equation}
and moreover we have
\begin{equation}
    \label{eqNumMink}
    \tilde H = \alpha (-N_0 + N_1). 
\end{equation}
The following are formulas for the action of these operators on the $ \psi_{jk} $ basis elements of $\mathcal{H}$, which follow from standard results on ladder operators \cite{Sakurai93}:
\begin{equation}
    \label{eqLadderPsi}
    \begin{gathered}
        A_0 \psi_{jk} = \sqrt{j} \psi_{j-1,k}, \quad A_0^+ \psi_{jk} = \sqrt{j+1} \psi_{j+1,k},  \\
        A_1 \psi_{jk} = \sqrt{k} \psi_{j,k-1}, \quad A_1^+ \psi_{jk} = \sqrt{k+1} \psi_{j,k+1},  \\ 
        N_0 \psi_{jk} = j \psi_{jk}, \quad N_1 \psi_{jk} = k \psi_{jk}.
    \end{gathered}
\end{equation}

We will now pull back the $ A_j $, $A_j^+$, and $ N_j $ to operators on classical observables of the circle rotation.  For that, let $\mathcal{S}(S^1) = \mathcal{U}^* \mathcal{S}(M,E)$ be the image of the Schwartz space of sections on Minkowski space under $\mathcal{U}^*$. For completeness, we note that $\mathcal{S}(S^1)$ can be identified with a Schwartz space of functions on the circle (see Ref.~\cite{AizenbudGourevitch11} for definitions), which in this case coincides with $C^\infty(S^1)$ by compactness of $S^1$. However, here we will not be needing that structure. 

By construction, $\mathcal{U} $ maps $\mathcal{S}(S^1) $ to $\mathcal{S}(M,E)$, and therefore the following are well-defined as densely-defined operators from $\mathcal{S}(S^1) \subset L^2(\mu)$ to $L^2(\mu)$:
\begin{equation}
    \label{eqLadder}
    \begin{gathered}
        A_- = \mathcal{U}^* A_0 \mathcal{U}, \quad A_-^+ = \mathcal{U}^* A_0^+ \mathcal{U}, \quad N_- = \mathcal{U}^* N_0 \mathcal{U}, \\
        A_+ = \mathcal{U}^* A_1 \mathcal{U}, \quad A_+^+ = \mathcal{U}^* A_1^+ \mathcal{U}, \quad N_+ = \mathcal{U}^* N_1 \mathcal{U}.
    \end{gathered}
\end{equation}
It then follows from the fact that $\mathcal{U} \mathcal{U}^*$ is the identity on $\tilde{\mathcal{H}}$ that these operators satisfy the canonical commutation relationships for ladder operators analogously to~\eqref{eqCommMink}, viz.
\begin{gather*}
    [ N_-,A_-] = - A_-, \quad [ N_-, A_-^+ ] = A_-^+, \\
    [ N_+,A_+] = - A_+, \quad [ N_+, A_+^+ ] = A_+, \\
    [ A_-, A_+] = [ A_-^+, A_+^+ ] = [ A_-, A_+^+ ] = [ A_+, A_-^+ ] = 0, \\
    [ N_-, N_+] = 0.
\end{gather*}
Moreover, the generating vector field $ \vec V $ of the classical harmonic oscillator, acting on $C^\infty(S^1)$ functions, can be expressed in terms of the number operators as (cf.~\eqref{eqNumMink})
\begin{equation}
    \label{eqVNum}
    \vec V = i \alpha( -N_- + N_+) = i \mathcal{U}^* \tilde H \mathcal{U}. 
\end{equation}

To characterize these operators more explicitly, consider  the order-$r$ \emph{fractional derivative operator} $\partial^r : D(\partial^r) \to L^2(\mu)$, $ r \geq 0 $, associated with standard angle coordinates on the circle (see Appendix~\ref{appFractional} for an explicit definition and additional details). Let also $\Pi_- : L^2(\mu) \to L^2(\mu)$ and $\Pi_+ : L^2(\mu) \to L^2(\mu)$ be the orthogonal projections mapping into $L^2_-(\mu)$ and $L^2_+(\mu)$, respectively, and define the spectrally truncated derivative operators $ \partial_-^r = \partial^r \Pi_-$ and $ \partial^r_+ = \partial^r \Pi_+$, both of which are defined on the same dense domain $D(\partial^r) \supset \mathcal{S}(S^1)$ as $\partial^r$. It can be shown (see Proposition~\ref{propLadder} in Appendix~\ref{appFractional}) that the ladder operators $A_{\pm} $ and $A_{\pm}^+$ take the form  
\begin{equation}
    \label{eqLadderFrac}
    \begin{gathered}
        A_- = i^{1/2} L^* \partial^{1/2}_-, \quad A_-^+ = i^{1/2} \partial_-^{1/2} L, \\
        A_+ = i^{-1/2} L \partial^{1/2}_+, \quad A_+^+ = i^{-1/2} \partial^{1/2}_+ L^*,
    \end{gathered}
\end{equation}
where $L$ and $L^*$ are the ladder-like operators from Section~\ref{secKoopLadder}. Moreover, it can be readily verified that the relationships
\begin{displaymath}
    \partial_-^ q \partial_-^r = \partial_-^{q+r}, \quad \partial_+^q \partial_+^r = \partial^{q+r}_+, \quad \partial_-^r+\partial_+^r = \partial^r     
\end{displaymath}
hold for every $q,r \geq 0$, leading, in conjunction with~\eqref{eqLadderLId}, to the formulas
\begin{displaymath}
    N_- = i \partial_-, \quad N_+ = - i \partial_+
\end{displaymath}
for the number operators. Note that the expressions above are consistent with~\eqref{eqVNum}, i.e., 
\begin{equation}
    \label{eqVNumDecomp}
    \vec V = \alpha \partial = \alpha( \partial_- + \partial_+) = i\alpha(-N_- + N_+). 
\end{equation}

In summary, thus far we have seen that pulling back the ladder operators for the quantum harmonic oscillator on Minkowski space under the Hilbert space homomorphism $\mathcal U$ leads to operators on classical observables ($\mathbb C$-valued functions) of the circle rotation which have the structure of order-$1/2$ fractional derivatives, composed with the ladder-like operators from Section~\ref{secKoopLadder}. By virtue of the structure-preserving properties of $ \mathcal U$, the ladder operators for the circle rotation exhibit canonical commutation relationships, and lead to the decomposition in~\eqref{eqVNumDecomp} of the generator of the Koopman group on $L^2(\mu)$ through their corresponding number operators. This completes the proof of Theorem~\ref{thmOps}(i).

\subsection{\label{secPosMom}Position and momentum operators}

Next, to construct ``position'' and ``momentum'' operators for the circle rotation, we can simply pull back the corresponding operators, $ \hat X_j$ and $ \hat P_j$, respectively, for the quantum harmonic oscillator on Minkowski space using $\mathcal U $. That is, we define $ \hat X_\pm : D( \hat X_\pm) \to L^2(\mu)$, $ \hat P_{\pm} : D(\hat P_\pm) \to L^2(\mu)$, where $D(\hat X_\pm)$ and $D(\hat P_\pm) $ are the dense subspaces of $L^2(\mu)$ given by $D(\hat X_-) = \mathcal U^* D( \hat X_0) $, $D(\hat X_+) = \mathcal U^* D(\hat X_1)$, $ D(\hat P_-) = \mathcal U^* D( \hat P_0)$, $ D(\hat P_+) = \mathcal U^* D(\hat P_1)$, and 
\begin{gather*}
    \hat X_- = \mathcal U^* \hat X_0 \mathcal U, \quad \hat X_+ = \mathcal U^* \hat X_1 \mathcal U, \\
    \hat P_- = \mathcal U^* \hat P_0 \mathcal U, \quad \hat P_+ = \mathcal U^* \hat P_1 \mathcal U.
\end{gather*}
Restricted to the Schwartz space $\mathcal S(S^1)$, these operators can be expressed in terms of the ladder operators from Section~\ref{secLadder}, viz., 
\begin{equation}
    \label{eqPosMom}
    \begin{aligned}
        \tilde X_-  &= \frac{1}{\sqrt{2\alpha}}(A_- +  A_-^+) = \frac{i}{\sqrt{2\alpha}}( L^* \partial_-^{1/2} + \partial_-^{1/2} L ), \\
        \hat P_-  &= -i \sqrt{\frac{\alpha}{2}} (A_- - A_-^+) = - \sqrt{\frac{\alpha}{2}} ( L^* \partial_-^{1/2} - \partial_-^{1/2}L), \\
        \tilde X_+  &= \frac{1}{\sqrt{2\alpha}}(A_+ +  A_+^+) = \frac{i}{\sqrt{2\alpha}}( L \partial_+^{1/2} + \partial_+^{1/2} L^* ), \\
        \tilde P_+  &= -i \sqrt{\frac{\alpha}{2}} (A_+ - A_+^+) = - \sqrt{\frac{\alpha}{2}} ( L \partial_+^{1/2} - \partial_+^{1/2}L^*), 
    \end{aligned}
\end{equation}
where $ \tilde X_- = \hat X_- \rvert_{\mathcal S(S^1)}$ and similarly for the other operators. In particular, it follows immediately from the commutation relationships for $A_\pm$ and $A_{\pm}^+$ established in Section~\ref{secLadder} that the operators introduced above exhibit canonical position--momentum commutation relationships, i.e., 
\begin{gather*}
    [ \tilde X_\pm, \tilde X_\pm ] = 0, \quad [ \tilde P_\pm, \tilde P_\pm ] = 0, \\
    [ \tilde X_-, \tilde P_- ] = 1, \quad [\tilde X_+, \tilde P_+] = 1, \\
    [ \tilde X_-, \tilde P_+ ] = 0, \quad [ \tilde X_+, \tilde P_-] = 0.
\end{gather*}
It is also worthwhile noting that $ \tilde X_\pm$ and $ \tilde P_\pm$ can be employed to define ``kinetic energy'' and ``potential energy'' operators for the circle rotation, respectively, 
\begin{displaymath}
    \mathfrak K = \frac{1}{2}( - \tilde P_-^2 + \tilde P_+^2 ), \quad \mathfrak V = \frac{\alpha^2}{2}( -\tilde X_-^2 + \tilde X_+^2  ).
\end{displaymath}
One can then verify that, up to a multiplication factor of $1/2$, $\mathfrak K $ is equal to the pullback under $ \mathcal U $ of the connection Laplacian $\Delta$ from Section~\ref{secConn}, $ \mathfrak K = \mathcal U \Delta \mathcal U^* / 2$, while $ \mathfrak V $ is equal to the pullback of the multiplication operator on $\mathcal H$ that multiplies by the potential function $v$, i.e., $ \mathfrak V = \mathcal U^* T_{v} \mathcal U$. 

Inspecting the formulas for $\tilde X_\pm$ and $ \tilde P_\pm$ in~\eqref{eqPosMom}, it is evident that the position and momentum operators for the circle rotation differ fundamentally from their counterparts $\hat X_j$ and $ \hat P_j $ for the quantum harmonic oscillator since (i) unlike $\hat X_j $ and $ \hat P_j $, which are all local operators, $\hat X_\pm$ and $ \hat P_\pm $ are all non-local; and (ii) unlike $\hat X_j $, which are multiplication operators by the corresponding coordinate functions $x^j $, $ \hat X_\pm $ are not multiplication operators. Here, by an operator $A$ on functions being \emph{local}, we mean that for a given function $ f \in D(A)$, the evaluation of the function $ A f $ at each point on its domain of definition depends only on the values of $ f $ in an arbitrarily small neighborhood of that point. Examples of operators meeting this condition are multiplication operators and derivative operators of integral order, such as $\hat X_j$ and $\hat P_j$. On the other hand, given a function $ f : S^1 \to \mathbb C $ of appropriate smoothness, the result of the fractional derivative $ \partial^{1/2} f(\theta) $ depends on the behavior of $ f $ at distant points from $\theta$ (see Appendix~\ref{appFractional}), and as a result $ \hat X_{\pm} $ and $ \hat P_\pm $ are non-local. In fact, the operators $ \hat X_\pm$ and $ \hat P_\pm$ generate fractional diffusion semigroups on the circle.  

Still, despite these differences, the fact that $\tilde X_\pm$ and $ \tilde P_\pm$ obey canonical commutation relationships means that many results for the quantum harmonic oscillator which are a consequence of these relationships carry over to the circle rotation. As an example, we mention here the position-momentum uncertainty relationships, which hold for the $ \tilde X_\pm$ and $\tilde P_\pm $ operators analogously to $ \tilde X_j $ and $ \tilde P_j $. That is, letting $ \sigma_{jk} = \langle \psi_{jk}, \cdot \rangle_{\mathcal H} \psi_{jk} $ be the pure quantum state in $ Q( \mathcal H)$ associated with eigensection $ \psi_{jk}$, it follows from standard results on quantum harmonic oscillators (e.g., Refs.~\cite{Sakurai93,Hall13}) that 
\begin{align*}
    \var_{\sigma_{j0}} \hat X_0^2 &:= \mathbb E_{\sigma_{j0}} \hat X_0^2 - ( \mathbb E_{\sigma_{j0}} \hat X_0)^2 = \frac{1}{\alpha}\left( j + \frac{1}{2}\right),\\
    \var_{\sigma_{j0}} \hat P_0^2 &:= \mathbb E_{\sigma_{j0}} \hat P_0^2 - ( \mathbb E_{\sigma_{j0}} \hat P_0)^2 = \alpha\left( j + \frac{1}{2}\right),
\end{align*}
and similarly that
\begin{displaymath}
    \var_{\sigma_{0j}} \hat X_1^2 = \frac{1}{\alpha}\left( j + \frac{1}{2}\right), \quad \var_{\sigma_{0j}} \hat P_1^2 = \alpha\left( j + \frac{1}{2}\right),
\end{displaymath}
leading to the position-momentum uncertainty relationships
\begin{align}
    \nonumber
    (\var_{\sigma_{j0}} \hat X_0^2)(\var_{\sigma_{j0}} \hat P_0^2) & = (\var_{\sigma_{0j}} \hat X_1^2)(\var_{\sigma_{0j}} \hat P_1^2)\\ 
    \label{eqUncertaintyH} &= \left( j + \frac{1}{2} \right)^2. 
\end{align}
Correspondingly, for the quantum state $ \rho_j = \langle \phi_j, \cdot \rangle_\mu \phi_j \in \mathcal Q(L^2(\mu)) $ of the circle rotation with $ j \leq 0$, we have 
\begin{displaymath}
    \var_{\rho_j} \hat X_- = \var_{\rho_j}( \mathcal U^* \hat X_0 \mathcal U) = \var_{\mathcal U \rho_j \mathcal U^*} \hat X_0 = \var_{\sigma_{-j,0}} \hat X_0,
\end{displaymath}
and $ \var_{\rho_j} \hat P_- = \var_{\sigma_{-j,0}} \hat P_0 $, while, for $ j > 0 $,
\begin{displaymath}
    \var_{\rho_j} \hat X_+ = \var_{\sigma_{0j}} \hat X_1, \quad \var_{\rho_j} \hat P_+ = \var_{\sigma_{0j}} \hat P_1,
\end{displaymath}
leading to analogous uncertainty relationships to~\eqref{eqUncertaintyH}, i.e.,
\begin{equation}
    \label{eqUncertaintyL2}
    \begin{gathered}
        (\var_{\rho_{j}} \hat X_-^2)(\var_{\rho_{j}} \hat P_-^2) = \left( j + \frac{1}{2} \right)^2, \quad j \leq 0,\\
        (\var_{\rho_{j}} \hat X_+^2)(\var_{\rho_{j}} \hat P_+^2) = \left( j + \frac{1}{2} \right)^2, \quad j > 0.
    \end{gathered}
\end{equation}
Note that in both~\eqref{eqUncertaintyH} and~\eqref{eqUncertaintyL2} the ground states, $ \psi_{00}$ and $\phi_0$, respectively, saturate the corresponding Sch\"rodinger uncertainty inequality \cite{Hall13}. With these results, we have completed the proof of Theorem~\ref{thmOps}.

\section{\label{secRKHA}Classical and quantum dynamics on reproducing kernel Hilbert algebras (RKHAs)}

What could be considered a shortcoming of the classical--quantum correspondence results established thus far is that the map $ \Omega_\tau $, mapping quantum observables in $\mathcal A(L^2(\mu))$ to classical observables in $C_\mathbb{R}(S^1)$, is not compatible with the natural Banach algebra homomorphism $ T : C_{\mathbb R}(S^1) \to \mathcal A(L^2(\mu)) $ mapping continuous functions to bounded multiplication operators on $L^2(\mu)$. To address this issue, in this section we shift attention from the dynamics (either classical or quantum) induced by the circle rotation on $L^2(\mu)$, and consider instead the dynamics on the RKHAs $ \hat{ \mathcal K}_\tau$ associated with order-$1/2$ fractional diffusions on the circle. After introducing the basic properties of these spaces (Section~\ref{secRKHAIntro}),  we consider aspects of classical and quantum dynamics (Sections~\ref{secRKHAClassical} and~\ref{secRKHAQuantum}), including the classical--quantum correspondence stated in Theorem~\ref{thmRKHS}. In particular, we will see that the reproducing property of $\hat{\mathcal K}_\tau$, which has no counterpart in the $L^2(\mu)$ setting, provides additional structure ensuring that the analogous operator to $ \Omega_\tau$ \emph{is} compatible with $T $. 

\subsection{\label{secRKHAIntro}RKHAs induced by fractional diffusions}

Following \cite{DasGiannakis19c}, for any $\tau > 0$, we consider the RKHS $\hat{\mathcal K}_\tau  $ on the circle associated with the reproducing kernel $\hat\kappa_\tau : S^1 \times S^1 \to \mathbb R_+$ (cf.~\ref{eqHeatKernel}),
\begin{equation}
    \label{eqFracHeatKernel}
    \begin{aligned}
        \hat \kappa_\tau( \theta, \theta') &= \sum_{j=-\infty}^\infty e^{- \lvert j \rvert \tau} \phi_j^*(\theta)\phi_j(\theta'), \\
        &= \sum_{j=-\infty}^\infty e^{-\lvert j \rvert \tau} e^{ij(\theta'-\theta)} \\
        &= \frac{\sinh \tau}{ \cosh\tau - \cos(\theta'-\theta)},
    \end{aligned}
\end{equation}
where we recognize the $\lvert j \rvert$ terms in $e^{-\lvert j \rvert \tau}$ as the eigenvalues of the order-$1/2$ fractional Laplacian,
\begin{displaymath}
    \mathcal L^{1/2} \phi_j = \lvert j \rvert \phi_j.
\end{displaymath}
One can verify that the sum over $j$ in~\eqref{eqFracHeatKernel} converges in any $C^r$ norm, $ r \in \mathbb N_0$, to a smooth positive-definite kernel on $S^1 \times S^1$, so that $ \hat{\mathcal K}_\tau$ is an RKHS of smooth functions. Moreover, $\hat \kappa_\tau$ is translation-invariant, and exhibits the analogous properties stated for the canonical heat kernel in Lemmas~\ref{lemUniversal} and~\ref{lemKernelNorm}. In particular, for every $ \tau > 0 $, the feature map $ \hat F_\tau : S^1 \to \hat{ \mathcal K}_\tau$ with $ \hat F_\tau(\theta) = \hat \kappa_\tau(\theta, \cdot )$ is continuous and injective, the image $\mathcal F(\hat{\mathcal K}_\tau) := \hat F_\tau(S^1) \subset \hat{\mathcal K}_\tau$ contains only nonzero functions, and $\hat{\mathcal K}_\tau $ lies dense in $C(S^1)$. In addition, we have:
\begin{prop}
    \label{propRKHA} For every $ \tau > 0$, the RKHS $ \hat{\mathcal K}_\tau$ is a unital Banach algebra of functions, i.e., there exists a constant $C_\tau $ such that 
    \begin{displaymath}
       \lVert f g \rVert_{\hat{\mathcal K}_\tau} \leq C_\tau \lVert f\rVert_{\hat{\mathcal K}_\tau}  \lVert g\rVert_{\hat{\mathcal K}_\tau}, \quad \forall f,g \in \hat{\mathcal K}_\tau.  
    \end{displaymath}
    Moreover, the space $\hat{\mathcal K}^\infty = \bigcap_{j=1}^\infty \hat{\mathcal K}_j$ is a dense, unital subalgebra of $\hat{\mathcal K}_\tau$, i.e., $ fg \in \hat{\mathcal K}^\infty $ for all $ f, g \in \hat{\mathcal K}^\infty$. 
\end{prop}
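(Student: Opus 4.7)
The plan is to split the proposition into its two claims: (a) $\hat{\mathcal{K}}_\tau$ is a unital Banach algebra, and (b) $\hat{\mathcal{K}}^\infty$ is a dense unital subalgebra. I would work throughout in the Fourier basis, where (by Mercer's theorem applied to the smooth translation-invariant kernel $\hat{\kappa}_\tau$) an element $f \in \hat{\mathcal{K}}_\tau$ is $f = \sum_j \hat f_j \phi_j$ with $\|f\|_{\hat{\mathcal{K}}_\tau}^2 = \sum_j e^{|j|\tau} |\hat f_j|^2$, and $\{e^{-|j|\tau/2}\phi_j\}_{j\in\mathbb{Z}}$ is an orthonormal basis. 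Evaluating the reproducing property at $\theta = \theta'$ gives the pointwise bound $\|f\|_\infty \leq \sqrt{\hat{\kappa}_\tau(0,0)}\,\|f\|_{\hat{\mathcal{K}}_\tau} = \sqrt{\coth(\tau/2)}\,\|f\|_{\hat{\mathcal{K}}_\tau}$, so $fg \in C(S^1)$ is well-defined for $f,g \in \hat{\mathcal{K}}_\tau$, and its Fourier coefficients are the convolution $\widehat{fg}_l = \sum_k \hat f_{l-k}\hat g_k$.

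The central step is the multiplicative estimate in (a). The guiding observation is the submultiplicativity of the weight $w_l := e^{|l|\tau/2}$, namely $w_l \leq w_{l-k} w_k$. Setting $a_j = w_j |\hat f_j|$ and $b_k = w_k |\hat g_k|$, so $\|a\|_{\ell^2} = \|f\|_{\hat{\mathcal{K}}_\tau}$ and $\|b\|_{\ell^2} = \|g\|_{\hat{\mathcal{K}}_\tau}$, I would obtain $w_l |\widehat{fg}_l| \leq (a * b)_l$, reducing the algebra inequality to a bound $\|a * b\|_{\ell^2} \leq C_\tau \|a\|_{\ell^2}\|b\|_{\ell^2}$. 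Since $\ell^2 * \ell^2 \not\subset \ell^2$ by Young's inequality alone, I would exploit the \emph{strict} submultiplicativity on opposite-sign Fourier modes: writing $w_l = w_{l-k} w_k \cdot e^{-\delta(l,k)\tau/2}$ with defect $\delta(l,k) := (|l-k|+|k|)-|l| \geq 0$, the defect vanishes precisely on the discrete segment between $0$ and $l$ and grows linearly with distance from it, yielding an exponential damping. To capitalize on this, I would split $f = f_+ + f_0 + f_-$ and $g = g_+ + g_0 + g_-$ into projections onto positive, zero, and negative Fourier modes, and analyze the nine cross terms separately: the ``same-sign'' products (e.g.\ $f_+ g_+$) reduce to one-sided convolutions on $\mathbb{N}_0$ where the weight is exactly multiplicative and a Young-type bound closes, while the ``opposite-sign'' products (e.g.\ $f_+ g_-$) acquire the extra damping $e^{-\min(|j|,|k|)\tau}$ that restores $\ell^2$-summability. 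Detailed bookkeeping and the explicit form of $C_\tau$ is carried out in \cite{DasGiannakis19c}.

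The unital property is immediate: $\mathbf{1} = \phi_0 \in \hat{\mathcal{K}}_\tau$ with $\|\mathbf{1}\|_{\hat{\mathcal{K}}_\tau} = 1$, and it is the multiplicative identity. For (b), trigonometric polynomials have finite Fourier support, hence lie in $\hat{\mathcal{K}}_j$ for every $j \geq 1$ and therefore in $\hat{\mathcal{K}}^\infty$. Since trigonometric polynomials are dense in each $\hat{\mathcal{K}}_\tau$ (by completeness of the basis $\{e^{-|j|\tau/2}\phi_j\}$), so is $\hat{\mathcal{K}}^\infty$. For closure under multiplication, if $f,g \in \hat{\mathcal{K}}^\infty$ then $f,g \in \hat{\mathcal{K}}_j$ for every $j \in \mathbb{N}$, and (a) applied in $\hat{\mathcal{K}}_j$ gives $fg \in \hat{\mathcal{K}}_j$; intersecting over $j$ places $fg$ in $\hat{\mathcal{K}}^\infty$.

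The hard part is the convolution estimate underlying (a). The exponential weight is submultiplicative but not strongly enough for a pure Young's-inequality argument; simultaneously one must avoid ``losing'' $\tau$ (e.g.\ settling for an inequality of the form $\|fg\|_{\hat{\mathcal{K}}_\tau} \leq C_{\tau,\epsilon} \|f\|_{\hat{\mathcal{K}}_{\tau+\epsilon}} \|g\|_{\hat{\mathcal{K}}_{\tau+\epsilon}}$, which would suffice for (b) but not for the Banach algebra claim at fixed $\tau$). Quantifying the surplus decay coming from cross-sign modes is the delicate step that the same-sign / opposite-sign splitting is designed to execute.
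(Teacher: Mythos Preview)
The paper does not prove this proposition; it defers to \cite{DasGiannakis19c} and remarks only that the argument there rests on the multiplicative closure $\phi_j\phi_k=\phi_{j+k}$ and the uniform bound $\lVert\phi_j\rVert_{C(S^1)}=1$. Your proposal likewise defers the hard estimate to the same reference, so at the level of attribution the two agree.

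The sketch you supply, however, has a genuine gap, and it sits precisely in the case you treat as routine. For same-sign products such as $f_+g_+$, the defect $\delta(l,k)=|l{-}k|+|k|-|l|$ vanishes for every $k$ in the convolution range $0\le k\le l$, so the weight is \emph{exactly} multiplicative there and furnishes no extra decay. Your inequality $w_l\lvert\widehat{f_+g_+}_l\rvert\le (a*b)_l$ with $a_j=w_j\lvert\hat f_j\rvert\in\ell^2$ is then sharp, and ``a Young-type bound closes'' would amount to $\lVert a*b\rVert_{\ell^2(\mathbb N_0)}\le C\lVert a\rVert_{\ell^2}\lVert b\rVert_{\ell^2}$, i.e.\ $\ell^2*\ell^2\subset\ell^2$, which you have already (correctly) ruled out. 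Concretely, take $\hat f_j=e^{-j\tau/2}(j{+}1)^{-3/4}$ for $j\ge 0$ and $\hat f_j=0$ otherwise: then $a_j=(j{+}1)^{-3/4}\in\ell^2$, all terms are nonnegative so there is no cancellation, and $(a*a)_l\sim c\,l^{-1/2}\notin\ell^2$; your estimate for $\lVert f_+^2\rVert_{\hat{\mathcal K}_\tau}$ diverges. Thus the opposite-sign blocks, which do acquire the damping $e^{-\min(|j|,|k|)\tau}$, are the benign part, not the hard one. Whatever mechanism in \cite{DasGiannakis19c} controls the same-sign block must go beyond submultiplicativity of the exponential weight---the paper's hint about the uniform $C(S^1)$ bound on the $\phi_j$ points to an ingredient your sign-splitting does not exploit---and as written your argument does not close at fixed $\tau$.
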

Proposition~\ref{propRKHA} follows from results in Ref.~\cite{DasGiannakis19c}. While we do not reproduce a proof here, it is worthwhile noting that \cite{DasGiannakis19c} relies heavily on the fact that the set of Fourier functions $ \phi_j$ is closed under multiplication, $ \phi_j \phi_k = \phi_{j+k}$, and is bounded in $C(S^1)$ norm uniformly with respect to $j$, $\lVert \phi_j \rVert_{C(S^1)} = 1 $.

\begin{rk*}
    In the case of the RKHSs $ \mathcal K_\tau$ associated with the standard heat kernel, Ref.~\cite{DasGiannakis19c} has only established a weaker result than Proposition~\ref{propRKHA}; namely, a H\"older-like inequality,
    \begin{displaymath}
        \lVert f g \rVert_{\mathcal K_\tau} \leq \lVert f \rVert_{\mathcal K_{\tau_1}} \lVert g  \rVert_{\mathcal K_{\tau_2}}, \quad \frac{1}{\tau} \geq \frac{1}{\tau_1} + \frac{1}{\tau_2}. 
    \end{displaymath}
    The failure of $\mathcal K_\tau$ to obey a result analogous to Proposition~\ref{propRKHA} can be traced to the quadratic increase of the Laplacian eigenvalues $ j^2$, which imposes a stronger constraint on the decay of the expansion coefficients $c_j $ for $ f = \sum_{j=-\infty}^\infty c_j \phi_j \in \mathcal K_\tau $ than $ f \in \hat{\mathcal K}_\tau$, where the eigenvalues $\lvert j \rvert$ of $\mathcal L^{1/2}$ grow linearly with $ j $. In effect, it appears that the decay of the exponential terms $e^{-\lvert j \rvert \tau}$ in~\eqref{eqFracHeatKernel} is fast-enough for $\hat{\mathcal K}_\tau$ to contain only smooth functions, yet slow-enough for it to be a Banach algebra. It should be noted that while $\mathcal K_\tau$ has not been shown to have the structure of a Banach algebra, the space $ \mathcal K^\infty = \bigcap_{j=1}^\infty \mathcal K_\tau$ is a unital algebra of functions. 
\end{rk*}

\subsection{\label{secRKHAClassical}Classical dynamics}

We now turn attention to the properties of the Koopman operators $U^t$ acting  on functions in $ \hat{\mathcal K}_\tau$. For that, it is worthwhile to begin by noting that a general RKHS need not be invariant under a dynamical flow $ \Phi^t$, even if that flow is smooth. Intuitively, this is because membership of a function $f$ in an RKHS imposes a stringent condition on the expansion coefficients of $f$ in a natural orthonormal basis for the kernel (e.g., \eqref{eqRKHSDecay}), and as the dynamics can deform the level sets of functions, $ f \circ \Phi^t$ need not satisfy those conditions. Nevertheless, the kernels  $ \hat \kappa_\tau$ possess an important special property, namely that they admit the Mercer representation in~\eqref{eqFracHeatKernel} in terms of Koopman eigenfunctions.  This turns out to be sufficient for $ f \circ \Phi^t$, $f \in \hat{\mathcal K}_\tau$, to lie in $ \hat{\mathcal K}_\tau$, so that one can define groups of Koopman operators $ \hat{\mathcal K}_\tau \to \hat{\mathcal K}_\tau$, which turn out to be strongly continuous and unitary.  Below, we state some of the properties of these groups in the form of a theorem, as we have not seen them stated elsewhere in the literature. 

\begin{thm}
    \label{thmRKHS2}Let $ \Phi^t : S^1 \to S^1$ be the circle rotation with frequency $\alpha$, $ \hat \kappa_\tau : S^1 \times S^1 \to \mathbb R $ the fractional heat kernel from~\eqref{eqFracHeatKernel} at time parameter $ \tau >0 $, and $ \hat{\mathcal K}_\tau$ the corresponding RKHA of $ \mathbb C$-valued functions. Define the Sobolev-like space
    \begin{displaymath}
        \hat{\mathcal K}_\tau^1 = \left\{ \sum_{j=-\infty}^\infty c_j \phi_j \in \hat{\mathcal K}_\tau : \sum_{j=-\infty}^\infty \lvert j \rvert^2 e^{\lvert j \rvert \tau }\lvert c_j \rvert^2 < \infty \right\}.   
    \end{displaymath}
    Then, the following hold for every $ \tau >0 $:
    \begin{enumerate}[(i), wide]
        \item For all $ t \in \mathbb R$, $ \hat{ \mathcal K}_\tau $, and thus $ \hat{\mathcal K}^\infty$, are invariant under $ \Phi^t $.
        \item The group of Koopman operators $ U^t : \hat{ \mathcal K}_\tau \to \hat{\mathcal K}_\tau$, with $ t \in \mathbb R$ and $ U^t f = f \circ \Phi^t$, is a strongly continuous, unitary group. 
        \item The skew-adjoint generator $V : D(V) \to \hat{\mathcal K}_\tau$ of the Koopman group on $\hat{\mathcal K}_\tau$  has domain $D(V) = \hat{\mathcal K}_\tau^1$, and acts as a derivation on $\hat{\mathcal K}^\infty$.
    \end{enumerate}
\end{thm}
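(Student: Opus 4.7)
The plan is to exploit the fact that the Koopman eigenfunctions $\phi_j(\theta) = e^{ij\theta}$ also serve (after normalization) as the natural orthonormal basis of $\hat{\mathcal K}_\tau$. Concretely, from the Mercer expansion of $\hat\kappa_\tau$ in~\eqref{eqFracHeatKernel}, the set $\{e^{-\lvert j \rvert \tau/2}\phi_j\}_{j\in\mathbb Z}$ is an orthonormal basis of $\hat{\mathcal K}_\tau$, so every $f \in \hat{\mathcal K}_\tau$ admits a Fourier expansion $f = \sum_{j} c_j \phi_j$ with $\lVert f \rVert_{\hat{\mathcal K}_\tau}^2 = \sum_j e^{\lvert j \rvert \tau}\lvert c_j \rvert^2 < \infty$. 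For part~(i), I would simply compute $(f\circ\Phi^t)(\theta) = \sum_j c_j e^{ij\alpha t}\phi_j(\theta)$, so the new Fourier coefficients $c'_j = e^{ij\alpha t}c_j$ satisfy $\lvert c'_j \rvert = \lvert c_j \rvert$, and $\hat{\mathcal K}_\tau$-membership is preserved. The same calculation for every $\tau>0$ gives invariance of $\hat{\mathcal K}^\infty$. For part~(ii), this identity immediately yields $\lVert U^t f \rVert_{\hat{\mathcal K}_\tau} = \lVert f \rVert_{\hat{\mathcal K}_\tau}$, so $U^t$ is an isometry with inverse $U^{-t}$, hence unitary. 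Strong continuity follows from
\begin{displaymath}
    \lVert U^t f - f \rVert_{\hat{\mathcal K}_\tau}^2 = \sum_{j\in\mathbb Z} e^{\lvert j \rvert \tau} \lvert e^{ij\alpha t} - 1 \rvert^2 \lvert c_j \rvert^2 \xrightarrow{t\to 0} 0
\end{displaymath}
by dominated convergence, using the pointwise bound $\lvert e^{ij\alpha t}-1\rvert^2 \le 4$ and the summable dominator $4 e^{\lvert j\rvert\tau}\lvert c_j \rvert^2$.

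For part~(iii), I would invoke Stone's theorem: $f = \sum_j c_j \phi_j$ lies in $D(V)$ iff the difference quotient $(U^t f - f)/t$ converges in $\hat{\mathcal K}_\tau$, which by Parseval in the orthonormal basis is equivalent to $\sum_j e^{\lvert j \rvert\tau}\lvert(e^{ij\alpha t}-1)/t\rvert^2\lvert c_j\rvert^2$ remaining bounded as $t\to 0$, i.e., to $\sum_j \lvert j\rvert^2 e^{\lvert j \rvert \tau}\lvert c_j \rvert^2 < \infty$. That is precisely the condition defining $\hat{\mathcal K}_\tau^1$, so $D(V) = \hat{\mathcal K}_\tau^1$ and $V f = i\alpha \sum_j j c_j \phi_j$ on this domain. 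To establish the derivation property on $\hat{\mathcal K}^\infty$, I would first show $\hat{\mathcal K}^\infty \subset D(V)$ for every $\tau$: if $f \in \hat{\mathcal K}_{2\tau}$, then $\lvert j\rvert^2 e^{\lvert j \rvert\tau}\le C_\tau e^{2\lvert j \rvert\tau}$ gives $f \in \hat{\mathcal K}_\tau^1$. The same estimate shows $Vf \in \hat{\mathcal K}^\infty$. Because elements of $\hat{\mathcal K}^\infty$ are smooth functions on $S^1$ and $V$ coincides with the differential operator $\alpha\,\partial_\theta$ on $C^\infty(S^1)$ (compare Fourier series), the classical Leibniz rule $V(fg) = \alpha(fg)' = (Vf)g + f(Vg)$ holds pointwise. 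The subalgebra property $fg \in \hat{\mathcal K}^\infty$ from Proposition~\ref{propRKHA} then ensures both sides lie in $\hat{\mathcal K}_\tau$, so the identity is a genuine derivation relation within $\hat{\mathcal K}_\tau$.

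I do not expect a serious obstacle: the proof reduces to Parseval-type calculations in the orthonormal basis and a dominated-convergence argument. The one step that merits care is identifying $D(V)$ with the explicit Sobolev-like space $\hat{\mathcal K}_\tau^1$ rather than some abstract completion, which I would do via the spectral/Stone-theorem characterization as above. A minor bookkeeping point is that the derivation identity must be read in $\hat{\mathcal K}_\tau$, not just pointwise; this is handled by the closure of $\hat{\mathcal K}^\infty$ under products (Proposition~\ref{propRKHA}) combined with the $\hat{\mathcal K}^\infty \subset D(V)$ inclusion just established.
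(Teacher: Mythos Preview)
Your proposal is correct. Parts (i), (ii), and the domain identification $D(V)=\hat{\mathcal K}_\tau^1$ in (iii) follow the same Parseval-in-the-Mercer-basis argument as the paper, with only cosmetic differences (you use dominated convergence for strong continuity, while the paper checks continuity on basis vectors; both are fine).

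The genuine divergence is in the derivation argument. The paper proceeds intrinsically: it proves an auxiliary lemma that $V\rvert_{\hat{\mathcal K}_{\tau'}}:\hat{\mathcal K}_{\tau'}\to\hat{\mathcal K}_\tau$ is bounded whenever $\tau'>\tau$, then approximates $f,g\in\hat{\mathcal K}^\infty$ by partial Fourier sums $f_l,g_l$, checks the Leibniz identity on the finite sums by direct computation, and passes to the limit in $\hat{\mathcal K}_\tau$-norm using the Banach-algebra inequality $\lVert fg\rVert_{\hat{\mathcal K}_\tau}\le C_\tau\lVert f\rVert_{\hat{\mathcal K}_\tau}\lVert g\rVert_{\hat{\mathcal K}_\tau}$ together with the boundedness lemma. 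You instead step outside the RKHA framework: you identify $V$ with the classical derivative $\alpha\,\partial_\theta$ on $\hat{\mathcal K}^\infty\subset C^\infty(S^1)$ (which is legitimate, since $\hat{\mathcal K}_\tau$-convergence implies uniform convergence and the Fourier series of $Vf$ is that of $\alpha f'$), invoke the ordinary product rule pointwise, and then use Proposition~\ref{propRKHA} only to place all terms back in $\hat{\mathcal K}^\infty$. Your route is shorter and more elementary; the paper's route has the virtue of staying entirely within the RKHA structure and never appealing to classical differentiation, which makes it more portable to settings where the generator is not a differential operator.
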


A proof of Theorem~\ref{thmRKHS2} can be found in Appendix~\ref{appRKHS2}. Note that the result in Claim~(iii) that $V$ acts as a derivation on $\hat{\mathcal K}^\infty$ is reminiscent of the result of ter Elst and Lema\'nczyk \cite{TerElstLemanczyk17} in the $L^2$ setting, which was stated in Section~\ref{secCircle}. It is also worthwhile noting that analogous results to Theorem~\ref{thmRKHS2}(i, ii) hold for the Koopman groups on $\mathcal K_\tau$, and it can also be shown that the generators of these groups have domain $ \mathcal K_\tau^1$, defined analogously to $ \hat{\mathcal K}_\tau^1$. However, our proof that $V$ acts as a derivation on $\hat{\mathcal K}^\infty $ makes use of the Banach algebra structure of $\hat{\mathcal K}_\tau$, so it does not carry over in an obvious way to show that the generator acts as a derivation on $\mathcal K^\infty$ (even though that space is invariant under $U^t$).         

\subsection{\label{secRKHAQuantum}Quantum dynamics and classical-quantum correspondence} 

We now study the quantum dynamics of the circle rotation associated with the RKHAs $\hat{\mathcal K}_\tau$ and its correspondence with the classical dynamics described in Section~\ref{secRKHAClassical}. Starting with the basic definitions, as space of classical observables we consider the RKHA $\hat{\mathcal K}_{\tau,\mathbb R}$ consisting of the real elements of $\hat{\mathcal K}_\tau$ for some $ \tau > 0 $. Note that $\hat{\mathcal K}_{\tau,\mathbb R}$ is a dense subalgebra of the Banach algebra $C_{\mathbb R}(S^1)$ of classical observables employed in Sections~\ref{secQuantum} and~\ref{secCCO},  possessing the additional Hilbert space structure. As spaces of regular quantum states, observables, and generalized quantum states we consider $\mathcal Q(\hat{\mathcal K}_\tau)$, $\mathcal A(\hat{\mathcal K}_\tau)$, and $\mathcal A'(\hat{\mathcal K}_\tau)$, respectively, defined analogously to their counterparts $\mathcal Q(L^2(\mu))$, $\mathcal A(L^2(\mu))$, and $\mathcal A'(L^2(\mu))$ from Section~\ref{secQuantumCircle}. The dynamics on these spaces are governed by unitary operators $U^t : \hat{\mathcal K}_{\tau,\mathbb R} \to \hat{\mathcal K}_{\tau,\mathbb R} $ (the Koopman operators) and isometries $ \hat \Phi^t : \mathcal Q(\hat{\mathcal K}_\tau) \to \mathcal Q(\hat{\mathcal K}_\tau)$, $\hat U^t : \mathcal A(\hat{\mathcal K}_\tau) \to \mathcal A(\hat{\mathcal K}_\tau)$, and $\hat U^{t\prime} : \mathcal A'(\hat{\mathcal K}_\tau) \to \mathcal A'(\hat{\mathcal K}_\tau)$, defined analogously to $ \tilde \Phi^t : \mathcal Q(L^2(\mu)) \to \mathcal Q(L^2(\mu)) $, $ \tilde U^t : \mathcal{A}(L^2(\mu)) \to \mathcal A(L^2(\mu))$, and $ \tilde U^{t\prime}: \mathcal A'(L^2(\mu)) \to \mathcal A'(L^2(\mu))$, respectively. 

Next, similarly  to $\mathbb K_\tau : \mathcal P(S^1) \to \mathcal{ K}_\tau$ and $\Pi: C(S^1)\setminus \{0\} \to \mathcal Q(L^2(\mu))  $, we define an RKHA embedding of probability measures $\hat{\mathbb K}_\tau : \mathcal P(S^1) \to \hat{\mathcal K}_{\tau,\mathbb R}$,
\begin{displaymath}
    \hat{\mathbb K}_\tau(m) = \int_{S^1} \hat\kappa_\tau(\theta,\cdot) \, dm(\theta),
\end{displaymath}
and a map $ \hat \Pi_\tau : \hat{\mathcal K}_\tau \setminus \{ 0 \} \to \mathcal Q(\hat{\mathcal K}_\tau)$, 
\begin{equation}
    \label{eqHatPi}
    \hat\Pi_\tau(f) = \frac{\langle f, \cdot \rangle_{\hat{\mathcal K}_\tau}f }{\lVert f \rVert^2_{\hat{\mathcal K}_\tau}},
\end{equation}
mapping nonzero RKHA functions into pure quantum states. The composition $ \hat \Psi_\tau = \hat \Pi_\tau \circ \hat F_\tau $ then maps classical states in the circle  to pure quantum states in $\mathcal Q(\hat{\mathcal K}_\tau)$. Note that by the reproducing property of $\hat{\mathcal K}_\tau$, $\langle \hat F_\tau(\theta),\cdot \rangle_{\hat{\mathcal K}_\tau}$ is equal to the evaluation functional $\mathbb V_\theta : \hat{\mathcal K}_\tau \to \mathbb C $ at $\theta$, so that
\begin{displaymath}
    \hat \Psi_\tau(\theta) = \frac{\hat \kappa_\tau(\theta,\cdot) \mathbb V_\theta }{\hat \kappa_\tau(\theta,\theta)}.
\end{displaymath}
That is, $\hat \Psi_\tau(\theta)$ acts on functions in $\hat{\mathcal K}_\tau$ by ``reading off'' their value at $\theta$, and multiplying the result by the normalized kernel section $\hat \kappa_\tau(\theta,\cdot) / \hat \kappa_\tau(\theta,\theta) $. Equipped with this map, we define the linear operator $\Omega_\tau : \mathcal{A}(\hat{\mathcal K}_\tau) \to \hat{\mathcal K}_{\tau,\mathbb R} $ mapping quantum mechanical observables in $ \mathcal{A}(\hat{\mathcal K}_\tau)$ to classical observables in $\hat{\mathcal K}_{\tau,\mathbb R}$ according to the formula (cf.\ Proposition~\ref{propS1Quantum}(ii))
\begin{displaymath}
    ( \hat\Omega_\tau A)(\theta) = \mathbb E_{\hat \Psi_\tau(\theta)} A.
\end{displaymath}
In addition, we have the transpose map $\hat \Omega_\tau' : \hat{\mathcal K}_{\tau,\mathbb R}' \to \mathcal A'(\hat{\mathcal K}_\tau)$, $\hat \Omega_\tau' J = J \circ \hat \Omega_\tau$, where we note that $\hat \Omega_\tau'$ can be equivalently defined as a map on $\hat{\mathcal K}_{\tau,\mathbb R}$ by the canonical isomorphism between Hilbert spaces and their duals. 

Defining, further, the unitary map $\hat{\mathcal V}_\tau : \hat{\mathcal K}_\tau \to L^2(\mu)$ by polar decomposition of the integral operator $\hat K_\tau : L^2(\mu) \to \hat{\mathcal K}_\tau$ associated with $\hat \kappa_\tau$ (cf.\ $\mathcal V_\tau $ in~\eqref{eqPolarDecomp}), we can proceed as in Section~\ref{secIso} to construct an isometric embedding $\hat{\mathcal W}_\tau : \hat{\mathcal K}_\tau \to \mathcal H$ of the RKHA $\hat{\mathcal K}_\tau$ into the Hilbert space $\mathcal H $ of sections on Minkowski space, as well as a Banach algebra homomorphism $\hat{\mathcal W}_\tau : \mathcal A(\mathcal H) \to \mathcal A(\hat{\mathcal K}_\tau)$, and linear isometries $\hat{\mathcal W}^+_\tau : \mathcal Q(\hat{\mathcal K}_\tau) \to \mathcal Q(\mathcal H)$ and $\hat{\mathcal W }'_\tau : \mathcal A'(\hat{\mathcal K}_\tau) \to \mathcal A'(\mathcal H)$. These maps have analogous properties to those established in the $L^2(\mu)$ case, as depicted in Figure~\ref{figCommutKH} using commutative diagrams. 

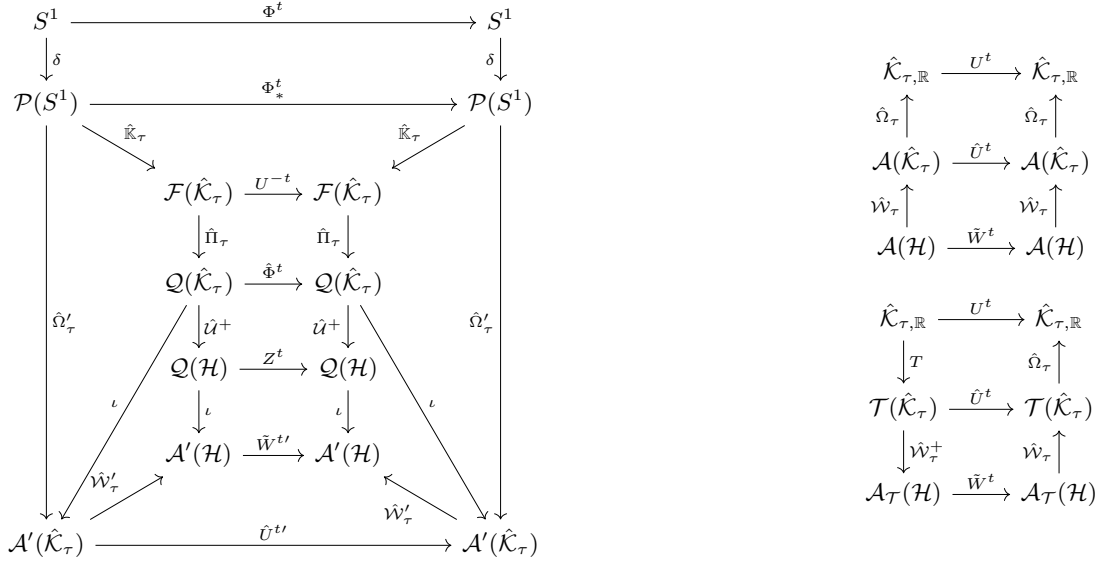
\begin{figure*}
    \begin{minipage}{.65\linewidth}
        \begin{displaymath}
            \begin{tikzcd}
                S^1 \arrow{d}{\delta} \arrow{rrr}{\Phi^t} & & & S^1 \arrow[swap]{d}{\delta} \\
                \mathcal{P}(S^1)  \arrow{rrr}{\Phi^t_*} \arrow{ddddd}{\hat\Omega'_{\tau}} \arrow{dr}{\hat{\mathbb K}_\tau} & & &   \mathcal{P}(S^1)  \arrow[swap]{dl}{\hat{\mathbb K}_\tau} \arrow[swap]{ddddd}{\hat\Omega'_{\tau}} \\
                & \mathcal F(\hat{\mathcal K}_{\tau}) \arrow{d}{\hat\Pi_\tau} \arrow{r}{U^{-t}} & \mathcal F(\hat{\mathcal K}_{\tau}) \arrow[swap]{d}{\hat\Pi_\tau} \\
                & \mathcal{Q}(\hat{\mathcal K}_\tau) \arrow{d}{\hat{\mathcal U}^+} \arrow[swap]{dddl}{\iota} \arrow{r}{\hat \Phi^t} & \mathcal{Q}(\hat{\mathcal K}_\tau) \arrow{dddr}{\iota} \arrow[swap]{d}{\hat{\mathcal U}^+} \\
                & \mathcal Q(\mathcal H) \arrow{d}{\iota} \arrow{r}{Z^t} & \mathcal Q(\mathcal H) \arrow[swap]{d}{\iota}\\
                & \mathcal A'(\mathcal H) \arrow{r}{\tilde W^{t\prime}} & \mathcal A'(\mathcal H) \\
                \mathcal A'(\hat{\mathcal K}_\tau) \arrow{ur}{\hat{\mathcal W}'_\tau} \arrow{rrr}{\hat U^{t\prime}}  & & & \mathcal A'(\hat{\mathcal K}_\tau) \arrow{ul}{\hat{\mathcal W}'_\tau} 
            \end{tikzcd}
        \end{displaymath}
    \end{minipage} \hfill
    \begin{minipage}{.3\linewidth}
        \begin{displaymath}
            \begin{tikzcd}
                \hat{\mathcal K}_{\tau,\mathbb R} \arrow{r}{U^t} & \hat{\mathcal K}_{\tau,\mathbb R} \\
                \mathcal A(\hat{\mathcal K}_\tau) \arrow{r}{ \hat U^t} \arrow{u}{\hat\Omega_\tau} & \mathcal A(\hat{\mathcal K}_\tau) \arrow{u}{\hat\Omega_\tau} \\
                \mathcal A(\mathcal H) \arrow{u}{\hat{\mathcal W}_\tau} \arrow{r}{\tilde W^t} & \mathcal A(\mathcal H) \arrow{u}{\hat{\mathcal W}_\tau}
            \end{tikzcd}
        \end{displaymath}
        \begin{displaymath}
            \begin{tikzcd}
                \hat{\mathcal K}_{\tau,\mathbb R} \arrow{r}{U^t} \arrow{d}{T} & \hat{\mathcal K}_{\tau,\mathbb R} \\
                \mathcal T(\hat{\mathcal K}_\tau) \arrow{d}{\hat{\mathcal W}_\tau^+} \arrow{r}{\hat U^t} & \mathcal T(\hat{\mathcal K}_\tau) \arrow{u}{\hat{\Omega}_\tau}\\
                \mathcal A_{\mathcal T}(\mathcal H) \arrow{r}{\tilde W^t} & \mathcal A_{\mathcal T}(\mathcal H) \arrow{u}{\hat{\mathcal W}_\tau}
            \end{tikzcd}
        \end{displaymath}
    \end{minipage}
    \caption{\label{figCommutKH}As in Fig.~\ref{figCommutL2H}, but for commutative diagrams showing the relationships between the classical/quantum dynamics associated with the RKHAs $\hat{\mathcal K}_\tau$ on the circle and the quantum harmonic oscillator on Minkowski space. In the commutative diagram in the bottom right, $\mathcal T(\hat{\mathcal K}_\tau)$ denotes the closed subalgebra of $\mathcal A(\hat{\mathcal K}_\tau)$ consisting of (bounded) multiplication operators by functions in $\hat{\mathcal K}_{\tau,\mathbb R}$, and $\mathcal A_{\mathcal T}(\mathcal H)$ the image of $\mathcal T(\hat{\mathcal K}_\tau)$ in $\mathcal A(\mathcal H)$ under the map $\hat{\mathcal W}_\tau^+ : A \mapsto \hat{\mathcal W}_\tau A \hat{\mathcal W}_\tau^*$. Note that such a diagram cannot be drawn in the $L^2(\mu)$ setting of Fig.~\ref{figCommutKH}.}
\end{figure*}

Several of these properties are stated as claims in Theorem~\ref{thmRKHS}(i--iii), and we will not repeat their derivation as the arguments are entirely analogous to those in Section~\ref{secQuantum}. Instead, we will focus on the properties of the classical--quantum correspondence associated with $\hat{\mathcal K}_\tau$ which are not present in the $L^2(\mu)$ setting, namely Claims~(iv,v) of Theorem~\ref{thmRKHS}. To that end, observe that by~\eqref{eqHatPi}, a quantum mechanical observable $A \in \mathcal A(\hat{\mathcal K}_\tau)$ maps under $\hat \Omega_\tau$ to the classical observable $f_A \in \hat{\mathcal K}_{\tau,\mathbb R}$ given by
\begin{align*}
    f_A(\theta) &= ( \hat \Omega_\tau A )(\theta) = \tr( \hat \Psi_\tau(\theta) A) \\
    &= \frac{\sum_{j=-\infty}^\infty \langle \hat \phi_{j,\tau}, \hat \kappa_\tau(\theta,\cdot) \mathbb V_\theta( A \hat \phi_{j,\tau} ) \rangle_{\hat{\mathcal K}_\tau}}{\hat\kappa_\tau(\theta,\theta)} \\
    &= \frac{\sum_{j=-\infty}^\infty ( \mathbb V_\theta \hat \phi_{j,\tau})^*(  \mathbb V_\theta(A \hat \phi_{j,\tau} ) ) }{\hat\kappa_\tau(\theta,\theta)} \\
    &= \frac{\sum_{j=-\infty}^\infty \hat\phi_{j,\tau}^*(\theta) (A \hat \phi_{j,\tau})(\theta)}{ \sum_{k=-\infty}^\infty \hat\phi^*_{k,\tau}(\theta) \hat\phi_{k,\tau}(\theta)  }. 
\end{align*}
As a result, whenever $A$ is a multiplication operator, i.e., $ A : f \mapsto gf  $ for some $ g \in \hat{\mathcal K}_{\tau,\mathbb R} $, we have $(A\hat\phi_{j,\tau})(\theta) = g(\theta) \hat \phi_{j,\tau}(\theta)$, and the expression above simplifies to
\begin{displaymath}
    f_A(\theta) = g(\theta). 
\end{displaymath}
Because $A$ was arbitrary, we deduce that $\hat \Omega_\tau $ is compatible with the natural Banach algebra homomorphism $T: \hat{\mathcal K}_{\tau,\mathbb R} \to \mathcal{A}(\hat{\mathcal K}_\tau)$ mapping classical observables to their corresponding multiplication operators. That is, $ \hat \Omega_\tau \circ T = \Id_{\hat{\mathcal K}_{\tau,\mathbb R}}$, as stated in Claim~(iv) of Theorem~\ref{thmRKHS}. The dynamical compatibility condition in Claim~(v) then follows directly from the definition of the operators involved, and leads to an additional commutative diagram, displayed in the bottom right of Fig.~\ref{figCommutKH}.

Figure~\ref{figPsiFrac} displays the evolution of the wavefunction $\hat \psi^{\sigma_x}_{\theta,\tau} $ on Minkowski space underlying the quantum state $ \hat \sigma_{\theta,\tau} = \hat{\mathcal W}_\tau \hat \Psi_\tau(\theta) \hat{\mathcal W}_\tau^*$, defined analogously to $\psi^{\sigma_x}_{\theta,\tau}$ from Section~\ref{secIso}. Here, we have $\hat \psi_{\theta,\tau}^{\sigma_x} = \zeta_{\sigma_x}^{-1} \hat \psi_{\theta,\tau} $, where $ \hat \psi_{\theta,\tau} \in \mathcal H$ is the section given by (cf.~\eqref{eqWF})
\begin{equation}
    \label{eqFracWF}
    \hat \psi_{\theta,\tau} = \sum_{j=0}^\infty e^{-\lvert j \rvert \tau / 2} (\phi_j(\theta) \psi_{j,0} + \phi^*_j(\theta) \psi_{0j}). 
\end{equation}
Th evolution of $ \hat \psi_{\theta,\tau}^{\sigma_x}$ exhibits a qualitatively similar behavior to the evolution of $\psi^{\sigma_x}_{\theta,\tau}$ in Fig.~\ref{figPsi}, but with the notable difference that (due to the slower decay of the exponential coefficients $e^{- \lvert j \rvert \tau / 2}$ in~\eqref{eqFracWF} compared to $e^{- j^2 \tau}$ in~\eqref{eqWF}) $ \hat \psi_{\theta,\tau}^{\sigma_x}$ develops smaller-scale oscillatory features than $ \psi_{\theta,\tau}^{\sigma_x}$.   

\begin{figure*}
    \includegraphics[width=\linewidth]{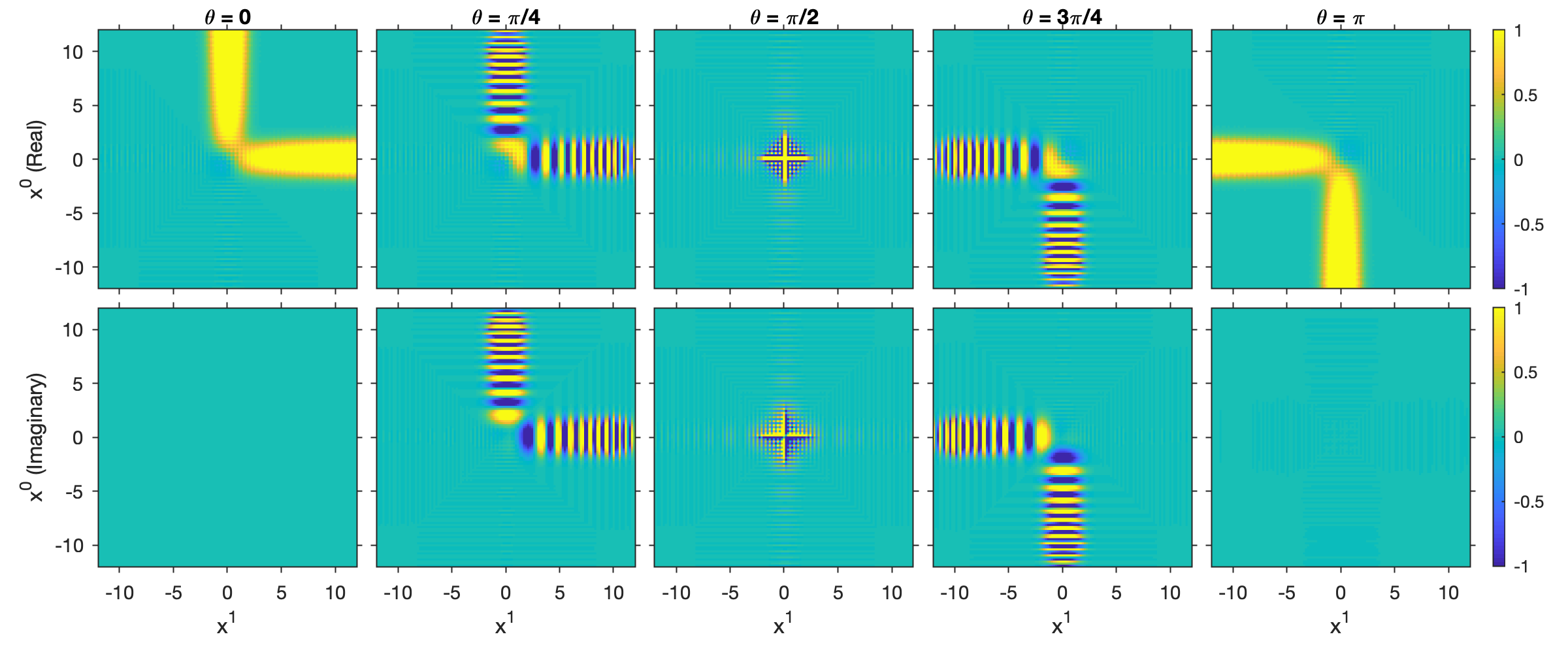}
    \caption{\label{figPsiFrac}As in Fig.~\ref{figPsi}, but for the wavefunction for the RKHA associated with the fractional diffusion on the circle. Notice the smaller-scale oscillatory features developing due to the linear growth of the eigenvalues of $\mathcal L^{1/2}$, as opposed to the quadratic growth of the eigenvalues of $\mathcal  L$.}
\end{figure*}

\section{\label{secConclusions}Concluding remarks}

The classical--quantum correspondences described in this paper, culminating with the RKHA-based formulation in Section~\ref{secRKHA}, provide a mutually consistent description of the dynamics of the circle rotation at three levels, namely the dynamical flow $\Phi^t : S^1 \to S^1$ on state space, the evolution of classical probability measures $ \Phi^t_* : \mathcal P(S^1) \to \mathcal P(S^1)$, and the evolution of quantum states $ \hat \Phi^t : \mathcal Q(\hat{\mathcal K}_\tau) \to \mathcal Q(\hat{\mathcal K}_\tau) $ on a reproducing kernel Hilbert space associated with fractional diffusions, having the additional structure of a $C^*$-algebra. Furthermore, these dynamics embed consistently into the evolution $ Z^t : \mathcal Q(\mathcal H) \to \mathcal Q(\mathcal H)$ of quantum states associated with an $\SO$ gauge theory for the harmonic oscillator on two-dimensional Minkowski space, whose dynamics are generated by a Laplace-type operator acting on sections of an associated $\mathbb C$-line bundle over Minkowski space, induced by a Lorentz-invariant connection satisfying the Yang-Mills equations. In particular, the $\SO$ action on the associated bundle is through a unitary, $\UU$, representation, so that the gauge theory employed in this work can be thought of as a two-dimensional analog of Maxwell electromagnetism. Using this correspondence, we have constructed ladder operators acting on classical observables of the circle rotation, which take the form of order-$1/2$ fractional derivatives, factorizing the positive- and negative- frequency components of the Koopman generator through number operators.       

The geometrical and algebraic structure of the classical--quantum correspondences identified in this work, occurring for a dynamical system as simple as the circle rotation, motivates further study of the connections between operator-theoretic ergodic theory, quantum mechanics, and gauge theory for more general systems, including systems with quasiperiodic or mixing dynamics, as well as systems with spatial structure (e.g., partial differential equation models), where non-Abelian structure groups may play a role.   

\begin{acknowledgements}
    This research was supported by ONR YIP grant N00014-16-1-2649 and NSF grant  DMS 1854383. The author would like to thank Suddhasattwa Das, Igor Mezi\'c, Mihai Putinar, and Joanna Slawinska for stimulating conversations.  He is also grateful to the Department of Computing and Mathematical Sciences at the California Institute of Technology and in particular his host, Andrew M.\ Stuart, for their hospitality and for providing a stimulating environment during a sabbatical, when part of this work was completed. 
\end{acknowledgements}

\appendix

\section{\label{appBundle}Fiber bundles and gauge theory}

In this appendix, we collect various definitions and results on the theory of fiber bundles and gauge theory supporting the analysis in the main text. We begin by considering standard results on general fiber bundles (Appendix~\ref{appBundleGeneral}), and then restrict attention to the principal and associated bundles over two-dimensional Minkowski space studied in this work (Appendix~\ref{appBundleMinkowski}). For additional details on this material we refer the reader to one of the many textbooks in the literature, e.g., Refs.~\cite{BerlineEtAl04,Michor08,RudolphSchmidt17}. 

\subsection{\label{appBundleGeneral}General fiber bundles}

Throughout this section, $ G $ will be a smooth Lie group with Lie algebra $\mathfrak g$, $ P \xrightarrow{\pi} M $ a smooth principal $G$-bundle, and $ E \xrightarrow{\pi_E} M $ a smooth associated bundle with typical fiber $ F$. Moreover, $ P_m = \pi^{-1}(\{m\}) $ and $ E_m = \pi_{E}^{-1}(\{ m \} ) $ will denote the fibers in $P$ and  $E $, respectively, over $ m \in M $. As in the main text, we will use the equivalent notations $R^\Lambda(p) \equiv p \cdot \Lambda$ for the right action at $ p \in P $ by group element $ \Lambda \in G$, and similarly $ L^\Lambda(f ) \equiv \Lambda \cdot f$ will represent the left action at $f \in F$.    

\subsubsection{Fiber-wise isomorphisms and metrics}

In the main text, we make use of the results stated in the following two lemmas. 
\begin{lem}
    \label{lemUnique}
    For any $ p \in P $ with $ \pi(p) = m $, the map $ \varepsilon_p : F \to E_m $, defined by $ \varepsilon_p( f ) = [ p, f ] $ is a bijection. Moreover, for every $ \Lambda \in G $, the property $ \varepsilon_{R^\Lambda(p)}^{-1} = L^{\Lambda^{-1}} \circ \varepsilon_p^{-1} $ holds.    
\end{lem}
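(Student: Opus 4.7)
The plan is to derive both statements directly from the defining equivalence relation on $P \times F$, exploiting the two characterizing properties of the right $G$-action on the principal bundle: it is free, and it is transitive on each fiber $P_m$. Neither step should require anything beyond the definitions; the main point of care is the inverse appearing in the $F$-slot of the equivalence relation $(p\cdot\Lambda, \Lambda^{-1}\cdot f) \sim (p,f)$, which is what ultimately produces the $\Lambda^{-1}$ on the right-hand side of the equivariance identity.

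For the bijection claim I will treat injectivity and surjectivity separately. For injectivity, I will assume $\varepsilon_p(f_1) = \varepsilon_p(f_2)$, unfold the definition to obtain a $\Lambda \in G$ with $p = p\cdot\Lambda$ and $f_2 = \Lambda^{-1}\cdot f_1$, and invoke freeness of the $G$-action to conclude $\Lambda = e$ and hence $f_1 = f_2$. For surjectivity, I will take an arbitrary $e \in E_m$, write $e = [p',f']$ for some $p' \in P_m$ and $f' \in F$, use transitivity of the $G$-action on the fiber $P_m$ to produce $\Lambda \in G$ with $p' = p\cdot\Lambda$, and then rewrite
\begin{displaymath}
    e = [p\cdot\Lambda, f'] = [p, \Lambda\cdot f'] = \varepsilon_p(\Lambda\cdot f'),
\end{displaymath}
where the middle equality is exactly the equivalence relation applied with group element $\Lambda^{-1}$.

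For the second claim, I will show the equivalent statement $\varepsilon_{R^\Lambda(p)} = \varepsilon_p \circ L^\Lambda$ (inverting both sides recovers the desired identity, using that $\varepsilon_p$ and $\varepsilon_{R^\Lambda(p)}$ are bijections by the first part). This amounts to verifying $[p\cdot\Lambda, f] = [p, \Lambda\cdot f]$ for every $f \in F$, which is another direct application of the equivalence relation. No genuine obstacle is anticipated: every step is bookkeeping on equivalence classes, and all that is required is consistent tracking of whether a given $\Lambda$ acts on the $P$-slot or, with its inverse, on the $F$-slot.
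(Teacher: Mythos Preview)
Your proposal is correct and follows essentially the same approach as the paper: injectivity via freeness of the $G$-action, surjectivity via transitivity on the fiber, and the equivariance identity as a one-line consequence of the equivalence relation $[p\cdot\Lambda, f] = [p, \Lambda\cdot f]$. The only cosmetic difference is that the paper verifies $\varepsilon_{R^\Lambda(p)}^{-1} = L^{\Lambda^{-1}}\circ\varepsilon_p^{-1}$ directly rather than first establishing the non-inverted version, but this is the same computation.
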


\begin{proof}
    Suppose that $ \tilde f \in F $ is such that $ [ p, f ] = [ p, \tilde f] $. Then, by definition of the $ [ \cdot, \cdot ] $ equivalence classes, there exists $ \Lambda \in G $ such that $ p \cdot \Lambda = p$ and $ \Lambda^{-1} \cdot f = \tilde f $. However, because the action of $ G $ on $ P $ is free, $ \Lambda $ is equal to the identity element of $ G$, and $ \tilde f = f $. If now $ p' \in P_m $ and $ f'\in F$ are such that $ [ p, f ] = [ p', f' ] $, then there exists a unique $\Lambda' \in G$ such that $ p' = p \cdot \Lambda' $, and therefore $[p',f']=[p \cdot \Lambda', f' ] = [ p, \Lambda' \cdot f' ] $. It then follows that $ \Lambda \cdot f' = f $, meaning that there is a well defined map from $ E_m $ to $ F $ mapping $ [ p', f' ] $ to the unique $ f $ such that $ [ p', f' ] = [ p, f ] $. It is straightforward to verify that this map is the inverse of $ \varepsilon_p $, proving that $ \varepsilon_p$ is a bijection. 

    Next, for any $ e \in E_m $ and $ p \in P_m $, we have $ \varepsilon_p^{-1} e = f $, where $ f $ is the unique element of $ F $ such that $[p,f] = e$. Thus, for any $ \Lambda \in G $, 
    \begin{align*}
        \varepsilon^{-1}_{R^\Lambda(p)}( e ) &= \varepsilon^{-1}_{R^\Lambda(p)} ([p,f] ) = \varepsilon^{-1}_{p \cdot \Lambda}( [ p \cdot \Lambda, \Lambda^{-1} \cdot f ] ) \\& = \Lambda^{-1} \cdot f = L^{\Lambda^{-1}}( \varepsilon_p^{-1}( e ) ), 
    \end{align*}
    proving the second claim, and completing the proof of the lemma.
\end{proof}

\begin{lem}
    \label{lemG} Suppose that $ E \xrightarrow {\pi_E} M $ is a vector bundle, whose typical fiber $F$ is equipped with an inner product $ \langle \cdot, \cdot \rangle_F$, and $ G $ acts on $ F $ via a unitary representation $ \rho : G \to \mathrm{U}(F) $. Then, the fiber-wise metric $ g_m : E_m \times E_m \to \mathbb{C} $, constructed analogously to~\eqref{eqG} using a local section $ \sigma: U \to P $ whose domain $ U \subseteq M$ contains $ m$, is independent of the choice of $ \sigma $. 
\end{lem}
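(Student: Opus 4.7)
The plan is to reduce the question to the compatibility identity for $\varepsilon$ already established in Lemma~\ref{lemUnique} together with unitarity of $\rho$. The key observation is that any two local sections through $m$ differ by a gauge element: if $\sigma : U \to P$ and $\sigma' : U' \to P$ both satisfy $m \in U \cap U'$, then $\sigma(m)$ and $\sigma'(m)$ lie in the same fiber $P_m$, and since the right $G$-action on $P$ is free and transitive on fibers, there is a unique $\Lambda \in G$ such that $\sigma'(m) = \sigma(m) \cdot \Lambda = R^\Lambda(\sigma(m))$.

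First I would apply Lemma~\ref{lemUnique} with $p = \sigma(m)$ to this $\Lambda$, obtaining
\begin{displaymath}
\varepsilon^{-1}_{\sigma'(m)} = \varepsilon^{-1}_{R^\Lambda(\sigma(m))} = L^{\Lambda^{-1}} \circ \varepsilon^{-1}_{\sigma(m)} = \rho(\Lambda^{-1}) \circ \varepsilon^{-1}_{\sigma(m)},
\end{displaymath}
where the last equality uses that the left $G$-action on the typical fiber $F$ is given by the representation $\rho$, i.e.\ $L^{\Lambda^{-1}} f = \rho(\Lambda^{-1}) f$.

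Then, denoting by $g^\sigma_m$ and $g^{\sigma'}_m$ the metrics constructed from $\sigma$ and $\sigma'$ via~\eqref{eqG}, I would compute, for arbitrary $e_1, e_2 \in E_m$,
\begin{align*}
g^{\sigma'}_m(e_1, e_2) &= \langle \varepsilon^{-1}_{\sigma'(m)}(e_1),\, \varepsilon^{-1}_{\sigma'(m)}(e_2) \rangle_F \\
&= \langle \rho(\Lambda^{-1}) \varepsilon^{-1}_{\sigma(m)}(e_1),\, \rho(\Lambda^{-1}) \varepsilon^{-1}_{\sigma(m)}(e_2) \rangle_F \\
&= \langle \varepsilon^{-1}_{\sigma(m)}(e_1),\, \varepsilon^{-1}_{\sigma(m)}(e_2) \rangle_F \\
&= g^\sigma_m(e_1, e_2),
\end{align*}
where the third equality is precisely the hypothesis that $\rho(\Lambda^{-1}) \in \mathrm{U}(F)$ is a unitary operator on $F$. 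Since $e_1, e_2$ were arbitrary, $g^{\sigma'}_m = g^\sigma_m$, establishing the claim.

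There is no substantial obstacle here: the only nontrivial ingredient is the transformation rule for $\varepsilon^{-1}_p$ under the right $G$-action, which is supplied verbatim by Lemma~\ref{lemUnique}, and the remainder is the defining property of a unitary representation. The lemma should therefore be stated and proved essentially in the few lines above; the main conceptual point worth emphasising in the write-up is why the argument requires \emph{unitarity} of $\rho$ (as opposed to mere linearity), namely that without it the change-of-frame factor $\rho(\Lambda^{-1})$ would distort the inner product and $g_m$ would depend on $\sigma$.
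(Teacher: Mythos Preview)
Your proof is correct and follows essentially the same route as the paper: both pick the unique $\Lambda\in G$ with $\sigma'(m)=\sigma(m)\cdot\Lambda$, use the transformation rule $\varepsilon^{-1}_{R^\Lambda(p)}=L^{\Lambda^{-1}}\circ\varepsilon^{-1}_p$ from Lemma~\ref{lemUnique}, and then invoke unitarity of $\rho(\Lambda^{-1})$ to conclude. Your write-up is in fact slightly cleaner, since you invoke the second statement of Lemma~\ref{lemUnique} directly rather than re-deriving it via equivalence classes as the paper does.
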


\begin{proof}
Let $ \tilde \sigma : \tilde U \to P$ be an arbitrary smooth section whose domain $\tilde U$ contains $m$, and $\iota_{\tilde \sigma, E } : \tilde U \times F \to E $ be the associated trivializing map, constructed analogously to $ \iota_{\sigma, E} $. We must show that for any $ e_1,e_2 \in E_m $, $ \tilde g_m( e_1, e_2 ) := \langle \varepsilon_{\tilde \sigma(m)}(e_1), \varepsilon_{\tilde \sigma(m)}( e_2 ) \rangle_F $ is equal to  $ g_m( e_1, e_2 ) = \langle \varepsilon_{\sigma(m)}(e_1), \varepsilon_{\sigma(m)}( e_2 ) \rangle_F $. For that, observe that $ \tilde g_m(e_1, e_2) = \langle \tilde f_1, \tilde f_2 \rangle_F$ and $ g_m(e_1,e_2) = \langle f_1, f_2 \rangle_F$, where, by Lemma~\ref{lemUnique}, $ \tilde f_j $ and $ f_j $ are unique elements in $ F $ such that $ [ \tilde \sigma( m ), \tilde f_j ] =  [ \sigma( m ), f_j ] = e_j $, $ j \in \{ 1, 2 \} $. Because $ G $ acts on $P$ freely, there exists a unique $ \Lambda \in G $ such that $ \tilde \sigma(m) = \sigma( m ) \cdot \Lambda $, and thus
    \begin{displaymath}
        [ \tilde \sigma( m ), \tilde f_j ] = [ \sigma( m ) \cdot \Lambda, \tilde f_j ] = [ \sigma( m ), \Lambda \cdot \tilde f_j ]. 
    \end{displaymath}
    Using again Lemma~\ref{lemUnique} it follows that $ \rho(\Lambda ) \tilde f_j =  \Lambda \cdot \tilde f_j = f_j $, and by unitarity of $ \rho(\Lambda ) $, 
    \begin{align*}
        \tilde g_m( e_1, e_2 ) &= \langle \tilde f_1, \tilde f_2 \rangle_F = \langle \rho( \Lambda ) \tilde f_1, \rho( \Lambda ) \tilde f_2 \rangle_F \\
        &= \langle f_1, f_2 \rangle_F = g_m( e_1, e_2 ),
    \end{align*}
    as claimed. 
\end{proof}

\subsubsection{\label{appConn}Connection 1-forms, covariant derivatives, and curvature tensors}

A \emph{connection 1-form} on a principal bundle $P \xrightarrow{\pi}M$ with structure group $ G $ and Lie algebra $\mathfrak{g} $ is a  Lie-algebra-valued 1-form $ \omega \in \Omega^1(P,\mathfrak{g})$, satisfying the following conditions for every point $ p \in  P $, group element $ \Lambda \in G$, Lie algebra element $\lambda \in \mathfrak g$, and fundamental vector field $W^{\lambda}$, and tangent vector $W \in T_pP$:   
\begin{align}
    \label{eqVert}\omega W^{\lambda}_p &= \lambda_a,\\    
    \label{eqAdj} (R^{\Lambda*} \omega )_p W &= \Ad_{\Lambda^{-1}} \omega_p W.
\end{align}
In~\eqref{eqAdj}, $R^{\Lambda*} : \Omega^1(P, \mathfrak{g} ) \to \Omega^1(P, \mathfrak{g})$  is the pullback map on $\mathfrak{g}$-valued 1-forms associated with the right action $R^\Lambda : P \to P $ by group element $\Lambda \in G$, and $ \Ad_{\Lambda^{-1}}  : \mathfrak{g} \to \mathfrak{g} $  is the representative of $\Lambda$ under the adjoint representation of $G$. The latter, is equal to the  pushforward map on tangent vectors to $ G $ associated with the map $ \AD_{\Lambda} : G \to G $, $\AD_{\Lambda^{-1}} \Lambda' = \Lambda \Lambda' \Lambda^{-1} $, evaluated at the identity $I$; that is, $\Ad_{\Lambda^{-1}} = \AD_{\Lambda^{-1} *, I} $.  If $G$ is Abelian, $\AD_{\Lambda} = \Id_G $ and $\Ad_{\Lambda} = \Id_{\mathfrak g}$ for all $ \Lambda \in G $, and \eqref{eqAdj} reduces to the $G$-invariance condition in~\eqref{eqConnCond}. 

Every such connection 1-form is equivalent to assigning a \emph{horizontal distribution} on $TP$; that is, a smooth assignment of a subspace $H_p P \subseteq T_p P $, called \emph{horizontal subspace}, at every $p \in P$, such that $H_p P \oplus V_p P = T_p P$, and the equivariance condition $ R^\Lambda_* H_p P = H_{R^\Lambda(p)}P $ holds for every $ \Lambda \in G$. In particular, at any $p \in P$, $H_pP $ is equal to the kernel of $ \omega_p$, and we have the vertical and horizontal projection maps $ \ver_p T_p P \to T_p P $ and $ \hor_p T_p P \to T_p P $, where $ \ran \ver_p = V_p P $, $ \ran \hor_p = H_p P $, and
\begin{displaymath}
    \ver_p X = W^{\omega_p X}, \quad \hor_p X = X - \ver_p X.
\end{displaymath}
The pointwise maps $\ver_p$ and $\hor_p$ naturally extend to maps $ \ver : \Gamma(E) \to \Gamma(E)$ and $\hor : \Gamma(E) \to \Gamma(E)$ on sections, respectively; cf.~\eqref{eqHor}.

The horizontal distribution also induces a notion of \emph{parallel transport} of curves on the base space $M$ to curves on the total space $P$. In particular, if $C : ( -1, 1 ) \to M $ is a smooth curve on the base space passing through $ C( 0 ) = m $, then for every point $ p \in P_{\pi^{-1}(m)} $ there exists a unique curve $C^P_p : ( -1, 1 ) \to P $ on the principal bundle, such that (i) $ C^P_p(0) = p $; (ii)  $\pi \circ C^P_p = C$; and (iii) for all $ s \in ( -1, 1 ) $, the tangent vector $\dot C^P_{p,p'} $ to $C^P_p $ at $ p' = C^P_p( s ) $  lies in $H_{p'} P $, and $ \pi_* \dot C^P_{p,p'} = \dot C_{m'} $, where $\dot C_{m'} \in T_{m'}M$ is the tangent vector to $C$ at $ m'=\pi(p') $. Given a vector field $ X \in \Gamma(TM)$ with integral curve $C_m : ( -1, 1 ) \to M $, $ m \in M $, parameterized such that $ C_m(0) = m $, the assignment $ p \mapsto \dot C^P_{p,p} \in T_p P $, $ p \in \pi^{-1}(\{m\})  $, defines a smooth vector field $ Y \in \Gamma(TP)  $ with $ Y_p = \dot C^P_{p,p} $, called the \emph{horizontal lift} of $X$. By construction, the vector field $ Y$ is projectible under $ \pi_* $, satisfying $ \pi_* Y = X$. It should be noted that analogous notions of horizontal subspace, parallel transport, and horizontal lift of vector fields can be defined for any associated bundle to $P \xrightarrow{\pi}M$, but we will not be needing these concepts here. 

A \emph{covariant derivative}, or \emph{connection}, on a real vector bundle $E \xrightarrow{\pi_E} M$  is a linear map $\nabla : \Gamma(E) \to \Omega^1(M,E)$ satisfying the Leibniz rule:
\begin{displaymath}
    \nabla(f s) = df \otimes s + f \nabla s, \quad \forall f \in C^\infty(M), \quad \forall s \in \Gamma(E).
\end{displaymath}
If $ X \in \Gamma(TM)$ is a vector field, $ \nabla$ induces a linear map $ \nabla_X : \Gamma(E) \to \Gamma(E) $, also referred to as covariant derivative, given by $ \nabla_X s = ( \nabla s)( X ) $. This map has the properties
\begin{equation}
    \label{eqCov2}
    \begin{aligned}
        \nabla_{fX}s & = f \nabla_X s, \\
        \nabla_{X+Y} s & = \nabla_X s + \nabla_Y s, \\
        \nabla_X(f s ) &= X( f ) s + f \nabla_X s,
    \end{aligned}
\end{equation}
for all $ f \in C^\infty(M)$, $ Y \in \Gamma(TM)$, and $ s \in \Gamma(E)$. It is straightforward to verify that $\nabla_X$ from~\eqref{eqCov0} satisfies these properties and is thus a covariant derivative operator. If $E\xrightarrow{\pi_E} M$ is complex, then the covariant derivative is constructed as above, but with $X$ and $Y$ taken to be sections of the complexified tangent bundle $T_{\mathbb C} M$. 

The covariant derivative $\nabla : \Gamma(E) \to \Omega^1(M,E) $ on sections induces covariant derivatives $\nabla : \Omega^{k}(M,E) \to \Omega^{k+1}(M,E)$ to $k$-forms with values in $E$. Specifically, given $s \in \Omega^k(M,E)$ and a collection $X_0,\ldots, X_k$ of vector fields in $\Gamma(TM)$, we define
\begin{multline}
    ( \nabla s )(X_0, \ldots, X_k) \equiv \nabla_{X_0,\ldots, X_k} s \\
    \begin{aligned}
        & \sum_{j=0}^k (-1)^j \nabla_{X_j}(s(X_0,\ldots, \hat X_j, \ldots X_k))\\ 
        &+ \sum_{0\leq j < l \leq k}(-1)^{j+l} s([X_j,X_l],X_0, \ldots, \hat X_j, \ldots,\\
    & \qquad \qquad \quad \hat X_l, \ldots, X_k),
    \end{aligned}
    \label{eqCovK}
\end{multline}
where $[\cdot, \cdot]$ is the Lie bracket of vector fields, and $\hat\ $ indicates a missing term. It can then be verified that the Leibniz rule 
\begin{displaymath}
    \nabla(w \wedge s ) = dw \wedge s + (-1)^l w \wedge \nabla s
\end{displaymath}
holds for any $ s \in \Omega^k(M,E)$ and $ w \in \Omega^l(M)$.

If $E\xrightarrow{\pi_E} M $ is an associated vector bundle to a principal bundle $P\xrightarrow{\pi} M$ with typical fiber $F$, onto which $G$ acts linearly through a representation $ \rho : G \to \GLF $ (i.e., $L^\Lambda = \rho(\Lambda)$), the definition for $\nabla $ in~\eqref{eqCovK} is consistent with first defining an exterior covariant derivative $\mathcal D : \Omega^k_G( P, F ) \to \Omega^k_G( P, F )$ on $G$-equivariant, $F$-valued $k$-forms on the principal bundle, and pulling back $\mathcal D $ to an operator on $E$-valued $k$-forms on the base space, analogously to the construction of $\nabla : \Gamma(E)\to \Omega^1(M,E)$ from $\mathcal D : C^\infty_G(P,F) \to \Omega^1_G( P, F )$ in Section~\ref{secConn}. 

Unlike standard exterior derivatives, $\nabla^2 $ and $\mathcal D^2$ are, in general, nonzero operators, which play an important role in defining notions of curvature for vector bundles and principal bundles. Focusing, for now, on the vector bundle $ E \xrightarrow{\pi_E}M$, it can be shown that $ \nabla^2 : \Gamma(E) \to \Omega^2(M,E) = \Gamma(\bigwedge^2 T^*M \otimes E)$, given according to \eqref{eqCovK} by
\begin{displaymath}
    \nabla^2_{X,Y} = \nabla_X \nabla_Y - \nabla_Y \nabla_X - \nabla_{[X,Y]}, \quad X,Y \in \Gamma(TM),
\end{displaymath}
acts as a multiplication operator by a 2-form taking values in the endomorphism bundle $ \End E \to M $ of $E$.  That is, we have
\begin{equation}
    \label{eqCurvR}
    \nabla^2_{X,Y} s = \mathsf R_{X,Y} s
\end{equation}
for some $ \mathsf R \in \Omega^2(M,\End E)$, and despite appearances, $\nabla^2$ is a zeroth-order differential operator since $R_{X,Y}$ is a $(1,1)$ tensor. The tensor field  $\mathsf R $ is called the \emph{curvature tensor}, or \emph{curvature endomorphism}, associated with the connection $\nabla$. It can further be shown that $\mathsf R$ satisfies a \emph{differential Bianchi identity}, 
\begin{displaymath}
    \nabla \mathsf R = 0,
\end{displaymath}
where $\nabla : \Omega^2( M, \End E ) \to \Omega^3(M,\End E)$ is a connection induced canonically from $\nabla : \Omega^2(M,E) \to \Omega^3(M,E)$ on tensor bundles of $E \xrightarrow{\pi_E} M$. If $\mathsf R$ vanishes, the connection $\nabla$ is said to be \emph{flat}.    

It should be noted that $\nabla^2$ is different from the second covariant derivative $\mathsf H : \Gamma(TM) \to \Gamma(T^* M \otimes T^*M \otimes E)$ in Section~\ref{secLapl} in that the former takes values in the product bundle of $E$ and the second exterior power of $T^*M$ (consisting of antisymmetric $(0,2)$ tensors), whereas the latter takes values in the product bundle of $E $ and the second tensor power of $T^*M$ (consisting of general $(0,2)$ tensors). Nevertheless, the two operators are related, in the sense that $ \nabla^2$ represents the antisymmetric component of $\mathsf H$. Specifically, because the Levi-Civita connection is \emph{torsion-free}, i.e., 
\begin{displaymath}
    \nabla^{\text{LC}}_X Y - \nabla^{\text{LC}}_Y X = [ X, Y ], \quad \forall X,Y \in \Gamma(TM),
\end{displaymath}
it follows that 
\begin{displaymath}
    \nabla^2_{X,Y} = \mathsf H_{X,Y} - \mathsf H_{Y,X}.
\end{displaymath}

\subsubsection{Adjoint bundles}

First, we describe the \emph{nonlinear adjoint bundle}  $ \AD P \xrightarrow{\pi_{{\AD P }}} M $  associated to the principal bundle $ P \xrightarrow{\pi} M$. $ \AD P \xrightarrow{\pi_{{\AD P }}} M $ is defined as the fiber bundle over $M $ with typical fiber $F = G$, and left group action on $F $ given by $ \Lambda \cdot f = \AD_\Lambda f $, where $\Lambda, f \in G$. A section $ \varsigma \in \Gamma(\AD P )$ defines a function $\tilde \xi :P \to G $ such that $ \tilde \xi(p)$ is the unique element of $G$ satisfying $\varsigma(m) = [ p, \tilde \xi(p) ] $. Moreover, for every $\Lambda \in G$, we have
\begin{displaymath}
    [p, \tilde \xi(p)] = [ p \cdot \Lambda, \Lambda^{-1} \cdot \tilde \xi(p) ] = [ p \cdot \Lambda, \AD_{\Lambda^{-1}} \tilde \xi(p) ].
\end{displaymath}
Because $ [p \cdot \Lambda, \AD_{\Lambda^{-1}} \tilde \xi(p)] = [ p \cdot \Lambda, \tilde \xi(p \cdot \Lambda)]$, it follows that  $ \tilde \xi(p \cdot \Lambda) = \AD_{\Lambda^{-1}} \tilde \xi(p)$, and thus $ \Lambda \cdot \tilde \xi(p \cdot \Lambda) = \tilde \xi(p) \cdot \Lambda$. The latter relation implies in turn that $ \varphi_{\varsigma} : P \to P $ with $ \varphi_\varsigma(p) = p \cdot \tilde \xi(p)$ satisfies the commutative diagram in~\eqref{eqGaugeCommute}, and is thus a gauge transformation. Conversely, starting from a gauge transformation $ \varphi : P \to P $, one can construct a section $ \varsigma_{\varphi} \in \Gamma(\AD P)$ by reversing the steps leading to $ \varphi_\varsigma$ from $ \varsigma $, so that every gauge transformation $ \varphi $ induces a section $ \varsigma_{\varphi}$. One can further verify that $ \varsigma = \varsigma_{\varphi_{\varsigma}}$ and $ \varphi = \varphi_{\varsigma_{\varphi}}$, so that the gauge group $ \mathcal{G}$ can be identified with $ \Gamma(\AD P)$.

If now $ e = [ p, f ] $, with $ p \in P $ and $ f \in F $, is a point on an associated bundle $ E \xrightarrow{\pi_E} M $ to $ P \xrightarrow{\pi}M $, lying above $ m = \pi(p) = \pi_E(e)$, the action $\varphi_*(e)$ of the pushforward map $ \varphi_* : E \to E $ associated with the gauge transformation $ \varphi$ is given by 
\begin{equation}
    \label{eqGaugeStar}
    \varphi_{*}(e) = [\varphi(p), f] = [ p \cdot \tilde \xi(p), f ] = [ p, \tilde \xi(p) \cdot f ].
\end{equation}
Thus, if $ \sigma :  U \to P $ is a trivializing section defined on $ U \subseteq M $, we have
\begin{equation}
    \label{eqGaugeStarSigma}
    \varepsilon^{-1}_{\sigma(m)}( \varphi_*(e)) = \tilde \xi(\sigma(m)) \varepsilon^{-1}_{\sigma(m)}(e).
\end{equation}

Note that if $G$ is Abelian, $ \AD_\Lambda = \Id $, and every equivalence class $ [ p, \Lambda ] \in \AD P $ lying above a given point $m \in M$ is characterized by a unique group element $\Lambda \in G$. As a result, $ \AD P$ is canonically isomorphic to $ M \times G$, and every section $\varsigma \in \Gamma(\AD P)$ induces a function $\xi : M \to G$ such that $ \varsigma(m) = [ p, \xi(m) ] $, where $p$ is an arbitrary point in $P_m $. In particular, the gauge transformation  $ \varphi_{\varsigma} : P \to P $ associated with $ \varsigma$ is given by $ \varphi_{\varsigma}(p) = p \cdot \xi(\pi(p))$, and~\eqref{eqGaugeStarSigma} becomes
\begin{displaymath}
    \varepsilon^{-1}_{\sigma(m)}( \varphi_*(e)) = \xi(m) \varepsilon^{-1}_{\sigma(m)}(e).
\end{displaymath}

We now consider the \emph{adjoint bundle} $ \Ad P \xrightarrow{\pi_{ {\Ad P}}} M$,  which is a distinct bundle from $ \AD P \xrightarrow{\pi_{{\AD P }}} M $, also playing an important role in gauge theory. In particular, $ \Ad P \xrightarrow{\pi_{ {\Ad P}}} M$ is defined as the vector bundle over $M$ associated to $ P \xrightarrow{\pi} M $ with typical fiber equal to the Lie algebra of $ G$, $F = \mathfrak{g} $, and left action given by the adjoint representation, $ L^\Lambda \lambda = \Ad_\Lambda \lambda  $. This bundle is useful for characterizing the space of connection 1-forms on $P$, denoted by $\mathcal C$. That is, it can be shown that $\mathcal C$ is an infinite-dimensional affine space with translation group isomorphic to $ \Omega^1(M, \Ad P)$; the space of 1-forms on the base space $M$ with values in $ \Ad P$. If the structure group $G$ is Abelian,  $\Ad_\Lambda = \Id $, and the equivalence classes $[p,\lambda ] \in \Ad P$ are characterized by unique Lie algebra elements $ \lambda \in \mathfrak g$. It then follows that $\Ad P $ is canonically isomorphic to $ M \times \mathfrak g$ (analogously to the isomorphism of $\AD P $ to $ M \times G$), and that sections in $\Gamma(\Ad P)$ are canonically identified with $\mathfrak g$-valued functions on $M$ (analogously to the identification of sections in $\Gamma(\AD P)$ with $G$-valued functions).   

\subsubsection{\label{appCurv}Curvature 2-forms and field strengths}

In addition to characterizing the space of connections, the adjoint bundle plays an important role in the context of curvature, as we now describe. First, as already stated in Section~\ref{secYM}, the \emph{curvature 2-form} associated with a connection 1-form $\omega \in \mathcal C$ is the Lie-algebra valued 2-form $\Omega \in \Omega^2(P,\mathfrak g)$ given by the covariant exterior derivative $\Omega = \mathcal D \omega$. It can be shown that $\Omega$ obeys the structure equation
\begin{equation}
    \label{eqCurvStruct}
    \Omega(Y,Z) = d\omega(Y, Z) + [ \omega Y, \omega Z],
\end{equation}
where $ [\cdot, \cdot] : \mathfrak g \times \mathfrak g \to \mathfrak g$ is the Lie algebra commutator. This leads to the \emph{Bianchi identity}, 
\begin{equation}
    \label{eqBianchiP}
    \mathcal D \Omega = \mathcal D^2 \omega =  0,
\end{equation}
which is a non-trivial result since, as noted in Appendix~\ref{appConn}, $\mathcal D^2$ is in general a nonzero operator. Equation~\eqref{eqCurvStruct} manifestly exhibits the fact that $\Omega $ depends nonlinearly on $\omega$ if $G$ is non-Abelian, whereas in the Abelian case that dependence is affine. By examining pullbacks $\sigma^*\Omega \in C^\infty(U,\mathfrak g)$ along trivializing sections $\sigma: U \subseteq M \to P$ of the principal bundle, $\Omega$ can be identified with a 2-form $ F^\omega \in \Omega^2( M, \Ad P ) $ taking values in the adjoint bundle. The 2-form $F^\omega$ is known as the \emph{gauge field strength} associated with $\omega$. 

Let now $ E \xrightarrow{\pi_E} M $ be a vector bundle with typical fiber $F$, acted upon by $G$ through a representation $\rho : G \to \GLF$. Let also $ \varrho: \mathfrak g \to \glf $ be the Lie algebra representation induced by $ \rho $ through its differential at the identity, $ \varrho = \rho_{*,I}$. Then, if $\nabla : \Gamma(E) \to \Omega^1(M,E)$ is the covariant derivative associated with $\omega$, the Bianchi identity in~\eqref{eqBianchiP} is equivalent to
\begin{displaymath}
    \nabla F^\omega = 0,
\end{displaymath}
and the corresponding curvature tensor $\mathsf R \in \Omega^2(M, \End E) $ from~\eqref{eqCurvR} satisfies    
\begin{displaymath}
    ( \mathsf R_{X,Y} s )_m = [ p, \varrho(\Omega(X,Y)) f ].
\end{displaymath}
Here, $ m $ is a point in $M$, $X,Y$ are vector fields in $\Gamma(TM)$, and $s$ is a section in $\Gamma(E)$ with $s(m) = [ p, f ] $. Specializing these results to the case of the adjoint bundle, $ E = \Ad P $, $\varrho $ becomes the adjoint representation of $ \mathfrak g $, and thus
\begin{displaymath}
    ( \mathsf R_{X,Y} s )_m = [ p, \ad_{\Omega(X,Y)} f ].
\end{displaymath}
Moreover, on the domain $U\subseteq M $ of any trivializing section $\sigma : M \to P $, we have (cf.~\eqref{eqCov})
\begin{displaymath}
    \nabla^\sigma_X f = df \cdot X + \ad_{\omega^\sigma X} f, \quad f \in C^\infty(U,\mathfrak g), \quad X \in \Gamma(TU).  
\end{displaymath}
From the last two equations, we deduce that if $G$ is Abelian (i.e., $\ad_\lambda = 0 $ for all $\lambda \in \mathfrak g$), then every connection 1-form $\omega \in \mathcal C$ induces a flat connection  $\nabla$ on the adjoint bundle, which coincides with the standard exterior derivative.

\subsubsection{\label{appYM}Outline of Yang-Mills theory}

In this section, we briefly outline the aspects of Yang-Mills theory leading to the constructions in Section~\ref{secYM}. As in Appendix~\ref{appCurv}, let $\omega \in \mathcal C$ be a connection 1-form on the principal bundle, and let $ \Ad P \xrightarrow{\pi_{\Ad P}} M $ be the adjoint bundle, equipped with the covariant derivative $\nabla$ induced from $\omega$. In Yang-Mills theory, it is assumed that the base space manifold $M$ is equipped with a Riemannian or pseudo-Riemannian metric $\eta$ with volume form $\nu$, and the Lie algebra $\mathfrak g $ is similarly equipped with a non-degenerate $\Ad$-invariant sesquilinear form $ b : \mathfrak g \times \mathfrak g \to \mathbb C $ (cf.\ $b$ from Section~\ref{secMinkowski}). Then, for every $m \in M$, one can define a sesquilinear form $g_m $ on $\Ad P_m $ analogously to~\eqref{eqG}, as well as pointwise sesquilinear forms for $k$-forms in $\Omega^k(M, \Ad P)$ and corresponding norms; the latter, denoted by $\lVert \cdot \rVert_m$.  The Hodge star operator $\star : \Omega^k(M) \to \Omega^{\dim M - k}(M)$ also lifts canonically to an operator $ \star : \Omega^k(M,\Ad P ) \to \Omega^{\dim M - k }(M,\Ad P)$. 

With these definitions, the \emph{Yang-Mills action} associated  with a connection $\omega$ is given by 
\begin{displaymath}
    S_{\text{YM}}(\omega) = \int_M \lVert F^\omega \rVert^2_m \, d\nu(m),
\end{displaymath}
whenever the integral exists. One can verify that $S_{\text{YM}}$ is gauge-invariant. 

If $M$ is compact, then $S_{\text{YM}}(\omega)$ exists for all connections in $\mathcal C$. In the non-compact case, including the Minkowski space studied here, the domain of definition of $S_\text{YM}$ is a proper subset of $\mathcal C$. The Yang-Mills condition states that $\omega$ should be a critical point of this functional; that is, for any section $ s \in \Gamma(\Ad P )$, 
\begin{displaymath}
    \left. \frac{d\ }{d\tau} S_{\text{YM}}(\omega + \tau s ) \right\rvert_{\tau =0} = 0.
\end{displaymath}
This condition leads to the \emph{Yang-Mills equation},
\begin{displaymath}
    \nabla \star F^\omega = 0,
\end{displaymath}
which represents a system of partial differential equations for local gauge fields $\omega^\sigma = \sigma^* \omega $ associated with local trivializing sections $\sigma : U \subseteq M \to P$ of the principal bundle. Due to the nonlinear dependence of $\Omega$ on $\omega$, these equations are nonlinear if $G$ is non-Abelian. Note that if $M$ is non-compact, a connection 1-form $\omega$ can satisfy the Yang-Mills equation while having infinite action. 

\subsection{\label{appBundleMinkowski} Fiber bundles over Minkowski space}

We now restrict attention to the setting where  $P\xrightarrow{\pi}M$ and $E \xrightarrow{\pi_E} M$ are the principal and associated bundles introduced in Section~\ref{secGauge} over two-dimensional Minkowski space, with structure group $ G \simeq \SO$. We begin with three lemmas establishing basic properties of Cartesian charts on $M$, and their induced charts on $P$. 

\begin{lem} \label{lemChartX}Let $x : M \to \mathbb{R}^2$ be an inertial chart with origin $o $ and basis vector fields $X_j$, and $y : P \to \mathbb{R}^3$ the induced chart on the principal bundle. Let also $ L^\Lambda_o $ be a Lorentz transformation associated with group element $\Lambda \in G$. Then, the following hold:  
    \begin{enumerate}[(i),wide]
        \item $ x ' = x \circ L^\Lambda_o$ is an inertial chart, whose coordinates and basis vector fields satisfy
            \begin{align*}
                x^{\prime i}(m) &= x^i(L^\Lambda_o(m)) = \sum_{j=0}^1 {\Lambda^{i}}_j x^j(m), \\ 
                X'_i &= \sum_{i=0}^1 X_j {\Lambda^{-1,j}}_i, 
            \end{align*}
            respectively. Here, ${\Lambda^{i}}_j$ are constant coefficients given by 
            \begin{displaymath}
                {\Lambda^{i}}_j = dx^i \cdot \Lambda X_j = \vec X^i \cdot \Lambda \vec X_j, 
            \end{displaymath}
            and the ${\Lambda^{-1,j}}_i$ are defined similarly with $\Lambda$ replaced by $\Lambda^{-1}$.
        \item The trivializing section $\sigma_{x'} : M \to P$ and maps $ \iota_{\sigma_{x'}} : M \times G \to P $, $ \gamma_{\sigma_{x'}} : P \to G$ associated with $x' $  have the equivariance properties
            \begin{displaymath}
                \sigma_{x'} = R^\Lambda \circ \sigma_x, \quad \iota_{\sigma_{x'}} = R^\Lambda \circ \iota_{\sigma_x}, \quad \gamma_{\sigma_{x'}} = L^{\Lambda^{-1}} \circ \gamma_{\sigma_x}.
            \end{displaymath}
        \item The coordinates and basis vector fields of the coordinate chart $y' : P \to \mathbb{R}^3$ induced by $x' $ satisfy
            \begin{gather*}
                y^{\prime i}(p) = \sum_{j=0}^1 {\Lambda^i}_j y^j(p), \quad Y'_i = \sum_{j=0}^1 Y_j {\Lambda^{-1,j}}_i, \quad i \in \{ 0, 1 \}, \\
                y^{\prime2}(p) = y^2(p) - \vartheta(\Lambda), \quad Y'_2 = Y_2,
            \end{gather*}
            where $\vartheta : G \to \mathbb{R}$ is the coordinate chart on the gauge group from Section~\ref{secMinkowski}. 
    \end{enumerate}
\end{lem}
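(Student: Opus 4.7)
\smallskip

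\noindent\textbf{Proof proposal.} The plan is to attack the three claims in order, since each one builds on the previous. The underlying theme is that $L^\Lambda_o$ is an affine map whose linear part is $\Lambda$, so the transformations of all the derived objects (coordinates, frames, trivializations, fiber coordinates on $P$) reduce to elementary linear-algebraic computations once we unwind definitions; the only place the group-theoretic structure intervenes nontrivially is through the Abelian property of $G$ and the additivity property~\eqref{eqThetaChart} of $\vartheta$.

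For (i), the first step is to compute $x'(m) = x(L^\Lambda_o(m))$ explicitly. By definition of the Lorentz transformation, $L^\Lambda_o(m) = o + \Lambda\overrightarrow{om}$, and expanding $\overrightarrow{om} = \sum_j x^j(m)\vec X_j$ in the basis of $x$ gives $\Lambda\overrightarrow{om} = \sum_{i,j}{\Lambda^i}_j x^j(m)\vec X_i$, which reads off the coordinate formula. To identify the new basis, I set $\overrightarrow{om} = \sum_i x^{\prime i}(m)\vec X'_i$ and compare with $\overrightarrow{om} = \sum_j x^j(m)\vec X_j$, using the relation for $x^{\prime i}$ just derived; solving for $\vec X_j$ in terms of $\vec X'_i$ and inverting yields $\vec X'_i = \sum_j {\Lambda^{-1,j}}_i\vec X_j$, i.e.\ $\vec X'_i = \Lambda^{-1}\vec X_i$, whence the stated formula for the coordinate vector fields $X'_i$. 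Finally, to conclude that $x'$ is inertial (not merely Cartesian), I must verify that $\{\vec X'_0,\vec X'_1\}$ is orthonormal, positively oriented, and future-directed; these follow respectively from $\Lambda^{-1}\in G$ preserving $\eta$, having unit determinant, and preserving the future cone (the latter being the orthochronous condition $\eta(\vec\tau,\Lambda^{-1}\vec X_0)<0$).

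For (ii), the fact that the right action on frames reads $p\cdot\Lambda = (\Lambda^{-1}p_0,\Lambda^{-1}p_1)$ immediately gives $\sigma_x(m)\cdot\Lambda = (\Lambda^{-1}X_{0,m},\Lambda^{-1}X_{1,m}) = \sigma_{x'}(m)$, so $\sigma_{x'}=R^\Lambda\circ\sigma_x$. Chasing through the definition $\iota_{\sigma}(m,\Lambda')=\sigma(m)\cdot\Lambda'$ and invoking commutativity of $G$ to rearrange $\sigma_x(m)\cdot\Lambda\cdot\Lambda' = (\sigma_x(m)\cdot\Lambda')\cdot\Lambda$ gives $\iota_{\sigma_{x'}}=R^\Lambda\circ\iota_{\sigma_x}$. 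The formula for $\gamma_{\sigma_{x'}}$ then falls out by uniqueness: the equation $\sigma_{x'}(\pi(p))\cdot\Lambda' = p$ is equivalent (via $\sigma_{x'}=R^\Lambda\circ\sigma_x$ and the Abelian property) to $\sigma_x(\pi(p))\cdot(\Lambda\Lambda')=p$, giving $\Lambda\Lambda'=\gamma_{\sigma_x}(p)$, and hence $\gamma_{\sigma_{x'}}(p)=\Lambda^{-1}\gamma_{\sigma_x}(p)=L^{\Lambda^{-1}}(\gamma_{\sigma_x}(p))$.

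For (iii), the coordinate formulas $y^{\prime i}(p)$ for $i\in\{0,1\}$ follow by composing $y^{\prime i}=x^{\prime i}\circ\pi$ with the result of (i), while $y^{\prime 2}(p)=\vartheta(\gamma_{\sigma_{x'}}(p))=\vartheta(\Lambda^{-1}\gamma_{\sigma_x}(p))=y^2(p)-\vartheta(\Lambda)$ uses (ii) together with the additive identity~\eqref{eqThetaChart} applied to $\Lambda^{-1}$. The vector-field formulas then come from the standard transformation rule for coordinate bases: inverting the Jacobian of the map $y\mapsto y'$, which is block-diagonal with $2\times 2$ block $({\Lambda^i}_j)$ and a $1\times 1$ block equal to $1$, produces $Y'_i=\sum_j Y_j{\Lambda^{-1,j}}_i$ for $i=0,1$ and $Y'_2=Y_2$. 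The only genuine subtleties in the whole argument are the consistent application of the Abelian property in (ii) to commute right translations, and the careful bookkeeping of inverses between $\Lambda$-components and $\Lambda^{-1}$-components when passing between coordinate transformations (which act contravariantly) and basis-field transformations; I do not anticipate a serious obstacle beyond keeping these consistent.
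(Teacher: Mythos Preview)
Your proposal is correct and follows essentially the same approach as the paper's proof: both unwind the definitions of $L^\Lambda_o$, the right $G$-action on frames, and the induced chart on $P$, reducing each claim to elementary linear algebra plus the additivity property~\eqref{eqThetaChart} of $\vartheta$. The only minor remark is that in part~(ii) the Abelian property of $G$ is genuinely needed for the $\iota_{\sigma_{x'}}$ relation (to commute $\Lambda$ past $\tilde\Lambda$), but the $\gamma_{\sigma_{x'}}$ relation follows from associativity of the action alone---your invocation of commutativity there is harmless but unnecessary.
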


\begin{proof}
    (i) Let $m \in M $ be arbitrary. Then, by definition of the $x$ chart, $ m = o + \vec v $, where $ \vec v = \sum_{j=0}^1 x^j(m) \vec X_j $, and 
    \begin{displaymath}
        L^\Lambda_o(m) = o + \sum_{j=0}^1 x^j(m) \Lambda \vec X_j = o + \sum_{j,k=0}^1  \vec X_k { \Lambda^k }_j x^j(m), 
    \end{displaymath}
    where ${\Lambda^k}_j = \vec X^k \cdot \Lambda \vec X_j$. Therefore, 
    \begin{displaymath}
        x^{\prime i}(m) = x^i(L^\Lambda_o(m)) = \sum_{j=0}^1 {\Lambda^{i}}_j x^j(m),
    \end{displaymath}
    which confirms the claimed relationship between the $x^{\prime } $ and $x$ coordinates. Moreover, 
    \begin{align*}
        \vec v &= \sum_{j=0}^1 x^{j}(m) \vec X_j = \sum_{i,j=0}^1 {\Lambda^{-1,j}}_i x^{\prime i}(m) \vec X_j \\
        &= \sum_{i=0}^1 x^{\prime i}(m) \left( \sum_{j=0}^1 X_j {\Lambda^{-1,j}}_i \right),   
    \end{align*}
    which implies that $X'_i = \sum_{j=0}^1 X_j {\Lambda^{-1,j}}_i$, as claimed. The equality of $dx^i\cdot \Lambda X_j$ and $ \hat X^i \cdot \Lambda \vec X_j$ follows by definition of inertial charts.

    (ii) The claim about $\sigma_{x'}$ follows directly from the fact that for every $m \in M$,
    \begin{align*}
        \sigma_{x'}(m) &= \{ X'_{0,m}, X'_{1,m} \} = \{ \Lambda^{-1} X_{0,m}, \Lambda^{-1} X_{1,m} \} \\
        &= \sigma_x(m) \cdot \Lambda = R^\Lambda( \sigma_x(m) ). 
    \end{align*}
Moreover, for every $p \in P$, we have $ \iota_{\sigma_{x'}}(p) = ( m, \gamma_{\sigma_{x'}}(p) ) $, where $ \gamma_{\sigma_{x'}}(p)$ is the unique element of $G $ satisfying $\sigma_{x'}(m) \cdot \gamma_{\sigma_{x'}}(p) = p$. But by the result just proved, $\sigma_{x'}(m) = \sigma_x(m) \cdot \Lambda$, and therefore $ \sigma_x(m) \cdot \Lambda \gamma_{\sigma_{x'}}(p) = p$. The latter, implies that $ \Lambda \cdot \gamma_{\sigma_{x'}}(p) = \gamma_{\sigma_x(p)}$, leading to the claim about $\gamma_{\sigma_{x'}}$. To prove the claim about $\iota_{\sigma_{x'}}$, observe that for every $m \in M $ and  $\tilde \Lambda \in G$, 
    \begin{align*}
        \iota_{\sigma_{x'}}(m, \tilde \Lambda) &= \sigma_{x'}(m) \cdot \tilde\Lambda = \sigma_{x}(m) \cdot \Lambda \tilde \Lambda = \sigma_{x}(m) \cdot \tilde \Lambda \Lambda \\
        &= \iota_{\sigma_{x}}(m, \tilde \Lambda ) \cdot \Lambda,
    \end{align*}
    and the claim follows.

    (iii) By definition of the $y'$ chart, for any $p \in P$ we have
    \begin{displaymath}
        y'(p) = ( x' \otimes \vartheta) \iota_{\sigma_{x'}}^{-1}(p) = ( x'(\pi(p)), \vartheta(\gamma_{\sigma_{x'}}(p))).
    \end{displaymath}
    Therefore, for $ i \in \{ 0, 1 \} $, Claim~(i) implies that
    \begin{displaymath}
        y^{\prime i}(p) = x^{\prime i}(\pi(p)) = \sum_{j=0}^1 {\Lambda^i}_j x^j(\pi(p)) = \sum_{j=0}^1 {\Lambda^i}_j y^j(p).
    \end{displaymath}
    Moreover, it follows from Claim~(ii) and~\eqref{eqThetaChart} that
    \begin{align*}
    y^{\prime 2}(p) &= \vartheta(\gamma_{\sigma_{x'}}(p)) = \vartheta(\Lambda^{-1} \gamma_{\sigma_x}(p))) \\
    &= \vartheta(\Lambda^{-1}) + \vartheta(\gamma_{\sigma_x}(p)) = - \vartheta(\Lambda) + y^2(p),
    \end{align*}
    which proves the claimed relation between $y'(p)$ and $y(p)$. Turning to the coordinate basis vector fields, for any $ f \in C^{\infty}(P)$ we have
    \begin{align*}
        Y'_{i,p} f &= \partial_j( f \circ y^{\prime,-1})_{y'(p)} = \partial_j(f \circ y^{-1} \circ y \circ y^{\prime,-1})_{y'(p)} \\
        &= \sum_{j=0}^2 \partial_j(f \circ y^{-1} )_{y(p)} \partial_i(y^j \circ y^{\prime,-1})_{y'(p)} \\
        &= \sum_{j=0}^2 ( Y_{j,p} f )   \partial_i(y^j \circ y^{\prime,-1})_{y'(p)},
    \end{align*}
    Thus, by the results just established,
    \begin{displaymath}
        \partial_i(y^j \circ y^{\prime,-1})_{y'(p)} = 
        \begin{cases}
            {\Lambda^{-1,j}}_i, & i,j \in \{ 0, 1 \}, \\
            1, & i = j = 2, \\ 
            0, & \text{otherwise}, 
        \end{cases}
    \end{displaymath}
    and we conclude that $ Y'_{i,p} f = \sum_{j=0}^1 Y_{j,p} f {\Lambda^{-1,j}}_i $ for $ i \in \{ 0, 1 \} $, and $ Y'_{2,p} f= Y_{2,p}$. This completes the proof of the claim and the lemma. 
\end{proof}

\begin{lem}
    \label{lemChartY}Let $y : P \to \mathbb{R}^3 $ be the coordinate chart on the principal bundle induced by an inertial chart $ x : M \to \mathbb{R}^2$, and let $Y_j$ and $X_j$ be the associated coordinate vector fields. Then, for any $ \Lambda \in G$, the $Y_j $ are invariant under the right action $R^\Lambda $ on the principal bundle, i.e., $ R^\Lambda_* Y_j = Y_j$. Moreover, $Y_2 $ is a fundamental vector field generated by the Lie algebra basis vector $ u $, i.e., $ Y_2 = W^u $.      
\end{lem}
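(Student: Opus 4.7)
The plan is to reduce both claims to a single coordinate computation showing that the right action $R^\Lambda$ acts, in the $y$ chart, as a pure translation by $\vartheta(\Lambda)$ in the third coordinate, while leaving the first two coordinates fixed. Once this is established, part (i) is a chain-rule computation and part (ii) is essentially the definition of the fundamental vector field.

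I would first compute $y \circ R^\Lambda$. Since $\pi \circ R^\Lambda = \pi$ (right action preserves fibers), and $y^i = x^i \circ \pi$ for $i \in \{0,1\}$, it is immediate that $y^i \circ R^\Lambda = y^i$ for $i \in \{0,1\}$. For the third coordinate, I would apply the equivariance of $\gamma_{\sigma_x}$ from \eqref{eqGammaEquiv}, giving $\gamma_{\sigma_x} \circ R^\Lambda = L^\Lambda \circ \gamma_{\sigma_x}$, and then the additivity of $\vartheta$ in \eqref{eqThetaChart}, to conclude
\begin{displaymath}
    y^2(R^\Lambda(p)) = \vartheta(\Lambda\,\gamma_{\sigma_x}(p)) = \vartheta(\Lambda) + y^2(p).
\end{displaymath}
So in coordinates, $R^\Lambda$ is the translation $(y^0, y^1, y^2) \mapsto (y^0, y^1, y^2 + \vartheta(\Lambda))$.

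For part (i), I would verify $R^\Lambda_{*,p} Y_{j,p} = Y_{j,R^\Lambda(p)}$ by testing on an arbitrary $f \in C^\infty(P)$: writing $F = f \circ y^{-1}$, the translation property just proven gives $(f \circ R^\Lambda) \circ y^{-1}(v^0, v^1, v^2) = F(v^0, v^1, v^2 + \vartheta(\Lambda))$, and differentiating with respect to $v^j$ at $v = y(p)$ yields $\partial_j F$ evaluated at $y(R^\Lambda(p))$, which is precisely $Y_{j, R^\Lambda(p)} f$. This proves $R^\Lambda_* Y_j = Y_j$ for each $j \in \{0,1,2\}$.

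For part (ii), I would use the definition of the fundamental vector field and the identity $\vartheta(\exp(\epsilon u)) = \epsilon$ (which follows from Section~\ref{secMinkowski} since $u = \Theta_I$ is the coordinate basis vector of $\vartheta$ at the identity). Applying the coordinate form of $R^{\exp(\epsilon u)}$ just derived, for any $f \in C^\infty(P)$,
\begin{displaymath}
    W^u_p f = \left.\frac{d}{d\epsilon}\right|_{\epsilon = 0} f(R^{\exp(\epsilon u)}(p)) = \left.\frac{d}{d\epsilon}\right|_{\epsilon=0} F(y^0(p), y^1(p), y^2(p) + \epsilon) = Y_{2,p} f,
\end{displaymath}
giving $W^u = Y_2$. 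No step is particularly delicate; the only subtlety is making sure one quotes~\eqref{eqGammaEquiv} and~\eqref{eqThetaChart} correctly to reduce the third-coordinate transformation to a constant shift, after which the rest is routine.
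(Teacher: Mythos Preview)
Your proposal is correct and follows essentially the same approach as the paper: both arguments first establish that $R^\Lambda$ acts in the $y$ chart as a translation by $\vartheta(\Lambda)$ in the third coordinate (using~\eqref{eqGammaEquiv} and~\eqref{eqThetaChart}), then deduce $R^\Lambda_* Y_j = Y_j$ by a chain-rule computation, and finally identify $Y_2 = W^u$ directly from the definition of the fundamental vector field. Your version is slightly more streamlined in working with $R^\Lambda$ rather than $R^{\Lambda^{-1}}$, but the substance is identical.
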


\begin{proof}
    Let $ p = \{ p_0, p_1 \} \in P $ be a point on the principal bundle lying above $ \pi( p ) = m $, and $ p' = R^{\Lambda^{-1}}( p )$. We begin by observing that
    \begin{equation}
        \label{eqYChart}
        \begin{gathered}
            y^0(p') = y^0(p), \quad y^1(p') = y^1(p), \\
            y^2(p') = y^2(p) - \vartheta(\Lambda). 
        \end{gathered}
    \end{equation}
    Indeed, the first expression in the above follows immediately from the facts that $ \pi(p') = m $ and
    \begin{displaymath}
        y^{0}(p') = x^0(\pi(p')) = x^0(\pi(p)) = y^0(p).
    \end{displaymath} 
    The second expression follows similarly. Meanwhile, the third expression follows from the  $G$-equivariance of $\gamma_{\sigma_x}$, viz.
    \begin{align*}
        y^2(p') &= \vartheta(\gamma_{\sigma_x}(p')) = \vartheta(\gamma_{\sigma_x}(R^{\Lambda^{-1}}(p))) \\
        &= \vartheta(L^{\Lambda^{-1}}(\gamma_{\sigma_x}(p))) = \vartheta(\Lambda^{-1}) + \vartheta(\gamma_{\sigma_x}(p)) \\
        &= - \vartheta(\Lambda) + y^2(p),
    \end{align*}
    where the first equality in the second line follows from~\eqref{eqThetaChart}. 
    
    To prove that $ R^\Lambda_* Y_j = Y_j $, we must show that for any $f \in C^\infty(P)$, $( R^\Lambda_* Y_j )_p f $ is equal to $ Y_{j,p} f$. We have,
    \begin{align*}
        (R^\Lambda_* Y_j)_p f &= Y_{j,p'}( f \circ R^\Lambda) = \partial_j( f \circ R^\Lambda \circ y^{-1})_{y(p')} \\
        & = \sum_{k=0}^2 \partial_k(f \circ y^{-1})_{y(p)} \partial_j(y^k \circ R^\Lambda \circ y^{-1})_{y(p')} \\
        & = \sum_{k=0}^2 Y_{k,p} f \,  \partial_j(y^k \circ R^\Lambda \circ y^{-1})_{y(p')}, 
    \end{align*}
    and by~\eqref{eqYChart}, $\partial_j(y^k \circ R^\Lambda \circ y^{-1})_{y(p')} = {\delta^k}_j$, leading to  $( R^\Lambda_* Y_j )_p f = Y_{j,p} f$.

    Finally, to verify that $Y_2$ is the fundamental vector field generated by $u $, it is enough to show that for all $p \in P$, $ dy^j \cdot W^{u}_p = {\delta^j}_2$. Indeed, it follows from~\eqref{eqYChart} that for $j \in \{ 0, 1 \}$ and any $ \epsilon \in \mathbb{R}$, $y^j(R^{\exp(\epsilon u) p}) = y^j(p)$, so that
    \begin{displaymath}
        dy^j_p \cdot W^u_p = \lim_{\epsilon\to0} \frac{y^j(R^{\exp(\epsilon u)}(p)) - y^j(p) }{\epsilon} = 0, 
    \end{displaymath}
    while $y^2(R^{\exp(\epsilon u)} p) = y^2(p) + \epsilon$, so that 
    \begin{displaymath}
        dy^2_p \cdot W^u_p = \lim_{\epsilon\to 0} \frac{y^2(R^{\exp(\epsilon u)}(p)) - y^{2}(p)}{\epsilon} = 1,
    \end{displaymath}
    proving the claim, and completing the proof of the lemma. 
\end{proof}

\begin{lem}
    \label{lemCirc} The $^\odot$ operator from~\eqref{eqCirc} is (i) Lorentz-invariant, i.e., independent of the choice of inertial chart $x $ with origin $ o \in M$; and (ii) $G$-equivariant, in the sense that $ ( R^\Lambda_* Y)^\odot = R^\Lambda_* (Y^\odot) $ for any $\Lambda \in G$, $p \in P$, and $ Y \in T_p P $. 
\end{lem}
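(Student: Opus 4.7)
Plan of proof. Both claims reduce to unwinding the defining formula $Y^\odot = \sum_{j=0}^2 (dy^j_p \cdot Y)\, Y^\odot_{j,p}$ in light of the transformation rules already recorded in Lemmas~\ref{lemChartX}(iii) and~\ref{lemChartY}. For (i), I fix an arbitrary second inertial chart $x' = x \circ L^\Lambda_o$ (with $\Lambda \in G$), let $y'$ be its induced chart on $P$ with basis $Y'_j$, and denote by $^{\odot'}$ the map defined through $Y^{\odot'}_{0,p} = -Y'_{1,p}$, $Y^{\odot'}_{1,p} = -Y'_{0,p}$, $Y^{\odot'}_{2,p} = Y'_{2,p}$. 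The goal is to show $Y^{\odot'} = Y^\odot$ for all $Y \in T_p P$. For (ii), I take $Y \in T_p P$ and compute $(R^\Lambda_* Y)^\odot$ directly at the point $p \cdot \Lambda$ using the naturality of the input data under $R^\Lambda_*$.

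For (i), Lemma~\ref{lemChartX}(iii) supplies $dy^{\prime i}_p = \sum_{j=0}^1 {\Lambda^i}_j\, dy^j_p$ and $Y'_{i,p} = \sum_{j=0}^1 Y_{j,p} {\Lambda^{-1,j}}_i$ for $i \in \{0,1\}$, while $dy^{\prime 2}_p = dy^2_p$ (the chart components $y^{\prime 2}$ and $y^2$ differ by the constant $-\vartheta(\Lambda)$) and $Y'_{2,p} = Y_{2,p}$. Substituting these into the formula for $Y^{\odot'}$, the $j = 2$ term is immediately $(dy^2_p \cdot Y) Y_{2,p}$, matching the vertical piece of $Y^\odot$. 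The remaining horizontal piece reduces to verifying that the matrix $N = \bigl(\begin{smallmatrix} 0 & -1 \\ -1 & 0 \end{smallmatrix}\bigr)$, which represents $^\odot$ on the span of $\{Y_{0,p}, Y_{1,p}\}$, commutes with every hyperbolic rotation $\bm\Lambda_\theta \in \SO$. This is a one-line check using $\cosh^2\theta - \sinh^2\theta = 1$; more conceptually, $N$ exchanges (up to sign) the two null directions of the Minkowski metric, which are preserved as a pair by the proper orthochronous Lorentz group. An equivalent shortcut is to note that on horizontal vectors $^\odot$ is the horizontal lift of $^\perp$ via $\pi_*$, and $^\perp$ has already been shown Lorentz-equivariant in Section~\ref{secMinkowski}; together with the manifest canonicity of $Y_2 = W^u$, this gives (i) without a separate matrix computation.

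For (ii), Lemma~\ref{lemChartY} provides exactly the needed $G$-invariances: $R^\Lambda_* Y_j = Y_j$ as vector fields on $P$, so that $Y_{j, p \cdot \Lambda} = R^\Lambda_*(Y_{j,p})$, which in turn forces $Y^\odot_{j, p \cdot \Lambda} = R^\Lambda_*(Y^\odot_{j,p})$ because each $Y^\odot_{j,p}$ is $\pm Y_{k,p}$ for some $k$. At the level of forms, $y^0$ and $y^1$ are $R^\Lambda$-invariant (the base point is preserved) and $y^2 \circ R^\Lambda = y^2 + \vartheta(\Lambda)$ by~\eqref{eqThetaChart}, so $R^{\Lambda*} dy^j = dy^j$ for all $j$. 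Plugging $R^\Lambda_* Y$ into the definition yields $(R^\Lambda_* Y)^\odot = \sum_j (dy^j_p \cdot Y)\, R^\Lambda_* Y^\odot_{j,p} = R^\Lambda_*(Y^\odot)$ in one step. The main obstacle, if any, is the commutation identity $N \bm\Lambda_\theta = \bm\Lambda_\theta N$ underlying (i); I expect this to be a trivial Lorentz-invariance check rather than a genuine difficulty, and everything else is bookkeeping.
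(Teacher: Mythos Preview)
Your proposal is correct and follows essentially the same route as the paper's proof: for (i) you reduce to the matrix identity $\bm\Lambda_\theta^{-1} N \bm\Lambda_\theta = N$ on the $\{Y_0,Y_1\}$ block (the paper writes this as ${\Lambda^{-1,l}}_j {A^j}_i {\Lambda^i}_k = {A^l}_k$) together with $Y'_2 = Y_2$, and for (ii) you invoke the $R^\Lambda_*$-invariance of the $Y_j$ and the resulting invariance of $dy^j$ from Lemma~\ref{lemChartY}, exactly as the paper does. Your additional conceptual shortcut for (i)---observing that $\pi_*(Y^\odot) = (\pi_* Y)^\perp$ and appealing to the Poincar\'e-equivariance of $^\perp$ plus the canonicity of $Y_2 = W^u$---is a nice alternative that the paper does not spell out, though ``horizontal lift'' is slightly imprecise here since no connection has yet been introduced at this point; the span of $\{Y_0,Y_1\}$ is chart-dependent as a complement to the vertical, so the clean statement is simply that $^\odot$ restricted to that span is the unique linear map covering $^\perp$ under $\pi_*$.
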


\begin{proof}
    (i) Let $x' : M \to \mathbb{R}^2$ be an inertial chart centered at $o $, and $ \Lambda $ the unique element of the structure group $G$ such that $ x' = x \circ L_o^\Lambda $. Let also $ y': P \to \mathbb{R}^3$ be the associated coordinate chart on the principal bundle, with coordinate vector fields $Y'_j$. For any $p \in P$, define the operator $^\boxdot : T_p P \to T_p P$ such that 
    \begin{displaymath}
        Y^{\prime\boxdot}_{0,p} = - Y'_{1,p}, \quad Y^{\prime\boxdot}_{1,p} = - Y'_{0,p}, \quad Y^{\prime\boxdot}_{2,p}= Y'_{2,p}. 
    \end{displaymath}
    We must show that for any $ Y\in T_pP $, $ Y^\boxdot = Y^\odot$. For that, observe first that $^\odot$ and $ ^\boxdot $ have the same matrix elements in their respective defining bases, i.e., 
    \begin{displaymath}
        {A^j}_i := dy^j_p \cdot Y^{\odot}_{i,p} = dy^{\prime j}_p \cdot Y^{\prime\boxdot}_{i,p}, 
    \end{displaymath}
    where
    \begin{displaymath}
        \bm A = [{A^j}_i] = 
        \begin{pmatrix}
            0 & -1 & 0 \\
            -1 & 0 & 0 \\
            0 & 0 & 1 
        \end{pmatrix}.
    \end{displaymath}
    It then follows from these facts and Lemma~\ref{lemChartX}(iii) that
    \begin{align*}
        Y^\boxdot &= \sum_{i=0}^2 ( dy^{\prime,i}_p \cdot Y) Y_{i,p}^{\boxdot} = \sum_{i,j=0}^2 ( dy^{\prime,i}_p \cdot Y) {A^j}_i Y'_{j,p} \\
        &= \sum_{i,j=0}^1 (dy^{\prime,i}_p \cdot Y) {A^{j}}_i Y'_{j,p} + ( dy^{\prime,2}_p \cdot Y ) Y'_{2,p}  \\
        &= \sum_{i,j,k,l=0}^1 ( dy^k_p \cdot Y) {\Lambda^{-1,l}}_j {A^{j}}_i {\Lambda^i}_k Y_{l,p} + ( dy^2_p \cdot Y ) Y_{2,p} \\
        &= \sum_{k,l=0}^1 ( dy^k_p \cdot Y) {A^l}_k Y_{l,p} + ( dy^2_p \cdot Y ) Y_{2,p}= Y^\odot,
    \end{align*}
    proving Claim~(i). 

    (ii) By the $ R^\Lambda_* $-invariance of the $Y_j $  established in Lemma~\ref{lemChartY} and the pointwise definition of $ ^\odot$ in~\eqref{eqCirc}, it follows that
    \begin{displaymath}
        (R^\Lambda_* Y_{j,p})^\odot = (Y_{j,R^\Lambda(p)})^\odot = Y^\odot_{j,R^\Lambda(p)}.
    \end{displaymath}
    Therefore, using again Lemma~\ref{lemChartY}, we obtain
    \begin{align*}
        (R^\Lambda_* Y)^\odot &= \sum_{j=0}^2 (dy^j_{R^\Lambda(p)} \cdot R^\Lambda_* Y)  Y_{j,R^\Lambda(p)} ^\odot  \\
        &= \sum_{j=0}^2 ( R^{\Lambda*}dy^j_{R^\Lambda(p)} \cdot  Y) ( R^\Lambda_* Y_{j,p})^{\odot}\\
        & = \sum_{j=0}^2 (dy^j_p \cdot Y ) ( R^{\Lambda}_*Y_{j,R^\Lambda(p)} )^{\odot} \\ 
        &= R^\Lambda_* ( Y^\odot),
    \end{align*}
    where the second-to-last line follows from the $R^{\Lambda *}$-invariance of the dual vector fields $ dy^j $ (which is in turn a direct consequence of the $R^\Lambda_*$-invariance of the $Y_j$). This proves the claim and the lemma.
\end{proof}

We now turn to the construction of the connection 1-form on the principal bundle, described in Section~\ref{secConn}.

\begin{prop}\label{propConn}The 1-form $\omega \in \Omega^1(P,\mathfrak g)$ defined in~\eqref{eqConn} satisfies~\eqref{eqConnCond}, i.e., it is a connection 1-form on the principal bundle $P\xrightarrow{\pi}M$. Moreover, $\omega$ does not depend on the choice of inertial chart $x: M \to \mathbb{R}^2$ with origin $o$.  
\end{prop}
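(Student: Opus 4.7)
My plan is to verify the two conditions in~\eqref{eqConnCond} and then the invariance under change of inertial chart, exploiting the explicit coordinate formulas of Lemmas~\ref{lemChartY} and~\ref{lemCirc} together with the additivity property~\eqref{eqThetaChart} of the chart $\vartheta$. The key observation is that the function $\tilde h_x$ is essentially a single combined scalar on $P$ whose behavior under both the vertical action of $G$ and a change of inertial chart amounts to adding a constant; this constant is killed by the derivative operator $Y^\odot$, leaving $\omega$ unchanged.

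First, I would verify the vertical condition $(\omega W^\lambda)_p = \lambda$. Writing $\lambda = \theta u$ and using~\eqref{eqFundamental} together with Lemma~\ref{lemChartY} which identifies $W^u$ with the coordinate vector field $Y_2$, it suffices to show that $(Y_2^\odot \tilde h_x)_p = 1$. By~\eqref{eqCirc}, $Y_2^\odot = Y_2$. Now $h\circ \pi$ depends only on $(y^0,y^1)$, so $Y_2(h\circ \pi) = 0$; on the other hand $\vartheta\circ\gamma_{\sigma_x} = y^2$ by construction of the chart $y$, hence $Y_2(\vartheta\circ \gamma_{\sigma_x}) = 1$. This gives $(\omega W^{\theta u})_p = \theta u$, as required.

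Next, I would verify $R^{\Lambda*}\omega = \omega$. Fix $\Lambda \in G$ and $Y \in T_pP$. By Lemma~\ref{lemCirc}(ii), $(R^\Lambda_* Y)^\odot = R^\Lambda_*(Y^\odot)$, so
\begin{displaymath}
    (\omega R^\Lambda_* Y)_{R^\Lambda(p)} = \bigl(R^\Lambda_*(Y^\odot) \tilde h_x\bigr)_{R^\Lambda(p)}\, u = \bigl(Y^\odot(\tilde h_x \circ R^\Lambda)\bigr)_p u.
\end{displaymath}
Since $\pi \circ R^\Lambda = \pi$, the first summand of $\tilde h_x$ is invariant, while the $G$-equivariance~\eqref{eqGammaEquiv} of $\gamma_{\sigma_x}$ together with~\eqref{eqThetaChart} yields $\vartheta(\gamma_{\sigma_x}(R^\Lambda(p))) = \vartheta(\Lambda) + \vartheta(\gamma_{\sigma_x}(p))$. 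Hence $\tilde h_x \circ R^\Lambda = \tilde h_x + \vartheta(\Lambda)$, the added constant drops under the derivation $Y^\odot$, and we conclude $(\omega R^\Lambda_* Y)_{R^\Lambda(p)} = (\omega Y)_p$. This establishes~\eqref{eqConnCond}.

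Finally, for independence of the inertial chart, let $x' = x \circ L^\Lambda_o$ be another inertial chart with the same origin $o$, let $\omega'$ be defined from $x'$ as in~\eqref{eqConn}, and let me compare $\tilde h_{x'}$ to $\tilde h_x$. The quadratic form $h$ in~\eqref{eqPot} is intrinsically defined from $o$ and $\eta$ and therefore does not involve the choice of basis, so the first summand is unchanged. For the second summand, Lemma~\ref{lemChartX}(ii) gives $\gamma_{\sigma_{x'}} = L^{\Lambda^{-1}} \circ \gamma_{\sigma_x}$, and~\eqref{eqThetaChart} then yields $\vartheta\circ\gamma_{\sigma_{x'}} = \vartheta(\Lambda^{-1}) + \vartheta\circ\gamma_{\sigma_x}$. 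Hence $\tilde h_{x'}$ differs from $\tilde h_x$ by a constant. Since Lemma~\ref{lemCirc}(i) ensures that $^\odot$ is itself independent of the inertial chart, the constant is killed by $Y^\odot$ and $\omega' = \omega$ follows. The only part requiring some care is recognizing that the seemingly asymmetric definition of $\tilde h_x$ packages together contributions that individually do transform, but whose combined transformation rule is an additive constant; once one notices this, the rest is bookkeeping with the lemmas already proved in Appendix~\ref{appBundleMinkowski}.
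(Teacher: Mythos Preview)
Your proof is correct and follows essentially the same approach as the paper: both verify~\eqref{eqConnCond} by exploiting that $\tilde h_x \circ R^\Lambda$ differs from $\tilde h_x$ by the constant $\vartheta(\Lambda)$, and both establish chart-independence via Lemma~\ref{lemChartX}(ii) and Lemma~\ref{lemCirc}(i). Your treatment of chart-independence is actually slightly cleaner than the paper's---you argue directly that $\tilde h_{x'} - \tilde h_x$ is a global constant and is annihilated by $Y^\odot$, whereas the paper first splits $\omega$ into a base-space piece involving $(\pi_*Y)^\perp h$ and a vertical piece involving $Y_2(\vartheta\circ\gamma_{\sigma_x})$ before showing each is separately chart-independent.
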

\begin{proof} 
    First, note that in the induced coordinate chart $ y : P \to \mathbb{R}^3$ on the principal bundle, with  $ y(p)=(y^0(p),y^1(p),y^2(p)) = (x^0(\pi(p)),x^1(\pi(p)),\vartheta(\gamma_{\sigma_x}(p))) $, the function $\tilde h_x$ has the representation
    \begin{displaymath}
        \tilde h_x(p) = \frac{- ( y^0(p) )^2 + ( y^1(p))^2}{2} + y^2(p).
    \end{displaymath}
    As a result, $Y_0 \tilde h_x = -y^0 $, $ Y_1 \tilde h_x = y^1$, and $ Y_2  \tilde h_x =  1 $, leading, in conjunction with~\eqref{eqCirc} and~\eqref{eqConn}, to~\eqref{eqConnComponents}. Moreover, by~\eqref{eqYChart}, for any $ \Lambda \in G$ and  $p \in P$, we have 
    \begin{equation}
        \label{eqTildeHR}
        ( \tilde h_x \circ R^\Lambda )(p) = \tilde h_x(p) + \vartheta(\Lambda). 
    \end{equation}
That is, $\tilde h_x \circ R^\Lambda$ differs from $\tilde h_x$ by a constant on $P$, namely $\vartheta(\Lambda)$, so that $Y(\tilde h_x \circ R^\Lambda ) = Y \tilde h_x$ for any $Y \in T_pP$.

    Next, by~\eqref{eqFundamental}, every fundamental vector field $W^\lambda$ associated with $ \lambda \in \mathfrak{g}$ satisfies $ W^\lambda = W^{\vartheta(\lambda) u} = \vartheta(\lambda) W^u = \vartheta( \lambda ) Y_2 $. Thus, by~\eqref{eqConnComponents},
    \begin{displaymath}
        ( \omega W^\lambda )_p = \vartheta(\lambda)(\omega Y_2 )_p = \vartheta(\lambda) u = \lambda,
    \end{displaymath}
    proving the first condition in~\eqref{eqConnCond}. To verify the second condition, it must be shown that for every $p \in P$, $ Y \in T_pP $, and $ \Lambda \in G$, $(R^{\Lambda *} \omega)_p Y = \omega_p Y$. Indeed, by Lemma~\ref{lemChartY} and~\eqref{eqTildeHR}, we obtain
    \begin{align*}
        (R^{\Lambda *} \omega)_p Y &= \omega_{R^\Lambda(p)} R^{\Lambda}_{*p} Y = R^{\Lambda }_{*p}Y^\odot \tilde h_x = Y^\odot_p(\tilde h_x \circ R^\Lambda) \\
        &= Y^\odot_p  \tilde h_x = \omega_p Y,
    \end{align*}
    as claimed.

    Next, to verify that $\omega$ is independent of the choice of inertial chart $ x $ with origin $ o $, note that~\eqref{eqConn} can be equivalently expressed as 
    \begin{displaymath}
        (\omega Y)_p = ( \pi_{*,p} Y )^\perp h u + ( dy^2 \cdot Y )_p Y_{2,p} ( \vartheta \circ \gamma_{\sigma_x}).
    \end{displaymath}  
    So long as $ o $ is kept fixed, the first term in the right-hand side is independent of $x$ by the definition of $ h $ in~\eqref{eqPot} and Poincar\'e invariance of the $^\perp$ operator (see Section~\ref{secMinkowski}). It thus suffices to show that the second term is $x$-independent. For that, note that if $ x' : M \to \mathbb R^2 $ is the inertial chart with $ x' = x \circ L^\Lambda $ for some $ \Lambda \in G$, then by Lemma~\ref{lemChartX}, $\gamma_{\sigma_{x'}} = L^{\Lambda^{-1}} \circ \gamma_{\sigma_x}$ and $Y'_2 = Y_2 $, so that 
    \begin{multline*}
        ( dy^{\prime2} \cdot Y )_p Y'_{2,p} (\vartheta \circ\gamma_{\sigma_{x'}}) \\ 
        \begin{aligned}
            &= ( dy^{2} \cdot Y )_p Y_{2,p}( \vartheta \circ L^{\Lambda^{-1}} \circ \gamma_{\sigma_x}) \\ 
            &= ( dy^{2} \cdot Y )_p Y_{2,p}( \vartheta \circ  \gamma_{\sigma_x} \circ R^{\Lambda^{-1}}) \\ 
            &= ( dy^{2} \cdot Y )_p Y_{2,p}( \vartheta \circ  \gamma_{\sigma_x} ). 
        \end{aligned}
    \end{multline*}
    Note that to obtain the second-to-last line we used the equivariance property of $\gamma_{\sigma_x}$ in~\eqref{eqGammaEquiv}, and to obtain the last line we made use of the fact that $\vartheta \circ  \gamma_{\sigma_x} \circ R^{\Lambda^{-1}}$ differs from $ \vartheta \circ \gamma_{\sigma_x} $ by a constant shift of $-\vartheta(\Lambda)$ (in accordance with~\eqref{eqThetaChart}), which is annihilated upon application of the tangent vector $Y_{2,p}$. Since $ Y $ and $ p $ were arbitrary, this shows that $\omega$ is independent of the inertial chart $ x $ with origin $ o $, completing the proof of the proposition.
\end{proof}

As a final result in this appendix, we establish how points on the associated vector bundle $E \xrightarrow{\pi_E} M$ transform under gauge transformations.

\begin{lem}
    \label{lemPhase}Let $ \varphi : P \to P $ be a gauge transformation.  Then, for every point $ e \in E_m $ in the associated bundle lying above $ m \in M$, we have $ \varphi_* e = e^{i\alpha \vartheta(\xi(m))/\sqrt{2}}e $, where $ \vartheta : G \to \mathbb{R}$ is the coordinate chart from~\eqref{eqThetaChart}, and $ \xi : M \to G$ the unique function satisfying $ \varphi(p) = p \cdot \xi(\pi(p))  $. Moreover, if $ \sigma \in \Gamma(P)$ is a section of the principal bundle, the following hold:
    \begin{enumerate}[(i),wide]
        \item For any function $ f \in C^\infty(M)$, the corresponding sections $ f_\sigma = \zeta_\sigma f $ and $ f_{\varphi \circ \sigma} = \zeta_{\varphi \circ \sigma} f$ satisfy $ f_{\varphi \circ \sigma} = \varphi_* \circ f_\sigma$, and
            \begin{displaymath}
                f_{\varphi \circ \sigma}(m) = e^{i\alpha \vartheta(\xi(m))/\sqrt{2}} f_\sigma(m), \quad \forall m \in M.
            \end{displaymath}
        \item For any section $ s \in \Gamma(E)$ of the associated bundle, the corresponding $C^\infty(M)$ functions (matter fields) $ s^\sigma = \zeta_\sigma^{-1} s$ and  $ s^{\varphi \circ \sigma} = \zeta_{\varphi \circ \sigma}^{-1} s$ satisfy $ s^{\varphi \circ \sigma} = ( \varphi^\sigma_* )^{-1} \circ s^\sigma $, and 
            \begin{displaymath}
                s^{\varphi \circ \sigma}(m) = e^{-i\alpha\vartheta(\xi(m))/\sqrt{2}} s^{\sigma}(m), \quad \forall m \in M.   
            \end{displaymath}
    \end{enumerate}
\end{lem}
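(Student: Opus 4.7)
The plan is to prove the lemma in three stages corresponding to the main displayed formula, part~(i), and part~(ii), with the main ingredients being the equivalence relation $ \sim $ defining $E$, Lemma~\ref{lemUnique}, the explicit form of the left action in~\eqref{eqLeftAction}, and the fact that $\vartheta : G \to \mathbb R$ satisfies $\vartheta(\Lambda^{-1}) = -\vartheta(\Lambda)$ as a consequence of~\eqref{eqThetaChart} applied to $\Lambda \Lambda^{-1} = I$. None of these steps should be difficult; the proof is essentially a careful unpacking of definitions, and I expect the only subtle point is keeping track of where inverses of the gauge map appear.

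For the first claim, I would start with an arbitrary $e \in E_m$ and write it as $e = [p, z] = \varepsilon_p(z)$ for some (arbitrary) $p \in P_m $ and $z \in \mathbb{C}$. By definition of $\varphi_*$ on the associated bundle and the defining relation $\varphi(p) = p \cdot \xi(\pi(p)) = p \cdot \xi(m)$, we obtain
\begin{displaymath}
    \varphi_* e = [\varphi(p), z] = [p \cdot \xi(m), z].
\end{displaymath}
Applying the equivalence relation $\sim$ with $\Lambda = \xi(m)$ rewrites this as $[p, \xi(m) \cdot z]$, and by~\eqref{eqLeftAction} the left action yields $\xi(m) \cdot z = e^{i\alpha \vartheta(\xi(m))/\sqrt 2} z$. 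Finally, using the definition $c[p,z] = [p,cz]$ of scalar multiplication on $E$, I pull the scalar out to conclude $\varphi_* e = e^{i\alpha \vartheta(\xi(m))/\sqrt 2}\, e$, as required.

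For part~(i), the identity $f_{\varphi \circ \sigma} = \varphi_* \circ f_\sigma$ follows directly from the definition $\zeta_\sigma f(m) = \varepsilon_{\sigma(m)}(f(m))$ together with $\varphi_*([p,z]) = [\varphi(p), z]$: evaluating both sides at $m$ produces $[\varphi(\sigma(m)), f(m)]$. Substituting the displayed formula from the first stage at $e = f_\sigma(m)$ then gives the multiplicative phase $e^{i\alpha\vartheta(\xi(m))/\sqrt 2}$, completing the claim.

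For part~(ii), I would compute $s^{\varphi \circ \sigma}(m) = \varepsilon^{-1}_{\varphi(\sigma(m))}(s(m)) = \varepsilon^{-1}_{\sigma(m) \cdot \xi(m)}(s(m))$ and then invoke the identity $\varepsilon^{-1}_{R^\Lambda(p)} = L^{\Lambda^{-1}} \circ \varepsilon^{-1}_p$ from Lemma~\ref{lemUnique} with $\Lambda = \xi(m)$ and $p = \sigma(m)$, which produces $s^{\varphi\circ\sigma}(m) = L^{\xi(m)^{-1}}(s^\sigma(m))$. By~\eqref{eqLeftAction} this left action is multiplication by $e^{i\alpha \vartheta(\xi(m)^{-1})/\sqrt 2}$, and the identity $\vartheta(\xi(m)^{-1}) = -\vartheta(\xi(m))$ delivers the desired factor $e^{-i\alpha\vartheta(\xi(m))/\sqrt 2}$. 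The relation $s^{\varphi \circ \sigma} = (\varphi^\sigma_*)^{-1} \circ s^\sigma$ is then immediate: part~(i) shows that $\varphi^\sigma_*$ acts on $C^\infty(M)$ by pointwise multiplication by $e^{i\alpha\vartheta(\xi(m))/\sqrt 2}$, whose inverse precisely multiplies by $e^{-i\alpha\vartheta(\xi(m))/\sqrt 2}$. This closes all three statements of the lemma.
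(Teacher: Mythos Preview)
Your proposal is correct and follows essentially the same route as the paper's own proof: both arguments unpack the definition of $\varphi_*$ via the equivalence relation and~\eqref{eqLeftAction} for the main formula, verify part~(i) by direct evaluation of $\zeta_{\varphi\circ\sigma} f$, and obtain part~(ii) from the equivariance identity $\varepsilon^{-1}_{R^\Lambda(p)} = L^{\Lambda^{-1}}\circ\varepsilon_p^{-1}$ of Lemma~\ref{lemUnique}. The only cosmetic difference is that the paper derives $s^{\varphi\circ\sigma} = (\varphi_*^\sigma)^{-1}\circ s^\sigma$ by an explicit chain through $\zeta_\sigma^{-1}\circ\varphi_*^{-1}\circ\zeta_\sigma$, whereas you infer it from the multiplication-operator description of $\varphi_*^\sigma$ implied by part~(i); both are equally valid.
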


\begin{proof}
    First, the existence and uniqueness of $\xi $ was established in Appendix~\ref{appBundleGeneral}. Let $ e = [ p, z ] $, where $ p \in P_m $ and $ z \in \mathbb{C} $. To prove the claim about $ \varphi_* e$, we use~\eqref{eqGaugeStar} and the fact that $ G$ acts on the fiber $F = \mathbb{C}$ by multiplication according to~\eqref{eqLeftAction}, leading to
\begin{align*}
    \varphi_*(e)& = [ p, \tilde \xi(p) \cdot z ] = [ p, \xi(m) \cdot z ] = [ p, e^{i \alpha \vartheta(\xi(m)) / \sqrt{2}} z ] \\
    &=  e^{i \alpha \vartheta(\xi(m)) / \sqrt{2}} [ p, z ] = e^{i \alpha \vartheta(\xi(m)) / \sqrt{2}} e, 
\end{align*}
as claimed. 

Next, to verify Claim~(i), we have
\begin{align*}
    f_{\varphi\circ \sigma}(m) &= \zeta_{\varphi\circ\sigma} f(m) = \varepsilon_{(\varphi \circ \sigma)(m)}(f(m)) \\
    &= [ (\varphi \circ \sigma)(m), f(m)] = [ \sigma(m) \cdot \xi(m), f(m) ] \\
    &= [ \sigma(m), \xi(m) \cdot f(m)] = e^{i\alpha \vartheta(\xi(m))/\sqrt{2}} f_\sigma(m),
\end{align*}
and the claim follows using the result about $ \varphi_* $ just proved. To verify Claim~(ii), we employ the $G$-equivariance of $ \varepsilon_p^{-1}$ from Lemma~\ref{lemUnique} to obtain
\begin{align*}
    s^{ \varphi \circ \sigma}(m) &= \varepsilon^{-1}_{(\varphi \circ \sigma)(m)} (s(m)) = \varepsilon^{-1}_{\sigma(m) \cdot \xi(m)}(s(m)) \\
    &= \xi(m)^{-1} \cdot \varepsilon^{-1}_{\sigma(m)}s(m) = \xi(m)^{-1} \cdot s^{\sigma}(m) \\
    &= e^{-i\alpha\vartheta(\xi(m))/\sqrt{2}} s^\sigma(m).
\end{align*}
The above, in conjunction with Claim~(i), leads to
\begin{align*}
    s^{\varphi \circ \sigma}(m) &= e^{-i\alpha\vartheta(\xi(m))/\sqrt{2}} \zeta_{\sigma}^{-1} s(m) \\
    &= \zeta_{\sigma}^{-1}(e^{-i\alpha\vartheta(\xi(\cdot))/\sqrt{2}} s)(m) \\
    &= ( \zeta_{\sigma}^{-1} \circ \varphi^{-1}_* \circ s )(m) = ( \zeta_{\sigma}^{-1} \circ \varphi^{-1}_* \circ \zeta_\sigma) s^\sigma(m) \\
    &=  ( (\varphi_*^\sigma)^{-1} \circ s^\sigma )(m),
\end{align*}
and the claim follows.
\end{proof}

\section{\label{appRKHS}Results from RKHS theory}

\subsection{\label{appRKHSBasic}Basic properties of the heat kernel on the circle and the associated RKHSs}

We begin by stating some commonly used terminology in the theory of RKHSs on topological spaces. We will only consider compact spaces, as our main interest here is on RKHSs on the circle equipped with the standard-metric topology.    

Let then $S$ be a compact topological space, and $\kappa : S \times S \to \mathbb{C}$ a Hermitian function, i.e., $\kappa(\theta,\theta')=\kappa(\theta',\theta)^*$ for all $\theta,\theta' \in S$. The function $\kappa$ is said to be \emph{positive-definite} if for any finite sequence $ \theta_1, \ldots, \theta_n \in S $ the kernel matrix $\bm K = [ \kappa(\theta_i,\theta_j)]_{ij}$ is non-negative. By the Moore-Aronszajn theorem \cite{Aronszajn50}, every positive-definite Hermitian function is the reproducing kernel for a unique RKHS $\mathcal{K}$ of complex-valued functions on $S$; that is, a Hilbert space $(\mathcal{K}, \langle \cdot, \cdot \rangle_{\mathcal K})$ such that (i) the kernel sections $\kappa(\theta,\cdot)$ lie in $\mathcal K$ for all $\theta \in S$; and (ii) for every $\theta \in S$, the pointwise evaluation functional $\mathbb V_\theta : \mathcal{K} \to \mathbb C$, $\mathbb V_\theta f = f(\theta)$ is continuous, and satisfies $\mathbb V_\theta f = \langle \kappa(\theta,\cdot), f \rangle_{\mathcal K}$. The latter relation is known as the \emph{reproducing property}. In addition, the kernel $\kappa$ is said to be \cite{MicchelliEtAl06,SriperumbudurEtAl11}:

\begin{itemize}[wide]
    \item \emph{Strictly positive-definite} if $\bm K $ is a strictly positive matrix whenever the $ \theta_1, \ldots, \theta_n$ are all distinct; 
    \item \emph{$C$-universal} if $\kappa $ is continuous, and $\mathcal K$ is a dense subspace of the space of complex-valued, continuous functions on $S$, equipped with the uniform norm; 
    \item \emph{$L^p$-universal} if $\kappa$ is bounded and Borel-measurable, and  $\mathcal K$ is a dense subspace of $L^p(m)$ for any Borel probability measure $m$ on $S$, where $ p \in [ 1, \infty ]$ and $L^p(m)$ is equipped with the standard norm; 
    \item \emph{Characteristic} if $\kappa$ is bounded and Borel-measurable, and the map  $m \mapsto \int_S \kappa(\cdot, \theta) \, d\mu(\theta)$ is injective, where $m$ is any Borel probability measure on $S$. 
\end{itemize}

On a compact space, $C$-universality and $L^p$ universality of a continuous kernel are equivalent notions \cite{SriperumbudurEtAl11}. Moreover, every $C$-universal kernel is strictly-positive definite and characteristic, though the converses of these statements are not true. In addition, we have:   

\begin{lem}
    \label{lemFeature}
    Let $\kappa : S \times S \to \mathbb{C}$ be a Hermitian positive-definite kernel on a metric space $S$, $\mathcal K $ the corresponding RKHS, and $ F : S \to \mathcal{K} $ the feature map such that $F(\theta) = \kappa(\theta, \cdot)$. Then, the following hold.  
    \begin{enumerate}[(i), wide]
        \item $F$ is continuous if and only if $\kappa$ is continuous.  
        \item If $\kappa$ is strictly positive-definite, then $F$ is injective, and $F(\theta)$ and $F(\theta')$ are linearly independent whenever $\theta$ and $\theta'$ are distinct. 
    \end{enumerate}
\end{lem}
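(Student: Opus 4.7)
The plan is to handle the two claims separately, using only the reproducing property and the positive-definiteness hypotheses.

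For Claim~(i), I would exploit the inner product identity
\begin{displaymath}
    \lVert F(\theta) - F(\theta') \rVert_{\mathcal K}^2 = \kappa(\theta,\theta) - \kappa(\theta,\theta') - \kappa(\theta',\theta) + \kappa(\theta',\theta'),
\end{displaymath}
which follows from the relation $\langle F(\theta), F(\theta') \rangle_{\mathcal K} = \kappa(\theta,\theta')$ obtained by applying the reproducing property to $ F(\theta') \in \mathcal K$ at $\theta$. If $\kappa$ is continuous, the right-hand side vanishes as $\theta' \to \theta$, giving continuity of $F$. For the converse, I would note that the inner product $\langle \cdot, \cdot \rangle_{\mathcal K} : \mathcal K \times \mathcal K \to \mathbb C $ is jointly continuous, so $\kappa = \langle F(\cdot), F(\cdot) \rangle_{\mathcal K}$ is continuous whenever $F$ is. Neither direction requires anything beyond the reproducing property and the definition of $F$.

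For Claim~(ii), the main observation is that the $2\times 2 $ kernel matrix $\bm K = [\kappa(\theta_i,\theta_j)]_{i,j=1,2}$ for distinct points $\theta_1 = \theta$, $\theta_2 = \theta'$ coincides with the Gram matrix of $\{F(\theta), F(\theta')\}$ in $ \mathcal K$. Strict positive-definiteness of $\kappa$ forces $\bm K$ to be strictly positive, hence invertible, which in turn forces $\{F(\theta), F(\theta')\} $ to be linearly independent (a standard fact about Gram matrices). Linear independence in particular gives $F(\theta)\neq F(\theta')$, so $F$ is injective; applying the same reasoning to the $1 \times 1 $ kernel matrix also shows $F(\theta) \neq 0$ for every $\theta \in S$.

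Neither step presents a real obstacle: the proof is essentially bookkeeping with the reproducing property and the Gram matrix interpretation. The only subtlety worth mentioning is that Claim~(i) does not require positive-definiteness beyond what is already assumed to define $\mathcal K $, and Claim~(ii) uses the strict version of positive-definiteness only on kernel matrices of size at most two.
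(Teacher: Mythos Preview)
Your proposal is correct and essentially matches the paper's approach. For Claim~(ii) the paper argues by contradiction---assuming a nontrivial linear relation $cF(\theta)+c'F(\theta')=0$, taking inner products with the two kernel sections, and concluding that the $2\times 2$ kernel matrix is singular---which is exactly your Gram-matrix observation in contrapositive form. For Claim~(i) the paper simply cites \cite[Lemma~2.1]{FerreiraMenegatto13} rather than writing out the argument, so your explicit computation via $\lVert F(\theta)-F(\theta')\rVert_{\mathcal K}^2$ and the joint continuity of the inner product actually supplies more detail than the paper does.
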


\begin{proof}
    (i) The equivalence between continuity of kernels and continuity of feature maps was proved in \cite[][Lemma~2.1]{FerreiraMenegatto13}.

    (ii) It is enough to show that $F(\theta)$ and $F(\theta')$ are linearly independent for any two distinct points $\theta$ and $\theta'$, for this property implies injectivity of $F$. To prove the claim by contradiction, suppose that there exist distinct points $ \theta, \theta' \in S^1 $ such that $F(\theta)$  and $F(\theta')$ are linearly dependent. Then, there exist coefficients $c, c' \in \mathbb{C}$, at least one of which is nonzero, such that $ c F(\theta) + c' F(\theta') = 0$, and thus
    \begin{align*}
        c \langle \kappa(\theta, \cdot), F(\theta) \rangle_{\mathcal K} + c' \langle \kappa( \theta, \cdot ), F(\theta') \rangle_{\mathcal K} &= 0, \\
        c \langle \kappa(\theta', \cdot ), F(\theta) \rangle_{\mathcal K} + c' \langle \kappa(\theta',\cdot), F(\theta') \rangle_{\mathcal K} &= 0. 
    \end{align*}
    Then, by the reproducing property, 
    \begin{displaymath}
        \langle \kappa(\theta, \cdot), F(\theta) \rangle_{\mathcal K} = \langle \kappa(\theta, \cdot), \kappa(\theta, \cdot ) \rangle_{\mathcal K} = \kappa(\theta,\theta'), 
    \end{displaymath}
    and similarly 
    \begin{displaymath}
        \langle \kappa(\theta, \cdot), F(\theta') \rangle_{\mathcal K} = \kappa(\theta,\theta'), \;  \langle \kappa(\theta',\cdot),F(\theta') \rangle_{\mathcal K} = \kappa(\theta',\theta'). 
    \end{displaymath}
    It then follows that the kernel matrix 
    \begin{displaymath}
        \bm K = \begin{pmatrix} \kappa(\theta,\theta) & \kappa(\theta,\theta') \\ \kappa(\theta',\theta) & \kappa(\theta',\theta') \end{pmatrix} 
    \end{displaymath}
    is a non-invertible kernel matrix associated with the distinct points $\theta$ and $\theta'$, contradicting the fact that $\kappa$ is strictly positive-definite.  
\end{proof}
 
With these definitions and results in place, we state the following lemma summarizing some of the basic properties of the heat kernel on the circle.  

\begin{lem}
    \label{lemUniversal}For any $\tau>0$, the heat kernel $\kappa_\tau : S^1 \times S^1 \to \mathbb{R}$ on the circle from~\eqref{eqHeatKernel} is a strictly positive-definite, $C$-universal, $L^p$-universal, characteristic kernel. Moreover, the feature map $F_\tau : S^1 \to \mathcal K_\tau$ is an injective, continuous map, mapping distinct points in $S^1$ to linearly independent functions in $\mathcal K_\tau$. 
\end{lem}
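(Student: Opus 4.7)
\medskip
\noindent\textbf{Proof proposal.}
The plan is to first establish $C$-universality of $\kappa_\tau$, and then derive the remaining kernel-theoretic properties either by invoking the implications cited in the paragraph preceding Lemma~\ref{lemFeature} or by direct calculation. The feature-map claims will follow by a direct application of Lemma~\ref{lemFeature}.

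First I will record that $\kappa_\tau$ is continuous (in fact smooth): this is immediate from~\eqref{eqHeatKernel}, whose eigenfunction expansion was already noted to converge in any $C^r(S^1)$ norm. Next, recall that $\mathcal{K}_\tau$ admits the orthonormal basis $\{\phi_{j,\tau}\}_{j\in\mathbb Z}$ with $\phi_{j,\tau} = e^{-j^2\tau/2}\phi_j$, so every finite linear combination of Fourier functions $\phi_j(\theta)=e^{ij\theta}$ lies in $\mathcal{K}_\tau$. The trigonometric polynomials are uniformly dense in $C(S^1)$ by the Stone--Weierstrass theorem (or by standard Fej\'er theory), so $\mathcal{K}_\tau$ is dense in $C(S^1)$ in the uniform topology, proving $C$-universality. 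Since $S^1$ is compact and $\kappa_\tau$ is bounded and continuous, $C$-universality is equivalent to $L^p$-universality, yielding the latter. The implications cited in the excerpt then give that $\kappa_\tau$ is characteristic and strictly positive-definite.

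For completeness I also note that strict positive-definiteness admits a short direct proof that I would include as an alternative: given distinct $\theta_1,\ldots,\theta_N\in S^1$ and any $\bm c=(c_1,\ldots,c_N)\in\mathbb C^N$, the Mercer expansion in~\eqref{eqHeatKernel} gives
\begin{displaymath}
\bm c^*\bm K\bm c \;=\; \sum_{j\in\mathbb Z} e^{-j^2\tau}\Bigl|\sum_{i=1}^N c_i\phi_j(\theta_i)\Bigr|^2 \;\geq\; 0,
\end{displaymath}
with equality forcing $\sum_i c_i\phi_j(\theta_i)=0$ for every $j\in\mathbb Z$; equivalently, all Fourier coefficients of the complex Radon measure $\sum_i c_i \delta_{\theta_i}$ vanish, which forces $\bm c = 0$ by uniqueness of Fourier series on $S^1$.

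Finally, the claims about the feature map $F_\tau$ are a direct consequence of Lemma~\ref{lemFeature}: continuity of $F_\tau$ follows from continuity of $\kappa_\tau$ (part (i)), and injectivity together with pairwise linear independence of distinct feature vectors follows from strict positive-definiteness (part (ii)). The only step that requires genuine content beyond bookkeeping is the density assertion used for $C$-universality; the rest is a chain of implications already catalogued in the preceding paragraphs, so I do not anticipate a significant obstacle.
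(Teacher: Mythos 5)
Your proof is correct, and it takes a genuinely different route to $C$-universality than the paper. The paper invokes Steinwart's characterization (Corollary~11 of~\cite{Steinwart01}) of universal translation-invariant kernels on $S^1$ in terms of strict positivity of the cosine Fourier coefficients of $\tilde\kappa$, whereas you argue more elementarily: every Fourier function $\phi_j$ is a scalar multiple of a basis element $\phi_{j,\tau}$ of $\mathcal{K}_\tau$, so $\mathcal{K}_\tau$ contains the trigonometric polynomials, which are uniformly dense in $C(S^1)$ by Stone--Weierstrass. Your approach is self-contained and avoids the external citation, at the cost of being slightly longer; the paper's is a one-line appeal to a known result that also yields the converse implication (universality $\Leftrightarrow$ positive Fourier coefficients), which is not needed here. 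You additionally supply a direct proof of strict positive-definiteness via the Mercer expansion and uniqueness of Fourier coefficients of Radon measures --- the paper obtains this only as a corollary of $C$-universality, so your direct argument is a worthwhile redundancy. The remaining steps (compactness-based equivalence of $C$- and $L^p$-universality, the implication to the characteristic property, and the feature-map claims via Lemma~\ref{lemFeature}) match the paper exactly.
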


\begin{proof}
    By compactness of $S^1$ and continuity of $\kappa_\tau$, that $ \kappa_\tau $ is $L^p$-universal, strictly positive, and characteristic will follow if it can be shown that it is $C$-universal. Indeed, by \cite[][Corollary~11]{Steinwart01}, a translation-invariant kernel $\kappa: S^1 \times S^1 \to \mathbb{R}$ of the form $ \kappa(\theta,\theta') = \tilde \kappa( \lvert \theta - \theta' \rvert)$, where $ \tilde \kappa : [ 0, 2\pi ] \to \mathbb{R} $ is a continuous function, and $\lvert \cdot \rvert$ denotes arclength distance, is $C$-universal if and only if $ \tilde \kappa $ admits the pointwise absolutely convergent Fourier expansion $\tilde \kappa(\theta) = \sum_{j=0}^\infty \hat \kappa_j \cos( j \theta ) $, where the $ \hat \kappa_j  $ are strictly positive. Clearly, the heat kernel $\kappa_\tau$ from~\eqref{eqHeatKernel} takes this form for any $\tau > 0$, and it therefore follows that it is $ C$-universal. The claim about injectivity and linear independence of $F_\tau$ applied to distinct points then follows from Lemma~\ref{lemFeature}. 
\end{proof}

\begin{lem}
    \label{lemKernelNorm}
    For every $\tau > 0 $, the $L^2(\mu)$ norm of the kernel section $ \kappa_\tau(\theta, \cdot)$ is independent of $ \theta \in S^1$.  
\end{lem}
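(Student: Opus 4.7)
The plan is to exploit two translation-invariance properties already established in the excerpt: the translation invariance of the kernel $\kappa_\tau$ (noted just below \eqref{eqHeatKernel}) and the translation invariance of the Haar measure $\mu$. Together these imply that $\theta \mapsto \lVert \kappa_\tau(\theta,\cdot)\rVert_\mu$ is a constant function on $S^1$.

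Concretely, first I would fix $\theta, \theta' \in S^1$ and write $\theta' = \theta + \vartheta_0 \mod 2\pi$ for some $\vartheta_0 \in \mathbb{R}$. Writing $\tau_{\vartheta_0} : S^1 \to S^1$ for the rotation by $\vartheta_0$, translation invariance of the kernel gives $\kappa_\tau(\theta',\cdot) = \kappa_\tau(\theta,\cdot) \circ \tau_{-\vartheta_0}$, so that
\begin{equation*}
    \lVert \kappa_\tau(\theta',\cdot)\rVert_\mu^2 = \int_{S^1} \lvert \kappa_\tau(\theta,\tau_{-\vartheta_0}(\eta))\rvert^2 \, d\mu(\eta).
\end{equation*}
A change of variables, using invariance of $\mu$ under the rotation $\tau_{-\vartheta_0}$, reduces the right-hand side to $\lVert \kappa_\tau(\theta,\cdot)\rVert_\mu^2$, proving the claim.

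As an alternative (or confirmation), one can use the Mercer expansion of $\kappa_\tau$ from \eqref{eqHeatKernel} together with Parseval's identity for the orthonormal basis $\{\phi_j\}$ of $L^2(\mu)$: since $\kappa_\tau(\theta,\cdot) = \sum_{j} e^{-j^2\tau}\phi_j^*(\theta)\phi_j(\cdot)$ with convergence in $L^2(\mu)$, Parseval yields
\begin{equation*}
    \lVert \kappa_\tau(\theta,\cdot)\rVert_\mu^2 = \sum_{j=-\infty}^\infty e^{-2j^2\tau}\lvert\phi_j(\theta)\rvert^2 = \sum_{j=-\infty}^\infty e^{-2j^2\tau},
\end{equation*}
where in the last step we use $\lvert\phi_j(\theta)\rvert^2 = 1$. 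The right-hand side is manifestly independent of $\theta$. The only (very mild) obstacle is verifying convergence of the Mercer expansion in $L^2(\mu)$, but this is immediate from the exponential decay of the coefficients $e^{-j^2\tau}$ and the orthonormality of $\{\phi_j\}$; no further subtlety is expected.
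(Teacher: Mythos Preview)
Your proposal is correct, and your first argument is essentially the paper's own proof: the paper uses shift-invariance of $\kappa_\tau$ together with unitarity of the Koopman operator $U^t$ on $L^2(\mu)$ (which is exactly your change-of-variables via Haar-measure invariance, phrased operator-theoretically) and the transitivity of the rotation to conclude $\lVert \kappa_\tau(\theta,\cdot)\rVert_\mu = \lVert \kappa_\tau(\Phi^t(\theta),\cdot)\rVert_\mu$ for all $t$. Your alternative Mercer/Parseval computation is a valid and slightly more explicit route that the paper does not take.
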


\begin{proof}
    By shift-invariance of $\kappa_\tau$ and unitarity of the Koopman group on $L^2(\mu)$, for every $ t \in \mathbb R$ we have
    \begin{align*}
        \lVert \kappa_\tau(\theta,\cdot) \rVert_{\mu} &= \lVert \kappa_\tau(\Phi^{\tau}(\theta), \Phi^t(\cdot)) \rVert_{\mu} = \lVert \kappa_{\tau}(\Phi^t(\theta), \cdot) \circ \Phi^t \rVert_\mu \\
        &= \lVert U^t \kappa_\tau(\Phi^t(\theta), \cdot ) \rVert_\mu = \lVert \kappa_\tau( \Phi^t(\theta),\cdot ) \rVert_\mu.
    \end{align*}
    The claim then follows from the fact that for every $ \theta,\theta' \in S^1$ there exists $ t \in \mathbb R$ such that $ \theta' = \Phi^t(\theta)$. 
\end{proof}
   
\subsection{\label{appS1Quantum}Proof of Proposition~\ref{propS1Quantum}}

First, to show that $\Psi_\tau = \Pi \circ F_\tau$ is injective, note that, by Lemma~\ref{lemUniversal}, $F_\tau$ maps distinct points in $S^1$ to linearly independent functions in $ \mathcal F(\mathcal K_\tau) \subset C(S^1) \setminus \{ 0 \} $. Moreover, by definition, $\Pi$ maps each (nonzero) function $ f $ its domain to the rank-1 orthogonal projection operator on $L^2(\mu)$ mapping onto $\spn\{ f \} $. Since $\spn\{f\}$ and $\spn\{f'\}$ are distinct subspaces of $L^2(\mu)$ whenever $f,f'$ are linearly independent continuous functions, these two facts together imply that  $\Psi_\tau$ is injective. 

To show that $\Psi_\tau$ is continuous, observe that 
\begin{displaymath}
    \Psi_{\tau}(\theta) = \langle F_\tau(\theta),\cdot\rangle_{\mu} F_\tau(\theta) / c, 
\end{displaymath}
where $c := \lVert \kappa( \theta, \cdot ) \rVert_\mu^2$ is independent of  $\theta$ by Lemma~\ref{lemKernelNorm}. Then, for any $\theta,\theta'$, we have
\begin{widetext}
\begin{align*}
    c\lVert \Psi_\tau(\theta) - \Psi_\tau(\theta') \rVert_1 &= \lVert \langle F_\tau(\theta),\cdot\rangle_{\mu} F_\tau(\theta) - \langle F_\tau(\theta'), \cdot \rangle_\mu F_\tau(\theta') \rVert_1 \\
    &= \lVert \langle F_\tau(\theta),\cdot \rangle_\mu F_\tau(\theta) - \langle F_\tau(\theta'),\cdot \rangle_\mu F_\tau(\theta) + \langle F_\tau(\theta'),\cdot\rangle_\mu F_\tau(\theta) - \langle F_\tau(\theta'),\cdot \rangle_\mu F_\tau(\theta') \rVert_1 \\ 
    &\leq \lVert \langle F_\tau(\theta) - F_\tau(\theta'), \cdot \rangle_\mu F_\tau(\theta) \rVert_1 + \lVert \langle F_\tau(\theta'), \cdot \rangle_\mu (F_\tau(\theta) - F_\tau(\theta')) \rVert_1 \\
    &=  \lVert F_\tau(\theta) - F_\tau(\theta') \rVert_\mu ( \lVert F_\tau(\theta) \rVert_\mu + \lVert F_\tau(\theta') \rVert_\mu) \\
    &\leq 2c^{1/2} \lVert F_\tau(\theta) - F_\tau(\theta') \rVert_{\mathcal K_\tau},
\end{align*}
and the continuity of $\Psi_\tau$ follows from continuity of $F_\tau $ (Lemma~\ref{lemUniversal}). Note that to obtain the equality in the second-to-last line above we used the fact that the trace norm of a rank-1 operator $ \langle f, \cdot \rangle_\mu g $ on $L^2(\mu) $ is equal to $ \lVert f \rVert_\mu \lVert g \rVert_\mu$, and the inequality in the last line follows from the fact that the $L^2(\mu)$ norm is bounded above by the $ \mathcal{K}_\tau$ norm for any $\tau > 0$. 
\end{widetext}

Next, to check that $ F_\tau \circ \Phi^t = U^{-t} \circ F_\tau$, we compute 
\begin{align*}
    F_\tau(\Phi^t(\theta)) &= \kappa_\tau(\Phi^t(\theta),\cdot) = \kappa_\tau(\theta,\Phi^{-t}(\cdot)) \\
    &= \kappa_\tau(\theta,\cdot) \circ \Phi^{-t} = U^{-t}\kappa_\tau(\theta,\cdot) = U^{-t} F_\tau(\theta),
\end{align*}
verifying the claim. In addition, for any $f \in C(S^1) \setminus \{ 0 \}$, we have 
\begin{align*}
    \tilde \Phi^t(\Pi(f)) &= U^{t*} \Pi(f) U^t = \frac{\langle f, U^t(\cdot)\rangle_\mu U^{t*} f}{\lVert f \rVert^2_{\mu}} \\
    &= \frac{\langle U^{-t}f,\cdot \rangle_\mu U^{-t} f}{ \lVert U^{-t} f \rVert_\mu^2} = \Pi(U^{-t}f),
\end{align*}
showing that $ \Pi \circ U^{-t} = \tilde \Phi^t \circ \Pi$. That $ \Psi_\tau \circ \Phi^t = \tilde \Phi^{t} \circ \Psi_\tau$ then follows from the fact that $\Psi_\tau = \Pi \circ F_\tau$. This completes the proof of Claim~(i).

Turning now to the map $\Omega_\tau$ in Claim~(ii), for any $A \in \mathcal A(L^2(\mu))$, 
\begin{displaymath}
    (\Omega_\tau A)^* = \tr((\Psi_\tau(\cdot))^* A^* ) = \tr(\Psi_\tau(\cdot)A) = \Omega_\tau A,
\end{displaymath}
where the second-to-last equality follows from the fact that for any $\theta \in S^1$, $\Psi_\tau(\theta)$ is a density operator, and thus self-adjoint. Moreover, for any $\theta,\theta' \in S^1$, we have
\begin{align*}
    \lvert \Omega_\tau A(\theta) - \Omega_\tau A(\theta') \rvert &= \lvert \tr( ( \Psi_{\tau}(\theta) - \Psi_\tau(\theta') ) A) \rvert \\
    & \leq  \tr \lvert (\Psi_\tau(\theta)-\Psi_\tau(\theta'))A\rvert \\
    &\leq \lVert \Psi_\tau(\theta) - \Psi_\tau(\theta') \rVert_1 \lVert A \rVert,
\end{align*}
and the continuity of $\Omega_\tau$ follows from the continuity of $\Psi_\tau$ established in Claim~(i). We therefore conclude that $\Omega_\tau$ is well-defined as an operator mapping into $C_{\mathbb R}(S^1)$. In addition, for any $t \in \mathbb{R}$,
\begin{align*}
    \Omega_\tau( \tilde U^t A ) &= \tr( \Psi_\tau(\cdot) U^t A U^{t*} ) = \tr( U^{t*} \Psi_\tau(\cdot) U^t A  ) \\
    &= \tr( \Phi^t_*( \Psi_\tau(\cdot)) A) = \tr(\Psi_\tau(\Phi^t(\cdot)) A) \\
    &= \tr(\Psi_\tau(\cdot) A ) \circ \Phi^t = U^t \tr( \Psi_\tau(\cdot) A) \\
    &= U^t (\Omega_\tau A),
\end{align*}
proving that $\Omega_\tau \circ \tilde U^t = U^t \circ \Omega_\tau$. This completes the proof of Claim~(ii). 

Finally, Claim~(iii) follows directly from the definition of $\Omega'_\tau$ and Claim~(ii).  \qed

\subsection{\label{appRKHS2}Proof of Theorem~\ref{thmRKHS2}}

Starting from Claims~(i) and~(ii), it follows from the Mercer representation of the kernel $\hat \kappa_\tau$ in~\eqref{eqFracHeatKernel} that $ \{ \hat \phi_{j,\tau} \}_{j\in\mathbb Z} $ with $ \hat \phi_{j,\tau} = e^{-\lvert j \rvert \tau / 2} \phi_j$ is an orthonormal basis of $\hat{\mathcal K}_\tau$. Moreover, for every $ t \in \mathbb R$, $ \hat \phi_{j,\tau} \circ \Phi^t = e^{i\alpha t} \hat \phi_{j,\tau}$, so that $U^t$ maps an orthonormal basis of $ \hat{\mathcal K}_\tau$ to an orthonormal basis. We therefore conclude that $ \hat{\mathcal K}_\tau$ and, since $\tau$ is arbitrary, $\hat{\mathcal K}^\infty$ are invariant under $U^t$, proving Claim~(i). Similarly, $U^t : \hat{\mathcal K}_\tau \to \hat{\mathcal K}_\tau $ is unitary since  $ \{ U^t\hat \phi_{j,\tau} \}_{j \in \mathbb Z} $ is an orthonormal basis, and the strong continuity of $ \{ U^t \}_{t\in \mathbb R} $ follows from the fact that $ t \mapsto U^t \hat \phi_{j,\tau} = e^{i\alpha j t} \phi_{j,\tau}$ is a continuous map.    

Next, turning to Claim~(iii), the basis elements $ \hat\phi_{j,\tau} $ are clearly eigenfunctions of the generator $V : D(V) \to \hat{\mathcal K}_\tau$, i.e., 
\begin{align*}
    V \hat \phi_{j,\tau} &= \lim_{t\to0} \frac{ U^t \hat \phi_{j,\tau} - \phi_{j,\tau} }{ t } \\
    &= \lim_{t\to0} \frac{e^{i\alpha j t} - 1 }{t} \hat \phi_{j,\tau} = i \alpha j \hat \phi_{j,\tau},   
\end{align*}
where the first limit is taken with respect to $\hat{\mathcal K}_\tau$ norm. Then, for every $ f = \sum_{j=-\infty}^\infty c_j \hat \phi_{j,\tau} \in \hat{\mathcal K}^1_\tau$ and $ l \in \mathbb N_0$, the action of $ V $ on the partial sum   $ f_l = \sum_{j=-l}^l c_j \hat \phi_{j,\tau} $ is given by $ g_l := V f_l = \sum_{j=-l}^l i \alpha j c_j \hat \phi_{j,\tau}$, and it follows from the definition of $\hat{\mathcal K}^1_\tau$ that $g_l $ is a Cauchy sequence in $\hat{\mathcal K}_\tau$. Thus, $\hat{\mathcal K}_\tau^1$ is a subspace of $D(V)$. Now if $ f =  \sum_{j=-\infty}^\infty c_j \hat \phi_{j,\tau} \in \hat{\mathcal K}_\tau \setminus \hat{\mathcal K}^1_\tau$, for every $C \geq 0 $, there exists $ l_0 \in \mathbb N_0 $ such that, for all $ l \geq l_0 $, $ \sum_{j=-l}^l \lvert j \rvert^2  \lvert c_j \rvert^2 \geq C^2 $. It therefore follows that $ g_l = V f_l $ has norm $ \lVert g_l \rVert_{\hat{\mathcal K}_\tau} > \alpha C$, and thus the sequence $ g_l $ is unbounded. This shows that $( \hat{\mathcal K}_\tau^{1} )^c \subseteq D(V)^c$, and therefore  $\hat{\mathcal K}^1_\tau \supseteq D(V)$. The latter, together with the fact that $ \hat{\mathcal K}^1_\tau \subseteq D(V) $ just shown, implies that $\hat{\mathcal K}^1_\tau =D(V)$, as claimed. 

What remains is to show that $V$ acts on $\hat{\mathcal K}^\infty$ as a derivation. For that, it needs to be shown that (i) $\hat{\mathcal K}^\infty $ is invariant under $V$; and (ii) the Leibniz rule holds, i.e. 
\begin{equation}
    \label{eqLeibnizRKHA}
    V(fg) = (Vf) g + f ( V g ), \quad \forall f,g \in \hat{\mathcal K}^\infty. 
\end{equation}
These claims will follow in turn from the following useful lemma:

\begin{lem}
    \label{lemVBounded}
    For every $ \tau > 0 $ and $ \tau' > \tau$, $\hat{\mathcal K}_{\tau'} $ is a subspace of $\hat{\mathcal K}^1_\tau$. Moreover, the restriction of the generator $ V : D(V) \to \hat{\mathcal K}_\tau$ to $ \hat{\mathcal K}_\tau$, viewed as an operator $ V\rvert_{\hat{\mathcal K}_{\tau'}} : \hat{\mathcal K}_{\tau'} \to \hat{\mathcal K}_\tau $, is bounded.
\end{lem}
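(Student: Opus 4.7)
The plan is to argue entirely in terms of Fourier expansions, exploiting the fact that the norms on $\hat{\mathcal K}_\tau$, $\hat{\mathcal K}_{\tau'}$, and $\hat{\mathcal K}^1_\tau$ are all weighted $\ell^2$ norms on the sequence $\{\hat f_j\}$ of Fourier coefficients of $f = \sum_{j\in\mathbb Z} \hat f_j \phi_j$. Specifically, from the Mercer expansion in~\eqref{eqFracHeatKernel} (and the fact that $\{\hat\phi_{j,\tau}=e^{-|j|\tau/2}\phi_j\}_{j\in\mathbb Z}$ is an orthonormal basis of $\hat{\mathcal K}_\tau$), the defining conditions become
\begin{gather*}
    f\in\hat{\mathcal K}_{\tau'} \iff \sum_{j\in\mathbb Z} e^{|j|\tau'}|\hat f_j|^2 < \infty, \\
    f\in\hat{\mathcal K}^1_\tau \iff \sum_{j\in\mathbb Z} |j|^2 e^{|j|\tau}|\hat f_j|^2 < \infty.
\end{gather*}

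First I would verify the inclusion $\hat{\mathcal K}_{\tau'}\subseteq\hat{\mathcal K}^1_\tau$. For $f\in\hat{\mathcal K}_{\tau'}$, one rewrites
\begin{displaymath}
    \sum_{j\in\mathbb Z} |j|^2 e^{|j|\tau}|\hat f_j|^2 = \sum_{j\in\mathbb Z} \bigl(|j|^2 e^{-|j|(\tau'-\tau)}\bigr) e^{|j|\tau'}|\hat f_j|^2,
\end{displaymath}
and observes that the prefactor $|j|^2 e^{-|j|(\tau'-\tau)}$ is uniformly bounded in $j$ by some constant $M_{\tau,\tau'}<\infty$ (exponential decay beats polynomial growth, and $\tau'-\tau>0$). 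The sum is therefore bounded by $M_{\tau,\tau'}\|f\|^2_{\hat{\mathcal K}_{\tau'}}$, which is finite, proving $f\in\hat{\mathcal K}^1_\tau = D(V)$.

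Next I would establish boundedness of $V\rvert_{\hat{\mathcal K}_{\tau'}}$ using the eigenvalue formula $V\phi_j=i\alpha j \phi_j$ from the proof of Claim~(iii). For $f=\sum_j \hat f_j \phi_j\in\hat{\mathcal K}_{\tau'}$, one has $Vf=\sum_j i\alpha j\, \hat f_j\phi_j$, so
\begin{displaymath}
    \|Vf\|^2_{\hat{\mathcal K}_\tau} = \alpha^2 \sum_{j\in\mathbb Z} |j|^2 e^{|j|\tau}|\hat f_j|^2 \leq \alpha^2 M_{\tau,\tau'} \|f\|^2_{\hat{\mathcal K}_{\tau'}},
\end{displaymath}
exactly by the bound from the previous paragraph. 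This gives $\|V\rvert_{\hat{\mathcal K}_{\tau'}}\|\leq \alpha M_{\tau,\tau'}^{1/2}$, completing the proof.

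There is no real obstacle here; the lemma is essentially a statement about weighted $\ell^2$ norms and the elementary fact that $|j|^2 e^{-|j|\epsilon}$ is bounded for $\epsilon>0$. The only thing worth being careful about is to explicitly identify the constant $M_{\tau,\tau'}=\sup_{k\geq 0} k^2 e^{-k(\tau'-\tau)}$ (attained at $k=2/(\tau'-\tau)$, giving $M_{\tau,\tau'}=4 e^{-2}/(\tau'-\tau)^2$) to make the dependence on the gap $\tau'-\tau$ transparent; this will be useful downstream for deducing that $V$ is a derivation on $\hat{\mathcal K}^\infty$, since it allows one to pass limits through $V$ by working in a slightly larger scale $\hat{\mathcal K}_{\tau'}\supset\hat{\mathcal K}^\infty$.
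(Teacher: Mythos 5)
Your proof is correct and follows essentially the same route as the paper's: both reduce the statement to the elementary bound $\sup_{j}|j|^2 e^{-|j|(\tau'-\tau)} < \infty$ applied to a weighted $\ell^2$ comparison of the Fourier-coefficient norms, and both then derive $\|Vf\|_{\hat{\mathcal K}_\tau} \leq \alpha M_{\tau,\tau'}^{1/2}\|f\|_{\hat{\mathcal K}_{\tau'}}$. The only cosmetic differences are that you work with coefficients relative to $\{\phi_j\}$ while the paper uses $\{\hat\phi_{j,\tau}\}$, you compute the explicit value of the constant, and you bound $\|Vf\|$ directly rather than on finite partial sums first.
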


A proof of Lemma~\ref{lemVBounded} will be given below. Assuming, for now, that it holds, let $ f = \sum_{j=-\infty}^\infty \hat f_j \phi_j$ and $ g = \sum_{j=-\infty}^\infty \hat g_j \phi_j $ be arbitrary elements of $\hat{\mathcal K}^\infty$, and note that the partial sums $ f_l = \sum_{j=-l}^l \hat f_j \phi_j$ and $ g_l = \sum_{j=-l}^l \hat g_j \phi_j $ converge in any $\hat{\mathcal K}_{\tau'} $ norm. Moreover, because $\hat{\mathcal K}_{\tau'}$ is a Banach algebra, as $l\to\infty$, $f_l g_l$ converges in $\hat{\mathcal K}_{\tau'}$ norm to $ fg$. Choosing, in particular, $ \tau' > \tau$, it follows from Lemma~\ref{lemVBounded} that $Vf_l $, $ Vg_l $, and $ V ( f_l g_l )$ are Cauchy sequences in $\hat{\mathcal K}_\tau$ converging to $Vf$, $Vg$, and $V(fg)$, respectively. The Banach algebra property of $\hat{\mathcal K}_\tau$ then implies  that $( V f_l ) g_l$ and $ f_l (Vg_l)$ are also $\hat{\mathcal K}_\tau$ Cauchy sequences, converging to $ (Vf) g $ and $ f (Vg)$, respectively. 

Now because $\tau$ was arbitrary, the convergence of $ Vf_l$ to $ Vf $ in $\hat{\mathcal K}_\tau$ norm implies that $Vf$ lies in $\hat{\mathcal K}_\tau$ for any $ \tau > 0 $, and therefore that $ \hat{\mathcal K}^\infty$ is invariant under $V$. Moreover, for any  $l\in \mathbb N_0 $, 
\begin{align*}
    V(f_l g_l) &= V\left( \sum_{j,k=-l}^l \hat f_j \hat g_k \phi_j \phi_k \right) = V\left( \sum_{j,k=-l}^l \hat f_j \hat g_k \phi_{j+k} \right) \\
    &= \sum_{j,k=-l}^l i \alpha (j+k) \hat f_j \hat g_k \phi_{j+k} \\
    & = \sum_{j,k=-l}^l i \alpha j \hat f_j \hat g_k \phi_j \phi_k  + \sum_{j,k=-l}^l i \alpha k \hat f_j \hat g_k \phi_j \phi_k  \\
    &= (V f_l) g_l + f_l (V g_l), 
\end{align*}
and taking $ l \to \infty $ limits in $\hat{\mathcal K}_\tau$ norm, we obtain
\begin{align*}
    V(fl) &= V\left( \lim_{l\to\infty} f_l g_l \right) = \lim_{l\to\infty} V(f_l g_l) \\
    &= \lim_{l\to\infty} (V f_l) g_l + \lim_{l\to\infty} f_l (V g_l) \\
    &= (Vf) g + f(Vg),
\end{align*}
verifying~\eqref{eqLeibnizRKHA}. 

Finally, to prove Lemma~\ref{lemVBounded}, observe that $ \hat{\mathcal K}_{\tau'} \subset \hat{\mathcal K}_\tau$  whenever $ \tau' > \tau $, and every $f \in \hat{\mathcal K}_{\tau'}$ admits an expansion  $ f = \sum_{j= -\infty }^\infty c_j \hat \phi_{j,\tau} $, where $\sum_{j=-\infty}^\infty e^{(\tau'-\tau)\lvert j \rvert} \lvert c_j \rvert^2 < \infty$. Since $e^{(\tau'-\tau) \lvert j \rvert} > C \lvert j \rvert^2 $ for some constant $ C > 0$ whenever $ \tau' > \tau$, it follows that $\sum_{j=-\infty}^\infty \lvert j \rvert^2 \lvert c_j \rvert^2 < \infty$, which shows that $ \hat{ \mathcal K }_{\tau'} \subseteq \hat{\mathcal K}_\tau^1$. Similarly, defining $ f_l = \sum_{j=-l}^l c_j \hat \phi_{j,\tau}$ with $c_j $ as above, we have 
\begin{align*}
    \lVert V f_l \rVert_{\hat{\mathcal K}_\tau}^2 &= \sum_{j=-l }^l \alpha^2 \lvert j \rvert^2 \lvert c_j \rvert^2 \\
    &\leq \sum_{j=-l}^l \alpha^2 e^{(\tau'-\tau)\lvert j \rvert} \lvert c_j \rvert^2 / C = \lVert f_l \rVert_{\hat{\mathcal K}_{\tau'}}^2 \alpha^2/C. 
\end{align*}
This shows that $ V\rvert_{\hat{\mathcal K}_{\tau'}} : \hat{\mathcal K}_{\tau'} \to \hat{\mathcal K}_\tau$ is uniformly bounded on the dense subspace of $ \hat{\mathcal K}_{\tau'}$ consisting of finite sums of the form $ \sum_{j=-l}^l c_j \hat \phi_{j,\tau}$, and is therefore bounded on the whole of $ \hat{\mathcal K}_{\tau'}$. \qed

\section{\label{appFractional}Fractional derivatives}

In this appendix, we provide definitions, and outline the basic properties of fractional derivative operators on functions on the circle. Additional details on these topics can be found, e.g., in Refs.~\cite{ButzerWestphal74,SamkoEtAl93}. 

To construct the fractional derivative operator $ \partial^r $ of order $ r\geq 0$, it is convenient to pass to a Fourier representation using the Fourier analysis operator $\mathfrak{F} : L^2(\mu) \to \ell^2$. The latter, is the unitary operator mapping $ f = \sum_{j=-\infty}^\infty c_j \phi_j \in L^2(\mu)$ to the $\ell^2$ sequence $ \hat f = \mathfrak{F} f := (c_j)_{j\in\mathbb{Z}} $ consisting of the expansion coefficients of $ f $ in the Koopman eigenfunction basis $ \{ \phi_j \}_{j=-\infty}^\infty$. Note, in particular, that 
\begin{displaymath}
    c_j = \langle \phi_j, f \rangle_\mu = \frac{1}{2\pi} \int_0^{2\pi} e^{-ij\theta} f(\theta) \, d\theta,
\end{displaymath}
indicating that the elements of $\hat f$ coincide with the standard Fourier expansion coefficients of $ f $.  Then, for any $ r \geq 0 $, we define
\begin{equation}
    \label{eqFracDer}
    \partial^r : D(\partial^r) \to L^2(\mu), \quad \partial^r = \mathfrak{F}^* \hat \partial_r \mathfrak{F}, 
\end{equation}
where $ \hat \partial^r : D(\hat \partial^r) \to \ell^2$ is the multiplication operator with dense domain 
\begin{displaymath}
    D(\hat \partial^r ) = \left\{ (c_j)_{j\in \mathbb{Z}} \in \ell^2 : \sum_{j=-\infty}^\infty j^{2r} \lvert c_j \rvert^2 < \infty \right\} \subseteq \ell^2,
\end{displaymath}
defined as
\begin{displaymath}
    \hat \partial^r (c_j)_{j\in \mathbb{Z}} = ( (i j)^r c_j)_{j\in \mathbb{Z}}.
\end{displaymath}
The domain of $\partial^r $ is then the dense subspace $D(\partial^r) $ of $ L^2(\mu)$ given by
\begin{displaymath}
    D(\partial^r) = \left \{ \sum_{j=-\infty}^\infty c_j \phi_j \in L^2(\mu) : \sum_{j=-\infty}^\infty j^{2r} \lvert c_j \rvert^2 < \infty \right \}.
\end{displaymath}

It follows directly from these definitions that if $ r $ is a non-negative integer,  $\partial^r$ from~\eqref{eqFracDer} coincides with the weak derivative operator with respect to standard angle coordinates $\theta$ on the circle. It can also be verified that the restriction $ \partial^r : W^{p,2}(\mu) \to L^2(\mu)$ of $ \partial^r $ to the Sobolev space $ W^{p,2}(\mu) \subseteq L^2(\mu) $ of order $p = \lceil r \rceil$, where
\begin{displaymath}
    W^{p,2}(\mu) = \left \{ \sum_{j=-\infty}^\infty c_j \phi_j \in L^2(\mu) : \sum_{j=-\infty}^\infty j^{2p} \lvert c_j \rvert^2 < \infty \right \},
\end{displaymath}
is a bounded operator. Here, $W^{p,2}(\mu)$ is equipped with the standard norm $\lVert f \rVert^{2}_{W^{p,2}(\mu)} = \sum_{q=0}^p \sum_{j=-\infty}^\infty j^{2q} \lvert c_j \rvert^2 $, where $  f = \sum_{j=-\infty}^\infty c_j \phi_j $.  

To gain intuition on the behavior of $\partial^r$ in a pointwise (as opposed to Fourier) representation, we state an equivalent definition to~\eqref{eqFracDer}, given by the $L^2(\mu)$ limit
\begin{equation}
    \label{eqFracDerAlt}
    \partial^r f = \lim_{a\to 0^+} \frac{1}{a^r} \mathbb{D}^r_a f, \quad f \in D(\partial^r),
\end{equation}
where $ \mathbb{D}^r_a f$ is the Riemann difference of $ f $, 
\begin{gather*}
    \mathbb{D}^r_a f( \theta ) = \sum_{n=0}^\infty (-1)^n \binom{r}{n} f( \theta - a n ), \\
    \binom{r}{n} = \frac{r (r -1) \cdots (r-n + 1)}{n!}.
\end{gather*}
In the fractional calculus, the operator in~\eqref{eqFracDerAlt} is known as the Liouville-Gr\"unwald, or Gr\"unwald-Letnikov derivative. From this definition, it is clear that whenever $ r $ is a non-negative integer $ \partial^r f(\theta) $ depends on the behavior of $f$ in an infinitesimally small neighborhood of $\theta$, for the Riemann difference $ \mathbb{D}^r_a$ contains finitely many terms. On the other hand, for non-integer $ r$, $\mathbb{D}^r_a$ contains infinitely many terms, and $ \partial^r f(\theta)$ depends on the behavior of $f$ on distant points from $\theta$. In other words, $\partial^r$ is a non-local operator whenever $r$ is not an integer.

With these definitions in place, we have the following:
\begin{prop}
    \label{propLadder}The ladder operators $A_\pm$ and $A_{\pm}^+$ from~\eqref{eqLadder} satisfy~\eqref{eqLadderFrac}.
\end{prop}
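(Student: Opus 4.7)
The plan is to verify each of the four identities in~\eqref{eqLadderFrac} by comparing the actions of both sides on the orthonormal basis $\{\phi_j\}_{j\in\mathbb{Z}}$ of Koopman eigenfunctions, which is contained in the common domain $\mathcal S(S^1)$. Since both sides of each claimed identity are densely defined, linear, and closable, agreement on this basis is sufficient. The computation is in essence a bookkeeping exercise: the left-hand sides are built from $\mathcal U^* A_j^{(+)} \mathcal U$ using~\eqref{eqUTransf} and~\eqref{eqLadderPsi}, while the right-hand sides are products of the shift-like operator $L$ or $L^*$ from Section~\ref{secKoopLadder} with the spectrally truncated fractional derivatives $\partial^{1/2}_\pm = \partial^{1/2}\Pi_\pm$, whose action on $\phi_j$ is determined from~\eqref{eqFracDer}.

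First I would set up the action of the fractional derivatives on basis elements using the principal branch, namely $\partial^{1/2}\phi_j = (ij)^{1/2}\phi_j$, which evaluates to $i^{1/2}\sqrt{j}\,\phi_j$ for $j>0$ and $i^{-1/2}\sqrt{|j|}\,\phi_j$ for $j<0$ (and $0$ for $j=0$). Combined with $\Pi_\pm$, this gives explicit closed forms for $\partial^{1/2}_\pm \phi_j$. Next, I would tabulate $\mathcal U \phi_j$ according to the three cases $j=0$, $j<0$, $j>0$ in~\eqref{eqUTransf}, apply the four Minkowski-space ladder operators $A_0, A_0^+, A_1, A_1^+$ using~\eqref{eqLadderPsi}, and read off $\mathcal U^*$ of the result, remembering that $\mathcal U^* \psi_{jk} = 0$ whenever $\psi_{jk}\notin \tilde{\mathcal H}$ (that is, whenever both indices are nonzero).

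Finally, for each of the four identities I would compare the two sides on $\phi_j$ across the three regimes of $j$. A representative computation is $A_-^+ \phi_j$ for $j<0$: the left-hand side gives $\sqrt{1-j}\,\phi_{j-1}$, while on the right $L\phi_j = \phi_{j-1}$, $\Pi_-\phi_{j-1}=\phi_{j-1}$ (since $j-1<0$), and $\partial^{1/2}\phi_{j-1} = i^{-1/2}\sqrt{1-j}\,\phi_{j-1}$, so the prefactor $i^{1/2}$ cancels $i^{-1/2}$ to yield the same $\sqrt{1-j}\,\phi_{j-1}$. The analogous calculations for $A_-, A_+, A_+^+$ and for the remaining index regimes proceed identically, and the $\Pi_\pm$ truncations are precisely what guarantees that the right-hand sides vanish in the regimes where the left-hand sides do (e.g., $A_+^+\phi_j=0$ for $j<0$ matches $\partial^{1/2}_+ L^*\phi_j = \partial^{1/2}_+ \phi_{j+1}=0$ when $j+1\le 0$).

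The only real subtlety, and the place where care is needed, is keeping track of the branch of $(ij)^{1/2}$ on the negative-frequency side: the factor $i^{1/2}$ (resp.\ $i^{-1/2}$) in the definitions of $A_-^{(+)}$ (resp.\ $A_+^{(+)}$) is chosen precisely to cancel the $i^{-1/2}$ (resp.\ $i^{1/2}$) arising from $\partial^{1/2}$ acting on negative (resp.\ positive) Fourier modes, so that real, non-negative coefficients $\sqrt{j+1},\sqrt{k+1},\sqrt{j},\sqrt{k}$ emerge in agreement with~\eqref{eqLadderPsi}. Once the branch convention is fixed consistently, the verification is mechanical. I do not anticipate any genuine obstacle beyond this bookkeeping; no density or closure arguments beyond what has already been established in Section~\ref{secLadder} are required, since both sides of~\eqref{eqLadderFrac} are defined on $\mathcal S(S^1)$ and agree on the orthonormal basis it contains.
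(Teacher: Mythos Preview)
Your proposal is correct and follows essentially the same approach as the paper: both proofs verify the identities by checking the action on the Koopman eigenbasis $\{\phi_j\}$, splitting into the cases $j\le 0$ and $j>0$, and tracking the phase factor $(ij)^{1/2}$ so that the prefactors $i^{\pm 1/2}$ cancel to produce the real coefficients $\sqrt{|j|}$, $\sqrt{|j|+1}$ from~\eqref{eqLadderPsi}. The paper's write-up is slightly more careful in justifying why agreement on $\{\phi_j\}$ suffices---it invokes the boundedness of $\partial^{1/2}:W^{1,2}(\mu)\to L^2(\mu)$ rather than your appeal to closability---but this is a minor presentational difference, not a different strategy.
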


\begin{proof}
    We will prove the claim only for $A_-$ and $A_-^+$, as the results for $A_+$ and $A_+^+$ follow similarly. For that, note first that the set 
    \begin{displaymath}
        \left\{ \ldots, \frac{\phi_{-2}}{\sqrt{2}}, \frac{\phi_{-1}}{\sqrt{1}}, \phi_0, \frac{\phi_1}{\sqrt{1}}, \frac{\phi_2}{\sqrt{2}}, \ldots \right\}   
    \end{displaymath}
     is an orthonormal basis of the Sobolev space $W^{1,2}(\mu)$. As a result, because $ \partial^{1/2} : W^{1,2} \to L^2(\mu)$ is bounded, it is enough to show that
    \begin{equation}
        \label{eqPropFrac}
        \begin{aligned}
            \mathcal{U}^* A_0 \mathcal{U} \phi_j & = i^{1/2} L^* \partial_-^{1/2} \phi_j, \\
            \mathcal{U}^* A_0^+ \mathcal{U} \phi_j &= i^{1/2} \partial_-^{1/2} L \phi_j, 
        \end{aligned}
    \end{equation}
    for all $ j \in \mathbb{Z}$. Indeed, using~\eqref{eqLadderPsi}, for any non-positive integer $ j $ we obtain   
    \begin{align*}
        \mathcal{U}^* A_0 \mathcal{U} \phi_j &= \mathcal{U}^* A_0 \psi_{-j,0} =\mathcal{U}^*( \sqrt{-j} \psi_{-j-1,0}) \\
        &= \sqrt{-j} \phi_{j+1} = i^{1/2}\phi_1 (  \sqrt{ i j} \phi_j )  \\
        &=i^{1/2} L^* \partial^{1/2} \phi_j = i^{1/2} L^* \partial_-^{1/2} \phi_j  
    \end{align*}
    and
    \begin{align*}
        \mathcal{U}^* A_0^+ \mathcal{U} \phi_j &= \mathcal{U}^* A_0^+ \psi_{-j,0} = \mathcal{U}^* ( \sqrt{-j+1} \psi_{-j+1,0}) \\
        &= \sqrt{-j+1}\phi_{j-1} =  i^{1/2} \sqrt{i(j-1)}\phi_{-1} \phi_j \\
        &= i^{1/2} \partial^{1/2} L \phi_j = i^{1/2} \partial^{1/2}_- L \phi_j. 
    \end{align*}
    On the other hand, if $ j $ is positive,
    \begin{align*}
        \mathcal{U}^* A_0 \mathcal{U} \phi_j &= \mathcal{U}^* A_0 \psi_{0j} = 0 = i^{1/2} L^* \partial_-^{1/2} \phi_j \\
        \mathcal{U}^* A_0^+ \mathcal{U} \phi_j &= \mathcal{U}^* A_0^+ \psi_{0j} = \mathcal{U}^*\psi_{1j} = 0 = i^{1/2} L^* \partial_-^{1/2} \phi_j. 
    \end{align*}
    We therefore conclude that~\eqref{eqPropFrac} holds, as claimed.
\end{proof}

%

\end{document}